 \newtheorem{theorem}{Theorem}[section]
 \newtheorem{lemma}[theorem]{Lemma}
 \newtheorem{proposition}[theorem]{Proposition}
 \theoremstyle{definition}
   \newtheorem{assumption}[theorem]{Assumption}
\theoremstyle{definition}
\newtheorem{definition}[theorem]{Definition}
\theoremstyle{remark}
\newtheorem{remark}[theorem]{Remark}
 \newcommand{\eps}{\varepsilon}
\newcommand{\norm}[1]{\Vert#1\Vert}
\newcommand{\normm}[1]{{\left\vert\kern-0.25ex\left\vert\kern-0.25ex\left\vert #1 
    \right\vert\kern-0.25ex\right\vert\kern-0.25ex\right\vert}}
\newcommand{\abs}[1]{\left\vert#1\right\vert}
\newcommand{\inner}[1]{\left(#1\right)}
\newcommand{\comi}[1]{\left<#1\right>}
\begin{document}

\title[Well-posedness    for the three-dimensional Prandtl equations]{Well-posedness in Gevrey function space  for the three-dimensional Prandtl  equations}

\author[W.-X. Li and T. Yang]{ Wei-Xi Li \and Tong Yang}

\date{}

\address[W.-X. Li]{School of Mathematics and Statistics, and Computational Science Hubei Key Laboratory,   Wuhan University,  430072 Wuhan, China
  }

\email{
wei-xi.li@whu.edu.cn}

\address[T.Yang]{
Department of Mathematics, City University of Hong Kong, Hong Kong
 \\ 
 $\&$ School of Mathematics and Statistics, Chongqing University, Chongqing, China }

\email{
matyang@cityu.edu.hk}

\begin{abstract}
In the paper, we study  the   three-dimensional Prandtl equations without any monotonicity condition on
the velocity field. We prove that when  one  tangential  component of the velocity field
  has a single curve of non-degenerate critical points with respect to the normal variable,  the system is  locally  well-posed in the  Gevrey function space with  Gevrey 
 index in  $]1, 2].$   The proof is based  on some new observation of cancellation 
 mechanism in the three space dimensional system in addition to those  in the two-dimensional setting
 obtained in \cite{awxy,GM,LY,MW}.  
\end{abstract}

\subjclass[2010]{35Q30, 35Q31}
\keywords{3D Prandtl boundary layer,  non-degenerate critical points, Gevrey class}

 \maketitle


\section{Introduction and main results}

The inviscid limit for the incompressible Navier-Stokes equations is 
one of the most fundamental problems
in fluid mechanics.  The justification remains a challenging problem   from the mathematical point of view in particular with physical boundary conditions because of the appearance of boundary layer. Under the  no-slip boundary condition, the behavior
 of the boundary layer can be  described by the Prandtl system  introduced by Prandtl  \cite{prandtl}  to study    
the behavior of the incompressible flow near a rigid 
wall at high Reynolds number.
Formally,  the  asymptotic limit of the Navier-Stokes equations is represented by the Euler equations  away from boundary and  by the Prandtl equations within the boundary layer. A mathematically rigorous justification  of the  vanishing viscosity limit basically remains unsolved up to now even though it has been achieved in some special settings, see for example \cite{CKV,GM-2, GMM, GN,GI,Ma, Samm}  and the references therein for the recent progress.

   The mathematical study on   the boundary layer has a very long history, however, so far  the   theory is only well developed in various function spaces  for the 2D Prandtl system.  In fact, the 2D Prandtl system can be reduced to a scalar nonlinear and nonlocal degenerate parabolic equation that has
   loss of  derivative in tangential variable. The degeneracy in the
viscosity dissipation coupled with  the loss of derivative
in the  nonlocal term is the main difficulty in
 the well-posedness theories.     To overcome the degeneracy,  it is natural, in the spirit of abstract  Cauchy-Kovalevskaya theorem,  to perform estimates  within the category of the analytic function space,  and in this context the well-posedness was obtained  in \cite{Samm} (see the earlier work \cite{asa}, and  also \cite{KV,cannone} for further generalization), even with the justification of
the vanishing viscosity limit and in 3D.  If the initial data have only
finite order of regularity, the well-posedness in Sobolev space was
first obtained by Oleinik  (cf. \cite{oleinik-3}) under the monotonicity  assumption in the normal direction, where the Crocco transformation was used to overcome the loss of the derivative.  Recently,  an approach based on
energy method was developed for the well-posedness in Sobolev space  under Oleinik's monotonicity assumption, see \cite{awxy,MW} where the key 
observation is some kind of cancellation property in the
convection term due to the monotonicity.  Furthermore,  the well-posedness results  in Gevrey space  were  achieved in the recent works \cite{DG,GM, LY} for the initial data  without analyticity or monotonicity,  where  some
further cancellation properties were observed near the non-degenerate critical points.   Different from  the analytic context,    the Gevrey space with index $\sigma>1$ contains compactly supported functions.  
  We also refer to \cite{lwx} for the smoothing effect in Gevrey space under the monotonicity assumption.   In the aforementioned works, only local-in-time well-posedness results are obtained. On the other hand,
the global weak solution was established by \cite{xin-zhang} with additional
assumption on pressure while
 the existence of global strong solutions still remains unsolved, although there are several works (see \cite{IV,xz,zhangzhang} for example)  about
 the lifespan of  solutions in different settings.  

On the other hand, in general boundary separation    happens  that implies
 the Prandtl equations can no longer be a suitable model for describing
the behavior of the flow near the boundary. In mathematics,
this is related to the fact that the Prandtl equations without the analyticity or monotonicity  are in general   ill-posed, cf. \cite{e-2, GV-D, GN1} and the references therein.  

Compared to the 2D case,  
 much less is known about the three-dimensional Prandtl equations.  As for the well-posedness theories, only partial results have been obtained
 in some specific settings,  cf.  \cite{Samm} in the analytic context and \cite{LWY1}  under some constraint on flow structure. 
In addition  to the difficulties for 2D,  another
major difficulty in 3D arises from
  the secondary flow.    As it will be seen in the
later analysis,  the cancellation properties observed in 2D case are not
enough to overcome the difficulties in the analysis. In addition, we need
to use some new cancellations in the 3D setting.
We will  explain this further  in Subsection \ref{subsec22}
about the new ideas and approach to be used.

 In this paper, we will study the well-posedness of 3D Prandtl system without  the analyticity or  monotonicity.  For this, let us
first mention the paper on the ill-posedness \cite{LWY2} which shows that
even for a perturbation of shear flow, without the structural
condition, the linearized Prandtl equations are ill-posed. In fact, the
ill-posedness estimate on the solution operator implies that the optimal 
Gevrey index for the well-posedness without any structural condition
is $2$. Hence, the result of this paper about the well-posedness in Gevrey
function space with index in $]1,2]$ complements the ill-posedness estimate
in \cite{LWY2}.  
   Without loss of generality,  we will consider the system in  a periodic domain in tangential direction, that is, in $\Omega=\mathbb T^2\times\mathbb R_+.$

  Denote by      $(u, v)$   the tangential component and  by $w$     the vertical component of the 
velocity field,  then the 3D Prandtl system in  $\Omega$  
 reads
\begin{equation}\label{prandtl+}
\left\{
\begin{aligned}
&\partial_t u + \inner{u \partial_x   + v\partial_y +w\partial_z}u -\partial_{z}^2u + \partial_x p
=0,\quad t>0,~ (x,y,z)\in\Omega, \\
&\partial_t v + \inner{u\partial_x   + v\partial_y +w\partial_z}v -\partial_{z}^2v+ \partial_y p
=0,\quad t>0,~ (x,y,z)\in\Omega, \\
&\partial_xu +\partial_yv +\partial_zw=0, \quad t>0,~ (x,y,z)\in\Omega, \\
&u|_{z=0} = v|_{z=0} = w|_{z=0} =0 , \quad  \lim_{z\to+\infty} (u,v) =\big(U(t,x,y), V(t,x,y)\big), \\
&u|_{t=0} =u_0, \quad v|_{t=0}=v_0, \quad   (x,y,z)\in\Omega,
\end{aligned}
\right.
\end{equation}
where   $(U(t,x,y), V(t,x,y))$ and $p(t,x,y)$ are the boundary traces  
of the tangential velocity field and pressure of the outer flow, 
satisfying Bernoulli's law
\begin{eqnarray*}
\left\{
\begin{aligned}
&\partial_t U + U\partial_x U+V\partial_y U +\partial_x p=0,\\
&\partial_t V + U\partial_x V+V\partial_y V +\partial_y p=0.
\end{aligned}
\right.
\end{eqnarray*}
Note   $p, U,V$ are given functions determined by the Euler flow,  and  \eqref{prandtl+} is a degenerate parabolic system losing one order
derivative in the tangential variable. 
We refer to \cite{mamoudi,oleinik-3,prandtl} for the background
and  mathematical presentation of  this fundamental system.

We will consider the Prandtl system \eqref{prandtl+} under the assumption  that  one component of the initial tangential velocity component, for example $u_0$ admits  a single curve of non-degenerate critical points.    The precise assumption on the initial data is
given as follows. 

\begin{assumption}\label{maas}  Let $\delta>2$ be a given number.  Suppose there exists a single curve $z=\gamma_0(x,y)$ in $\Omega$ with $0< \sup_{\mathbb T^2}\gamma_0<+\infty$ and  several constants  $C_0>0,$ $0<c_0<1 $ and $0<\epsilon_0<{1\over 4} \sup_{\mathbb T^2}\gamma_0,$  such that for any   $(x,y)\in\mathbb T^2$ the initial datum
   $  u_0 \in C^6(\Omega)$  satisfies  the following properties:
   
   \begin{eqnarray*}
   	\left\{
   	\begin{aligned}
   	&	\partial_z u_0(x,y,z)=0\ \textrm{iff}\ z=\gamma_0(x,y),\ \textrm{and} \ \partial_z^2 u_{0}(x,y,\gamma_0(x,y))\neq 0;\\
   	&		  c_0 \comi z^{-\delta}\leq \partial_z u_0(x,y,z) \leq  c_0^{-1} \comi z^{-\delta} \ \textrm{if}\ \abs{z-\gamma_0(x,y)}\geq \epsilon_0;\\
   	&\sum_{2\leq j\leq 6} \abs{\partial_z^j u_0(x,y,z)}\leq  C_0 \comi z^{-\delta-1}  \ \textrm{for any}\ z\geq 0,
   	\end{aligned}
\right.
   \end{eqnarray*}
where  $\comi{z}=\big(1+\abs z^2\big)^{1/2}.$
\end{assumption}

Next we introduce the Gevrey function space in the tangential variables $x,y.$  In
this paper, we use   $\ell,\kappa$ to denote  two fixed constants
satisfying 
\begin{equation}\label{ellalpha} 
	\kappa\geq 1, \quad \ell>3/2~~{\rm and}~~    \ell +1/2<\delta\leq \ell+1, 
\end{equation} 
where $\delta >2$ is  given  in Assumption \ref{maas}.

\begin{definition}[Gevrey space in tangential variables $x$ and $y$]
\label{defgev}  Let $U,V$ be the data given in \eqref{prandtl+}.  With each pair $(\rho,\sigma)$, $\rho>0$ and $\sigma\geq 1, $  a Banach space $X_{\rho,\sigma}$   consists of all  smooth  vector functions
 $(u,v)$  such that  $\norm{(u,v)}_{\rho,\sigma}<+\infty,$ where the Gevrey norm   $\norm{\cdot}_{\rho,\sigma}$ is defined below.     For each multi-index  $\alpha=(\alpha_1,\alpha_2)\in\mathbb Z_+^2$,  denote $\partial^\alpha=\partial_x^{\alpha_1}\partial_y^{\alpha_2}$ and 
 \begin{eqnarray*}
 	\psi=\partial_z (u-U)=\partial_z u,\quad \eta=\partial_z (v-V)=\partial_z v.
 \end{eqnarray*} Then the Gevrey norm is defined by 
\begin{eqnarray}\label{trinorm}
	\norm{(u,v)}_{\rho,\sigma} &=& \sup_{ \abs\alpha \geq 7} \frac{\rho^{\abs\alpha-6}}{[\inner{\abs\alpha-7}!]^{\sigma}} \Big( \norm{\comi z^{\ell-1}\partial^\alpha (u-U)}_{L^2}+\norm{\comi z^{\kappa}\partial^\alpha (v-V)}_{L^2} \Big)\nonumber  \\
	&&+\sup_{ \abs\alpha \leq 6}  \big( \norm{\comi z^{\ell-1}\partial^\alpha (u-U)}_{L^2} +\norm{\comi z^{\kappa}\partial^\alpha (v-V)}_{L^2} \big) \nonumber \\
	&& +\sup_{ \abs\alpha \geq 7} \frac{\rho^{\abs\alpha-6}}{[\inner{\abs\alpha-7}!]^{\sigma}}   \norm{\comi z^{\ell}\partial^\alpha \psi}_{L^2}+\sup_{ \abs\alpha \leq 6} \norm{\comi z^{\ell}\partial^\alpha \psi}_{L^2} \nonumber \\
	&&+ \sup_{  \abs\alpha \geq 7}   \frac{\rho^{\abs\alpha-5}}{[\inner{\abs\alpha-6}!]^{\sigma}}  \abs\alpha\norm{\comi z^{\kappa+2}\partial^\alpha \eta}_{L^2}+ \sup_{  \abs\alpha \leq 6}     \norm{\comi z^{\kappa+2}\partial^\alpha\eta}_{L^2}    \\
	&&+\sup_{\stackrel{1\leq j\leq 4}{\abs\alpha+j\geq 7}} \frac{\rho^{\abs\alpha+j-6}}{[\inner{\abs\alpha+j-7}!]^{\sigma}}\norm{\comi z^{\ell+1} \partial^\alpha \partial_z^j\psi}_{L^2}+\sup_{\stackrel{1\leq j\leq 4}{\abs\alpha+j\leq 6}}  \norm{\comi z^{\ell+1} \partial^\alpha \partial_z^j\psi}_{L^2}  \nonumber \\
	&+&\sup_{\stackrel{1\leq j\leq 4}{\abs\alpha+j\geq 7}} \frac{\rho^{\abs\alpha+j-5}}{[\inner{\abs\alpha+j-6}!]^{\sigma}} \abs\alpha \norm{\comi z^{\kappa+2} \partial^\alpha \partial_z^j\eta}_{L^2} +\sup_{\stackrel{ 1\leq j\leq 4}{\abs\alpha+j\leq 6}}   \norm{\comi z^{\kappa+2}\partial^\alpha  \partial_z^j \eta}_{L^2},\nonumber
\end{eqnarray}
where and throughout the paper we use $L^2$ instead of $L^2(\Omega)$ 
without confusion. Moreover, we define another Gevrey space  $Y_{\rho,\sigma}$  consist of smooth functions $F(x,y)$ such that $\normm{F}_{\rho,\sigma}<+\infty,$  where  
\begin{eqnarray*}
	\normm{F}_{\rho,\sigma}=\sup_{\abs\alpha\geq 0} \frac{\rho^{\abs\alpha}}{  (\abs\alpha !)^{\sigma}} \norm{\partial^\alpha F}_{L^2(\mathbb T^2)}.
\end{eqnarray*}
  \end{definition}
  
 \begin{remark} \label{remaniso}
 Note the factors in front of the $L^2$-norms of $\partial^\alpha \psi$ and  $\partial^\alpha \eta$    are anisotropic, likewise, for the mixed derivatives in the last two  lines of \eqref{trinorm}. 
 \end{remark}

We will look for the solutions to \eqref{prandtl+} in the Gevrey function space $X_{\rho,\sigma}$.   For this, the  initial data $(u_0,v_0)$  satisfy the following compatibility conditions
 \begin{equation}\label{comcon}
 \left\{
 \begin{aligned}
 &(u_0, v_0)|_{z=0}=(0,0),~~\lim_{z\rightarrow +\infty}(u_0,v_0)|=(U,V)~~{\rm and}~~(\partial_{z}\psi_0, \partial_{z}\eta_0)|_{z=0}=(\partial_x p,\partial_yp),\\
 &\partial_z^3\psi_0\big |_{z=0}=  \psi_0\inner{\partial_x \psi_0-\partial_y\eta_0} |_{z=0}+2 \eta_0 \partial_y\psi_0|_{z=0}+\partial_t\partial_xp,\\
 &\partial_z^3\eta_0\big |_{z=0}=  \eta_0\inner{\partial_y\eta_0-\partial_x\psi_0} |_{z=0}+2\psi_0\partial_x\eta_0 |_{z=0}+\partial_t\partial_yp,	
	\end{aligned}
	\right.
	\end{equation}
where $\psi_0=\partial_zu_0$ and  $\eta_0=\partial_zv_0.$ 

The main result of this paper can be stated as follows.

\begin{theorem}
	\label{maintheorem}
	Let $1<\sigma \leq 2,$ under the  compatibility conditions \eqref{comcon}, suppose $U,V, p\in Y_{2\rho_0,\sigma}$ and $(u_0,v_0)\in X_{2\rho_0,\sigma}$ for some $\rho_0>0.$ Moreover, suppose   $u_0$ satisfies Assumption \ref{maas}.   Then the Prandtl system \eqref{prandtl+} admits a unique solution $(u,v) \in L^\infty\big([0,T];~X_{\rho,\sigma}\big)$ for some $T>0$ and some $0<\rho<2\rho_0.$  
\end{theorem}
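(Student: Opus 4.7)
My plan is to run an abstract Cauchy--Kowalewski type argument in the Banach scale $(X_{\rho,\sigma})_{\rho\in(0,2\rho_0]}$. First I would regularize \eqref{prandtl+} by adding a small tangential dissipation $\varepsilon(\partial_x^2+\partial_y^2)$ to the two evolution equations, so that for each $\varepsilon>0$ classical parabolic theory produces a smooth local solution $(u^\varepsilon,v^\varepsilon)$. The heart of the proof is to derive $\varepsilon$-uniform a priori bounds for $\norm{(u^\varepsilon,v^\varepsilon)}_{\rho(t),\sigma}$ along a decreasing family of radii $\rho(t)=2\rho_0-Kt$; a standard compactness/diagonal argument then produces a limit $(u,v)\in L^\infty([0,T];X_{\rho,\sigma})$ for some $T>0$ and $0<\rho<2\rho_0$, and uniqueness follows by running essentially the same estimate on the difference of two solutions.

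\textbf{Good unknowns and the 3D cancellation.} The only genuine source of derivative loss in \eqref{prandtl+} is the term $w\,\partial_zu$ (and its analogue $w\,\partial_zv$), since by incompressibility $w=-\int_0^z(\partial_xu+\partial_yv)\,dz'$ contains one tangential derivative of the unknowns. Assumption \ref{maas} propagates on a short time interval: the critical curve $z=\gamma_0(x,y)$ persists as $z=\gamma(t,x,y)$, and $\psi=\partial_zu$ factorises as $\psi\sim(z-\gamma)\,a$ with $a=\partial_z^2u$ non-degenerate near the curve, so that the ratio $\partial_z\psi/\psi$ extends smoothly across $\gamma$. For the $u$-equation I would then use, in the spirit of \cite{GM,LY}, the 2D-type cancellation unknown
\begin{equation*}
  g_\alpha^{(1)}=\partial^\alpha\psi-\frac{\partial_z\psi}{\psi}\,\partial^\alpha u,
\end{equation*}
whose evolution equation kills the top-order $(\partial^\alpha w)$-contribution identically. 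For the $v$-equation, $\eta=\partial_zv$ carries no analogous structural critical curve, which is precisely the new difficulty in 3D. The new cancellation I would implement uses incompressibility $\partial_zw=-\partial_xu-\partial_yv$ to split the loss-of-derivative term $(\partial^\alpha w)\eta_z$: the $\partial^\alpha\partial_xu$ part is absorbed by $g_\alpha^{(1)}$, while the residual $\partial^\alpha\partial_yv$ part is moved off $\eta_z$ by an integration by parts in $y$, the price of one extra tangential derivative being compensated by the heavier weight $\comi{z}^{\kappa+2}$ and the additional factor $\abs\alpha$ attached to $\eta$ in \eqref{trinorm} (cf.\ Remark \ref{remaniso}).

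\textbf{Weighted estimates and summation.} With these good unknowns in hand I would carry out weighted $L^2$ estimates on $g_\alpha^{(1)}$ and on the companion $v$-unknown, as well as on their normal derivatives $\partial_z^j$ up to $j=4$, using the weights $\comi{z}^{\ell-1},\comi{z}^{\ell},\comi{z}^{\ell+1}$ on the $u$-side and $\comi{z}^{\kappa},\comi{z}^{\kappa+2}$ on the $v$-side dictated by \eqref{trinorm}. The viscous dissipation $\norm{\comi{z}^\ell\partial_z\partial^\alpha\psi}_{L^2}^2$, combined with the anisotropic interpolation $\norm{\partial^\alpha f}_{L^\infty_z(L^2_{xy})}^2\lesssim\norm{\partial^\alpha f}_{L^2}\norm{\partial_z\partial^\alpha f}_{L^2}$, permits one tangential derivative to land on a coefficient without loss. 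Commutators $[\partial^\alpha,u]\partial_x$, $[\partial^\alpha,v]\partial_y$, $[\partial^\alpha,w]\partial_z$ are Leibniz-expanded and closed by combinatorial Gevrey-type inequalities valid precisely for $\sigma>1$. Summing over $\alpha$ with the Gevrey weights of Definition \ref{defgev} and using $-\dot\rho(t)$ to absorb the remaining half-derivative loss produces a differential inequality of the form
\begin{equation*}
  \frac{d}{dt}\norm{(u,v)}_{\rho(t),\sigma}^2\;\le\;C\,\norm{(u,v)}_{\rho(t),\sigma}^2+\frac{C}{-\dot\rho(t)}\,\norm{(u,v)}_{\rho(t),\sigma}^3,
\end{equation*}
which closes on a short time interval provided $K$ is taken sufficiently large.

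\textbf{Main obstacle.} I expect the decisive step to be the design and analysis of the 3D cancellation in the $v$-equation. The 2D mechanism exploited for $u$ relies on the critical-curve structure of $\partial_zu$, which $\partial_zv$ simply does not have; breaking the secondary-flow coupling by combining incompressibility, integration by parts in $y$, and the anisotropic Gevrey weighting of $\eta$ is the technical core of the argument, and it is what forces the asymmetric structure of the norm \eqref{trinorm}. A secondary delicate point is the uniform control of the singular factor $\partial_z\psi/\psi$ in a neighbourhood of the moving critical curve $z=\gamma(t,x,y)$, which must be propagated from Assumption \ref{maas} and the $C^6$ regularity of $u_0$ by a continuity/bootstrap argument on the same short time interval.
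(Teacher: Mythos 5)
Your overall scaffolding (tangential regularization $\varepsilon(\partial_x^2+\partial_y^2)$, propagation of Assumption \ref{maas} on a short time interval, $\varepsilon$-uniform estimates on a shrinking scale of Gevrey radii, compactness, and finally passing from constant to general outer flow by working with $u-U$, $v-V$) does match the paper's strategy. But the core analytic step contains a genuine error: you claim that since $\psi=\partial_z u\sim(z-\gamma)a$ with $a=\partial_z^2u$ non-degenerate, the ratio $\partial_z\psi/\psi$ ``extends smoothly across $\gamma$.'' It does not: near the critical curve $\partial_z\psi/\psi\sim (z-\gamma)^{-1}$, so your single good unknown $g^{(1)}_\alpha=\partial^\alpha\psi-\frac{\partial_z\psi}{\psi}\partial^\alpha u$ is unusable in a neighbourhood of $z=\gamma(t,x,y)$. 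This is exactly why the paper (following \cite{GM,LY}) splits the normal direction with cutoffs $\tau_1,\tau_2$: the $f_m$-type cancellation is used only where $|\psi|>0$, while near the curve one must pass to the second-order quantity $q_m=\tau_2\partial_x^m\xi-\tau_2\frac{\partial_z\xi}{\xi}\partial_x^m\psi$ (exploiting $\xi=\partial_z^2u\neq0$ there), and the remaining terms $\tau_2\partial_x^m\psi$, $\tau_2\partial_x^m u$ and the nonlocal contributions such as $\inner{\tau_2'\int_0^z\partial_x^m\partial_y v,\ \tau_2\partial_x^{m+1}u}_{L^2}$ require the G\'erard-Varet--Masmoudi representation of $\partial_x^m u$ through $\varphi_m$, $\alpha_m$, $\beta_m$ (Section 6 of the paper). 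None of this machinery is replaced by anything in your proposal, so the estimate does not close near the critical curve.

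Two further ingredients of the paper are missing or only gestured at. First, even where the $f_m$-cancellation applies, it converts the nonlocal loss $(\partial^\alpha w)\partial_z\psi$ into the local source $g_\alpha=\partial^\alpha\big[(\partial_y v)\psi-(\partial_y u)\eta\big]$, which still loses one $y$-derivative; the paper handles this by a second cancellation that produces a closed equation for $g$ (and for $h,\theta_j,\mu_j$), estimated with the anisotropic factor $\abs\alpha$ and shifted Gevrey indices -- your proposal never addresses this loss. Second, for the secondary component, the paper does not integrate by parts in $y$ against $\eta_z$; it avoids direct energy estimates on $\partial^\alpha v$ altogether and instead introduces $\Gamma_m=\psi\partial_x^m v-\eta\partial_x^m u$ and $H_m=\xi\partial_x^m v-\eta\partial_x^m\psi$, whose equations are free of $\partial^\alpha w$ by cancellation, recovering $\partial_x^m v$ by division by $\psi$ or $\xi$ in the respective regions. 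Your ``move the derivative off $\eta_z$ by integrating by parts in $y$ and pay with the weight and the factor $\abs\alpha$'' is not a workable substitute as stated: the problematic term is nonlocal in $z$ (through $w$), and the factor $\abs\alpha$ in the norm only buys one derivative for $\eta$, not for $v$ itself. So the proposal identifies the right difficulties but does not supply the cancellations that actually resolve them.
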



 \begin{remark}
Up to a coordinate transformation,  the assertion in the  above theorem still holds if we impose the existence of non-degenerate points on the tangential component
of the initial velocity field in an arbitrary  but fixed direction instead of $u_0.$
 \end{remark}

 \begin{remark}
 	The assumption that  $U,V, p\in Y_{2\rho_0,\sigma}$ is closely related to the up-to-boundary Gevrey regularity for the Euler equations,  see for example \cite{KV1} and the references therein.    
 \end{remark}
 
 \begin{remark}
 	     It remains unsolved about
 the well-posedness in the category of Sobolev space with finite
reguarity that has been well developed  in 2D. And a more important open problem is  the mathematical   justification on  the approximation of the solution to Navier-Stokes equation with no-slip boundary
condition  by the those of Euler and Prandtl equations.  We expect the present work may
shed some light on  the vanishing viscosity limit for Navier-Stokes equations
in 3D setting.  
  \end{remark}

To simplify the notations,   we will mainly focus  only on  the constant outer flow
  when $(U,V)  \equiv (0,0),$  and  the argument  can be extended, without
essential difficulty to  general  functions $U$ and $V$ as given
in the proof of Theorem 1.4 at the end of the paper.

 For the constant outer flow, 
 the Prandtl system \eqref{prandtl+}  can be written as   
\begin{equation}\label{prandtl}
\left\{
\begin{aligned}
&\partial_t u + \inner{u \partial_x   + v\partial_y +w\partial_z}u -\partial_{z}^2u 
=0,\quad t>0,~ (x,y,z)\in\Omega, \\
&\partial_t v + \inner{u\partial_x   + v\partial_y +w\partial_z}v -\partial_{z}^2v 
=0,\quad t>0,~ (x,y,z)\in\Omega,  \\
&(u,v)|_{z=0} =(0,0), \quad  \lim_{z\to+\infty} (u,v) =\big(0, 0\big), \\
&(u,v)|_{t=0} =(u_0, v_0), \quad   (x,y,z)\in\Omega,
\end{aligned}
\right.
\end{equation}
with 
\begin{eqnarray*}
	w(t,x,y,z)=- \int_0^z\partial_x u(t,x, y,\tilde z)\,d\tilde z- \int_0^z\partial_y v(t,x, y,\tilde z)\,d\tilde z.
\end{eqnarray*}

The existence and uniqueness of solutions to  system \eqref{prandtl} can be stated as follows. 
  
 \begin{theorem}
 \label{maithm1}	
 	Let $1<\sigma\leq  2.$ Suppose  the initial datum      $(u_0,v_0)$  belongs to $ X_{2\rho_0,\sigma}$ for some $\rho_0>0,$  and satisfies  the    compatibility condition   \eqref{comcon} with constant pressure.  
	 Then the system \eqref{prandtl} admits a unique solution $(u,v) \in L^\infty\big([0,T];~X_{\rho,\sigma}\big)$ for some $T>0$ and some $0<\rho<2\rho_0.$ 
 \end{theorem}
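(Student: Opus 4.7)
The plan is to construct solutions via a regularized system (e.g.\ adding a small tangential dissipation $\eps(\partial_x^2+\partial_y^2)$), obtain $\eps$-uniform a priori estimates on the Gevrey norm $\norm{(u,v)}_{\rho(t),\sigma}$, and pass to the limit. The core of the argument is therefore the propagation of this norm on a short interval $[0,T]$ with $\rho(t)=2\rho_0-Kt$ decreasing. Applying $\partial^\alpha=\partial_x^{\alpha_1}\partial_y^{\alpha_2}$ with $|\alpha|\geq 7$ to the equations for $u,v,\psi=\partial_zu,\eta=\partial_zv$ and taking weighted $L^2$ inner products, the only terms that block a direct energy estimate come from $\partial^\alpha w$, since
\begin{equation*}
\partial^\alpha w(t,x,y,z)=-\int_0^z\big(\partial_x\partial^\alpha u+\partial_y\partial^\alpha v\big)(t,x,y,\tilde z)\,d\tilde z
\end{equation*}
carries one more tangential derivative than the norm controls. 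This is the loss of derivative underlying Prandtl ill-posedness in Sobolev spaces.

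Following the 2D strategy of \cite{awxy,GM,LY,MW}, I would treat the $u$-component via the good unknown
\begin{equation*}
g_\alpha=\partial^\alpha\psi-\frac{\partial_z\psi}{\psi}\,\partial^\alpha u,
\end{equation*}
whose evolution kills the $(\partial^\alpha w)$ term in the region $\{\psi>0\}$. By Assumption \ref{maas}, $\psi\geq c_0\comi z^{-\delta}$ outside an $\epsilon_0$-neighbourhood of the critical curve $z=\gamma(t,x,y)$, which persists for short time by an implicit function argument applied to $\psi(t,x,y,\gamma)=0$. The weight exponents $\ell-1,\ell,\ell+1$ in \eqref{trinorm} are calibrated so that the loss of decay from dividing by $\psi$ is balanced by the weight $\comi z^{\ell-1}$ on $\partial^\alpha u$; the window $\ell+1/2<\delta\leq\ell+1$ is exactly this calibration. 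Inside the $\epsilon_0$-neighbourhood where $\psi$ vanishes, the non-degeneracy $\partial_z^2 u(\cdot,\gamma)\neq 0$ allows a second-order variant of $g_\alpha$ built from $\partial_z\psi$, glued to the outer good unknown by a partition of unity as in \cite{DG,GM,LY}.

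The main \emph{new} difficulty, which is where the announced 3D cancellation enters, is the secondary flow: the $v$-equation produces a term $(\partial^\alpha w)\eta$, and no structural assumption constrains $\eta$, so dividing by $\eta$ is hopeless. My plan is to chase a \emph{cross} good unknown that ties $v$ back to $u$ through the divergence-free condition $\partial_xu+\partial_yv+\partial_zw=0$. Concretely I would work with
\begin{equation*}
h_\alpha=\partial^\alpha\eta-\frac{\eta}{\psi}\,\partial^\alpha\psi\ +\ \textrm{(lower-order correctors)},
\end{equation*}
equivalently with a tangential derivative of $\partial_z(v/\psi)$. After integration by parts in $z$ and substitution of the divergence relation, the $(\partial^\alpha w)\eta$ contribution is exchanged for an expression involving $\partial_x\partial^\alpha u$ (already controlled via $g_\alpha$) plus commutators. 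This is the reason \eqref{trinorm} assigns $\eta$ the extra weight $\comi z^{\kappa+2}$ together with the anisotropic Gevrey factor $|\alpha|\rho^{|\alpha|-5}/[(|\alpha|-6)!]^\sigma$: one fewer derivative and a larger combinatorial weight than $\psi$, reflecting that the loss of derivative has been transferred from $v$ to $u$. Verifying that this cross cancellation is robust both in $\{\psi>0\}$ and across the non-degenerate critical curve is the principal obstacle of the proof.

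With the good unknowns $g_\alpha,h_\alpha$ and the low-order companions $\partial^\alpha u,\partial^\alpha v$ together with their normal derivatives up to order $4$ in hand, the remainder is a careful bookkeeping of commutators. The parabolic dissipation $\norm{\comi z^{\ell}\partial_z\partial^\alpha\psi}_{L^2}^2$ and $\norm{\comi z^{\kappa+2}\partial_z\partial^\alpha\eta}_{L^2}^2$ absorbs certain commutator pieces, while the rest is summed via the Gevrey inequality
\begin{equation*}
\sum_{\beta\leq\alpha}\binom{\alpha}{\beta}\big[(|\beta|-7)!\,(|\alpha-\beta|-7)!\big]^\sigma\lesssim\frac{[(|\alpha|-7)!]^\sigma}{|\alpha|},
\end{equation*}
which crucially requires $\sigma>1$. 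This yields a differential inequality of the schematic form $\frac{d}{dt}\norm{(u,v)}_{\rho,\sigma}^2+\textrm{(diss.)}\leq C\norm{(u,v)}_{\rho,\sigma}^2-\dot\rho\,\norm{(u,v)}_{\rho,\sigma}^2$, and the extra $|\alpha|$-derivative in the remaining top-order term is absorbed by $-\dot\rho$ upon choosing $K$ large in $\rho(t)=2\rho_0-Kt$, closing the estimate on some $[0,T]$ with $T\lesssim\rho_0/K$. Uniqueness follows from the same scheme applied to the difference of two solutions.
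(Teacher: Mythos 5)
Your skeleton---regularization by $\eps(\partial_x^2+\partial_y^2)$, uniform-in-$\eps$ Gevrey estimates with a shrinking radius, and the 2D good unknowns ($\tau_1\big(\partial_x^m\psi-\frac{\xi}{\psi}\partial_x^m u\big)$ away from the critical curve, a second-order analogue built on $\xi$ near it)---matches the paper. The gap is exactly in the genuinely three-dimensional part. Your ``cross'' unknown $\partial^\alpha\eta-\frac{\eta}{\psi}\partial^\alpha\psi$ (what you call $h_\alpha$; note the paper uses $g_\alpha,h_\alpha$ for $\partial^\alpha g,\partial^\alpha h$ with $g,h$ as in \eqref{defR1}) does not cancel the derivative loss: the dangerous terms it inherits are $(\partial^\alpha w)\partial_z\eta-\frac{\eta}{\psi}(\partial^\alpha w)\partial_z\psi=(\partial^\alpha w)\big(\zeta-\frac{\eta\xi}{\psi}\big)$, which vanishes for no structural reason. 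The combinations that do cancel are $\Gamma_m=\psi\,\partial_x^m v-\eta\,\partial_x^m u$ and, near the critical curve, $H_m=\xi\,\partial_x^m v-\eta\,\partial_x^m\psi$ (see \eqref{Gaal}), obtained by pairing the $v$-equation with the $u$- (resp.\ $\psi$-) equation; $\partial_x^m v$ is then recovered by dividing by $\psi$ or $\xi$ on the respective regions. Your fallback---trading $(\partial^\alpha w)\eta$ for $\partial_x\partial^\alpha u$ via the divergence constraint---does not close either, since $\partial_x\partial^\alpha u$ carries $\abs\alpha+1$ tangential derivatives and is controlled neither by the norm nor by the good unknowns.

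You also miss the second new loss, and with it the actual role of the restriction $\sigma\le 2$. In 3D the 2D cancellation applied to the $\psi$-equation leaves the source $g=(\partial_y v)\psi-(\partial_y u)\eta$ in the equation for $f_m$, i.e.\ the term $\partial_x^m g$, which still loses one $y$-derivative; this is listed in \eqref{twoterms}--\eqref{conter} as the core 3D difficulty. The paper handles it by a further cancellation: multiplying the $\psi$-equation by $\partial_y v$ and the $\eta$-equation by $\partial_y u$ and subtracting yields a transport--diffusion equation for $g$ (similarly for $h,\theta_j,\mu_j$) with no $\partial^\alpha w$ term, so these quantities are put into the norm with the anisotropic factor $\abs\alpha\,\rho^{\abs\alpha-5}/[(\abs\alpha-6)!]^\sigma$, and the remaining one-derivative mismatch is absorbed into $(\tilde\rho-\rho)^{-1}$ precisely because $m^{\sigma-1}\le m$, i.e.\ $\sigma\le 2$ (cf.\ \eqref{factor} and Lemma \ref{lemg1}); your write-up attributes the index restriction only to $\sigma>1$ in a commutator sum and never explains where $\sigma\le2$ enters. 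Two further ingredients are absent: the near-critical terms \eqref{conter} involving $\tau_2\partial_x^m\partial_y v$, which require the G\'erard-Varet--Masmoudi representation \eqref{albe} of $\partial_x^m u$ combined with $\Gamma_m,H_m$ (Lemmas \ref{lemwu1}--\ref{lemwu2}); and the persistence, uniformly in $\eps$, of the quantitative structure \eqref{condi}, which the paper gets from a maximum-principle argument applied to $(1+z)^{\delta}\psi_\eps$, not from an implicit-function argument on the critical curve alone.
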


We will focus on proving  Theorem \ref{maithm1}  and show  at the end of the last section  about  how to extend  the argument to the case with
 general outer flow when $U,V\in Y_{2\rho_0,\sigma}.$    
  Furthermore, we will present in detail the proof of Theorem \ref{maithm1} for $\sigma\in [3/2,2]$. Note that the constraint  $\sigma\geq 3/2$ is not essential and indeed it is just a technical assumption  for clear presentation. 
 And we will explain  at the end of Section \ref{sec7} about
 how to modify the proof for the case when $1<\sigma<3/2$.

The rest of the paper is organized as follows.   In Section  \ref{sec2}, we first introduce the notations used in the paper, and then explain the main difficulties and the new ideas.
 We will prove  in Sections  \ref{sec4}-\ref{sec7} the a priori estimate given in Section \ref{sec2}.    The proof of   the well-posedness for the Prandtl system is given in the last section.

 \section{Notations and methodology}
 \label{sec2}
  This section  and Sections \ref{sec4}-\ref{sec7} are to derive a priori estimate for the Prandtl system \eqref{prandtl},  which is crucial for proving Theorem \ref{maithm1}.  In Subsection \ref{subsec21}, we introduce
 some  auxiliary functions to be estimated. In  Subsection \ref{subsec22},
 we  explain the difficulties   for the well-posedness of 3D Prandtl system and  then present the ideas  to overcome them.  The
a priori estimate is given in Subsection \ref{subsec23}    with its proof given
in Sections \ref{sec4}-\ref{sec7}.

Let $\delta, \ell$ and $\kappa$ be some given numbers satisfying \eqref{ellalpha},  and let $(u,v)$ be a solution to  the Prandtl system \eqref{prandtl} in $[0,T]\times\Omega,$  satisfying the properties stated as follows.  There exists a single curve $z=\gamma(t,x,y)$ of non-degenerate critical points, that is,   for $(t,x,y)\in[0,T]\times\mathbb T^2,$
\begin{eqnarray*}
	\partial_z u(t,x,y,z)=0\  \textrm{iff} \ z=\gamma(t,x,y,z),  \ \textrm{and}\  \abs{\partial_z^2 u(t,x,y,\gamma(t,x,y))}> 0. 
\end{eqnarray*}
To simply the argument we may assume without loss of generality that 
\begin{eqnarray*}
	\gamma\equiv 1,
\end{eqnarray*}
that is $1$ is the only non-degenerate critical point of $u,$
and the general case can be derived quite similarly with slight modification. 
Moreover, there exist three constants $0<c, \epsilon<1/4$  and $C>0$ such that  for any      $t\in[0, T]$  and  for any $(x,y)\in\mathbb T^2$   we have 
 \begin{eqnarray}\label{condi}
\left\{
\begin{aligned}
&\partial_z u(t,x,y,z)\equiv 0~ \textrm{iff} \ z=1,  \ \textrm{and}\  \abs{\partial_z^2 u(t,x,y,1)}> 0;\\
  &\abs{\partial_z^2 u(t,x,y,z)}\geq  c, ~\, {\rm if}~~\,\abs{z-1}\leq 2\epsilon; \\
  & c \comi z^{-\delta}\leq\abs{   \partial_z u(t,x,y,z) }\leq    c^{-1}\comi z^{-\delta},~~\,{\rm if}~~\,\abs{z-1}\geq \epsilon;\\
  &\sum_{j=1}^6\abs{ \partial_z^ju(t,x,y,z)}\leq   c^{-1}  \comi z^{-\delta-1} ~~\, \textrm {for  } ~z\geq 0, 
 \end{aligned}
 \right.
\end{eqnarray}
and  
 \begin{equation}\label{+condi1}
 \begin{aligned}
 &\sum_{\abs\alpha\leq 3}  \Big(\norm{\comi z^{\ell-1}\partial^\alpha u}_{L^\infty} +  \norm{\comi z^{\kappa}\partial^{\alpha} v}_{L^\infty} +  \norm{ \partial^{\alpha} w}_{L^\infty} \Big)\\
 &\qquad+\sum_{\abs\alpha\leq 4}  \norm{\comi z^{\ell}\partial^{\alpha}  \partial_zu}_{L_{x,y}^\infty(L_z^2)} +\sum_{\stackrel{ \abs\alpha+j\leq 4}{j\geq 1}}   \norm{\comi z^{\ell+1}\partial^{\alpha}\partial_z^j \partial_zu}_{L_{x,y}^\infty(L_z^2)}\\
 &\qquad\qquad\quad\qquad\qquad\quad\qquad+\sum_{  \abs\alpha+j\leq 4} \norm{\comi z^{\kappa+2}\partial^{\alpha}\partial_z^j \partial_z v}_{L_{x,y}^\infty(L_z^2)}\leq  C,
\end{aligned}
\end{equation}
where $L_{x,y}^\infty(L_z^2)=L^\infty\inner{\mathbb T^2; L^2(\mathbb R_+)}$ stands for the classical Sobolev space, so does  the Sobolev space $L_{x,y}^2(L_z^\infty).$

Let $0\leq \tau_1, \tau_2\leq 1$ be two  $C^\infty(\mathbb R)$ smooth functions  such that
 \begin{equation}\label{chi1}
 \tau_1\equiv 1 ~\,\textrm{on}\, ~\big\{\abs{z-1}> 3\epsilon/2 \big\} , \quad \tau_1\equiv 0 ~\,\textrm{on}\, ~ \big\{\abs{z-1}\leq \epsilon \big\},
\end{equation}
and  
 \begin{equation}\label{cutofffu}
    	\tau_2\equiv 1 ~\,\textrm{on}\, ~\big\{\abs{z-1}\leq 3\epsilon/2 \big\},\quad {\rm supp}\, \tau_2\subset\big\{\abs{z-1}\leq 2\epsilon \big\}.
\end{equation}
Observe 
\begin{equation}\label{suppch2}
\tau_1+\tau_2\geq 1,\quad	\tau_1'=\tau_1'\tau_2,\quad \tau_2'=\tau_2'\tau_1, ~~{\rm and } ~~\inner{1-\tau_2}=\inner{1-\tau_2}\tau_1,
\end{equation}
because $\tau_2\equiv 1 $ on  supp\,$\tau_1',$   $\tau_1\equiv 1 $ on  supp\,$\tau_2',$  and $\tau_1\equiv 1$  on  supp\,$(1-\tau_2).$ Here and  throughout the paper, $f'$ and $f''$ stand for the first and the
second order derivatives of  $f$.

 \subsection{Notations} \label{subsec21} From now on, we  write $\partial^\alpha$ instead of $\partial_{x}^{\alpha_1}\partial_{y}^{\alpha_2}$ for $\alpha\in\mathbb Z_+^2.$    Let  $(u,v)$ solve the system \eqref{prandtl}.   Define $\psi $ and $\eta$  by
 \begin{eqnarray}\label{ps}
   \psi=\partial_z u ~~\,\textrm{and} ~~\,\eta=\partial_z v. \end{eqnarray}
Applying $\partial_z$ to the equations for $u$ and $v$ in \eqref{prandtl},  we obtain the equations solved by  $\psi$ and $\eta,$ that is,   
\begin{equation}\label{equforpsi}
\left\{
\begin{aligned}
	&\partial_t \psi +\big(u\partial_x    +v\partial_y	 +w\partial_z \big)\psi-\partial _{z}^2\psi=g ,\\
	&\partial_t \eta +\big(u\partial_x    +v\partial_y	 +w\partial_z \big) \eta-\partial _{z}^2\eta=h  ,
	\end{aligned}
	\right.
\end{equation}
where $g ,h  $ are given by 
\begin{equation}\label{defR1}
\left\{
\begin{aligned}
	& g =(\partial_y v)\psi- ( \partial_yu)\eta,\\
	&h  = (\partial_x u)\eta-(\partial_xv) \psi.
	\end{aligned}
	\right.
\end{equation}
Moreover, denote 
\begin{equation}
\label{s}	
\xi=\partial_z\psi=\partial_z^2u,\quad \zeta=\partial_z\eta=\partial_z^2v,
\end{equation}
and  it follows from \eqref{equforpsi} that
 \begin{equation}\label{seco}
\left\{
\begin{aligned}
	&\partial_t \xi +\big(u\partial_x    +v\partial_y	 +w\partial_z \big)\xi-\partial _{z}^2\xi=\sum_{j=1}^3\theta_j,\\
	&\partial_t \zeta +\big(u\partial_x    +v\partial_y	 +w\partial_z \big) \zeta-\partial _{z}^2\zeta = \sum_{j=1}^3\mu_j,
	\end{aligned}
	\right.
\end{equation}
where 
\begin{equation}\label{sor}
\left\{
\begin{aligned}
	&\theta_1=2\xi \partial_yv-2\eta\partial_y\psi,\quad \theta_2=\psi \partial_y\eta-\zeta\partial_yu,\quad \theta_3=\xi\partial_xu-\psi\partial_x\psi,\\
	&\mu_1=2\zeta\partial_xu-2\psi\partial_x\eta,\quad \mu_2=\eta\partial_x\psi-\xi\partial_xv,\quad\mu_3=\zeta\partial_y v-\eta\partial_y\eta.
	\end{aligned}
	\right.
\end{equation}
For   each multi-index $\alpha=(\alpha_1,\alpha_2)\in\mathbb Z_+^2$, we   define   $g_{\alpha}$ and $h_\alpha$ by 
\begin{equation}\label{galpha}
	g_\alpha= \partial^\alpha g = \partial_x^{\alpha_1}\partial_y^{\alpha_2}g   , \quad \textrm{and}\quad h_\alpha= \partial^\alpha h  =\partial_x^{\alpha_1}\partial_y^{\alpha_2}h  .
\end{equation} 
Similarly, define 
\begin{equation}\label{themu}
	\vec \theta_\alpha=\inner{\partial^\alpha \theta_1,\cdots, \partial^\alpha \theta_3},\quad 	\vec \theta_\mu=\inner{\partial^\alpha \mu_1,\cdots, \partial^\alpha \mu_3}
\end{equation}
And for each $m\geq1$ define 
 \begin{equation}\label{Gaal}
 \left\{
 \begin{aligned}
 	&\Gamma_m= \psi\partial_x^m v-\eta\partial_x^m u,\quad \tilde\Gamma_m=\inner{\partial_y^m v}\psi-\inner{\partial_y^m u}\eta,\\
 	&H_m=\xi \partial_x^mv-\eta\partial_x^m\psi,\quad \tilde H_m=\xi \partial_y^mv-\eta\partial_y^m\psi,\\
 	&G_m=\xi\partial_x^m u-\psi\partial_x^m\psi,\quad \tilde G_m=\xi\partial_y^m u-\psi\partial_y^m\psi.
 	\end{aligned}
 	\right.
 \end{equation} 
 Let $\tau_1,\tau_2$ be given by \eqref{chi1} and \eqref{cutofffu}.  Motivated by \cite{awxy,MW}, 
when  $ \psi\neq 0$  in supp\,$\tau_1$  then we can define $f_{m}$ with $ m\geq 1,$      by 
 \begin{equation}\label{falpha}
 	f_{m}=\tau_1\partial_x^{m} \psi-\tau_1\frac{\xi }{ \psi}\partial_x^{m}  u=\tau_1\psi\partial_z\Big(\frac{\partial_x^m u}{\psi}\Big),
 \end{equation}
 recalling $\psi,\xi$ are defined in \eqref{ps} and \eqref{s}.  
Likewise, define $\tilde f_m$   by 
 \begin{equation*}
 	\tilde f_m=\tau_1\partial_y^{m} \psi-\tau_1\frac{\xi }{ \psi}\partial_y^{m} u=\tau_1 \psi \partial_z\Big(\frac{\partial_y^m u}{ \psi}\Big).
 \end{equation*}
On the other hand, when $\xi\neq 0$ in supp\ $\tau_2$ then we define
 \begin{equation}
 \label{t2}
 q_m=	\tau_2\partial_x^m\xi-\tau_2\frac{\partial_z\xi}{\xi}\partial_x^m \psi=\tau_2\xi\partial_z\Big(\frac{\partial_x^m \psi}{\xi}\Big),
 \end{equation}
 and similarly for $\tilde q_m.$
  It follows from \eqref{prandtl}  that 
\begin{equation*}
	\label{bonval}
	(\partial_z\psi,\partial_z\eta)|_{z=0}=(\partial_zf_m, \Gamma_m)|_{z=0}=(\partial_z\tilde f_m, \tilde \Gamma_m)|_{z=0}=(g_\alpha, h_\alpha)|_{z=0}=(0, 0). 
\end{equation*}
 
  Note the definitions of $f_m$
  and $\tilde f_m$ are 
  motivated by \cite{awxy,MW}, and meanwhile the type of  auxiliary functions $q_m, \tilde q_m$ are used by \cite{LY}.   As in \cite{LY} if we can control  the term 
  $\tau_2\partial_x^m\psi$ then
    the estimates on $\partial_x^m u$ and $\partial_x^m\psi$ can be derived from the weighted $L^2$-norm of $f_m$ and $q_m$ by   Hardy inequality and Poincar\'e inequality, likewise, for $\partial_y^m u$ and $\partial_y^m\psi.$  As a result, we have the upper bounds on $\partial^\alpha u$  and $\partial^\alpha \psi,$   by the inequality  
 \begin{equation}
 	\label{realp}
 	\forall~\alpha\in\mathbb Z_+^2,~\forall~ F\in H^\infty,\quad \norm{\partial^\alpha F}_{L^2(\mathbb T^2)}^2\leq \norm{\partial_x^{\abs\alpha}F}_{L^2(\mathbb T^2)}^2+ \norm{\partial_y^{\abs\alpha} F}_{L^2(\mathbb T^2)}^2.
 \end{equation}
In order to obtain the upper bound  of $\partial^\alpha \eta,$ we will apply the energy method to the equation \eqref{equforpsi} for $\eta.$   It remains to  estimate $\partial^\alpha v$ with $\abs\alpha=m,$ and this can be deduced from the estimation on   the auxilliary functions  $\Gamma_m$ and $\tilde\Gamma_m$ on the monotonicity part where
 $\psi\neq 0,$ and from $H_m, \tilde H_m$ on the concave part where $\xi\neq 0.$   Finally, we need to estimate
 $g_\alpha$ and $h_\alpha$ since they appear in the equations for $f_m$ and $\partial^\alpha \eta$   
with the degeneracy in tangential variables.   We will 
explain further in the next subsection about
the motivation for introducing the above auxilliary functions.

\subsection{Difficulties and  methodologies}\label{subsec22}
In this subsection,
we will  explain the main difficulties and the new ideas   introduced in this
paper.

 When   applying  $\partial^\alpha$ to the equations in \eqref{prandtl},  we  lose derivatives in the tangential variables $ x $ and $y$ in   the terms 
\begin{equation}\label{twoterms}
	\inner{\partial^\alpha w}\psi, \quad \inner{\partial^\alpha w} \eta.
\end{equation}
Similar to 2D case, part of these terms
can be   handled under the  the existence of single curve of non-degenerate critical points,     by  some kind of  cancellation properties,   cf. \cite{GM, LY}. 
In the 3D setting considered in this paper,
 for the   monotonicity part that 
$ \abs{\psi}>0,$ we can apply the same cancellation as in 2D case,  to the equations of $\partial^\alpha u$ and $\partial^\alpha  \psi.$  This indeed eliminates the first term  in \eqref{twoterms},  but meanwhile  a new term
\begin{eqnarray*}
	g_\alpha=\partial^\alpha g 
\end{eqnarray*} 
appears when applying $\partial^\alpha$ to the equation \eqref{equforpsi} for $\psi.$   Note that $g$
also has the loss of one order derivative in $y$ variable.
 This  prevents us to investigate the well-posedness in  Sobolev space with
finite order of regularity. 
 When performing estimates in the concave (or convex) part where $\abs\xi>0,$  the situation is quite different from the one in \cite{GM,LY}. Precisely,  in addition to the term 
\begin{eqnarray*}
	\inner{\tau_2' \int_0^z\partial_x^{m+1}u\ d\tilde z,~ \tau_2 \partial_x^{m+1}u}_{L^2\inner{\Omega}}
\end{eqnarray*}
that can be handled by some kind of crucial representation of $\partial_x^m u$ introduced by \cite{GM},  
 we are faced  two new terms caused by the appearance of the secondary component $v$ and the cutoff function $\tau_2:$
\begin{eqnarray}\label{conter}
	\inner{\tau_2' \int_0^z\partial_x^{m}\partial_y v\ d\tilde z,~ \tau_2 \partial_x^{m+1}u}_{L^2\inner{\Omega}}\ {\rm and}	\ \inner{\tau_2  \partial_x^{m}\partial_y v, ~ \tau_2 \partial_x^{m+1}u}_{L^2\inner{\Omega}},
\end{eqnarray}  
where the degeneracy occurs in the nonlocal term caused by $v$.

\medskip
 
\noindent\underline{\it Estimates on  $\partial^\alpha u$ and $\partial^\alpha \psi$}.  If we have the desired upper bounds for the terms in \eqref{conter} then we can make use of the same cancellation as in  2D case \cite{LY} to obtain a new equation for the auxilliary function $f_m, q_m$ defined by \eqref{falpha} and \eqref{t2}.  Even though  this can not
 avoid the degeneracy
 in the tangential variables because of $g_\alpha, \vec\theta_\alpha$.     
Our observation is that this kind of cancellation transfers  the degeneracy coming from the first non-local term in \eqref{twoterms} to a local term $g_\alpha.$ To work with    $g_\alpha$,  we have the advantage  to use another kind of cancellation. Precisely, multiplying the first equation in \eqref{equforpsi}     by $\partial_y v$, and the second equation   by  $\partial_y u$,  and then their subtraction yields  the equation for $g $. Based on this,
  we can  apply our approach  used in 2D \cite{LY} to perform energy estimate for $g_\alpha$ and similarly for $\vec\theta_\alpha$, and then for $f_m,$ in the context of Gevrey function space rather than  the analytic function space, if we ignore  at this  moment the degeneracy caused by the second term in \eqref{twoterms}.  Similar argument applies
also to $h_\alpha$ and $\tilde f_m, \tilde q_m.$  As a result, we can obtain the estimates as desired for $\partial^\alpha u$ and $  \partial^\alpha \psi$ by Hardy inequality and Poincar\'e inequality. 

\medskip
\noindent\underline{\it Estimates on   $\partial^\alpha v$ and $\partial^\alpha\eta$}. Now we turn to  the second tangential component of velocity field.   
Firstly, for $\eta,$  note that if the order of its derivatives in the energy  are   one order less than the ones of $u, v$ and $\psi,$  then we do not have
derivative loss for this term.  This is why in the definition of
 the   Gevrey norm, there is an anisotropic term for $\eta$ (see Remark \ref{remaniso}).   

To handle $v$ and its derivatives,  we will not apply the energy 
estimation directly because
    it  involves the second term in \eqref{twoterms}.     Instead,
 we observe that the estimate on  $\partial_x^m v$ can be derived through  $h_{m-1,0}=\partial_x^{m-1} h$ defined in \eqref{galpha}  in the   monotonicity part where  $ \abs\psi>0,$   and  through $\partial_x^{m-1}\mu_2$ defined in \eqref{sor} in the concave part of $\abs\xi>0$.    On the other hand, it is not easy to estimate the 
bounds of the lower order derivatives
 $\partial_x^j v, j<m,$  in the expression of $h_{m-1,0}.$    This is why we introduce the auxillary functions $\Gamma_m$ and $H_m,$ which contain only
 the leading term in the representations of $h_{m-1,0}$ and $\partial_x^{m-1}\mu_2.$  It is
then clear that we can  get the upper bound of $\partial_x^m v$ from $\Gamma_m$ in the  monotonicity part  and  from $H_m$ in concave part.    Furthermore, $\Gamma_m$  can be handled  by  a new cancellation as shown in  
the equation for $\Gamma_m,$  where  the terms in \eqref{twoterms}  
do not appear due to cancellation.  Similar argument applies
to $H_m, \tilde \Gamma_m$ and $\tilde H_m.$  And this leads to the desired estimate on $\partial^\alpha v.$    

 \medskip
\noindent\underline{\it Estimates on   the terms in \eqref{conter}}.   We will use the similar idea as above for treatment of $\partial_x^m v,$ together with the crucial representation of $\partial_x^m u$ introduced by \cite{GM}; see Subsections \ref{subsec61} and \ref{subsec62} for  detailed computation.

\subsection{A priori estimate}\label{subsec23}

We first introduce the Gevery function space and the equipped norm. 

 \begin{definition}  
\label{gevspace}	
Denote $\vec a=(u,v)$ with $(u,v)$ satisfying the Prandtl system  \eqref{prandtl} and the conditions \eqref{condi}-\eqref{+condi1}.  Let  $X_{\rho,\sigma}$ be the Gevrey function space given in Definition \ref{defgev},  equipped with the norm $\norm{\cdot}_{\rho,\sigma}$	 defined by \eqref{trinorm}.  Set
\begin{eqnarray*}
\begin{aligned}
	& \abs{\vec a}_{\rho,\sigma}  =  \norm{\vec a}_{\rho,\sigma} + \sup_{m\geq 7}   \frac{\rho^{m-6}}{[\inner{m-7}!]^{\sigma}}   \Big( \norm{\comi z^{\ell} f_{m}}_{L^2}+ \norm{q_{m}}_{L^2}+\norm{\tau_2\partial_x^m \xi}_{L^2}\Big)\\
	&\quad+ \sup_{m\geq 7}   \frac{\rho^{m-6}}{[\inner{m-7}!]^{\sigma}}   \Big( \norm{\comi z^{\ell} \tilde f_{m}}_{L^2}+\norm{  \tilde q_{m}}_{L^2}+\norm{ \tau_2\partial_y^m \xi}_{L^2}\Big)\\
	 &\quad+ \sup_{m\geq 7}   \frac{\rho^{m-6}}{[(m-7)!]^{\sigma}}   \Big(\norm{\comi z^{\kappa+\delta} \Gamma_{m}}_{L^2}+ \norm{G_{m}}_{L^2}+\norm{H_{m}}_{L^2} \Big)\\
	  &\quad+ \sup_{m\geq 7}   \frac{\rho^{m-6}}{[(m-7)!]^{\sigma}}   \Big(  \norm{\comi z^{\kappa+\delta} \tilde \Gamma_{m}}_{L^2}+\norm{\tilde G_{m}}_{L^2}+\norm{\tilde H_{m}}_{L^2}\Big)\\
	&\quad + \sup_{\abs\alpha\geq 7}   \frac{\rho^{\abs\alpha-5}}{[\inner{\abs\alpha-6}!]^{\sigma}} \abs\alpha\Big(\norm{\comi z^{\kappa+\delta}  g_{\alpha}}_{L^2}+ \norm{\comi z^{\kappa+\delta} h_{\alpha}}_{L^2} + \norm{\vec\theta_{\alpha}}_{L^2}+ \norm{  \vec\mu_{\alpha}}_{L^2} \Big) \\
	& \quad +\sup_{1\leq m \leq 6}   \Big( \norm{\comi z^{\ell} f_{m}}_{L^2}+ \norm{q_{m}}_{L^2}+\norm{ \tau_2\partial_x^m \xi}_{L^2}+\norm{\comi z^{\ell} \tilde f_{m}}_{L^2}+\norm{  \tilde q_{m}}_{L^2}+\norm{ \tau_2\partial_y^m \xi}_{L^2}\Big)\\
	&\quad + \sup_{1\leq m\leq 6}      \big(\norm{\comi z^{\kappa+\delta} \Gamma_{m}}_{L^2}+ \norm{G_{m}}_{L^2}+\norm{H_{m}}_{L^2}+ \norm{\comi z^{\kappa+\delta} \tilde \Gamma_{m}}_{L^2}+\norm{\tilde G_{m}}_{L^2}+\norm{\tilde H_{m}}_{L^2}\big)\\
	&\quad +\sup_{0\leq\abs\alpha \leq 6}    \Big(  \norm{ \comi z^{\kappa+\delta} g_{\alpha}}_{L^2}+ \norm{\comi z^{\kappa+\delta}h_{\alpha}}_{L^2}  + \norm{\vec\theta_{\alpha}}_{L^2}+ \norm{  \vec\mu_{\alpha}}_{L^2} \Big),
	 \end{aligned}
\end{eqnarray*}
where the functions $\tau_j,$ $g_\alpha, h_\alpha, f_m,$ etc., are defined in Subsection \ref{subsec21}. 
 Similarly, we define $\abs{\vec a_0}_{\rho,\sigma}$ for  $\vec a_0=(u_0,v_0),$  the initial datum in \eqref{prandtl}.  The $L^2$-norm of a vector $\vec p=(p_1,p_2,p_3)$ is defined by 
 \begin{eqnarray*}
 	\norm{\vec p}_{L^2}^2=\sum_{1\leq j\leq 3} \norm{p_j}_{L^2}^2.
 \end{eqnarray*}
\end{definition}

\begin{remark}\label{remeqi} It is clear that $\abs{\vec{a}}_{\rho,\sigma}\leq \abs{\vec a}_{\tilde\rho,\sigma}$ for any $\rho\leq\tilde\rho.$ Moreover, 
direct calculation gives 
\begin{equation*} 
\norm{\vec a}_{\rho,\sigma}\leq \abs{\vec a}_{\rho,\sigma}\leq C_{\rho,\rho^*} \big(\norm{\vec a}_{\rho*,\sigma}+\norm{\vec a}_{\rho*,\sigma}^2\big)
\end{equation*}
 for any   $\rho<\rho^*,$  with $C_{\rho, \rho^*}$ being
a constant depending only on the difference $\rho^*-\rho$.  
\end{remark}

The main result of this part can be stated as follows.  

\begin{theorem}[A priori estimate in Gevrey space]\label{apriori}
Let $3/2\leq\sigma\leq 2$ and  $0<\rho_0\leq 1.$  Suppose  $(u,v)\in L^\infty\inner{[0, T];~X_{\rho_0,\sigma}}$ is the solution to the Prandtl system \eqref{prandtl}    such that  the properties listed in  \eqref{condi}-\eqref{+condi1} hold. 
 Then  there exists   a constant   $C_*>1,$   such that    the estimate 
 \begin{equation}\label{esapr}
	\abs{\vec a (t)}_{\rho,\sigma}^2\leq C_* \abs{\vec a_0}_{\rho, \sigma}^2+C_* \int_{0}^t \inner{\abs{\vec a(s)}_{\rho,\sigma}^2+\abs{\vec a(s)}_{\rho,\sigma}^4} \,ds+C_* \int_{0}^t\frac{ \abs{\vec a(s)}_{\tilde\rho,\sigma}^2}{\tilde \rho-\rho}\,ds 
\end{equation}	 
 holds for any pair $(\rho,\tilde\rho)$ with   $0<\rho<\tilde \rho<\rho_0$ and any $t\in[0,T].$ Here, the constant $C_*$ depends only on   the constants in \eqref{condi}-\eqref{+condi1} as well as the Sobolev embedding constants.  
 \end{theorem}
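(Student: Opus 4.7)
The plan is to establish the estimate \eqref{esapr} by running a coupled family of weighted $L^2$ energy estimates, one for each of the auxiliary quantities listed in Definition \ref{gevspace}, and then summing the resulting bounds against the Gevrey weight $\rho^{|\alpha|-6}/[(|\alpha|-7)!]^\sigma$. The reason the norm $|\vec a|_{\rho,\sigma}$ is so large is precisely because each of the pieces $f_m, q_m, \tau_2\partial_x^m\xi, \tilde f_m, \tilde q_m, \tau_2\partial_y^m\xi, \Gamma_m, \tilde\Gamma_m, G_m, \tilde G_m, H_m, \tilde H_m, g_\alpha, h_\alpha, \vec\theta_\alpha, \vec\mu_\alpha$ plays a distinct role in absorbing one source of tangential derivative loss. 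Once uniform bounds are in hand for the base cases $|\alpha|\leq 6$ (which are classical since only finitely many derivatives are involved), the task reduces to a careful inductive/simultaneous bookkeeping of the high-order pieces.

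First I would derive the evolution equation satisfied by each auxiliary function. For $f_m$ and $\tilde f_m$, applying $\partial_x^m$ (resp.\ $\partial_y^m$) to \eqref{equforpsi} and forming the Alexandre--Wang--Xu--Yang / Masmoudi--Wong combination $\tau_1\partial^m\psi-\tau_1(\xi/\psi)\partial^m u$ cancels the leading $(\partial^m w)\psi$ term; the loss of one $x$- or $y$-derivative is thereby transferred onto $g_\alpha$ (resp.\ $h_\alpha$), which is the content of the key 2D-type cancellation of \cite{GM,LY}. For $q_m$ and $\tilde q_m$, I would repeat the Li--Yang construction in the concave zone where $|\xi|\geq c$, and here the new 3D terms \eqref{conter} have to be treated separately; I would handle them by inserting the Gérard-Varet--Masmoudi representation of $\partial_x^m u$ in terms of $\int_0^z\partial_x^m\psi$ plus lower-order pieces, and for the secondary $\partial_x^m\partial_y v$ contribution I would use the corresponding representation obtained from $\Gamma_m$ on $\{z\leq 1\}$. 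For $\Gamma_m, \tilde\Gamma_m, H_m, \tilde H_m, G_m, \tilde G_m$, a direct computation from \eqref{equforpsi}--\eqref{seco} shows that the dangerous $(\partial^m w)\psi, (\partial^m w)\eta$ cancel between the two combined equations, leaving a transport--diffusion equation with right-hand sides that involve only already-controlled quantities (this is the new 3D cancellation described in Subsection \ref{subsec22}). For $g_\alpha, h_\alpha, \vec\theta_\alpha, \vec\mu_\alpha$, I would subtract $(\partial_y v)\cdot(\text{eq.\ for }\psi)-(\partial_y u)\cdot(\text{eq.\ for }\eta)$ and its analogues, obtaining transport--diffusion equations whose forcing terms are quadratic in the controlled quantities and involve at most one extra tangential derivative, which will be paid for by the Cauchy shrinkage $\tilde\rho\mapsto \rho$.

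Next, for each evolution equation I would apply the standard weighted $L^2$ energy identity against the prescribed weight $\comi z^{\ell}, \comi z^{\kappa+\delta}$, etc. The transport part contributes nothing after integration by parts thanks to incompressibility and the boundary conditions at $z=0$; the diffusion part yields a good $\partial_z$-dissipation term which absorbs bad $z$-derivative commutators from the weight; and the remaining forcing terms are estimated by the Cauchy--Schwarz and Hardy--Poincaré inequalities together with the Sobolev embeddings controlled by \eqref{+condi1}. The weights and the anisotropic rescaling for $\eta$ (reflected in the factor $|\alpha|$ in the $\eta$-part of \eqref{trinorm}, cf.\ Remark \ref{remaniso}) are chosen exactly so that the products arising from the Leibniz expansion of commutators, once summed with Gevrey weights, give bounds of the form $C\abs{\vec a}_{\rho,\sigma}^2$ or $C\abs{\vec a}_{\rho,\sigma}^4$, with a single terminal term that carries one extra tangential derivative and is controlled by $\abs{\vec a}_{\tilde\rho,\sigma}^2/(\tilde\rho-\rho)$ via the standard inequality $\rho^{n-1}/[(n-1)!]^\sigma\leq [\sigma e/(\tilde\rho-\rho)]\cdot \tilde\rho^{n}/(n!)^\sigma$. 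Summing the resulting differential inequalities and applying $\|\partial^\alpha F\|^2\leq \|\partial_x^{|\alpha|}F\|^2+\|\partial_y^{|\alpha|}F\|^2$ to convert pure-$x$ and pure-$y$ estimates on $f_m, \Gamma_m,\ldots$ into mixed $\partial^\alpha$ bounds on $u,\psi,v$ yields \eqref{esapr} after integrating in time.

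The main obstacle, by some margin, is the bookkeeping of the new 3D terms \eqref{conter} in the $q_m, \tilde q_m$ estimates. In the 2D case of \cite{LY} the single dangerous term is handled by the Gérard-Varet--Masmoudi representation of $\partial_x^m u$, but in 3D the presence of $\partial_x^{m}\partial_y v$ inside the nonlocal factor $w$ destroys this representation unless the secondary variable $v$ is already under control at the same order. My approach is to close this circularity by running the $\Gamma_m, H_m, \tilde\Gamma_m, \tilde H_m$ estimates simultaneously with the $f_m, q_m$ estimates, so that the bound on $\partial_x^{m+1}v$ needed inside \eqref{conter} is furnished at each step by the already-obtained control of $\Gamma_{m+1}$ and $H_{m+1}$ divided by $\psi$ or $\xi$ respectively (bounded below in their respective regions thanks to \eqref{condi} and the cutoffs \eqref{chi1}--\eqref{cutofffu}). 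A second delicate point is ensuring that the quadratic forcing terms coming from commutators like $[\partial^\alpha, u\partial_x+v\partial_y+w\partial_z]$ produce bilinear bounds consistent with the Gevrey index $\sigma\geq 3/2$ (so that binomial coefficient sums converge), which is exactly why the paper first proves the result for $\sigma\in[3/2,2]$ before explaining the modifications in $(1,3/2)$.
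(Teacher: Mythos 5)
Your overall architecture is the same as the paper's: separate weighted $L^2$ energy estimates for $f_m,q_m,\tilde f_m,\tilde q_m$ (Proposition \ref{propfm}), for $\Gamma_m,H_m,G_m$ and their tilde versions (Proposition \ref{+propfm+++}), for $g_\alpha,h_\alpha,\vec\theta_\alpha,\vec\mu_\alpha$ (Proposition \ref{propgalpha}), for $\tau_2\partial_x^m\psi,\tau_2\partial_x^m\xi$ (Proposition \ref{prpnear}) and for the tangential/mixed derivatives (Propositions \ref{propuv} and \ref{norma}), summed against the Gevrey weights with the single derivative loss paid by \eqref{factor} and the $1/(\tilde\rho-\rho)$ term. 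However, your treatment of the decisive new 3D term in \eqref{conter} has a genuine gap. First, the quantity appearing there is the \emph{mixed} derivative $\partial_x^{m}\partial_y v$, whereas $\Gamma_{m+1}=\psi\partial_x^{m+1}v-\eta\partial_x^{m+1}u$ and $H_{m+1}$ only encode \emph{pure} $x$-derivatives of $v$ (and $\tilde\Gamma,\tilde H$ only pure $y$-derivatives); no mixed-derivative analogue is in the norm $\abs{\vec a}_{\rho,\sigma}$, so "already-obtained control of $\Gamma_{m+1},H_{m+1}$" does not furnish the bound you need. Second, even setting the mismatch aside, invoking an order-$(m+1)$ tangential derivative of $v$ inside the order-$m$ estimate cannot be closed "simultaneously at each step": in an a priori estimate there is no induction parameter, and the only currency for an extra derivative is the radius loss. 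But $v$ carries no gain of a factor $\abs\alpha^{-1}$ in \eqref{trinorm} (unlike $\eta$, $g_\alpha$, $h_\alpha$), so the crude pairing $\norm{\comi z^{\kappa}\partial_x^{m}\partial_y v}_{L^2}\,\norm{\tau_2\partial_x^m u}_{L^2}$ costs a factor $(m-6)^{\sigma}\inner{\rho/\tilde\rho}^{m-6}\tilde\rho^{-1}$, of which \eqref{factor} absorbs only one power of $m$; the residual $m^{\sigma-1}$ is unbounded (or, if absorbed by a second application of \eqref{factor}, produces $1/(\tilde\rho-\rho)^2$), and either way the result is not of the form \eqref{esapr}. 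So the circularity you identify is not actually closed by your mechanism.

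The paper's resolution is structurally different at exactly this point and is worth noting: one never places $m+1$ tangential derivatives on $v$. In Lemma \ref{lemwu2} (and similarly in Lemma \ref{lemwu1} after writing $w$ as a $z$-integral) the extra $\partial_y$ is integrated by parts onto $u$, $\partial_x^m v$ is then expressed at order $m$ through $\Gamma_m/\psi$ on the monotone region and $H_m/\xi$ on the concave region, and the resulting order-$(m+1)$ factor $\partial_x^m\partial_y u$ is handled through the G\'erard-Varet--Masmoudi representation \eqref{albe}: the $\partial_y\beta_m$ contribution, controlled by \eqref{ene}, is always paired with $\norm{\varphi_m}_{L^2(\mathbb T^2\times[0,2])}=\norm{G_m}_{L^2}$, and Lemma \ref{lemvar} converts $G_m$ into $\partial_x^{m-1}\theta_3$ (which enjoys the extra $\abs\alpha^{-1}$ weight in the norm) plus $\norm{\partial_z f_{m-1}}_{L^2}$; the latter enters as $\frac{m^{2\sigma}}{\rho^2}\norm{\partial_z f_{m-1}}_{L^2}^2$ and is absorbed, after time integration, by the dissipation $\int_0^t\norm{\comi z^\ell\partial_z f_{m-1}}_{L^2}^2\,ds$ provided by Proposition \ref{propfm} (see \eqref{veca} and the conclusion of Proposition \ref{prpnear}). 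Your sketch omits both this pairing structure and the essential role of the $\partial_z f_m$-dissipation, which are precisely what make the estimate close with a single power of $1/(\tilde\rho-\rho)$; as written, your argument for \eqref{conter} would fail. (Your remark that the transport term "contributes nothing" by incompressibility is a minor imprecision -- the weight commutator $\inner{w(\partial_z\comi z^{\ell})f_m,\comi z^{\ell}f_m}_{L^2}$ survives but is harmless -- and does not affect the substance.)
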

 
 \begin{remark}
 	When $1<\sigma<3/2$,  we can obtain a similar a priori estimate as \eqref{esapr}, 
 cf. Theorem  \ref{thmnewapr} in Section \ref{sec7}.  
 \end{remark}

In view of Remark \ref{remeqi},  each  term in \eqref{esapr} is  well-defined. We will proceed to derive the upper bound of $\abs{\vec a}_{\rho,\sigma}$
 in  the next   Sections \ref{sec4}-\ref{sec7}. 

 Before proving  Theorem \ref{apriori},  we first list some
basic  inequalities to be used.  
  
\begin{lemma}\label{lemequa}
With the notations in Subsection \ref{subsec21}, the  following 
inequalities hold. 

\noindent (i) For any integer $k\geq 1$ and for any  pair $(\rho,\tilde \rho)$ with $0<\rho<\tilde\rho\leq 1,$ we have 
\begin{equation}
\label{factor}
k\inner{\frac{\rho}{\tilde\rho}}^k\leq 	\frac{1}{\tilde\rho} k\inner{\frac{\rho}{\tilde\rho}}^k\leq\frac{1}{\tilde\rho-\rho}.
\end{equation}

\noindent (ii)  For any suitable function $F,$
\begin{equation}
	\label{soblev}
	\norm{F}_{L^\infty(\Omega)}\leq  \sqrt 2\Big(\norm{F}_{L_{x,y}^2(L_z^\infty)}+\norm{\partial_x F}_{{L_{x,y}^2(L_z^\infty)}}+\norm{\partial_y F}_{{L_{x,y}^2(L_z^\infty)}}+\norm{\partial_x \partial_y F}_{{L_{x,y}^2(L_z^\infty)}}\Big),
\end{equation}
and
\begin{equation}
	\label{soblev+}
	\begin{aligned}
\norm{F}_{L^\infty(\Omega)} \leq & 2\Big(\norm{ F}_{L^2}+\norm{\partial_{x}  F}_{L^2}+\norm{\partial_{y} F}_{L^2}+\norm{  \partial_{z}F}_{L^2}\Big)\\ &+2\Big(\norm{\partial_{x} \partial_{y}F}_{L^2}+\norm{\partial_{x} \partial_{z}F}_{L^2}+\norm{\partial_{y} \partial_{z}F}_{L^2}+\norm{\partial_{x}\partial_y \partial_{z}F}_{L^2}\Big).
	\end{aligned}
\end{equation}

\noindent (iii)  For any $0<r\leq 1$ and any $\alpha=(\alpha_1, \alpha_2)\in\mathbb Z_+^2$ with $\abs\alpha=m$,      we have
\begin{equation}\label{etan}
\begin{aligned}
 &\norm{\comi z^{\ell-1}\partial^\alpha u}_{L^2}+ \norm{\comi z^{\ell}\partial^\alpha \psi}_{L^2}+ \norm{\tau_2\partial^\alpha \xi}_{L^2}+\norm{\comi z^{\ell}f_m }_{L^2} +\norm{\comi z^{\kappa}\partial^\alpha v}_{L^2}\\
 &\quad +\norm{   \comi z^{\kappa+\delta}  \Gamma_m}_{L^2}+\norm{ H_m}_{L^2}+\norm{   G_m}_{L^2}\\[3pt]
  \leq&~
	\left\{
	\begin{aligned}
	& \frac{   [\inner{m-7}!]^{ \sigma}}{r^{ (m-6)}}\abs{\vec a}_{r,\sigma},\quad {\rm if}~m \geq 7,\\
	&\abs{\vec a}_{r,\sigma},\quad {\rm if}~  m \leq 6,
	\end{aligned}
	\right.
	\end{aligned}
\end{equation} 
and 
\begin{equation}\label{emix}
	\norm{\comi z^{\ell+1}\partial^\alpha\partial_z^j \psi}_{L^2}\leq 
	\left\{
	\begin{aligned}
	& \frac{   [\inner{\abs\alpha+j-7}!]^{ \sigma}}{r^{ (\abs\alpha+j-6)}}\abs{\vec a}_{r,\sigma},\quad {\rm if}~\abs\alpha+j\geq 7 ~{\rm and}~ 1\leq j\leq 4,\\
	& \abs{\vec a}_{r,\sigma},\quad {\rm if}~ \abs\alpha+j\leq 6~{\rm and}~1\leq j\leq 4.
	\end{aligned}
	\right.
\end{equation}

\noindent (iv)  For any $0<r\leq 1$ and any $\alpha=(\alpha_1, \alpha_2)\in\mathbb Z_+^2$,      we have
\begin{equation}
\label{chi2est} 
\begin{aligned}
 & \norm{ \partial^\alpha w}_{L_{x,y}^2(L_z^\infty)}+\abs\alpha  \norm{\comi z^{\kappa+2}  \partial^\alpha \eta}_{L^2}\\
&\quad +\abs\alpha\Big(\norm{ \comi z^{\kappa+\delta} g_{\alpha }}_{L^2}+ \norm{\comi z^{\kappa+\delta} h_{\alpha}}_{L^2}+  \norm{\comi z^{\kappa+2}  \partial^\alpha \eta}_{L^2}+  \norm{\vec\theta_\alpha}_{L^2}+  \norm{\vec\mu_\alpha }_{L^2}\Big) \\
    \leq & \left\{
	\begin{aligned}
	& \frac{    [\inner{\abs\alpha-6}!]^{ \sigma}}{r^{ (\abs\alpha-5)}}\abs{\vec a}_{r,\sigma},~{\rm if}~\abs\alpha\geq 7,\\
	& \abs{\vec a}_{r,\sigma},\quad {\rm if}~\abs\alpha \leq 6,
	\end{aligned}
	\right.
	\end{aligned}
\end{equation}
and 
\begin{equation}\label{mixoneta}
	 \abs\alpha\,\norm{\comi z^{\kappa+2}\partial^\alpha\partial_z^j \eta}_{L^2}\leq 
	\left\{
	\begin{aligned}
	& \frac{   [\inner{\abs\alpha+j-6}!]^{ \sigma}}{r^{ (\abs\alpha+j-5)}}\abs{\vec a}_{r,\sigma},\quad {\rm if}~\abs\alpha+j\geq 7 ~{\rm and}~ 1\leq j\leq 4,\\
	& \abs{\vec a}_{r,\sigma},\quad {\rm if}~\abs\alpha+j\leq 6~{\rm and}~1\leq j\leq 4.
	\end{aligned}
	\right.
\end{equation}
\noindent (v) Let $\sigma\geq 1$ and let $m\geq 7.$  Then  for any $0<r\leq 1$ we have
\begin{equation}
\label{fm1}
\norm{\partial_x^{m-1} \xi}_{L^2} \leq \norm{  \partial_z f_{m-1}}_{L^2} +    C m^{-\sigma}\frac{\big[\inner{m-7}!\big]^{\sigma}}{r^{m-7} } \abs{\vec a}_{r,\sigma}.
\end{equation}

 \end{lemma}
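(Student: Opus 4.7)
The plan is to handle each of the five items by mostly direct unpacking of the definition of $|\cdot|_{\rho,\sigma}$ in Definition \ref{gevspace}, together with a small number of standard tools: elementary one-variable calculus for (i), Sobolev embedding on the torus and on the half-line for (ii), a Hardy-type weighted Cauchy--Schwarz estimate for the vertical velocity in (iv), and an algebraic decomposition using the partition $\tau_1+\tau_2\geq 1$ in (v).

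For part (i), set $x=\rho/\tilde\rho\in(0,1)$ and note that a short calculus check gives $kx^k(1-x)\leq 1$ for all $k\geq 1$ (the maximum in $x$ is attained at $x=k/(k+1)$ and tends to $1/e$ as $k\to\infty$). Rearranging yields $k(\rho/\tilde\rho)^k\leq \tilde\rho/(\tilde\rho-\rho)$, which is the second bound in \eqref{factor}; the first is immediate from $\tilde\rho\leq 1$. Part (ii) is a standard tensor-product Sobolev embedding: \eqref{soblev} follows from the 2D torus embedding $H^1_xH^1_y\hookrightarrow L^\infty_{xy}$ applied pointwise in $z$, while \eqref{soblev+} is obtained from \eqref{soblev} by replacing $L^\infty_z$ by $L^2_z$ via the 1D half-line embedding $\|G\|_{L^\infty_z}\leq \sqrt 2(\|G\|_{L^2_z}+\|\partial_z G\|_{L^2_z})$.

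For parts (iii) and (iv), every $L^2$-norm on the left-hand side appears \emph{explicitly} in the definition of $|\vec a|_{\rho,\sigma}$, either in $\norm{\vec a}_{\rho,\sigma}$ for the pure tangential $\partial^\alpha$ quantities and their anisotropic mixed-derivative counterparts appearing in \eqref{emix} and \eqref{mixoneta}, or in the auxiliary-function list for $f_m,q_m,\tau_2\partial_x^m\xi,\Gamma_m,G_m,H_m,g_\alpha,h_\alpha,\vec\theta_\alpha,\vec\mu_\alpha$ (and their $y$-variants); a direct comparison of weights and factorial factors gives the stated bounds, with the $\abs\alpha$-prefactor for the $\eta$-quantities matching the anisotropic factor in \eqref{trinorm}. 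The only item that requires a genuine estimate is $\norm{\partial^\alpha w}_{L^2_{xy}(L^\infty_z)}$ in \eqref{chi2est}: using $w=-\int_0^z(\partial_xu+\partial_yv)\,d\tilde z$, I write
\begin{equation*}
\abs{\partial^\alpha w(x,y,z)}\leq \int_0^\infty\abs{\partial_x\partial^\alpha u}\,d\tilde z+\int_0^\infty\abs{\partial_y\partial^\alpha v}\,d\tilde z,
\end{equation*}
and apply Cauchy--Schwarz in $z$ with the weights $\comi{\tilde z}^{-(\ell-1)}$ and $\comi{\tilde z}^{-\kappa}$, both of which lie in $L^2(\mathbb R_+)$ because $\ell-1>1/2$ and $\kappa\geq 1>1/2$ by \eqref{ellalpha}. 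This reduces the $L^2_{xy}(L^\infty_z)$-norm of $\partial^\alpha w$ to $\norm{\comi z^{\ell-1}\partial_x\partial^\alpha u}_{L^2}+\norm{\comi z^\kappa\partial_y\partial^\alpha v}_{L^2}$, both of which are controlled at tangential order $\abs\alpha+1$ by the advertised factor $[(\abs\alpha-6)!]^\sigma r^{-(\abs\alpha-5)}|\vec a|_{r,\sigma}$.

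Part (v) is the only non-trivial item. I split $\partial_x^{m-1}\xi=\tau_1\partial_x^{m-1}\xi+(1-\tau_1)\partial_x^{m-1}\xi$. For the localized remainder, \eqref{suppch2} gives $(1-\tau_1)=(1-\tau_1)\tau_2$, hence $\norm{(1-\tau_1)\partial_x^{m-1}\xi}_{L^2}\leq\norm{\tau_2\partial_x^{m-1}\xi}_{L^2}$. For the main piece I differentiate the defining identity $f_{m-1}=\tau_1\partial_x^{m-1}\psi-\tau_1(\xi/\psi)\partial_x^{m-1}u$ in $z$ and solve algebraically for $\tau_1\partial_x^{m-1}\xi$:
\begin{equation*}
\tau_1\partial_x^{m-1}\xi=\partial_zf_{m-1}-\tau_1'\partial_x^{m-1}\psi+\partial_z(\tau_1\xi/\psi)\partial_x^{m-1}u+(\tau_1\xi/\psi)\partial_x^{m-1}\psi.
\end{equation*}
On $\mathrm{supp}\,\tau_1$, $\abs\psi$ is bounded below by $c\comi z^{-\delta}$ and $\abs\xi,\abs{\partial_z\xi}$ are bounded above by \eqref{condi}--\eqref{+condi1}, so $\tau_1\xi/\psi$ and $\partial_z(\tau_1\xi/\psi)$ are uniformly bounded on $\Omega$. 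The remaining factors $\partial_x^{m-1}\psi$ and $\partial_x^{m-1}u$ are of tangential order $m-1\geq 6$, hence controlled by $[(m-8)!]^\sigma r^{-(m-7)}|\vec a|_{r,\sigma}$ directly from the definition of $|\vec a|_{r,\sigma}$. The advertised gain of $m^{-\sigma}$ follows from $[(m-8)!]^\sigma=[(m-7)!]^\sigma/(m-7)^\sigma$ together with the elementary bound $(m-7)^\sigma\geq C^{-1}m^\sigma$, which holds trivially with a finite constant for $7\leq m\leq 13$ and with $C=2^\sigma$ for $m\geq 14$ since $(m-7)/m\geq 1/2$ there. I expect this factorial bookkeeping to be the only subtle point; the other manipulations are routine.
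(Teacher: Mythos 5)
Your argument is correct and is essentially the one the paper intends: the paper's own proof of this lemma merely cites \cite[Lemma 3.2]{LY} for (i) and (v) and standard Sobolev inequalities for (ii), and declares (iii)--(iv) to be direct consequences of Definition \ref{gevspace}, which is exactly what you spell out, including the weighted Cauchy--Schwarz bound in $z$ for $\partial^\alpha w$ (using $\ell-1>1/2$, $\kappa\geq 1$) and the differentiated identity expressing $\tau_1\partial_x^{m-1}\xi$ through $\partial_z f_{m-1}$. Two trivial touch-ups: for a mixed multi-index $\alpha$ the term $\tau_2\partial^\alpha\xi$ does not literally appear in $\abs{\vec a}_{r,\sigma}$ but is recovered from $\tau_2\partial_x^{m}\xi$ and $\tau_2\partial_y^{m}\xi$ via \eqref{realp} applied at fixed $z$ (the cutoff and weights depend only on $z$), and in (v) the borderline case $m=7$ must be handled directly by the low-order part of the norm (the terms are of tangential order $m-1=6$) with $7^{\sigma}$ absorbed into $C$, since your auxiliary bound $(m-7)^{\sigma}\geq C^{-1}m^{\sigma}$ fails at $m=7$.
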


\begin{proof}
We refer to \cite[Lemma 3.2]{LY}	for the proof of  (i) and (v), and the proof of   (ii)  follows from the standard Sobolev inequalities (see for example \cite[Lemma A.1]{LY}).  The other inequalities are  direct consequences of the definition of $\abs{\vec a}_{r,\sigma}.$
\end{proof}

 \section{Estimates on $f_m, \tilde f_m, q_m$ and $\tilde q_m$} 
 \label{sec4}
 
This section is for deriving the upper bounds of the weighted $L^2$-norms of  $f_m$ and $q_m,$  defined in  \eqref{falpha} and \eqref{t2}.  And $\tilde f_m$ and $\tilde q_m$ can be handled similarly.  To estimate  $f_m$, we
   will use the cancellation  introduced in \cite{awxy,MW} for 2D Prandtl equations.  The estimation on $q_m$ relies  on another kind of cancellation used in \cite{LY}.    
 
 To simplify the notation, we  use from now on the capital letter $C$ to denote some generic  constant that may vary from line to line, and it
depends  only on the constants   in   \eqref{condi}-\eqref{+condi1}  as well as  the Sobolev embedding constants, in particular, it is independent of the order of derivatives denoted by $m$.

  The proposition below is the main result in this section.

 \begin{proposition}\label{propfm} Let $3/2\leq\sigma\leq 2$ and  $0<\rho_0\leq 1.$ 
Suppose  $(u,v)\in L^\infty\inner{[0, T];~X_{\rho_0,\sigma}}$  is the solution to the Prandtl system \eqref{prandtl} satisfying the conditions \eqref{condi}-\eqref{+condi1}.  
 Then
 	  for any  $ m\geq 7,$   any $t\in[0,T]$ and    any pair $\inner{\rho,\tilde\rho}$ with $0<\rho<\tilde\rho< \rho_0$,  we have
\begin{multline*}
    \frac{\rho^{2( m-6)}}{ [( m-7)!]^{2\sigma}}\Big(\norm{\comi z^\ell f_m(t)}_{L^2}^2+\norm{q_m(t)}_{L^2}^2+\norm{\comi z^\ell \tilde f_m(t)}_{L^2}^2+\norm{ \tilde q_m(t)}_{L^2}^2\Big) \\
    + \frac{\rho^{2( m-6)}}{ [( m-7)!]^{2\sigma}}\int_0^t\Big(\norm{\comi z^\ell\partial_z f_m(s)}_{L^2}^2+ \norm{\comi z^\ell \partial_z\tilde f_m(s)}_{L^2}^2 \Big) ds\\
    \leq   C\abs{\vec a_0}_{\rho,\sigma}^2+
 	C \inner{  \int_0^t   \inner{ \abs{\vec a(s)}_{ \rho,\sigma}^2+\abs{\vec a(s)}_{ \rho,\sigma}^4 }   \,ds+    \int_0^t  \frac{  \abs{\vec a(s)}_{ \tilde\rho,\sigma}^2}{\tilde\rho-\rho}\,ds}. 
\end{multline*}
 \end{proposition}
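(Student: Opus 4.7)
The plan is to derive a parabolic equation for each of the auxiliary quantities $f_m, q_m, \tilde f_m, \tilde q_m$, run a weighted $L^2$-energy estimate that recovers a $\comi z^\ell \partial_z f_m$-type dissipation on the left, and then sum over $m\ge 7$ after multiplying by the Gevrey weight $\rho^{2(m-6)}/[(m-7)!]^{2\sigma}$. The point of these auxiliary functions, motivated by \cite{awxy,MW,LY}, is that the derivative-losing contribution $(\partial_x^m w)\psi$ in the equation for $\partial_x^m\psi$ is exactly cancelled against the corresponding term arising when one subtracts $(\xi/\psi)$ times the $\partial_x^m u$-equation; the novelty here is to track the additional 3D source terms and the cross-contributions \eqref{conter} caused by the secondary flow.

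First I would derive the $f_m$-equation. Applying $\partial_x^m$ to \eqref{equforpsi} for $\psi$ and to \eqref{prandtl} for $u$, multiplying the $u$-equation by $\xi/\psi$, subtracting, and multiplying by $\tau_1$, the $\partial_x^m w$-terms cancel and one obtains
\begin{equation*}
\partial_t f_m + (u\partial_x + v\partial_y + w\partial_z) f_m - \partial_z^2 f_m = \mathcal{R}_m,
\end{equation*}
where $\mathcal{R}_m$ collects (i) commutators of $\partial_x^m$ with the transport operator; (ii) derivatives of the coefficient $\xi/\psi$, bounded uniformly on $\mathrm{supp}\,\tau_1$ by the lower bound on $|\psi|$ in \eqref{condi}; (iii) the 3D term $\tau_1\partial_x^m g$ inherited from the right-hand side of \eqref{equforpsi}; and (iv) the nonlocal $\int_0^z\partial_x^m\partial_y v\,d\tilde z$ contribution to $\partial_x^m w$ which, unlike in 2D, does not disappear after the cancellation. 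Pairing with $\comi z^{2\ell}f_m$, integration by parts (boundary terms vanish by \eqref{comcon} and the weight) produces $\norm{\comi z^\ell\partial_z f_m}_{L^2}^2$ on the left, and each piece of $\mathcal{R}_m$ is estimated against $\abs{\vec a}_{\rho,\sigma}$ via Lemma \ref{lemequa}. The top-order Leibniz piece, which carries one extra tangential derivative beyond the Gevrey budget, is converted into $\abs{\vec a(s)}_{\tilde\rho,\sigma}^2/(\tilde\rho-\rho)$ through \eqref{factor} after multiplying by $\rho^{2(m-6)}/[(m-7)!]^{2\sigma}$ and taking the supremum in $m$.

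For $q_m$, I would work in the concave strip $\abs{z-1}\le 2\epsilon$ where $\abs\xi\ge c$ by \eqref{condi}, applying $\partial_x^m$ to the $\xi$-equation \eqref{seco} and combining with $(\partial_z\xi/\xi)$ times the $\partial_x^m\psi$-equation; the cancellation is again of $\partial_x^m w$. The resulting source is of the same structural type as $\mathcal{R}_m$, plus the two genuinely new 3D contributions displayed in \eqref{conter}. The local piece $(\tau_2\partial_x^{m-1}\partial_y v,\tau_2\partial_x^{m-1}\xi)_{L^2}$ will be controlled using the auxiliary functions $H_m$ whose bounds are established in later sections, while the nonlocal piece involving $\tau_2'$, supported away from $z=1$, is treated by the Hardy/Poincar\'e inequality together with the representation of $\partial_x^m u$ used in \cite{GM}. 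The tilde quantities $\tilde f_m,\tilde q_m$ are handled identically with $\partial_x^m$ replaced by $\partial_y^m$, swapping the roles of the two tangential components.

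The main obstacle is the combinatorial bookkeeping in the Leibniz expansions of $\partial_x^m g$ and of $\partial_x^m[(u\partial_x + v\partial_y + w\partial_z)\psi]$: each distribution of the $m$ derivatives must be attributed either to the zero-loss term $\int_0^t(\abs{\vec a}_{\rho,\sigma}^2+\abs{\vec a}_{\rho,\sigma}^4)\,ds$ or to the one-loss term $\int_0^t\abs{\vec a}_{\tilde\rho,\sigma}^2/(\tilde\rho-\rho)\,ds$, using the factorial inequality $\binom{m}{k}[(k-7)!\,(m-k-7)!]^\sigma\le C[(m-7)!]^\sigma$, which is precisely where the technical hypothesis $\sigma\ge 3/2$ enters. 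Coupled with this is the necessity, unique to 3D, of absorbing the $\partial_y v$-type secondary-flow terms into quantities $\Gamma_m,H_m$ that are themselves controlled elsewhere; closing the loop forces a simultaneous treatment of several families of auxiliary quantities rather than a single one. Once this packaging is carried out uniformly in $m\ge 7$ and integrated in time, the claimed estimate follows, with the initial-data contribution bounded by $C\abs{\vec a_0}_{\rho,\sigma}^2$ as stated.
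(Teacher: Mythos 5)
Your overall skeleton (cancellation to derive parabolic equations for $f_m,q_m,\tilde f_m,\tilde q_m$, weighted $L^2$ energy estimates, Leibniz splitting, and conversion of the one-derivative-loss pieces into $\abs{\vec a}_{\tilde\rho,\sigma}^2/(\tilde\rho-\rho)$ via \eqref{factor}) is the same as the paper's. But your description of the source terms contains a structural error. In the $f_m$-equation the subtraction with weight $\chi=\partial_z\psi/\psi$ kills the \emph{entire} term $(\partial_x^m w)\psi$, including its $\int_0^z\partial_x^m\partial_y v\,d\tilde z$ part: the paper's right-hand side $\tau_1\partial_x^m g+\mathcal J_m$ in \eqref{equforf} contains only $\partial_x^j w$ with $j\le m-1$. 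Your item (iv), asserting that the nonlocal $\partial_y v$-contribution of $\partial_x^m w$ ``does not disappear after the cancellation,'' is false; and had it been true, the treatment you propose (GM representation, Hardy/Poincar\'e, $H_m$) would not apply on $\mathrm{supp}\,\tau_1$, which is unbounded in $z$ and reaches the boundary, so your sketch would leave a derivative-loss term untreated. The genuine 3D difficulty in this proposition is instead the \emph{local} term $\tau_1\partial_x^m g$ (and, for $q_m$, $b\tau_2\partial_x^m g$ and $\tau_2\partial_x^m\theta_j$), which loses one $y$-derivative and is absorbed only because the norm $\abs{\vec a}_{\rho,\sigma}$ carries the anisotropic component $\abs\alpha\,\norm{\comi z^{\kappa+\delta}g_\alpha}_{L^2}$ with the shifted Gevrey weight; this is exactly Lemma \ref{lemg1}, and it is also where the restriction $\sigma\le 2$ enters, a point your sketch does not address.

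Similarly, the two terms in \eqref{conter} do not occur in the $q_m$-equation at all: multiplying the $\partial_x^m\psi$-equation by $b=\partial_z\xi/\xi$ cancels $(\partial_x^m w)\partial_z\xi$ completely, so the $q_m$-estimate in this proposition is closed using only quantities already contained in $\abs{\vec a}_{\rho,\sigma}$ (in particular $\norm{\tau_2\partial_x^m\xi}_{L^2}$ and $\norm{\comi z^\ell\partial_x^m\psi}_{L^2}$). The terms \eqref{conter} belong to the separate estimate of $\tau_2\partial_x^m\psi$ and $\tau_2\partial_x^m\xi$ (Proposition \ref{prpnear}, proved via Lemmas \ref{lemwu1}--\ref{lemwu2} with the GM representation and $\Gamma_m,H_m,G_m$), which is a different statement proved later and, in the paper's logic, itself uses Proposition \ref{propfm} through $\norm{\partial_z f_{m-1}}_{L^2}$. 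So as written your source-term analysis both introduces phantom terms you cannot bound and misplaces the genuinely new 3D contributions; correcting these two points brings your argument back to the paper's proof.
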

 
 We will only estimate  $f_m$ and $q_m $ because  $\tilde f_m$ and $\tilde q_m$ can be estimated similarly.  To prove the above proposition, we first derive the equations solved by $f_m$,
  $ m\geq1$, as 
		\begin{equation}
		\label{equforf}
		\partial_t f_m +\big(u \partial_x  +v\partial_y +w\partial_z\big) f_m  -\partial _{z}^2f_m = \tau_1\partial_x^m g  +\mathcal  J_m,
	\end{equation}
 	where  $g $ is defined by \eqref{defR1}, and 
 		\begin{eqnarray*}
	\mathcal J_m 
   &=&\tau_1\chi \sum_{j=1}^m{m\choose j}\big[\big(\partial_x^j  u \big)\partial_x\partial_x^{m-j}u+ \big(\partial_x^j v\big)\partial_y\partial_x^{m-j}u\big] +\tau_1\chi \sum_{j=1}^{m-1}{m\choose j}\big(\partial_x^j w\big) \partial_x^{m-j} \psi\\
   && -\tau_1\sum_{j=1}^m{m\choose j} \big[\big(\partial_x^j u\big)\partial_x\partial_x^{m-j}\psi+\big(\partial_x^j v\big)\partial_y\partial_x^{m-j}\psi\big]  -\tau_1\sum_{j=1}^{m-1}{m\choose j} \big(\partial_x^j w\big)\partial_z\partial_x^{m-j} \psi\\
 &&-\tau_1\Big[\frac{\partial_z g -\eta\partial_y\psi-\chi g }{\psi} -\partial_x \psi+\chi \partial_x u+\chi\partial_y v+2\chi\partial_z\chi\Big]\partial_x^m u+2\tau_1 (\partial_z\chi)\partial_x^m\psi\\
	&&+\inner{w\tau_1'-\tau_1''} \big(\partial_x^{m} \psi-\frac{\partial_z \psi }{ \psi}\partial_x^{m}  u\big)-2\tau_1'\partial_z \big(\partial_x^{m} \psi-\frac{\partial_z \psi }{ \psi}\partial_x^{m}  u\big) 
	\end{eqnarray*}
with $\chi\stackrel{\rm def}{ =} \partial_z\psi / \psi.$
Here and throughout the paper,  ${m\choose j}$ denotes the binomial coefficient.  
 
We just give a sketch for obtaining   \eqref{equforf}-\eqref{gammequ}. 	 The derivation of  \eqref{equforf} relies on the cancellation property observed in \cite{awxy,MW}. 
	  Applying $\partial_x^m$ to the equations \eqref{prandtl} and \eqref{equforpsi}  for $u$ and $\psi,$    it  follows from Leibniz formula that  
 \begin{equation}\label{uequ}
 \begin{aligned}
&	\partial_t \partial_x^m u+\big(u \partial_x  +v\partial_y +w\partial_z\big) \partial_x^m u-\partial _{z}^2\partial_x^m u+\inner{\partial_x^m  w} \psi  \\
 =&-\sum_{j=1}^{m}{m\choose j}\big[\big(\partial_x^j u\big)\partial_x\partial_x^{m-j}u+ \big(\partial_x^j v\big)\partial_y\partial_x^{m-j}u\big] -\sum_{j=1}^{m-1}{m\choose j}\big(\partial_x^j w\big) \partial_x^{m-j} \psi
 \end{aligned}
 \end{equation}
 and
    \begin{multline}\label{eqpsi}
 \partial_t \partial_x^m  \psi+\big(u \partial_x  +v\partial_y +w\partial_z\big) \partial_x^m \psi-\partial _{z}^2\partial_x^m  \psi+\inner{\partial_x^m  w}\partial_z  \psi \\
 = \partial_x^m g  -\sum_{j=1}^{m}{m\choose j} \big[\big(\partial_x^j u\big)\partial_x\partial_x^{m-j}\psi+\big(\partial_x^j v\big)\partial_y\partial_x^{m-j}\psi\big]  -\sum_{j=1}^{m-1}{m\choose j} \big(\partial_x^j w\big)\partial_z\partial_x^{m-j}\psi.
 \end{multline}
 Multiplying the first  equation above by $ \tau_1\partial_z\psi/\psi$ and the second equation by $\tau_1$  and  then  subtracting one  by  another,  we obtain the equation for $f_m.$ 


The following three lemmas are for the estimates on
 the terms involving  $\partial_x^m g$ and $\mathcal J_m $   appearing on the right sides of \eqref{equforf}.  

\begin{lemma}\label{lemg1}
	Let $\sigma\leq 2.$  Then for any   $ m\geq 7,$  and  any pair $(\rho,\tilde\rho)$ with $0<\rho<\tilde\rho<\rho_0\leq 1,$ we have
	\begin{eqnarray*}
		 \inner{ \comi z^\ell\tau_1 \partial_x^m g ,  ~\comi z^\ell  f_m}_{L^2} \leq \frac{C [\inner{ m-7}!]^{2\sigma}}{ \rho^{2( m-6)}}\frac{\abs{\vec a}_{\tilde \rho,\sigma}^2}{\tilde\rho-\rho}.
	\end{eqnarray*}
\end{lemma}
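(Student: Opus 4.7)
Proof plan.

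The plan is to Leibniz-expand $\partial_x^m g$, apply Cauchy--Schwarz against $\comi{z}^\ell f_m$, and then estimate the resulting sum by combining the Sobolev embeddings and the Gevrey norm bounds from Lemma~\ref{lemequa}. Since $g=(\partial_y v)\psi-(\partial_y u)\eta$, Leibniz's rule yields
$\partial_x^m g=\sum_{j=0}^{m}\binom{m}{j}\bigl[(\partial_x^j\partial_y v)(\partial_x^{m-j}\psi)-(\partial_x^j\partial_y u)(\partial_x^{m-j}\eta)\bigr].$
Cauchy--Schwarz gives $\big|(\comi{z}^\ell\tau_1\partial_x^m g,\comi{z}^\ell f_m)_{L^2}\big|\le\|\comi{z}^\ell\tau_1\partial_x^m g\|_{L^2}\cdot\|\comi{z}^\ell f_m\|_{L^2}$, and the second factor is immediately bounded by $\rho^{-(m-6)}[(m-7)!]^{\sigma}|\vec a|_{\tilde\rho,\sigma}$ from the definition of $|\vec a|_{\tilde\rho,\sigma}$ and the monotonicity $|\vec a|_{\rho,\sigma}\le|\vec a|_{\tilde\rho,\sigma}$; the real task is therefore to bound $\|\comi{z}^\ell\tau_1\partial_x^m g\|_{L^2}$ by $C\rho^{-(m-6)}[(m-7)!]^{\sigma}(\tilde\rho-\rho)^{-1}|\vec a|_{\tilde\rho,\sigma}$.

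For each summand I would place the factor with the fewer tangential derivatives in $L^\infty(\Omega)$ or $L^\infty_{x,y}(L^2_z)$ via the Sobolev inequalities \eqref{soblev}, \eqref{soblev+}, together with \eqref{+condi1} when the derivative count is $\le 3$, and the factor with the larger number of derivatives in the appropriate weighted $L^2$-norm controlled by \eqref{etan} or \eqref{chi2est}. The weight $\comi{z}^\ell$ is distributed as $\comi{z}^{\ell-\kappa}\cdot\comi{z}^\kappa$ (when paired with a $v$-factor, using $\kappa\ge 1$ and $\ell-\kappa\le\ell$) or as $\comi{z}^{\ell-1}\cdot\comi{z}$ (when paired with a $u$-factor) so that the weight on each factor matches the one appearing in its Gevrey norm. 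To close the combinatorial sum I would use the standard Gevrey-algebra inequality $\binom{m}{j}[(j-a)!]^\sigma[(m-j-b)!]^\sigma\le C[(m-a-b)!]^\sigma$, valid for $\sigma\ge 1$, so that $\sum_j[\,\cdot\,]\le C[(m-7)!]^\sigma$; the radii combine as $\rho^{-(j-c_1)}\rho^{-(m-j-c_2)}=\rho^{-(m-c_1-c_2)}$, with any fixed offset absorbed by $\rho\le 1$.

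The main obstacle is the endpoint $j=m$, where $\partial_x^m\partial_y v$ and $\partial_x^m\partial_y u$ carry $m+1$ tangential derivatives--- one order beyond what $f_m$ controls at radius $\rho$. For these terms I would switch to the larger radius $\tilde\rho>\rho$, which provides, for instance, $\|\comi{z}^\kappa\partial_x^m\partial_y v\|_{L^2}\le \tilde\rho^{-(m-5)}[(m-6)!]^\sigma|\vec a|_{\tilde\rho,\sigma}$, and then convert the extra derivative into the Gevrey-loss factor $(\tilde\rho-\rho)^{-1}$ via the shrinking inequality $(m-6)(\rho/\tilde\rho)^{m-6}\le (\tilde\rho-\rho)^{-1}$ from \eqref{factor}; the parallel $j=0$ endpoint involving $\partial_x^m\eta$ is handled using the anisotropic prefactor $|\alpha|$ in the $\eta$-part of the norm, which supplies the extra power of $m$. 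The residual polynomial factor $(m-6)^{\sigma-1}$ with $\sigma\le 2$ is absorbed using $\rho\le 1$ together with a further application of \eqref{factor}, producing the claimed estimate.
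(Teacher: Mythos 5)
Your plan diverges from the paper at the very first step, and the divergence is where it breaks. The paper never Leibniz-expands $\partial_x^m g$: it observes that $\partial_x^m g=g_\alpha$ with $\alpha=(m,0)$ is itself one of the quantities built into $\abs{\vec a}_{\rho,\sigma}$ (Definition \ref{gevspace}), with weight $\comi z^{\kappa+\delta}\geq\comi z^{\ell}$ \emph{and} the anisotropic prefactor $\abs\alpha$, so \eqref{chi2est} gives $\norm{\comi z^{\ell}g_\alpha}_{L^2}\leq m^{-1}[(m-6)!]^{\sigma}\tilde\rho^{-(m-5)}\abs{\vec a}_{\tilde\rho,\sigma}$; pairing with \eqref{etan} for $f_m$, the built-in $m^{-1}$ reduces $(m-6)^{\sigma}$ to $m^{\sigma-1}\leq m$ (here $\sigma\leq2$ enters), and a single use of \eqref{factor} converts that one power of $m$ into $1/(\tilde\rho-\rho)$. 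The hard work for $g_\alpha$ is done separately in Section \ref{sec5} via the cancellation in the equation \eqref{eqforg1} for $g$; that is exactly why $g_\alpha$ was put into the norm.

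In your expansion the fatal term is the endpoint $j=m$, i.e.\ $(\partial_x^m\partial_y v)\,\psi$. The $v$-component of the norm has \emph{no} $\abs\alpha$ prefactor, so the best you can use is $\norm{\comi z^{\kappa}\partial_x^m\partial_y v}_{L^2}\leq[(m-6)!]^{\sigma}\tilde\rho^{-(m-5)}\abs{\vec a}_{\tilde\rho,\sigma}=(m-6)^{\sigma}[(m-7)!]^{\sigma}\tilde\rho^{-(m-5)}\abs{\vec a}_{\tilde\rho,\sigma}$. Inequality \eqref{factor} trades exactly one power of $m$ for one factor $1/(\tilde\rho-\rho)$; the leftover $(m-6)^{\sigma-1}$ (a genuine power of $m$ for $\sigma\in(1,2]$) cannot be ``absorbed using $\rho\leq1$'', and a second application of \eqref{factor} produces $1/(\tilde\rho-\rho)^2$, which is not the claimed estimate and would destroy the abstract Cauchy--Kovalevskaya scheme the a priori estimate feeds into. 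Indeed $\sup_{m}(m-6)^{\sigma}x^{m-6}(1-x)\sim(1-x)^{1-\sigma}$ as $x=\rho/\tilde\rho\to1$, so no rearrangement of radii fixes this: a one-derivative loss in the Gevrey-$\sigma$ scale costs $m^{\sigma}$, while radius shrinking pays only $m^{1}$ per $1/(\tilde\rho-\rho)$; the missing $m$ must come from the anisotropic prefactor, which $v$ does not have (this is precisely the derivative loss in $y$ carried by $g$ that the paper flags in Subsection \ref{subsec22}). A secondary mismatch: for middle indices $4\lesssim j\lesssim m-4$ both factors require Gevrey control, so $\norm{\comi z^{\ell}\tau_1\partial_x^m g}_{L^2}$ is at best quadratic in $\abs{\vec a}$, not linear as you claim, and your final bound would contain $\abs{\vec a}^3$ terms absent from the statement.
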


\begin{proof} Note that $\partial_x^m g =g_\alpha$ with $\alpha=(m,0)\in\mathbb Z_+^2,$ and that $\ell\leq \kappa+\delta$ in view of \eqref{ellalpha}. Then we 
	use \eqref{chi2est} and \eqref{etan}  to get 
	\begin{eqnarray*}
			 \inner{ \comi z^\ell\tau_1 \partial_x^m g ,  ~\comi z^\ell  f_m}_{L^2} &\leq& m^{-1}\frac{ [\inner{ m-6}!]^{\sigma}}{\tilde\rho^{ m-5}}  \abs{\vec a}_{\tilde \rho,\sigma}  \frac{ [\inner{ m-7}!]^{\sigma}}{\tilde \rho^{ m-6}}\abs{\vec a}_{\tilde \rho,\sigma}\\
			 &\leq&  \tilde\rho^{-1} m^{\sigma-1} \frac{ \rho^{2( m-6)}}{ \tilde \rho^{2( m-6)}}  \frac{ [\inner{ m-7}!]^{2\sigma}}{ \rho^{2( m-6)}}\abs{\vec a}_{\tilde \rho,\sigma}^2.
\end{eqnarray*}
Moreover, it follows from the fact   $\sigma\leq 2$ and \eqref{factor} that 
\begin{eqnarray*}
	 \tilde\rho^{-1} m^{\sigma-1} \frac{ \rho^{2( m-6)}}{ \tilde \rho^{2( m-6)}}  \leq  	 \tilde\rho^{-1} m \frac{ \rho^{ m-6}}{ \tilde \rho^{ m-6}}\leq \frac{C}{\tilde\rho-\rho}.
\end{eqnarray*}
Then combining these estimates  completes the proof.
\end{proof}

\begin{lemma}\label{lemjm}
	Let $\sigma\in[3/2,2].$  Then for any  $ m\geq 7,$  and  any pair $(\rho,\tilde\rho)$ with $0<\rho<\tilde\rho<\rho_0\leq 1,$ we have
	\begin{equation*}
		 \inner{ \comi z^\ell \mathcal J_m,  ~\comi z^\ell  f_m}_{L^2}
		  \leq   \frac{C [\inner{ m-7}!]^{2\sigma}}{ \rho^{2( m-6)}} \big(\abs{\vec a}_{ \rho,\sigma}^2+\abs{\vec a}_{ \rho,\sigma}^3\big)+ \frac{C [\inner{ m-7}!]^{2\sigma}}{ \rho^{2(m-6)}}\frac{\abs{\vec a}_{\tilde \rho,\sigma}^2}{\tilde\rho-\rho}.
	\end{equation*}
\end{lemma}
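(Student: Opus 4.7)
The plan is to decompose $\mathcal J_m$ into four structurally different groups and estimate the pairing $\inner{\comi z^\ell \mathcal J_m, \comi z^\ell f_m}_{L^2}$ on each group independently. Group (G1) consists of the two binomial convolutions $\tau_1\chi\sum_{j=1}^m\binom{m}{j}[(\partial_x^j u)\partial_x^{m-j+1}u+(\partial_x^j v)\partial_y\partial_x^{m-j}u]$ and the analogous ones with $\psi$ replacing $u$; group (G2) consists of the two $w$-convolutions $\sum_{j=1}^{m-1}\binom{m}{j}(\partial_x^j w)\partial_x^{m-j}\psi$ and $\sum_{j=1}^{m-1}\binom{m}{j}(\partial_x^j w)\partial_z\partial_x^{m-j}\psi$; group (G3) collects the local terms of schematic form $[\text{bounded coefficient}]\cdot\partial_x^m u$ and $[\text{bounded coefficient}]\cdot\partial_x^m\psi$; group (G4) is the pair of cutoff terms carrying $\tau_1'$ or $\tau_1''$.

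Group (G3) is bounded directly: on $\mathrm{supp}\,\tau_1$ the quotient $\chi=\partial_z\psi/\psi$ is bounded thanks to the lower bound on $|\psi|$ in \eqref{condi}, and each remaining factor lies in $L^\infty(\Omega)$ by the Sobolev embeddings \eqref{soblev}-\eqref{soblev+} together with \eqref{+condi1}. Cauchy-Schwarz against $\comi z^{2\ell} f_m$ and the weighted $L^2$ bounds on $\partial_x^m u$, $\partial_x^m\psi$ from \eqref{etan} then produce the $|\vec a|^2_{\rho,\sigma}+|\vec a|^3_{\rho,\sigma}$ contribution, the cubic term coming from the quadratic factor $\partial_z g/\psi$ in the bracket. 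Group (G4) is supported in the compact annulus $\{\epsilon\leq |z-1|\leq 3\epsilon/2\}$ where every coefficient, including $\partial_z\psi/\psi$, is bounded and the weight $\comi z^{2\ell}$ is harmless; I would handle the term $-2\tau_1'\partial_z(\cdots)$ by integration by parts, transferring $\partial_z$ onto $f_m$ and absorbing a small multiple of $\|\comi z^\ell\partial_z f_m\|_{L^2}^2$ into the dissipation on the left side of \eqref{equforf} whenever needed.

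The heart of the argument is (G1)-(G2), which I would treat by the standard Gevrey convolution technique. In each binomial sum I would partition $j$ into a low range $j\leq 6$, a symmetric high range $j\geq m-6$, and a middle range $7\leq j\leq m-7$. In the two outer ranges the factor carrying few derivatives goes in $L^\infty(\Omega)$ by \eqref{soblev}-\eqref{soblev+} and \eqref{+condi1}, while the other factor is placed in weighted $L^2$ using one of \eqref{etan}, \eqref{emix}, \eqref{chi2est}, \eqref{mixoneta}; the constraint $\ell\leq\kappa+\delta$ from \eqref{ellalpha} guarantees that the weight on the second factor can always be compared with $\comi z^\ell$. In the middle range I would distribute the product as $L^2_{x,y}(L^\infty_z)\otimes L^\infty_{x,y}(L^2_z)$ and apply the anisotropic Sobolev bound \eqref{soblev} on the first factor. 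The prefactor $\binom{m}{j}$ is then absorbed into the Gevrey factorials by the standard convolution inequality for Gevrey sequences, and summing the resulting series in $j$ converges uniformly in $m$ precisely when $\sigma\geq 3/2$.

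The main obstacle is the extreme index $j=m-1$ in the $w$-convolutions of (G2). Writing $\partial_x^{m-1}w=-\int_0^z(\partial_x^m u+\partial_x^{m-1}\partial_y v)\,d\tilde z$ exposes an $(m+1)$-st horizontal derivative of $v$ that lies outside the $\rho$-norm; I would measure this factor in the $\tilde\rho$-norm instead and pay the cost through \eqref{factor}, $m(\rho/\tilde\rho)^m\leq 1/(\tilde\rho-\rho)$, producing the third term on the right side of the claimed bound. A parallel difficulty occurs in the second $w$-sum at $j=m-1$, where $(\partial_x^{m-1}w)\partial_z\partial_x\psi$ forces the use of the mixed-derivative bound \eqref{emix} and interacts with the anisotropic factor $|\alpha|$ visible in \eqref{chi2est}-\eqref{mixoneta}; the careful bookkeeping of these two endpoint contributions, together with the loss-of-derivative transfer via \eqref{factor}, is where the bulk of the work lies.
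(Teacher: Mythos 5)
Your decomposition (G1)--(G4) and the way you estimate each block coincide with the paper's proof: the same splitting into binomial convolutions, $w$-convolutions, local terms and cutoff terms, the same $L^\infty$ versus weighted-$L^2$ (and $L_{x,y}^2(L_z^\infty)\otimes L_{x,y}^\infty(L_z^2)$) pairing across low/middle/high ranges of $j$ with $\sigma\geq 3/2$ ensuring summability, and the same treatment of the extreme indices in the $w$-sums by measuring the top-order factor in the $\tilde\rho$-norm and invoking \eqref{factor} to produce the $\frac{\abs{\vec a}_{\tilde\rho,\sigma}^2}{\tilde\rho-\rho}$ term (your labeling of $\partial_x^{m-1}\partial_y v$ as an $(m+1)$-st derivative is a miscount --- it is $m$-th order, and the loss actually comes from the binomial factor $m$, i.e.\ from the derivative loss built into $w$ as encoded in \eqref{chi2est} --- but the remedy you apply is exactly the right one). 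The one place you genuinely diverge is the cutoff term $-2\tau_1'\partial_z\big(\partial_x^m\psi-\frac{\partial_z\psi}{\psi}\partial_x^m u\big)$: integrating by parts in $z$ works, but it leaves an $\eps\norm{\comi z^\ell\partial_z f_m}_{L^2}^2$ contribution that is not present in the stated inequality, so as written you prove a modified version of the lemma that must then be absorbed into the dissipation in the energy identity behind Proposition \ref{propfm}. The paper avoids this entirely by using \eqref{suppch2}, $\tau_1'=\tau_1'\tau_2$, so that the offending piece is $\tau_1'\tau_2\,\partial_x^m\xi$ plus lower-order terms, and $\norm{\tau_2\partial_x^m\xi}_{L^2}$ is a component of $\abs{\vec a}_{\rho,\sigma}$ controlled by \eqref{etan}; with that substitution your argument reproduces the lemma exactly as stated.
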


 \begin{proof} We divide the term into
 \begin{eqnarray*}
 \inner{ \comi z^\ell\mathcal J_m, ~ \comi z^\ell  f_m}_{L^2}=A_1+A_2+A_3+A_4
\end{eqnarray*}
with
\begin{eqnarray*}
A_1&=&\sum_{j=1}^{m}{m\choose j}\inner{\tau_1\chi  \comi z^\ell\big[\big(\partial_x^j  u \big)\partial_x\partial_x^{m-j}u+ \big(\partial_x^j v\big)\partial_y\partial_x^{m-j}u\big],  ~\comi z^\ell  f_m}_{L^2} \\
&& -
\sum_{j=1}^{m}{m\choose j}\inner{ \tau_1 \comi z^\ell\big[\big(\partial_x^j u\big)\partial_x\partial_x^{m-j}\psi+\big(\partial_x^j v\big)\partial_y\partial_x^{m-j}\psi\big] ,  ~\comi z^\ell  f_m}_{L^2},\\
A_2&=&\sum_{j=1}^{m-1}{m\choose j}  \inner{\tau_1\chi \comi z^\ell \big(\partial_x^j w\big) \partial_x^{m-j} \psi,  ~\comi z^\ell  f_m}_{L^2}\\
&&- \sum_{j=1}^{m-1}{m\choose j}  \inner{ \tau_1  \comi z^\ell \big(\partial_x^j w\big)\partial_z\partial_x^{m-j} \psi,  ~\comi z^\ell  f_m}_{L^2},\\
A_3&	= &
 2 \inner{ \comi z^\ell \tau_1 (\partial_z\chi)\partial_x^m\psi, ~ \comi z^\ell  f_m}_{L^2}\\
&&   -\Big( \comi z^\ell \tau_1 \Big[\frac{\partial_z g -\eta\partial_y\psi-\chi g }{\psi} -\partial_x \psi+\chi \partial_x u+\chi\partial_y v+2\chi\partial_z\chi\Big]\partial_x^m u,   ~\comi z^\ell  f_m\Big)_{L^2}
\end{eqnarray*}
and
\begin{eqnarray*}
A_4= 
 \inner{ \comi z^\ell \Big[\inner{w\tau_1'-\tau_1''} \big(\partial_x^{m} \psi-\frac{\partial_z \psi }{ \psi}\partial_x^{m}  u\big)-2\tau_1'\partial_z \big(\partial_x^{m} \psi-\frac{\partial_z \psi }{ \psi}\partial_x^{m}  u\big)\Big], ~ \comi z^\ell  f_m}_{L^2}.
\end{eqnarray*}
We now derive the upper bounds for  $A_j, 1\leq j\leq 4,$  term by term.    

\medskip
\noindent\underline{\it  Upper bound for $A_3$ and $A_4$}.  
  In view of the conditions \eqref{condi}-\eqref{+condi1} and the fact that  $\abs{\chi}\leq C\comi z^{-1}$ on supp\ $\tau_1$,  by observing \eqref{ellalpha},  we have 
    \begin{eqnarray*}
  	\norm{\tau_1\partial_z\chi}_{L^\infty}+\norm{\comi z\tau_1  \big[\frac{ -\eta\partial_y\psi }{\psi} -\partial_x \psi+\chi\partial_x u+\chi\partial_y v+2\chi\partial_z\chi\big] }_{L^\infty} \leq C.
  \end{eqnarray*}
 And  using \eqref{ellalpha} and  the representation \eqref{defR1} of $g $    gives
 \begin{eqnarray*}
 	 \norm{\comi z \tau_1  \frac{\partial_z g  -\chi g }{ \psi}    }_{L^\infty} \leq C.
 \end{eqnarray*} 
  Thus,
\begin{equation*}
	A_3\leq C  \inner{ \norm{\comi z^{\ell-1} \partial_x^m u}_{L^2}+\norm{\comi z^\ell  \partial_x^m \psi}_{L^2}}\norm{\comi z^\ell  f_m}_{L^2}\leq \frac{C [\inner{ m-7}!]^{2\sigma}}{ \rho^{2( m-6)}} \abs{\vec a}_{ \rho,\sigma}^2,
\end{equation*}
the last inequality following from \eqref{etan}. Similarly,  we have byobserving that $\tau_1'=\tau_1'\tau_2$ due to \eqref{suppch2} and that $\comi z$ is bounded on supp\ $\tau_1'$ or supp\ $\tau_1'',$ 
\begin{eqnarray*}
	A_4&\leq& \norm{ f_m}_{L^2} \Big( \norm{ \partial_x^m u}_{L^2}+\norm{   \partial_x^m \psi}_{L^2}+\norm{\tau_2 \partial_x^m \xi}_{L^2}\Big)   \leq   \frac{C [\inner{ m-7}!]^{2\sigma}}{ \rho^{2( m-6)}} \abs{\vec a}_{ \rho,\sigma}^2,
\end{eqnarray*}
where in the last inequality we have used \eqref{etan}. 
Thus, we conclude 
\begin{eqnarray*}
	A_3+A_4\leq   \frac{C [\inner{ m-7}!]^{2\sigma}}{ \rho^{2( m-6)}} \abs{\vec a}_{ \rho,\sigma}^2.
\end{eqnarray*}

\medskip
\noindent\underline{\it  Upper bound for $A_2$}.  We will show that
 $A_2$ satisfies
\begin{equation}
\label{a2}
	A_2\leq \frac{C [\inner{ m-7}!]^{2\sigma}}{\rho^{2( m-6)}} \abs{\vec a}_{ \rho,\sigma}^3+\frac{C [\inner{ m-7}!]^{2\sigma}}{\rho^{2( m-6)}} \frac{\abs{\vec a}_{\tilde \rho,\sigma}^2}{\tilde\rho-\rho}.
\end{equation} 
For this,  firstly, we have 
\begin{eqnarray*}
	A_2&\leq& \norm{\comi z^\ell  f_m}_{L^2}   \sum_{1\leq j\leq m-1}{m\choose j}  \norm{ \comi z^\ell \big(\partial_x^j w\big)\partial_z\partial_x^{m-j} \psi}_{L^2}\\
	&&+\norm{\comi z^\ell  f_m}_{L^2}   \sum_{1\leq j\leq m-1}{m\choose j}  \norm{ \comi z^\ell \big(\partial_x^j w\big)\partial_x^{m-j} \psi}_{L^2}\\
	 &\stackrel{\rm def}{ =}& A_{2,1}+A_{2,2}. 
\end{eqnarray*} 
We will  only estimate
     $A_{2,1}$  because $A_{2,2}$ can be handled similarly.    By  \eqref{+condi1}, \eqref{emix}  and \eqref{chi2est}, we have 
\begin{equation}
\label{been}
\begin{aligned}
	& \Big[\sum_{1\leq j\leq 2}+\sum_{m-2\leq j\leq m-1}\Big] {m\choose j}  \norm{ \comi z^\ell \big(\partial_x^j w\big)\partial_z\partial_x^{m-j} \psi}_{L^2}\\
	\leq &~ C m \frac{  [\inner{ m-7}!]^{\sigma}}{\tilde \rho^{ m-6}}\abs{\vec a}_{\tilde \rho,\sigma}\leq  \frac{ C[\inner{ m-7}!]^{\sigma}}{\rho^{ m-6}}\frac{\abs{\vec a}_{\tilde \rho,\sigma}}{\tilde\rho-\rho},
	\end{aligned}
\end{equation}
where in the last inequality we have used  \eqref{factor}.  Next,
we estimate the remaining terms in the summation. Note that
\begin{eqnarray*}
  \sum_{3\leq j\leq m-3}{m\choose j}  \norm{ \comi z^\ell \big(\partial_x^j w\big)\partial_z\partial_x^{m-j} \psi}_{L^2} \leq T_{1}+T_2   \end{eqnarray*}
with 
\begin{eqnarray*}
	T_1&=&\sum_{j=3}^{[(m-3)/2]}{m\choose j}  \norm{ \comi z^\ell \big(\partial_x^j w\big)\partial_z\partial_x^{m-j} \psi}_{L^2},\\
	T_2&=& \sum_{j=[(m-3)/2]+1}^{m-3}{m\choose j}  \norm{ \comi z^\ell \big(\partial_x^j w\big)\partial_z\partial_x^{m-j} \psi}_{L^2},
\end{eqnarray*}
where as standard, $[p] $ denotes the largest integer less than or equal to $p.$
By using Sobolev inequality \eqref{soblev} and \eqref{etan}-\eqref{chi2est}, we
have
 \begin{eqnarray*}
	T_1&\leq & \sum_{j=3}^{[(m-3)/2]}{m\choose j}  \norm{ \big(\partial_x^j w\big)}_{L^\infty}\norm{\comi z^\ell \partial_z\partial_x^{m-j} \psi}_{L^2}\\
	&\leq &C\frac{m!}{3!(m- 3)!}  \abs{\vec a}_{\rho,\sigma}\frac{[(m-9)!]^\sigma}{\rho^{m-8}} \abs{\vec a}_{\rho,\sigma}\\
	&&+  C \sum_{j=4}^{[(m-3)/2]}\frac{m!} {j!(m-j)!} \frac{[(j-4)!]^\sigma}{\rho^{j-3}} \abs{\vec a}_{\rho,\sigma}\frac{[(m-j-6)!]^\sigma}{\rho^{m-j-5}} \abs{\vec a}_{\rho,\sigma}\\
	&\leq &  \frac{C  \abs{\vec a}_{\rho,\sigma}^2}{\rho^{m-8}}m^3[(m-9)!]^{\sigma}+ \frac{C  \abs{\vec a}_{\rho,\sigma}^2}{\rho^{m-8}} \sum_{j=4}^{[(m-3)/2]}\frac{m! [(j-4)!]^{\sigma-1} [(m-j-6)!]^{\sigma-1}} {j^4(m-j)^6}   \\
	&\leq &  \frac{C  \abs{\vec a}_{\rho,\sigma}^2}{\rho^{m-6}} [(m-7)!]^{\sigma} \big(m^3m^{-2\sigma}\big)+\frac{C  \abs{\vec a}_{\rho,\sigma}^2}{\rho^{m-6}} \sum_{j=4}^{[(m-3)/2]}\frac{(m-7)! m^7} {j^4 m^6}   [(m-10)!]^{\sigma-1}\\
	&\leq & \frac{C  [(m-7)!]^{\sigma} \abs{\vec a}_{\rho,\sigma}^2 }{\rho^{m-6}}  \bigg( m^3m^{-2\sigma}+\sum_{j=4}^{[(m-3)/2]}\frac{  m } {j^4  m^{3(\sigma-1)}}\bigg)\\
	&\leq & \frac{C  [(m-7)!]^{\sigma}  }{\rho^{m-6}}  \abs{\vec a}_{\rho,\sigma}^2,
\end{eqnarray*}
where in the last inequality we have used
 the fact that $\sigma\geq 3/2.$   Similarly,
\begin{eqnarray*}
	T_2&\leq & \sum_{j=[(m-3)/2]+1}^{m-3}{m\choose j}  \norm{\big(\partial_x^j w\big)}_{L_{x,y}^2(L_z^\infty)} \norm{\comi z^\ell \partial_z\partial_x^{m-j} \psi}_{L_{x,y}^\infty(L_z^2)}\\
	&\leq &  \sum_{j=[(m-3)/2]+1}^{m-4}\frac{m!} {j!(m-j)!} \frac{[(j-6)!]^\sigma}{\rho^{j-5}} \abs{\vec a}_{\rho,\sigma}\frac{[(m-j-4)!]^\sigma}{\rho^{m-j-3}} \abs{\vec a}_{\rho,\sigma}\\
	&&+\frac{C  \abs{\vec a}_{\rho,\sigma}^2}{\rho^{m-8}}m^3[(m-9)!]^{\sigma}\\
	&\leq & \frac{C  \abs{\vec a}_{\rho,\sigma}^2}{\rho^{m-8}} \sum_{j=[(m-3)/2]+1}^{m-4}\frac{m![(j-6)!]^{\sigma-1} [(m-j-4)!]^{\sigma-1}} {j^6(m-j)^4}   +\frac{C  \abs{\vec a}_{\rho,\sigma}^2}{\rho^{m-8}}m^3[(m-9)!]^{\sigma}\\
	&\leq & \frac{C  \abs{\vec a}_{\rho,\sigma}^2}{\rho^{m-6}} \sum_{j=[(m-3)/2]+1}^{m-4}\frac{(m-7)! m^7} {m^6 (m-j)^4}   [(m-10)!]^{\sigma-1}+\frac{C  \abs{\vec a}_{\rho,\sigma}^2}{\rho^{m-6}}m^3[(m-9)!]^{\sigma}\\
	&\leq & \frac{C  [(m-7)!]^{\sigma} \abs{\vec a}_{\rho,\sigma}^2 }{\rho^{m-6}}\bigg(m^3m^{-2\sigma}+\sum_{j=[(m-3)/2]+1}^{m-4}\frac{  m} { (m-j)^4 m^{3(\sigma-1)}} \bigg)\\
	&\leq & \frac{C  [(m-7)!]^{\sigma}  }{\rho^{m-6}}  \abs{\vec a}_{\rho,\sigma}^2.
\end{eqnarray*}
Combining these inequalities yields
\begin{eqnarray*}
	 \sum_{3\leq j\leq m-3}{m\choose j}  \norm{ \comi z^\ell \big(\partial_x^j w\big)\partial_z\partial_x^{m-j} \psi}_{L^2} \leq \frac{C  [(m-7)!]^{\sigma}  }{\rho^{m-6}}  \abs{\vec a}_{\rho,\sigma}^2,
\end{eqnarray*}
which along with  \eqref{been} gives
\begin{eqnarray*}
	\sum_{1\leq j\leq m-1}{m\choose j}  \norm{ \comi z^\ell \big(\partial_x^j w\big)\partial_z\partial_x^{m-j} \psi}_{L^2}\leq  \frac{C [\inner{ m-7}!]^{ \sigma}}{\rho^{  m-6 }} \abs{\vec a}_{ \rho,\sigma}^2+\frac{C [\inner{ m-7}!]^{ \sigma}}{\rho^{  m-6 }} \frac{\abs{\vec a}_{\tilde \rho,\sigma}}{\tilde\rho-\rho},
\end{eqnarray*}
and thus,
\begin{eqnarray*}
	 A_{2,1}
	&\leq &\frac{C [\inner{ m-7}!]^{2\sigma}}{\rho^{2( m-6)}} \abs{\vec a}_{ \rho,\sigma}^3+\frac{C [\inner{ m-7}!]^{2\sigma}}{\rho^{2( m-6)}} \frac{\abs{\vec a}_{\tilde \rho,\sigma}^2}{\tilde\rho-\rho}.
\end{eqnarray*}
The upper bound for $A_{2,2}$ is similar  and thus  \eqref{a2} follows.  

\medskip
\noindent\underline{\it  Upper bound for $A_1$}. Just following the argument used  above for $A_2$ with slight modification,  we can obtain
\begin{eqnarray*}
	A_1\leq \frac{C [\inner{ m-7}!]^{2\sigma}}{\rho^{2( m-6)}} \abs{\vec a}_{ \rho,\sigma}^3+\frac{C [\inner{ m-7}!]^{2\sigma}}{\rho^{2( m-6)}} \frac{\abs{\vec a}_{\tilde \rho,\sigma}^2}{\tilde\rho-\rho}.
\end{eqnarray*}
This  completes the proof of Lemma \ref{lemjm}.    
 \end{proof}

\begin{proof}[Proof of Proposition \ref{propfm}] We first estimate $f_m.$ 
 	 Multiplying both sides of  equation \eqref{equforf} by $  \comi z^{2\ell} f_m$ and  then integrating over $\Omega$,  by integration by parts and observing $\partial_zf_m|_{z=0}=0,$ we have, for any $\eps>0,$
 	\begin{equation*}
 	\begin{aligned}
 	& 	\frac{1}{2}\frac{d}{dt}   \norm{\comi z^\ell f_m}_{L^2}^2+ \norm{\comi z^\ell  \partial_z  f_m }_{L^2}^2\\
 	 =&   \inner{ \comi z^\ell( \partial_x^mg +\mathcal J_m),  \,\comi z^\ell  f_m}_{L^2}+ \inner{ w (\partial_z\comi z^\ell)f_m,  \,\comi z^\ell  f_m}_{L^2} +\frac{1}{2}  \inner{  \big(\partial_{z}^2\comi z^{2\ell}\big)  f_m,   \,  f_m}_{L^2}  
 	\\
 	 \leq&\inner{ \comi z^\ell \partial_x^mg , ~ \comi z^\ell  f_m}_{L^2}+  \inner{ \comi z^\ell\mathcal J_m, ~ \comi z^\ell  f_m}_{L^2}+   C  \norm{   \comi z^\ell  f_m}_{L^2}^2 \\
 	 \leq&   \frac{C [\inner{ m-7}!]^{2\sigma}}{ \rho^{2( m-6)}} \inner{\abs{\vec a}_{ \rho,\sigma}^2+\abs{\vec a}_{ \rho,\sigma}^3}+ \frac{C [\inner{ m-7}!]^{2\sigma}}{ \rho^{2( m-6)}}\frac{\abs{\vec a}_{\tilde \rho,\sigma}^2}{\tilde\rho-\rho},
 	\end{aligned}
 	\end{equation*}
where 	the last line follows from Lemmas  \ref{lemg1}-\ref{lemjm} as well as \eqref{etan}.  Then   integrating over $[0,t],$  we obtain  the upper bound as desired for the term $\norm{\comi z^\ell f_m}_{L^2}.$  The estimate on   $\norm{q_m}_{L^2}$ is similar.  In fact, we multiply equation \eqref{eqpsi}    by  $\tau_2(\partial_z\xi)/\xi$, and apply $\tau_2\partial_x^m$ to the first equation in \eqref{seco}, and finally  subtracting one  by  another. This gives the equation solved by $q_m$:
\begin{multline*}
	\Big(\partial_t    +u\partial_x +v\partial_y +w\partial_z -\partial_z^2  \Big) q_m=\tau_2\sum_{j=1}^3\partial_x^m\theta_j-b\tau_2\partial_x^mg\\
	- \tau_2 \sum_{j=1}^{m}{m\choose j} \big[\big(\partial_x^j u\big)\ \partial_x^{m-j+1}\xi+\big(\partial_x^j v\big)\partial_y\partial_x^{m-j}\xi\big]  -\tau_2\sum_{j=1}^{m-1}{m\choose j} \big(\partial_x^j w\big)\partial_z\partial_x^{m-j}\xi\\
	+b\tau_2 \sum_{j=1}^{m}{m\choose j} \big[\big(\partial_x^j u\big) \partial_x^{m-j+1}\psi+\big(\partial_x^j v\big)\partial_y\partial_x^{m-j}\psi\big] + b\tau_2\sum_{j=1}^{m-1}{m\choose j} \big(\partial_x^j w\big) \partial_x^{m-j}\xi 	\\+2\inner{\partial_zb  }\partial_z (\tau_2\partial_x^m\psi) 
	  -  \tau_2 ' b w\partial_x^m \psi+  b \tau_2''  \partial_x^m \psi+ 2b  \tau_2' \partial_x^m \xi  
	 +  \tau_2 'v\partial_x^m \xi-  \tau_2'' \partial_x^m \xi- 2\tau_2' \partial_x^m \partial_z \xi +\mathcal P_{m} ,
\end{multline*}	
where  $b =\frac{\partial_z\xi}{\xi}$ and 
\begin{eqnarray*}
	\mathcal P_{m}&=& \frac{2\Big(\psi\partial_x \xi-\inner{\partial_x u} \partial_z\xi\Big)}{\xi}\tau_2\partial_x^m\psi-\frac{  \Big(\psi\partial_x \psi-\inner{\partial_x u}  \xi\Big)\partial_z\xi}{\xi^2}\tau_2\partial_x^m\psi\\
	&&-  \frac{ 2    \inner{\partial_z^2\xi}  \partial_z\xi  }{\xi^2} \tau_2\partial_x^m\psi+\frac{  2\inner{\partial_z\xi}^2 \partial_z\xi}{\xi^3}\tau_2\partial_x^m\psi.
	\end{eqnarray*}
 Then by repeating the argument for treating $f_m$,  we can obtain the desired upper bound for $\norm{q_m}_{L^2};$ 
 see also \cite[Subsection 5.2]{LY} for the detailed computation. 
 The estimate on $\tilde f_m$ and $\tilde q_m$  can be obtained similarly.   Then we have completed the proof of Proposition  \ref{propfm}. 
 \end{proof}

 \section{Estimates on $\Gamma_m, H_m, G_m$ and $\tilde\Gamma_m,\tilde H_m, \tilde G_m$}
 
This section is for deriving the upper bounds of the  $L^2$-norms of  $\Gamma_m, H_m$ and $G_m,$  defined in  \eqref{Gaal}.  And $\tilde\Gamma_m,\tilde H_m, \tilde G_m$  can be handled similarly.  The main tool here is   another kind of cancellation without monotonicity or concave condition.

  The proposition below is the main result in this section.

 \begin{proposition}\label{+propfm+++} Let $3/2\leq\sigma\leq 2$ and  $0<\rho_0\leq 1.$ 
Suppose  $(u,v)\in L^\infty\inner{[0, T];~X_{\rho_0,\sigma}}$  is the solution to the Prandtl system \eqref{prandtl} satisfying the conditions \eqref{condi}-\eqref{+condi1}.  
 Then
 	  for any  $ m\geq 7,$   any $t\in[0,T]$ and    any pair $\inner{\rho,\tilde\rho}$ with $0<\rho<\tilde\rho< \rho_0$,  we have
\begin{multline*}
    \frac{\rho^{2( m-6)}}{ [( m-7)!]^{2\sigma}}\Big(\norm{\comi z^{\kappa+\delta}\Gamma_m(t)}_{L^2}^2+\norm{H_m(t)}_{L^2}^2+\norm{G_m(t)}_{L^2}^2 \Big) \\
    \leq   C\abs{\vec a_0}_{\rho,\sigma}^2+
 	C \inner{  \int_0^t   \inner{ \abs{\vec a(s)}_{ \rho,\sigma}^2+\abs{\vec a(s)}_{ \rho,\sigma}^4 }   \,ds+    \int_0^t  \frac{  \abs{\vec a(s)}_{ \tilde\rho,\sigma}^2}{\tilde\rho-\rho}\,ds},
\end{multline*}
and similarly for the upper bound of  $ \tilde\Gamma_m, \tilde H_m $ and $\tilde G_m.$
 \end{proposition}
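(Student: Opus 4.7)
The plan is to derive the evolution equations for each of $\Gamma_m, H_m, G_m$ by suitably combining the $\partial_x^m$-differentiated equations for $u, v, \psi, \xi$ obtained via Leibniz's formula from \eqref{prandtl} and \eqref{equforpsi}--\eqref{seco}, and then to close weighted $L^2$ energy estimates. Writing $L = \partial_t + u\partial_x + v\partial_y + w\partial_z - \partial_z^2$, the elementary product rule $L(\vpi F) = \vpi LF + F L\vpi - 2(\partial_z\vpi)(\partial_z F)$ together with $L\psi = g$, $L\eta = h$, $L\xi = \sum_j\theta_j$, $L\zeta = \sum_j\mu_j$ yields, for example,
\[
L\Gamma_m = \psi\,L(\partial_x^m v) - \eta\,L(\partial_x^m u) + g\,\partial_x^m v - h\,\partial_x^m u - 2\xi\,\partial_z\partial_x^m v + 2\zeta\,\partial_z\partial_x^m u.
\]
Since the Leibniz expansion gives $L(\partial_x^m u) = -(\partial_x^m w)\psi + \mathcal U_m$ and $L(\partial_x^m v) = -(\partial_x^m w)\eta + \mathcal V_m$, where $\mathcal U_m, \mathcal V_m$ contain only commutator cross-terms at orders $1\le j\le m-1$ without $\partial_x^m w$, the 3D cancellation
\[
\psi\bigl[-(\partial_x^m w)\eta\bigr] - \eta\bigl[-(\partial_x^m w)\psi\bigr] \equiv 0
\]
eliminates all tangentially-derivative-losing nonlocal terms from $L\Gamma_m$. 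An analogous computation yields equations $L H_m = (\textrm{local source}) + \mathcal Q_m^H$ and $L G_m = (\textrm{local source}) + \mathcal Q_m^G$ in which the same cancellation eliminates $(\partial_x^m w)\xi$ against $(\partial_x^m w)\psi$ or $(\partial_x^m w)\eta$.

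Next I would perform the weighted $L^2$ energy estimate, multiplying $L\Gamma_m$ by $\comi z^{2(\kappa+\delta)} \Gamma_m$ and $LH_m, LG_m$ by $H_m, G_m$ respectively, and integrating over $\Omega$. The diffusion term produces $\norm{\comi z^{\kappa+\delta}\partial_z \Gamma_m}_{L^2}^2$, which is nonnegative and can be discarded after integration by parts using the vanishing trace $\Gamma_m|_{z=0}=0$ recorded just after \eqref{t2}; the boundary contributions arising for $H_m, G_m$ are absorbed via \eqref{+condi1} and \eqref{etan}. The source $g\partial_x^m v - h\partial_x^m u$ in the equation for $\Gamma_m$ is handled by substituting $\psi\partial_x^m v = \Gamma_m + \eta\partial_x^m u$ and using the weight $\comi z^{\kappa+\delta}$ to balance the decay of $\psi$ in \eqref{condi} against the bound on $\comi z^{\kappa+2}\partial^\alpha\eta$ encoded in \eqref{chi2est}. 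The cross-parabolic contributions $\xi\partial_z\partial_x^m v, \zeta\partial_z\partial_x^m u$ are bounded through \eqref{emix}, \eqref{mixoneta}, and the $L^\infty$-bounds of $\xi, \zeta$ from \eqref{+condi1}. The Leibniz families $\mathcal U_m, \mathcal V_m$ and their analogs for $L H_m, L G_m$ are then treated exactly as $\mathcal J_m$ was in Lemma \ref{lemjm}: the edge indices $j\in\{1, 2, m-2, m-1\}$ are bounded using \eqref{chi2est}, \eqref{mixoneta}, and the factor $\rho/\tilde\rho\le 1$ from \eqref{factor}, yielding the $(\tilde\rho-\rho)^{-1}$ loss, while the middle range $3\le j\le m-3$ is dominated by Sobolev \eqref{soblev}--\eqref{soblev+} combined with the convolution bound
\[
\sum_{j=3}^{[(m-3)/2]} \binom{m}{j} [(j-c)!]^\sigma [(m-j-c')!]^\sigma \le C\,m^3\,[(m-7)!]^\sigma,
\]
whose summability uses the hypothesis $\sigma\ge 3/2$.

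The main obstacle is the careful bookkeeping of the numerous Leibniz cross-terms in $L\Gamma_m, LH_m, LG_m$: one must verify that every such term factorises as (a coefficient controlled by Sobolev embedding) $\times$ (a quantity directly bounded by the Gevrey norm $\abs{\vec a}_{\rho,\sigma}$), so that the prefactor $\rho^{m-6}/[(m-7)!]^\sigma$ extracts cleanly and a Cauchy--Schwarz pairing against $\comi z^{\kappa+\delta}\Gamma_m$ (resp.\ $H_m$, $G_m$) closes the estimate with total right-hand side of size $\abs{\vec a}_{\rho,\sigma}^2 + \abs{\vec a}_{\rho,\sigma}^4 + \abs{\vec a}_{\tilde\rho,\sigma}^2/(\tilde\rho-\rho)$. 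Once this combinatorial step is in place, integrating in time over $[0,t]$ yields the claimed inequality. The parallel bounds on $\tilde\Gamma_m, \tilde H_m, \tilde G_m$ follow by the identical argument with $\partial_x^m$ replaced by $\partial_y^m$, the cancellation structure being symmetric in the two tangential directions.
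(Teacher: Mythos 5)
Your argument is essentially the paper's own proof: the same weighted combinations (the $\partial_x^m$-differentiated $v$-equation times $\psi$ minus the differentiated $u$-equation times $\eta$ for $\Gamma_m$, and the analogous pairings with $\xi$ for $H_m,G_m$) cancel the nonlocal $\partial_x^m w$ terms and give exactly equation \eqref{gammequ} and its analogues, after which the $\comi z^{\kappa+\delta}$-weighted (resp.\ unweighted) energy estimate, the Lemma \ref{lemg1}/\ref{lemjm}-type bounds on the commutator and Leibniz terms (with the $\sigma\geq 3/2$ splitting and \eqref{factor} producing the $(\tilde\rho-\rho)^{-1}$ contribution), and integration in time close the bound as in the paper. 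Two small points of precision: the boundary terms for $H_m$ and $G_m$ are not ``absorbed'' but vanish, since $\xi|_{z=0}=0$ in the constant-pressure case forces $\partial_x^m\xi|_{z=0}=0$ and hence $\partial_z H_m|_{z=0}=\partial_z G_m|_{z=0}=0$; and the source $g\,\partial_x^m v$ is best estimated directly (using that $\comi z^{\delta}g\in L^\infty$ by \eqref{condi}--\eqref{+condi1} together with the norm bound on $\comi z^{\kappa}\partial_x^m v$), since factoring out $\psi$ as you propose would degenerate at the critical curve $z=1$ where $\psi=0$.
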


 We first hanlde  $\Gamma_m, $ and to do so we  apply another kind of  cancellation by virtue of the equations solved by and $u$ and $v$. Precisely, applying $\partial_x^m$ to the equation \eqref{prandtl} for $v$  gives 
 	\begin{eqnarray*}
 &&	\partial_t\partial_x^m  v+\big(u \partial_x  +v\partial_y +w\partial_z\big) \partial_x^m v-\partial _{z}^2\partial_x^m  v+\inner{\partial_x^m  w}\eta\\
 &=&-\sum_{j=1}^{m}{m\choose j}\big [\big(\partial_x^j u\big)\partial_x\partial_x^{m-j}v+ \big(\partial_x^j v\big)\partial_y\partial_x^{m-j}v\big]-\sum_{j=1}^{m-1}{m\choose j} \big(\partial_x^j w\big) \partial_x^{m-j}\eta.
 \end{eqnarray*}
 We multiply the above equation by $\psi$ and multiply the equation \eqref{uequ}     by $\eta$, and then subtract one by another to have
 \begin{equation}\label{gammequ}
  \partial_t \Gamma_m +\big(u \partial_x  +v\partial_y +w\partial_z\big)\Gamma_m  -\partial _{z}^2\Gamma_m =\mathcal L_m,
 	\end{equation}
 	where  
		\begin{eqnarray*}
 	\mathcal L_m
 &=&\sum_{j=1}^{m}{m\choose j}\big[\big(\partial_x^j u\big)\partial_x\partial_x^{m-j}u+\big(\partial_x^j v\big)\partial_y\partial_x^{m-j}u\big] \eta+\sum_{j=1}^{m-1}{m\choose j}\big(\partial_x^j w\big)\big( \partial_x^{m-j}\psi\big)\eta\\
	&&-\sum_{j=1}^{m}{m\choose j}\big[\big(\partial_x^j u\big)\partial_x\partial_x^{m-j}v+\big(\partial_x^j v\big)\partial_y\partial_x^{m-j}v\big] \psi -\sum_{j=1}^{m-1}{m\choose j}\big(\partial_x^j w\big)\big( \partial_x^{m-j}\eta\big) \psi \\
	&&+g \partial_x^m v-2\inner{ \partial_z\psi} \partial_x^m \eta-\Big(h  \partial_x^m u-2\inner{\partial_z\eta}\partial_x^m \psi\Big)
	 \end{eqnarray*}
 with $g, h  $ defined by \eqref{defR1}. Note the terms in the last line above  come from the commutators between the functions $ \psi ,\eta$ and the differential operators,  where we have used the equations  \eqref{equforpsi}  for $\psi$ and $\eta.$

 \begin{lemma}\label{lemlm}
	Let $\sigma\in[3/2,2].$  Then for any  $ m\geq 7,$  and  any pair $(\rho,\tilde\rho)$ with $0<\rho<\tilde\rho<\rho_0\leq 1,$ we have
	\begin{eqnarray*}
		 \inner{ \comi z^{\kappa+\delta} \mathcal L_m,  ~\comi z^{\kappa+\delta}  \Gamma_m}_{L^2} \leq   \frac{C [\inner{ m-7}!]^{2\sigma}}{ \rho^{2( m-6)}} \big(\abs{\vec a}_{ \rho,\sigma}^2+\abs{\vec a}_{ \rho,\sigma}^3\big)+ \frac{C [\inner{ m-7}!]^{2\sigma}}{ \rho^{2( m-6)}}\frac{\abs{\vec a}_{\tilde \rho,\sigma}^2}{\tilde\rho-\rho}.
	\end{eqnarray*}
\end{lemma}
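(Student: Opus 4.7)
The structure of $\mathcal L_m$ parallels that of $\mathcal J_m$ in Lemma \ref{lemjm}: both are sums of products of lower-order derivatives of $u,v,w,\psi,\eta$ with top-order derivatives, plus a handful of commutator terms. My plan is to imitate the scheme used there, pairing $\comi z^{\kappa+\delta}\mathcal L_m$ with $\comi z^{\kappa+\delta}\Gamma_m$ in $L^2$ and splitting the sums into three ranges of the summation index: the extremal indices $j\in\{1,2,m-2,m-1\}$, the lower interior $3\le j\le [(m-3)/2]$, and the upper interior $[(m-3)/2]+1\le j\le m-3$. The commutator contributions on the last two lines of $\mathcal L_m$ (namely $g\partial_x^m v$, $(\partial_z\psi)\partial_x^m\eta$, $h\partial_x^m u$ and $(\partial_z\eta)\partial_x^m\psi$) will be handled separately.

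\textbf{Commutator terms.} For the last two lines, I would first observe that, in view of \eqref{condi}--\eqref{+condi1} and the definitions of $g,h$ in \eqref{defR1}, the weighted quantities $\comi z^{\kappa+\delta}g$, $\comi z^{\kappa+\delta}h$, $\partial_z\psi$ and $\partial_z\eta$ are bounded in $L^\infty$. Pairing in $L^2$ with $\comi z^{\kappa+\delta}\Gamma_m$ and Cauchy--Schwarz then reduce everything to $L^2$ norms of $\partial_x^m u$, $\partial_x^m v$, $\partial_x^m\psi$ and $\partial_x^m\eta$ (the latter with an extra $m$ factor supplied by the anisotropic scaling in \eqref{chi2est}), which are all controlled by $[(m-7)!]^\sigma\rho^{-(m-6)}\abs{\vec a}_{\rho,\sigma}$ through \eqref{etan} and \eqref{chi2est}. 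This yields the $\abs{\vec a}_{\rho,\sigma}^2$ part of the stated bound.

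\textbf{Convolution sums.} For the extremal indices $j\in\{1,2,m-2,m-1\}$ the low-order factor is bounded uniformly by \eqref{+condi1}, whereas the remaining factor is of top order and is measured in the larger norm $\abs{\vec a}_{\tilde\rho,\sigma}$; the factor $m^\sigma$ (with $\sigma\le 2$) produced by $\binom{m}{j}$ is absorbed into $1/(\tilde\rho-\rho)$ via \eqref{factor}, giving the cubic term $[(m-7)!]^{2\sigma}\rho^{-2(m-6)}\abs{\vec a}_{\tilde\rho,\sigma}^2/(\tilde\rho-\rho)$. For the lower interior range, I would place the $\partial_x^j(\cdot)$ factor into $L^\infty$ using \eqref{soblev}, keeping the $\partial_x^{m-j+1}(\cdot)$ factor and the remaining multiplier $\psi$ or $\eta$ (with its weight) in $L^2$; the bounds \eqref{etan}, \eqref{emix}, \eqref{chi2est}, \eqref{mixoneta} then reduce the estimate to a combinatorial sum of the form $\sum_{j}\frac{(m-7)!\,m^7[(m-10)!]^{\sigma-1}}{j^4(m-j)^6\,m^6}$, which converges and is $O([(m-7)!]^\sigma)$ precisely because $\sigma\ge 3/2$. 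The upper interior range is symmetric with the $L^\infty$--$L^2$ roles swapped. The $w$-sums are handled in exactly the same way, using the identity $\partial_x^j w=-\int_0^z(\partial_x^{j+1}u+\partial_y\partial_x^j v)\,d\tilde z$ and the $L^\infty_z$-bound for $\partial^\alpha w$ in \eqref{chi2est} to control $\partial_x^j w$ in $L^\infty_{xy}(L^2_z)$ or $L^\infty$.

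\textbf{Anticipated main obstacle.} The proof is essentially a routine reorganization of the combinatorics of Lemma \ref{lemjm}; the conceptual difficulty is negligible. The one genuine subtlety is the management of the $z$-weights. Because $\Gamma_m$ carries the heavy weight $\comi z^{\kappa+\delta}$, whereas $\eta$ decays like $\comi z^{-\kappa-2}$ and $\psi$ only like $\comi z^{-\ell}$, the weight distribution across the factors in each summand must be tracked carefully, using \eqref{ellalpha} to ensure $\ell\le\kappa+\delta\le \ell+\kappa$ so that all weighted norms appearing are controlled by the definition of $\abs{\vec a}_{\rho,\sigma}$. Once this bookkeeping is done, summing the three types of bounds yields precisely the estimate claimed in Lemma \ref{lemlm}.
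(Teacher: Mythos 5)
Your treatment of the Leibniz (convolution) sums and of the $w$-terms is essentially the paper's argument for Lemma \ref{lemjm} transplanted to $\mathcal L_m$, and that part is fine. The genuine gap is in your "commutator terms" paragraph, specifically the term $-2(\partial_z\psi)\partial_x^m\eta$. You claim that after Cauchy--Schwarz everything is "controlled by $[(m-7)!]^\sigma\rho^{-(m-6)}\abs{\vec a}_{\rho,\sigma}$ through \eqref{etan} and \eqref{chi2est}", yielding only the $\abs{\vec a}_{\rho,\sigma}^2$ contribution. This is false for $\partial_x^m\eta$: the norm $\abs{\vec a}_{r,\sigma}$ controls $\eta$ anisotropically, i.e.\ \eqref{chi2est} gives $\norm{\comi z^{\kappa+2}\partial_x^m\eta}_{L^2}\leq m^{-1}[(m-6)!]^{\sigma}r^{-(m-5)}\abs{\vec a}_{r,\sigma}$, which exceeds $[(m-7)!]^{\sigma}r^{-(m-6)}\abs{\vec a}_{r,\sigma}$ by an unbounded factor of order $m^{\sigma-1}/r$. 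Pairing this with $\norm{\comi z^{\kappa+\delta}\Gamma_m}_{L^2}\leq [(m-7)!]^{\sigma}\rho^{-(m-6)}\abs{\vec a}_{\rho,\sigma}$ at the single radius $\rho$ therefore produces $m^{\sigma-1}\rho^{-1}[(m-7)!]^{2\sigma}\rho^{-2(m-6)}\abs{\vec a}_{\rho,\sigma}^2$, which is not bounded by the right-hand side of the lemma. This is precisely the derivative-loss term (one full tangential derivative lands on $\eta$), and it is the one place where the paper's proof does \emph{not} follow Lemma \ref{lemjm} but instead invokes the mechanism of Lemma \ref{lemg1}: estimate both factors in the larger norm $\abs{\vec a}_{\tilde\rho,\sigma}$ and absorb the surplus $m^{\sigma-1}\tilde\rho^{-1}(\rho/\tilde\rho)^{2(m-6)}$ (using $\sigma\leq 2$ and \eqref{factor}) into $C/(\tilde\rho-\rho)$, which is exactly why the term $[(m-7)!]^{2\sigma}\rho^{-2(m-6)}\abs{\vec a}_{\tilde\rho,\sigma}^2/(\tilde\rho-\rho)$ appears in the statement. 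Your proposal reaches a correct-looking conclusion only because that term happens to be present on the right-hand side for other reasons; as written, your estimate of this piece does not go through.

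A secondary, smaller point: in your weight bookkeeping you assert $\ell\leq\kappa+\delta\leq\ell+\kappa$, but \eqref{ellalpha} gives $\ell+1/2<\delta\leq\ell+1$, so $\kappa+\delta>\ell+\kappa$ and the second inequality is false. What is actually needed (and true) is $\delta\leq\ell+1$, so that e.g.\ $\comi z^{\kappa+\delta}(\partial_y u)\eta$ is controlled by $\norm{\comi z^{\ell-1}\partial_y u}_{L^\infty}\norm{\comi z^{\kappa+2}\eta}$-type quantities; with this correction the $L^\infty$ bounds you invoke for $\comi z^{\kappa+\delta}g$ and $\comi z^{\kappa+\delta}h$ do hold under \eqref{condi}--\eqref{+condi1}.
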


\begin{proof}
	The proof is similar to those for Lemmas \ref{lemg1}-\ref{lemjm}.  For instance,  following the argument in Lemma \ref{lemg1}, it yields 
	\begin{eqnarray*}
		 -2\inner{ \comi z^{\kappa+\delta}\inner{\partial_z\psi} \partial_x^m \eta,  ~\comi z^{\kappa+\delta}  \Gamma_m}_{L^2}\leq \frac{C [\inner{ m-7}!]^{2\sigma}}{ \rho^{2( m-6)}}\frac{\abs{\vec a}_{\tilde \rho,\sigma}^2}{\tilde\rho-\rho}.
	\end{eqnarray*}
	Meanwhile the other terms involving in $\mathcal L_m$ can be handled 
 as  in Lemma \ref{lemjm}. We omit the detail here for brevity. 
\end{proof}

\begin{proof}[Proof of Proposition \ref{+propfm+++}] We 
 	 multiply both sides of  equation \eqref{gammequ} by $  \comi z^{2(\kappa+\delta)} \Gamma_m$ and  then integrating over $\Omega$,  by integration by parts and observing $\Gamma_m|_{z=0}=0,$  we have,
 	\begin{equation*}
 	\begin{aligned}
 	& 	\frac{1}{2}\frac{d}{dt}   \norm{\comi z^{\kappa+\delta} \Gamma_m}_{L^2}^2+ \norm{\comi z^{\kappa+\delta}  \partial_z  \Gamma_m }_{L^2}^2\\
 	 =&   \inner{ \comi z^{\kappa+\delta}  \mathcal L_m,  \,\comi z^\ell  \Gamma_m}_{L^2}+ \inner{ w (\partial_z\comi z^{\kappa+\delta})\Gamma_m,  \,\comi z^{\kappa+\delta}  \Gamma_m}_{L^2} +\frac{1}{2}  \inner{  \big(\partial_{z}^2\comi z^{2\inner{\kappa+\delta}}\big)  \Gamma_m,   \,  \Gamma_m}_{L^2}  
 	\\
 	 \leq& \inner{ \comi z^{\kappa+\delta}  \mathcal L_m,  \,\comi z^\ell  \Gamma_m}_{L^2}+   C  \norm{   \comi z^{\kappa+\delta}  \Gamma_m}_{L^2}^2 \\
 	 \leq&   \frac{C [\inner{ m-7}!]^{2\sigma}}{ \rho^{2( m-6)}} \inner{\abs{\vec a}_{ \rho,\sigma}^2+\abs{\vec a}_{ \rho,\sigma}^3}+ \frac{C [\inner{ m-7}!]^{2\sigma}}{ \rho^{2( m-6)}}\frac{\abs{\vec a}_{\tilde \rho,\sigma}^2}{\tilde\rho-\rho},
 	\end{aligned}
 	\end{equation*}
where 	the last line follows from Lemmas  \ref{lemlm} as well as \eqref{etan}.  Then   integrating over $[0,t],$  we obtain  the upper bound as desired for the term  $\norm{\comi z^{\kappa+\delta} \Gamma_m}_{L^2}.$      

The other terms $\norm{H_m}_{L^2}$ and $\norm{G_m}_{L^2}$ can be treated in the same way. In fact, we apply similar kind of  cancellation  by virtue of the equations solved by   $v$ and $\psi$ to get  the desired upper bound for $\norm{H_m}_{L^2}$, and meanwhile by virtue of the equations for   $u$ and $\psi$ to obtain the estimate on  $\norm{G_m}_{L^2}.$   Since the argument is quite similar as that for handling $\norm{\comi z^{\kappa+\delta} \Gamma_m}_{L^2}$ we omit it for brevity. 
 The estimate on $\tilde \Gamma_m, \tilde H_m$ and $\tilde G_m$  can be obtained in the same way.   Then we have completed the proof of Proposition \ref{+propfm+++}. 
 \end{proof}

 \section{Estimates  on $g_{\alpha}, h_\alpha$ and  $\vec\theta_\alpha, \vec\mu_\alpha$}
  \label{sec5}
 In this section, we estimate  $g_{\alpha}, h_\alpha$ and $\vec\theta_\alpha, \vec\mu_\alpha,$ which are defined by  \eqref{galpha} and \eqref{themu},  and  the main result can be stated as follows.
  
 \begin{proposition}\label{propgalpha} Let $3/2\leq\sigma\leq 2$ and  $0<\rho_0\leq 1.$ 
 Suppose  $(u,v)\in L^\infty\inner{[0, T];~X_{\rho_0,\sigma}}$  is the solution to the Prandtl system \eqref{prandtl} satisfying the conditions \eqref{condi}-\eqref{+condi1}.  
 Then
 	  for any  $ \alpha\in\mathbb Z_+^2$ with $\abs\alpha\geq 7,$  any $t\in[0,T]$ and    any pair $\inner{\rho,\tilde\rho}$ with $0<\rho<\tilde\rho<\rho_0$,  we have
\begin{multline*}
   \frac{\rho^{2( \abs\alpha-5)}}{ [( \abs\alpha-6)!]^{2\sigma}}\Big(\abs\alpha^2\norm{\comi z^{\kappa+\delta} g_\alpha(t)}_{L^2}^2+\abs\alpha^2\norm{\comi z^{\kappa+\delta}h_\alpha(t)}_{L^2}^2  \Big) \\
   + \frac{\rho^{2( \abs\alpha-5)}}{ [( \abs\alpha-6)!]^{2\sigma}}\Big( \abs\alpha^2\norm{ \vec\theta_\alpha(t)}_{L^2}^2+\abs\alpha^2\norm{ \vec\mu_\alpha(t)}_{L^2}^2 \Big) \\
   \leq  C\abs{\vec a_0}_{\rho,\sigma}^2+
 	C \inner{  \int_0^t   \inner{ \abs{\vec a(s)}_{ \rho,\sigma}^2+\abs{\vec a(s)}_{ \rho,\sigma}^4 }   \,ds+    \int_0^t  \frac{  \abs{\vec a(s)}_{ \tilde\rho,\sigma}^2}{\tilde\rho-\rho}\,ds}. 
\end{multline*}
 \end{proposition}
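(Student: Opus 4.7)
My plan follows the same energy–cancellation strategy that works for $f_m,\tilde f_m$ and $\Gamma_m,\tilde\Gamma_m$ in Propositions~\ref{propfm} and \ref{+propfm+++}, but now for the quadratic observables $g,h$ and their normal derivatives $\vec\theta,\vec\mu$. The first step is to derive transport–diffusion equations of the schematic form
\begin{equation*}
\bigl(\partial_t+u\partial_x+v\partial_y+w\partial_z-\partial_z^2\bigr)\Phi=\mathcal R_\Phi,\qquad \Phi\in\{g,h,\theta_j,\mu_j\},
\end{equation*}
obtained by the very cancellation that motivated the definitions: compute $\partial_t g=\partial_t[(\partial_yv)\psi-(\partial_yu)\eta]$, use the equations for $u,v,\psi,\eta$ given in \eqref{prandtl} and \eqref{equforpsi}, and verify that the nonlocal terms $(\partial_y\partial^\beta w)\psi$ and $(\partial_y\partial^\beta w)\eta$ produced by differentiating $\partial_yv,\partial_yu$ are cancelled by the corresponding terms coming from $(\partial_yv)\partial_t\psi-(\partial_yu)\partial_t\eta$. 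What remains in $\mathcal R_\Phi$ is a bilinear expression in at most first $z$‑derivatives of $(u,v,\psi,\eta)$ and their tangential derivatives, with no factor of $w$ hitting a top–order term. The equations for $\theta_j,\mu_j$ are obtained in the same way after an additional $\partial_z$, using \eqref{seco}.

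Next I apply $\partial^\alpha$ to each of these equations and perform a weighted $L^2$ energy estimate against $\comi z^{2(\kappa+\delta)}g_\alpha$, $\comi z^{2(\kappa+\delta)}h_\alpha$, $\vec\theta_\alpha$, $\vec\mu_\alpha$ respectively. The boundary contributions at $z=0$ vanish thanks to the compatibility relations quoted just after \eqref{t2}, and the transport term is handled as in Propositions~\ref{propfm} and \ref{+propfm+++}: the $\partial_z\comi z^{2(\kappa+\delta)}$ and $\partial_z^2\comi z^{2(\kappa+\delta)}$ terms are absorbed into an $\abs\alpha^2\norm{\comi z^{\kappa+\delta}g_\alpha}_{L^2}^2$ contribution, generating the $\int_0^t\abs{\vec a}_{\rho,\sigma}^2\,ds$ piece on the right. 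The real work is to bound
\begin{equation*}
\abs\alpha^2\bigl(\comi z^{\kappa+\delta}\partial^\alpha\mathcal R_g,\comi z^{\kappa+\delta}g_\alpha\bigr)_{L^2}
\end{equation*}
and its analogues, multiplied by $\rho^{2(\abs\alpha-5)}/[(\abs\alpha-6)!]^{2\sigma}$, in the norm $\abs{\vec a}_{\rho,\sigma}$.

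Each term in $\partial^\alpha\mathcal R_\Phi$ comes via Leibniz as a sum $\sum_\beta\binom{\alpha}{\beta}(\partial^\beta F_1)(\partial^{\alpha-\beta}F_2)$ where $F_1,F_2$ are among $u,v,\psi,\eta,\xi,\zeta,\partial_z\psi,\partial_z\eta$ (the $w$‑type terms in $\mathcal R_\Phi$ appear only with at most $\abs\alpha-1$ derivatives on $w$, and thus acquire the classical anti‑derivative identity $w=-\int_0^z(\partial_xu+\partial_yv)$ which is controlled by Lemma~\ref{lemequa}). I split the sum into low–high, middle, and high–low as in the proof of Lemma~\ref{lemjm}: extreme indices $\beta\in\{0,1,2\}\cup\{\abs\alpha-2,\abs\alpha-1,\abs\alpha\}$ go to the $L^\infty$ bound \eqref{+condi1} on the low factor times the Gevrey norm of the high factor, and the $\abs\alpha$ in front is absorbed by the shift $[\inner{\abs\alpha-6}!]^\sigma\to [\inner{\abs\alpha-7}!]^\sigma$ via the $1/(\tilde\rho-\rho)$ mechanism of \eqref{factor}; the middle range is summed using the convexity inequality $\binom{m}{j}[(j-6)!]^{\sigma-1}[(m-j-6)!]^{\sigma-1}\le C[(m-7)!]^{\sigma-1}j^{-4}(m-j)^{-4}\cdot m^{\ldots}$ which, because $\sigma\geq 3/2$, produces a convergent series and the clean bound $C[(\abs\alpha-6)!]^{2\sigma}/\rho^{2(\abs\alpha-5)}\,\abs{\vec a}_{\rho,\sigma}^{\,3}$. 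The weight $\comi z^{\kappa+\delta}$ is exactly chosen so that the two $z$‑decay factors carried by the pair $(F_1,F_2)$ (read off Definition~\ref{gevspace}) add up to at least $\comi z^{\kappa+\delta}$; this uses $\ell+1/2<\delta\leq\ell+1$ from \eqref{ellalpha}, which is where the single‑curve assumption enters indirectly.

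I expect the main obstacle to be a careful bookkeeping of the $\abs\alpha$ prefactor in the anisotropic norm (see Remark~\ref{remaniso}): the natural energy identity gives only $\rho^{\abs\alpha-6}/[\inner{\abs\alpha-7}!]^\sigma$, and one must use the one‑derivative‑less character of $\eta$ and of $h_\alpha,\vec\mu_\alpha$ together with the identity $\abs\alpha^2/[\inner{\abs\alpha-6}!]^{2\sigma}=[\abs\alpha/\inner{\abs\alpha-6}!^\sigma]^2$ to redistribute the factor. After time integration on $[0,t]$ the quadratic terms give $\int_0^t\abs{\vec a(s)}_{\rho,\sigma}^2\,ds$, the cubic ones give $\int_0^t\abs{\vec a(s)}_{\rho,\sigma}^4\,ds$ via $\abs{\vec a}^3\leq\abs{\vec a}^2+\abs{\vec a}^4$, and the boundary‑of‑radius loss from \eqref{factor} produces the $\int_0^t\abs{\vec a(s)}_{\tilde\rho,\sigma}^2/(\tilde\rho-\rho)\,ds$ contribution, yielding the stated inequality.
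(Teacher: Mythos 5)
Your overall frame (derive the cancelled equation \eqref{eqforg1} for $g$, apply $\partial_x^m$, run a weighted energy estimate, and split the Leibniz sums as in Lemma \ref{lemjm}) coincides with the paper's strategy, but it stops short of the actual difficulty, and the step you propose for the extreme indices does not close. The source $\mathcal K_m$ contains $2\partial_x^{m}[(\partial_y\psi)\partial_z\eta]-2\partial_x^{m}[(\partial_y\eta)\partial_z\psi]$, whose worst Leibniz contributions are $(\partial_y\psi)\,\partial_z\partial_x^{m}\eta$ and $(\partial_y\partial_x^{m}\eta)\,\partial_z\psi$, i.e.\ $m+1$ derivatives falling on $\eta$. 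The anisotropic norm gives only $\norm{\comi z^{\kappa+2}\partial_x^{m}\partial_z\eta}_{L^2}\leq [(m-5)!]^{\sigma}m^{-1}\tilde\rho^{-(m-4)}\abs{\vec a}_{\tilde\rho,\sigma}$ (and similarly for $\partial_y\partial_x^m\eta$ via \eqref{chi2est}, \eqref{mixoneta}), while $\norm{\comi z^{\kappa+\delta}g_m}_{L^2}\leq [(m-6)!]^{\sigma}m^{-1}\tilde\rho^{-(m-5)}\abs{\vec a}_{\tilde\rho,\sigma}$. The prefactor $m^2$ is exactly eaten by these two $1/m$'s, so the direct pairing overshoots the allowed budget $[(m-6)!]^{2\sigma}\rho^{-2(m-5)}$ by the factorial mismatch $(m-5)^{\sigma}$; the mechanism \eqref{factor} absorbs only one power of $m$ at the price of the single admissible $1/(\tilde\rho-\rho)$, leaving an unbounded excess $m^{\sigma-1}$ (a full power of $m$ at $\sigma=2$). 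Your closing remark that one should ``redistribute the $\abs\alpha$ factor'' using the one-derivative-less character of $\eta$ is precisely the step that cannot be done by bookkeeping: with only Leibniz splitting, $L^\infty$ bounds on the low factor, and \eqref{factor}, the estimate for these terms fails for every $\sigma>1$.

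The paper closes this gap with two devices absent from your proposal. First (Lemma \ref{lemgm}), for the low-$j$ half of $\partial_x^m[(\partial_y\psi)\partial_z\eta]$ it integrates by parts in $z$ so that the normal derivative leaves $\partial_x^{m-j}\eta$ and lands on $g_m$ and the weight; this produces $\eps\, m^2\norm{\comi z^{\kappa+\delta}\partial_z g_m}_{L^2}^2$, which must then be absorbed by the dissipation term retained in the $g_m$ energy identity --- your energy step is not set up to keep and use that dissipation. Second (Lemma \ref{lemepsis}), the term $(\partial_y\partial_x^{m}\eta)\partial_z\psi$ has no $\partial_z$ to move, so no integration by parts in $z$ helps; instead one differentiates the definition \eqref{defR1} of $g$ to write $\psi\,\partial_y\partial_x^m\eta=\partial_z g_m$ minus lower-order terms, so the leading contribution becomes $-2m^2\inner{\comi z^{\kappa+\delta}\chi\,\partial_z g_m,\ \comi z^{\kappa+\delta}g_m}_{L^2}$ with $\chi=\partial_z\psi/\psi$, which is rendered harmless by one further integration by parts. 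Without these two steps the bound for $g_\alpha$ (and likewise $h_\alpha$, $\vec\theta_\alpha$, $\vec\mu_\alpha$) does not follow. A minor additional slip: in $\mathcal K_m$ the convection terms $(\partial_x^j w)\partial_z g_{m-j}$ run up to $j=m$, not $\abs\alpha-1$; this is harmless because $\partial^\alpha w$ is itself controlled by the norm in \eqref{chi2est}, but it is not where the argument is decided.
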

 
 We will only estimate $g_\alpha$ because the treatment of  
  $h_\alpha, \vec\theta_\alpha$ and $\vec\mu_\alpha$ is similar.
By \eqref{realp}, it suffices to estimate  $g_m$ and $\tilde g_m$ with 
 \begin{equation}
 \label{gmeq}
 	g_m\stackrel{\rm def}{ =}\partial_x^m g ,\quad \tilde g_m\stackrel{\rm def}{ =}\partial_y^m g .
 \end{equation}
 For the same reason, we handle only $g_m$. 
 And we begin with the equation solved by $g_m.$
 Let $m\geq 1$ and let   $g_m$ be given    by   \eqref{gmeq}. Then 
	\begin{equation}
		\label{equforg}
		\partial_t g_m +\big(u \partial_x  +v\partial_y +w\partial_z\big) g_m  -\partial _{z}^2g_m =\mathcal K_m,
	\end{equation}
	where   
	\begin{equation*}
	\begin{aligned}
	\mathcal K_m &= -\sum_{1\leq j \leq m}{{m}\choose j}[\big(\partial_x^j  u \big)\partial_xg_{m-j}+ \big(\partial_x^j v\big)\partial_yg_{m-j}+ \big(\partial_x^j w\big)\partial_zg_{m-j}] \\
 &\quad+2\partial_x^{m}[(\partial_y \psi)\partial_z\eta] -2\partial_x^{m}[(\partial_y\eta)\partial_z \psi].
	\end{aligned}
	\end{equation*}
To see this,
multiplying the first equation in \eqref{equforpsi}   by   $\partial_y v$ and the second equation by $\partial_y u,$  and then subtracting one by another, we get the equation   solved by $g$, i.e., 
 \begin{equation}\label{eqforg1}
 \partial_t g  +\big(u \partial_x  +v\partial_y +w\partial_z\big) g   -\partial _{z}^2g  =2(\partial_y \psi)\partial_z\eta-2(\partial_y\eta)\partial_z \psi.
 \end{equation}
Taking $\partial_x^{m}$ on both sides and then using  Leibniz formula, we obtain  the equation \eqref{equforg}.

The following three lemmas are for the estimates on the terms in $\mathcal K_m.$ 

\begin{lemma}\label{lemgm}
Let $\sigma\in[3/2,2].$  Then for any   $m\geq 7,$     any pair $(\rho,\tilde\rho)$ with $0<\rho<\tilde\rho<\rho_0\leq 1,$   and  any $\eps>0,$ we have  
	\begin{equation}\label{estpsieta}
	\begin{aligned}
		&2m^2\Big(\comi z^{\kappa+\delta}\partial_x^{m}[(\partial_y \psi)\partial_z\eta], ~\comi z^{\kappa+\delta} g_m\Big)_{L^2}\\
		\leq&~   \eps m^2\norm{\comi z^{\kappa+\delta} \partial_z g_m}_{L^2}^2+  \frac{C_\eps  [\inner{m-6}!]^{2\sigma}}{\rho^{2(m-5)}} \inner{\abs{\vec a}_{ \rho,\sigma}^3+   \abs{\vec a}_{ \rho,\sigma}^4+ \frac{\abs{\vec a}_{\tilde \rho,\sigma}^2}{\tilde\rho-\rho}},
		\end{aligned}
	\end{equation}
	where $C_\eps$ is a constant depending on $\eps.$
\end{lemma}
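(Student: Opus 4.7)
The plan is to expand the left-hand side with the Leibniz formula
$$\partial_x^m\bigl[(\partial_y\psi)(\partial_z\eta)\bigr]=\sum_{j=0}^m\binom{m}{j}(\partial_x^j\partial_y\psi)(\partial_x^{m-j}\partial_z\eta),$$
pair each summand with $\comi{z}^{2(\kappa+\delta)}g_m$ in $L^2(\Omega)$, and split the $j$-sum into three ranges: the low tail $j\in\{0,1,2\}$, the high tail $j\in\{m-2,m-1,m\}$, and the bulk $3\le j\le m-3$. The crucial structural fact I will exploit is the anisotropic factor $\abs\alpha$ carried by the $\eta$-components in \eqref{trinorm} (Remark \ref{remaniso}); this is what can eventually cancel one of the two $m$'s in the prefactor $m^2$.

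For the worst term $j=0$, namely $m^2\inner{\comi z^{2(\kappa+\delta)}(\partial_y\psi)(\partial_x^m\partial_z\eta),g_m}_{L^2}$, a direct application of \eqref{mixoneta} to $\partial_x^m\partial_z\eta$ would waste an extra power of $m$. To save it I would integrate by parts in $z$: since $g|_{z=0}=0$ (as $u|_{z=0}=v|_{z=0}=0$ forces $\partial_y u|_{z=0}=\partial_y v|_{z=0}=0$), the boundary terms vanish and
$$m^2\int_\Omega \comi{z}^{2(\kappa+\delta)}(\partial_y\psi)(\partial_x^m\partial_z\eta)g_m\,dxdydz=-m^2\int_\Omega\partial_z\bigl[\comi{z}^{2(\kappa+\delta)}(\partial_y\psi)g_m\bigr](\partial_x^m\eta)\,dxdydz.$$
Distributing the $\partial_z$ yields three contributions: one containing $\partial_z g_m$ (which is absorbed into the stated $\eps m^2\norm{\comi z^{\kappa+\delta}\partial_z g_m}_{L^2}^2$ via $2\abs{ab}\le\eps a^2+\eps^{-1}b^2$, producing the $\abs{\vec a}_{\rho,\sigma}^4$ term through the squared remainder), one containing $\partial_y\xi$ (bounded pointwise using \eqref{+condi1}), and one containing a harmless weight derivative; in each the product $m\norm{\comi z^{\kappa+2}\partial_x^m\eta}_{L^2}\cdot m\norm{\comi z^{\kappa+\delta}g_m}_{L^2}$ is controlled by $[(m-6)!]^{2\sigma}\rho^{-2(m-5)}\abs{\vec a}_{\rho,\sigma}^2$ through \eqref{chi2est}, so the $m^2$ prefactor is exactly absorbed. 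The terms $j=1,2$ are treated identically, and $j=m-2,m-1,m$ are handled symmetrically by integrating by parts so that the $z$-derivative ends up on the $\psi$-factor, using \eqref{emix} in place of \eqref{mixoneta}.

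For the bulk $3\le j\le m-3$ the approach of Lemma \ref{lemjm} applies with no essential change: for $j\le(m-3)/2$ I put $\partial_x^j\partial_y\psi$ in $L^\infty(\Omega)$ via the Sobolev inequality \eqref{soblev} and keep the weighted $L^2$-norm of $\partial_x^{m-j}\partial_z\eta$ through \eqref{mixoneta}, while for $j\ge(m-3)/2$ the roles are swapped. The excess weight $\delta-2$ in $\comi{z}^{\kappa+\delta}$ relative to the $\comi{z}^{\kappa+2}$ available on $\eta$-quantities is supplied by the decay $\abs{\partial_y\psi}\lesssim\comi{z}^{-\delta-1}$ from \eqref{condi}. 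The resulting combinatorial sum is of the form $\sum_j \binom{m}{j}[(j-r_1)!]^\sigma[(m-j-r_2)!]^\sigma/(m-j)$, and by the factorial identity used in the proof of Lemma \ref{lemjm} together with $\sigma\ge 3/2$, it telescopes to a bounded multiple of $[(m-6)!]^\sigma\rho^{-(m-5)}$, producing the cubic contribution $\abs{\vec a}_{\rho,\sigma}^3$. The single Cauchy--Kovalevskaya loss $\abs{\vec a}_{\tilde\rho,\sigma}^2/(\tilde\rho-\rho)$ arises through \eqref{factor} on those few summands where a derivative count slips into the $\tilde\rho$-scale.

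The main obstacle is the bookkeeping that ensures the $m^2$ prefactor is absorbed \emph{term by term} rather than only in a summed sense, together with the convergence of the bulk series $\sum_j 1/[j(m-j)]^{2(\sigma-1)}$; this is precisely the point at which the hypothesis $\sigma\ge 3/2$ is unavoidable, mirroring its appearance in Lemma \ref{lemjm}.
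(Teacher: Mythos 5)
Your low-tail and bulk treatment is essentially the paper's: the paper also expands by Leibniz, integrates by parts in $z$ on the range where the top-order $x$-derivatives fall on $\eta$ (the boundary term vanishing because $g_m|_{z=0}=0$), absorbs the $\partial_z g_m$ contribution by $\eps m^2\norm{\comi z^{\kappa+\delta}\partial_z g_m}_{L^2}^2$, and exploits the anisotropic factor $\abs\alpha$ in the $\eta$- and $g_\alpha$-norms so that the prefactor $m^2$ is exactly cancelled; the bulk is a Lemma \ref{lemjm}-type computation where $\sigma\geq 3/2$ enters. (The paper splits at $j=[m/2]$ and integrates by parts on all of $0\le j\le[m/2]$ rather than only $j\le 2$, but your variant closes there, borderline at $j=3$.)

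The genuine gap is your treatment of the high tail $j\in\{m-2,m-1,m\}$ "by integrating by parts so that the $z$-derivative ends up on the $\psi$-factor, using \eqref{emix}". For these $j$ the $\eta$-factor $\partial_x^{m-j}\partial_z\eta$ is already low order, and moving $\partial_z$ off it is counterproductive: (i) the resulting term $(\partial_x^j\partial_y\partial_z\psi)(\partial_x^{m-j}\eta)\,g_m$ with $j=m$ costs, via \eqref{emix}, $\norm{\comi z^{\ell+1}\partial_x^m\partial_y\partial_z\psi}_{L^2}\lesssim [(m-5)!]^\sigma r^{-(m-4)}\abs{\vec a}_{r,\sigma}$, which is a factor $\sim m^\sigma$ worse than $[(m-6)!]^\sigma r^{-(m-5)}$; after pairing with $g_m$ (which via \eqref{chi2est} only supplies one factor $m^{-1}$) you are left with $m^{1+\sigma}(\rho/\tilde\rho)^{2(m-5)}$, and \eqref{factor} can convert only a single power of $m$ into $(\tilde\rho-\rho)^{-1}$, so an uncontrolled $m^{\sigma}$ remains; (ii) the companion of the $\partial_z g_m$ piece must be squared against $\eps m^2\norm{\comi z^{\kappa+\delta}\partial_z g_m}_{L^2}^2$, giving $C_\eps m^2\norm{\comi z^{\kappa+\delta}(\partial_x^j\partial_y\psi)\partial_x^{m-j}\eta}_{L^2}^2$ with $j\approx m$; at scale $\rho$ this carries an extra $m^2$, and at scale $\tilde\rho$ it forces $(\tilde\rho-\rho)^{-2}$, neither of which fits the stated bound. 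The high tail must instead be estimated directly, as in the paper's $B_4$ (the \eqref{a2}-type argument): bound the low-order factors $\partial_x^{m-j}\partial_z\eta$ by \eqref{+condi1}, bound $\partial_x^j\partial_y\psi$ by \eqref{etan} at the scale $\tilde\rho$, pair with $g_m$ itself so that the $\abs\alpha$-weighted bound \eqref{chi2est} supplies one factor $m^{-1}$, and use \eqref{factor} once to turn the single leftover power of $m$ into $(\tilde\rho-\rho)^{-1}$, yielding exactly the term $\frac{C[(m-6)!]^{2\sigma}}{\rho^{2(m-5)}}\frac{\abs{\vec a}_{\tilde\rho,\sigma}^2}{\tilde\rho-\rho}$.
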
 

\begin{proof} We use  Leibniz formula   to get 
\begin{eqnarray*}
	&&m^2\Big(\comi z^{\kappa+\delta}\partial_x^{m}[(\partial_y \psi)\partial_z\eta], ~\comi z^{\kappa+\delta} g_m\Big)_{L^2}\\
	&= &m^2\bigg(\sum_{0\leq j \leq [m/2]}+\sum_{[m/2]+1\leq j \leq m }\bigg){{m}\choose j} \Big(\comi z^{\kappa+\delta} (\partial_y \partial_x^j\psi)\partial_z\partial_x^{m-j}\eta , ~\comi z^{\kappa+\delta} g_m\Big)_{L^2}.
	\end{eqnarray*}
	Furthermore, using integration by parts for the first term on the right side gives
\begin{eqnarray*}
	 m^2\Big(\comi z^{\kappa+\delta}\partial_x^{m}[(\partial_y \psi)\partial_z\eta], ~\comi z^{\kappa+\delta} g_m\Big)_{L^2} \leq  B_1+B_2+B_3+B_4
	\end{eqnarray*}
	with
	\begin{eqnarray*}
	B_1&= &m^2 \sum_{0\leq j \leq [m/2]}{{m}\choose j} \norm{\comi z^{\kappa+\delta} (\partial_y \partial_x^j\psi)\partial_x^{m-j}\eta}_{L^2} \norm{\comi z^{\kappa+\delta} \partial_z g_m}_{L^2},\\
	B_2&=&m^2 \sum_{0\leq j \leq [m/2]}{{m}\choose j} \norm{\comi z^{\kappa+\delta} (\partial_y \partial_z \partial_x^j\psi)\partial_x^{m-j}\eta}_{L^2} \norm{\comi z^{\kappa+\delta}   g_m}_{L^2},\\
	B_3&=&m^2 \sum_{0\leq j \leq [m/2]}{{m}\choose j} \norm{\comi z^{\kappa+\delta} (\partial_y  \partial_x^j\psi)\partial_x^{m-j}\eta}_{L^2} \norm{\comi z^{\kappa+\delta}   g_m}_{L^2},\\
	B_4&=&m^2 \sum_{[m/2]+1\leq j \leq m }{{m}\choose j} \norm{\comi z^{\kappa+\delta} (\partial_y \partial_x^j\psi)\partial_z\partial_x^{m-j}\eta}_{L^2}\norm{\comi z^{\kappa+\delta} g_m}_{L^2}.
\end{eqnarray*}
By  \eqref{etan}-\eqref{chi2est} and the Sobolev inequality \eqref{soblev+}, we have
\begin{eqnarray*}
	&&m \sum_{0\leq j \leq [m/2]}{{m}\choose j} \norm{\comi z^{\kappa+\delta} (\partial_y \partial_z \partial_x^j\psi)\partial_x^{m-j}\eta}_{L^2} \\
	&\leq & C m \sum_{j=2}^{  [m/2]} \frac{m!} {j!(m-j)!} \frac{[(j-2)!]^\sigma}{\rho^{j-1}} \abs{\vec a}_{\rho,\sigma}\frac{[(m-j-6)!]^\sigma}{\rho^{m-j-5} (m-j)} \abs{\vec a}_{\rho,\sigma} +\frac{C[(m-6)!]^{\sigma} }{\rho^{m-5}} \abs{\vec a}_{\rho,\sigma}^2\\
	&\leq & \frac{C \abs{\vec a}_{\rho,\sigma}^2  }{\rho^{m-5}} \sum_{j=2}^{  [m/2]} \frac{ m![(j-2)!]^{\sigma-1} [(m-j-6)!]^{\sigma-1}m} {j^2(m-j)^7}   +\frac{C[(m-6)!]^{\sigma} }{\rho^{m-5}} \abs{\vec a}_{\rho,\sigma}^2\\
	&\leq & \frac{C  [(m-6)!]^{\sigma}  }{\rho^{m-5}}  \abs{\vec a}_{\rho,\sigma}^2.
\end{eqnarray*}
Similarly,     
\begin{eqnarray*}
	  m \sum_{0\leq j \leq [m/2]}{{m}\choose j}  \norm{\comi z^{\kappa+\delta} (\partial_y \partial_x^j\psi)\partial_x^{m-j}\eta}_{L^2}  \leq    \frac{C [\inner{m-6}!]^{ \sigma}}{\rho^{ m-5}} \abs{\vec a}_{ \rho,\sigma}^2.
\end{eqnarray*}
Thus,  we combine the above estimates to have
for any $\eps>0,$ 
\begin{eqnarray*}
	B_1 +B_2+B_3\leq  \eps m^2\norm{\comi z^{\kappa+\delta} \partial_z g_m}_{L^2}^2+  \frac{C_\eps  [\inner{m-6}!]^{2\sigma}}{\rho^{2(m-5)}} \Big(\abs{\vec a}_{ \rho,\sigma}^3+\abs{\vec a}_{ \rho,\sigma}^4\Big).
	\end{eqnarray*}
Applying a similar argument  as for \eqref{a2},   we have
\begin{eqnarray*}
	&&m \bigg(\sum_{[m/2]+1\leq j \leq m-2 }+\sum_{m-1\leq j \leq m}\bigg){{m}\choose j} \norm{\comi z^{\kappa+\delta} (\partial_y \partial_x^j\psi)\partial_z\partial_x^{m-j}\eta}_{L^2}\\
	&\leq&   \frac{C [\inner{m-6}!]^{\sigma}}{\rho^{m-5}} \abs{\vec a}_{ \rho,\sigma}^2+   \frac{C  [\inner{m-6}!]^{\sigma}}{\rho^{m-5}}\frac{ \abs{\vec a}_{ \tilde\rho,\sigma}}{\tilde\rho-\rho}.
\end{eqnarray*}
Thus, \begin{eqnarray*}
 B_4 \leq \frac{C [\inner{m-6}!]^{2\sigma}}{\rho^{2(m-5)}} \abs{\vec a}_{ \rho,\sigma}^3+\frac{C [\inner{m-6}!]^{2\sigma}}{\rho^{2(m-5)}} \frac{ \abs{\vec a}_{ \tilde\rho,\sigma}^2}{\tilde\rho-\rho}.
\end{eqnarray*}
Combining the estimates on $B_1$-$B_4,$ we obtain the desired inequality
and complete the proof. 
\end{proof}

\begin{lemma}
	Let $\sigma\in[3/2,2].$  Then for any   $m\geq 7,$     any pair $(\rho,\tilde\rho)$ with $0<\rho<\tilde\rho<\rho_0\leq 1,$   and  any $\eps>0,$ we have  
		\begin{multline*}
		 -2m^2\sum_{1\leq j \leq m}{{m}\choose j}\Big(\comi z^{\kappa+\delta} [\big(\partial_x^j  u \big)\partial_xg_{m-j}+ \big(\partial_x^j v\big)\partial_yg_{m-j}+ \big(\partial_x^j w\big)\partial_zg_{m-j}] , ~\comi z^{\kappa+\delta} g_m\Big)_{L^2}\\
		 \leq    \eps m^2\norm{\comi z^{\kappa+\delta} \partial_z g_m}_{L^2}^2+  \frac{C_\eps  [\inner{m-6}!]^{2\sigma}}{\rho^{2(m-5)}} \inner{\abs{\vec a}_{ \rho,\sigma}^3+   \abs{\vec a}_{ \rho,\sigma}^4 +\frac{\abs{\vec a}_{\tilde \rho,\sigma}^2}{\tilde\rho-\rho}},
	\end{multline*}
	where $C_\eps$ is a constant depending on $\eps.$
\end{lemma}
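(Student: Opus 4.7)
The plan is to split the sum on the left into three groups according to the three convection--type terms, namely $(\partial_x^j u)\partial_x g_{m-j}$, $(\partial_x^j v)\partial_y g_{m-j}$, and $(\partial_x^j w)\partial_z g_{m-j}$, and then to further subdivide each group into the ranges $1\le j\le[m/2]$, $[m/2]+1\le j\le m-3$, and $m-2\le j\le m$. In each subrange one of the two factors carries only few derivatives, so its $L^\infty$ norm can be controlled by Sobolev inequalities \eqref{soblev}, \eqref{soblev+} together with the hypotheses \eqref{condi}--\eqref{+condi1}; the other factor, which carries most of the derivatives, is bounded in $L^2$ using the interpolation-type estimates \eqref{etan}, \eqref{chi2est} (and \eqref{emix}, \eqref{mixoneta} for the mixed $\partial_z$ cases). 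The combinatorial identities
\[
\binom{m}{j}\bigl[(j-c_1)!\bigr]^{\sigma}\bigl[(m-j-c_2)!\bigr]^{\sigma}\le C\,[(m-6)!]^{\sigma}\cdot\frac{m^{p}}{j^{q}(m-j)^{r}}
\]
with appropriate $p,q,r$ and the summability of $\sum j^{-q}(m-j)^{-r}$ for $\sigma\ge 3/2$ (exactly as in Lemma \ref{lemjm} and Lemma \ref{lemgm}) will then convert the binomial weighted sums into the claimed factor $[(m-6)!]^{2\sigma}/\rho^{2(m-5)}$.

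For the first two groups, no integration by parts is needed: $\partial_x g_{m-j}=g_{(m-j+1,0)}$ and $\partial_y g_{m-j}=g_{(m-j,1)}$ are themselves of the form $g_\alpha$, so the bounds provided by \eqref{chi2est} apply directly with the anisotropic $|\alpha|$ factor. The boundary terms in the extremal subranges (small $j$, where $\partial_x^j u$ and $\partial_x^j v$ are $L^\infty$-bounded by \eqref{+condi1}, and large $j$, where $g_{m-j}$ and $\partial_x g_{m-j}$ are bounded) are absorbed into the first term $C_\eps[(m-6)!]^{2\sigma}/\rho^{2(m-5)}\bigl(\abs{\vec a}_{\rho,\sigma}^3+\abs{\vec a}_{\rho,\sigma}^4\bigr)$, and the one case where one of the factors exceeds the $|\alpha|\le m$ range forces use of the Cauchy inequality \eqref{factor} and yields the $\abs{\vec a}_{\tilde\rho,\sigma}^2/(\tilde\rho-\rho)$ term.

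For the third group, I would integrate by parts in $z$ whenever $j\le m-1$, since then $\partial_z(\partial_x^j w)=-\partial_x^{j+1}u-\partial_x^j\partial_y v$ is of the form handled by \eqref{etan}, one contribution produces the term $\eps m^2\norm{\comi z^{\kappa+\delta}\partial_z g_m}_{L^2}^2$ after Cauchy--Schwarz, and the other terms (coming from $\partial_z\comi z^{2(\kappa+\delta)}$ and boundary trace, which vanishes because $g_\alpha|_{z=0}=0$) are estimated as in the first two groups. The extremal case $j=m$ is the exception: here I would \emph{not} integrate by parts but instead estimate $(\partial_x^m w)(\partial_z g)$ directly, expanding $\partial_z g=(\partial_y\eta)\psi+(\partial_y v)\xi-(\partial_y\psi)\eta-(\partial_y u)\zeta$ into a sum of products of factors bounded in $L^\infty$ by \eqref{+condi1}, and pairing $\partial_x^m w$ in $L^2_{x,y}(L^\infty_z)$ with $g_m$ in $L^2_{x,y}(L^1_z)$ after extracting $\comi z^{-\tau}$, $\tau>1/2$, from the weight.

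The main obstacle will be the careful bookkeeping of weights and combinatorial factors in the third group, especially at the extremal index $j=m$ where the naïve integration by parts would create $\partial_x^{m+1}u$, which exceeds the derivative budget. Avoiding this by keeping the $\partial_z$ on $g_0=g$ and exploiting that $g_0$ and $\partial_z g$ are bounded via \eqref{condi}--\eqref{+condi1} is the key technical point. Once this special case is dispatched, the remaining binomial sums are of exactly the shape already estimated in Lemma \ref{lemjm} and Lemma \ref{lemgm}, so their treatment is parallel and the resulting bound matches the stated one.
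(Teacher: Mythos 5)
Your proposal follows essentially the same route as the paper's proof: the $u$- and $v$-groups are handled by the Lemma \ref{lemjm}-type binomial estimates together with the anisotropic bound \eqref{chi2est} on $g_\alpha$, the $w$-group is integrated by parts in $z$ (boundary term vanishing since $g_\alpha|_{z=0}=0$) so that Cauchy--Schwarz yields the $\eps m^2\norm{\comi z^{\kappa+\delta}\partial_z g_m}_{L^2}^2$ term, and the top of the sum is treated directly through the explicit expansion of $\partial_z g$ and \eqref{condi}--\eqref{+condi1}, precisely to avoid creating $\partial_x^{m+1}u$. The only (immaterial) difference is the cut point: the paper handles both $j=m-1$ and $j=m$ by the direct argument, while you integrate by parts up to $j=m-1$, which is equally admissible since $\partial_z\partial_x^{m-1}w$ only involves derivatives of order $m$.
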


\begin{proof}
Applying the argument used in the proof of  Lemma \ref{lemjm},  we have
 \begin{eqnarray*}
		&&-2m^2\sum_{1\leq j \leq m}{{m}\choose j}\Big(\comi z^{\kappa+\delta} [\big(\partial_x^j  u \big)\partial_xg_{m-j}+ \big(\partial_x^j v\big)\partial_yg_{m-j}] , ~\comi z^{\kappa+\delta} g_m\Big)_{L^2}\\
		&\leq&    \frac{C   [\inner{m-6}!]^{2\sigma}}{\rho^{2(m-5)}} \inner{\abs{\vec a}_{ \rho,\sigma}^3+ \frac{\abs{\vec a}_{\tilde \rho,\sigma}^2}{\tilde\rho-\rho}}.
	\end{eqnarray*}
Moreover, we write 
	\begin{eqnarray*}
		&&-2m^2\sum_{1\leq j \leq m}{{m}\choose j}\Big(\comi z^{\kappa+\delta}  \big(\partial_x^j w\big)\partial_zg_{m-j}, ~\comi z^{\kappa+\delta} g_m\Big)_{L^2}\\
		&=&-2m^2\bigg[\sum_{1\leq j \leq m-2}+\sum_{m-1\leq j \leq m}\bigg]{{m}\choose j}\Big(\comi z^{\kappa+\delta}  \big(\partial_x^j w\big)\partial_zg_{m-j}, ~\comi z^{\kappa+\delta} g_m\Big)_{L^2}.
	\end{eqnarray*}
	Direct calculation shows that, by   \eqref{condi}-\eqref{+condi1} and the representation \eqref{defR1} of $g,$ 
	\begin{eqnarray*}
		 -2m^2 \sum_{m-1\leq j \leq m}{{m}\choose j}\Big(\comi z^{\kappa+\delta}  \big(\partial_x^j w\big)\partial_zg_{m-j}, ~\comi z^{\kappa+\delta} g_m\Big)_{L^2} 
		 \leq     \frac{C   [\inner{m-6}!]^{2\sigma}}{\rho^{2(m-5)}}  \frac{\abs{\vec a}_{\tilde \rho,\sigma}^2}{\tilde\rho-\rho}. 
	\end{eqnarray*}
	Furthermore, following the approach used to estimate $B_1$-$B_3$ in Lemma \ref{lemgm} as well as the argument in Lemma \ref{lemjm}, we
	can obtain  
	\begin{eqnarray*}
	&&-2m^2\sum_{1\leq j \leq m-2}{{m}\choose j}\Big(\comi z^{\kappa+\delta}  \big(\partial_x^j w\big)\partial_zg_{m-j}, ~\comi z^{\kappa+\delta} g_m\Big)_{L^2}	\\
	&\leq& \eps m^2\norm{\comi z^{\kappa+\delta} \partial_z g_m}_{L^2}^2+  \frac{C_\eps  [\inner{m-6}!]^{2\sigma}}{\rho^{2(m-5)}} \inner{\abs{\vec a}_{ \rho,\sigma}^3+   \abs{\vec a}_{ \rho,\sigma}^4}.
	\end{eqnarray*}
	Thus,  combining these estimates yields
	\begin{eqnarray*}
		&&-2m^2\sum_{1\leq j \leq m}{{m}\choose j}\Big(\comi z^{\kappa+\delta}  \big(\partial_x^j w\big)\partial_zg_{m-j}, ~\comi z^{\kappa+\delta} g_m\Big)_{L^2}\\
		&\leq&  \eps m^2\norm{\comi z^{\kappa+\delta} \partial_z g_m}_{L^2}^2+  \frac{C_\eps  [\inner{m-6}!]^{2\sigma}}{\rho^{2(m-5)}} \inner{\abs{\vec a}_{ \rho,\sigma}^3+   \abs{\vec a}_{ \rho,\sigma}^4 +\frac{\abs{\vec a}_{\tilde \rho,\sigma}^2}{\tilde\rho-\rho}}.
	\end{eqnarray*}
Then the desired upper bound estimate follows, and it
completes the proof.  
\end{proof}

\begin{lemma}\label{lemepsis}
Let $\sigma\in[3/2,2].$ Then for any   $m\geq 7,$      any pair $(\rho,\tilde\rho)$ with $0<\rho<\tilde\rho<\rho_0\leq 1,$  and  any $\eps>0,$ we have  
	\begin{equation}\label{espsis}
	\begin{aligned}
		& -2m^2\Big(\comi z^{\kappa+\delta}\partial_x^{m}[ (\partial_y\eta)\partial_z \psi], ~\comi z^{\kappa+\delta} g_m\Big)_{L^2}\\
		\leq& ~   \eps m^2\norm{\comi z^{\kappa+\delta} \partial_z g_m}_{L^2}^2+  \frac{C_\eps  [\inner{m-6}!]^{2\sigma}}{\rho^{2(m-5)}} \inner{\abs{\vec a}_{ \rho,\sigma}^2+   \abs{\vec a}_{ \rho,\sigma}^4+ \frac{\abs{\vec a}_{\tilde \rho,\sigma}^2}{\tilde\rho-\rho}},
		\end{aligned}
	\end{equation}
	where $C_\eps$ is a constant depending on $\eps.$
\end{lemma}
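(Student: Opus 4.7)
The plan is to mirror the strategy of Lemma~\ref{lemgm}, exploiting the structural parallel between $(\partial_y\psi)(\partial_z\eta)$ and $(\partial_y\eta)(\partial_z\psi)$ with the roles of $\psi$ and $\eta$ exchanged. First I would apply Leibniz' rule to write
\begin{equation*}
\partial_x^m[(\partial_y\eta)\partial_z\psi]=\sum_{j=0}^{m}\binom{m}{j}(\partial_y\partial_x^j\eta)(\partial_z\partial_x^{m-j}\psi),
\end{equation*}
and split the resulting sum into the low range $0\le j\le[m/2]$ and the high range $[m/2]+1\le j\le m$. Note that $\partial_z\partial_x^{m-j}\psi=\partial_x^{m-j}\xi$ in both ranges, so the derivative-loss difficulty shifts between the two factors depending on where the tangential derivatives concentrate.

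For the low range, where many tangential derivatives land on $\psi$, the key step is integration by parts in $z$ to transfer $\partial_z$ off of $\partial_z\partial_x^{m-j}\psi$ onto $\comi z^{2(\kappa+\delta)}g_m$. The boundary contribution at $z=0$ vanishes because $g_m|_{z=0}=0$ (see Subsection~\ref{subsec21}), and the resulting terms split into three pieces: (i) $(\partial_y\partial_x^j\eta)(\partial_x^{m-j}\psi)$ paired with $\partial_z g_m$, which is absorbed via Young's inequality into $\eps m^2\norm{\comi z^{\kappa+\delta}\partial_z g_m}_{L^2}^2$; (ii) $(\partial_y\partial_z\partial_x^j\eta)(\partial_x^{m-j}\psi)$ paired with $g_m$; and (iii) a weight contribution from $\partial_z\comi z^{2(\kappa+\delta)}$, which is of the same form as (ii) but with a bounded weight loss. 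Controlling $\partial_y\partial_x^j\eta$ (and $\partial_y\partial_z\partial_x^j\eta$) in $L^\infty$ via \eqref{soblev+} together with the anisotropic bounds \eqref{chi2est}-\eqref{mixoneta}, and $\partial_x^{m-j}\psi$ in $L^2$ via \eqref{etan}, the combinatorial sum then collapses by the same $\sigma\ge 3/2$ argument used in the estimates for $T_1$ in Lemma~\ref{lemjm} and for $B_1$-$B_3$ in Lemma~\ref{lemgm}, producing the $\abs{\vec a}_{\rho,\sigma}^3+\abs{\vec a}_{\rho,\sigma}^4$ contribution.

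For the high range, few derivatives land on $\psi$, and I would bound $\partial_z\partial_x^{m-j}\psi=\partial_x^{m-j}\xi$ in $L_{x,y}^2(L_z^\infty)$ (or, for $j\ge m-2$, directly in $L^\infty$ using \eqref{+condi1}) while controlling $\partial_y\partial_x^j\eta$ in $L^2$ through the anisotropic factor $\abs\alpha$ appearing in \eqref{chi2est}. The intermediate terms again telescope under the $\sigma\ge 3/2$ combinatorial bound. The two extreme cases $j=m-1$ and $j=m$ are isolated: here the Gevrey weight on $\eta$ at order $m+1$ barely exceeds the available $\rho$-scale, and passing to $\abs{\vec a}_{\tilde\rho,\sigma}$ together with \eqref{factor} generates the loss $\abs{\vec a}_{\tilde\rho,\sigma}^2/(\tilde\rho-\rho)$. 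In particular, the case $j=m$, where $\partial_z\psi$ is bounded uniformly by \eqref{+condi1}, furnishes the purely quadratic $\abs{\vec a}_{\rho,\sigma}^2$ term in the conclusion, in contrast with Lemma~\ref{lemgm} whose right-hand side starts only at cubic order.

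I expect the main obstacle to be bookkeeping the boundary index $j=m$: on the $\eta$-side this forces an $(m+1)$-st order tangential derivative, one order higher than what $\partial^\alpha\eta$ is tracked at in the $\rho$-norm. The anisotropic factor $\abs\alpha$ in front of $\norm{\comi z^{\kappa+2}\partial^\alpha\eta}_{L^2}$ (Remark~\ref{remaniso}) trades one order of tangential differentiation for a power of $m$, which exactly matches the $m^2$ prefactor in the statement provided one is willing to pay a Cauchy-type loss $(\tilde\rho-\rho)^{-1}$ via \eqref{factor}. Verifying that this trade-off closes without extra degeneracy, and checking that the extra $z$-derivative in piece (ii) above is actually controlled by \eqref{mixoneta} with $j=1$, are the two bookkeeping points I would tighten most carefully.
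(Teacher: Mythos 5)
There is a genuine gap, and it sits exactly at the point you flag as ``bookkeeping'': the boundary index $j=m$. Your claim is that the anisotropic factor $\abs\alpha$ in \eqref{chi2est} trades the $(m+1)$-st tangential derivative of $\eta$ against the $m^2$ prefactor, at the price of one Cauchy loss $(\tilde\rho-\rho)^{-1}$. Count the powers: \eqref{chi2est} gives $\norm{\comi z^{\kappa+2}\partial_y\partial_x^m\eta}_{L^2}\le \frac{[(m-5)!]^\sigma}{(m+1)\,\tilde\rho^{\,m-4}}\abs{\vec a}_{\tilde\rho,\sigma}$ and $\norm{\comi z^{\kappa+\delta}g_m}_{L^2}\le\frac{[(m-6)!]^\sigma}{m\,\rho^{\,m-5}}\abs{\vec a}_{\rho,\sigma}$, so the two anisotropic factors $m+1$ and $m$ are already consumed in cancelling the prefactor $m^2$. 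What is left over is the factorial jump $[(m-5)!]^\sigma=(m-5)^\sigma[(m-6)!]^\sigma$ together with a single extra power of $\tilde\rho^{-1}$; after comparing with the target $\frac{[(m-6)!]^{2\sigma}}{\rho^{2(m-5)}}$, the excess is $(m-5)^\sigma\,\tilde\rho^{-1}(\rho/\tilde\rho)^{m-5}$. Inequality \eqref{factor} absorbs only one power of $m$ at the cost of $(\tilde\rho-\rho)^{-1}$, leaving an unbounded residue $m^{\sigma-1}$ for $\sigma>1$; absorbing that too costs $(\tilde\rho-\rho)^{-(\sigma-1)}$ more, so the best direct bound is of size $(\tilde\rho-\rho)^{-\sigma}$, which is incompatible with the statement and with the first-order pole required to close the a priori estimate \eqref{esapr}. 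This is precisely the loss-of-$y$-derivative in $g$ that cannot be beaten by norms alone in the Gevrey range $\sigma\in(1,2]$ (it would be fine only in the analytic case $\sigma=1$), so the $j=m$ term cannot ``furnish the purely quadratic term'' by a direct estimate.

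The paper's proof removes this term by a cancellation coming from the structure of $g$ rather than by norm bookkeeping: applying $\partial_z\partial_x^m$ to $g=(\partial_yv)\psi-(\partial_yu)\eta$ gives $\partial_zg_m=(\partial_y\partial_x^m\eta)\psi+\sum_{j\le m-1}\binom{m}{j}(\partial_y\partial_x^j\eta)\partial_x^{m-j}\psi+\partial_x^m[(\partial_yv)\partial_z\psi]-\partial_x^m[(\partial_y\psi)\eta]-\partial_x^m[(\partial_yu)\partial_z\eta]$, so that $(\partial_y\partial_x^m\eta)\partial_z\psi=\chi\,\partial_zg_m-\chi(\cdots)$ with $\chi=\partial_z\psi/\psi$. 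The dangerous piece becomes $S_1=-2m^2\inner{\comi z^{\kappa+\delta}\chi\,\partial_zg_m,\,\comi z^{\kappa+\delta}g_m}_{L^2}$, which after one integration by parts in $z$ (the derivative landing on the bounded function $\comi z^{2(\kappa+\delta)}\chi$) is controlled by $\abs{\vec a}_{\rho,\sigma}^2$ with no loss at all; the remaining pieces $S_2$--$S_5$ are of the type already treated in Lemmas \ref{lemjm} and \ref{lemgm}. Your treatment of the ranges $2\le j\le m-3$ and of $j=m-1$ (where $\binom{m}{m-1}=m$ but both factors carry order-$m$ norms, so one application of \eqref{factor} suffices) is consistent with the paper, and the integration by parts you propose in the low range is harmless though unnecessary here, since the high derivatives then fall on $\psi$, whose mixed-derivative norms \eqref{emix} have the favorable indexing. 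But without the substitution of $\partial_zg_m$ (or an equivalent cancellation) the top term $j=m$ does not close, so the proposal as written does not prove the lemma.
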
 

\begin{proof} By using \eqref{+condi1} and \eqref{etan}-\eqref{mixoneta}, 
	direct calculation yields
	\begin{eqnarray*}
	&&-2 m^2 \bigg(\sum_{0\leq j \leq1}+\sum_{m-2\leq j \leq m-1}\bigg){{m}\choose j} \Big(\comi z^{\kappa+\delta} (\partial_y \partial_x^j\eta)\partial_z\partial_x^{m-j}\psi , ~\comi z^{\kappa+\delta} g_m\Big)_{L^2}\\
	&  \leq & \frac{C[\inner{m-6}!]^{2\sigma}}{\rho^{2(m-5)}} \frac{\abs{\vec a}_{\tilde \rho,\sigma}^2}{\tilde\rho-\rho}.
	\end{eqnarray*}
Moreover,  following the argument used in Lemma \ref{lemgm},  we have
	\begin{eqnarray*}
	&&-2 m^2 \sum_{2\leq j \leq m-3}{{m}\choose j} \Big(\comi z^{\kappa+\delta} (\partial_y \partial_x^j\eta)\partial_z\partial_x^{m-j}\psi , ~\comi z^{\kappa+\delta} g_m\Big)_{L^2}  \leq \frac{C[\inner{m-6}!]^{2\sigma}}{\rho^{2(m-5)}}  \abs{\vec a}_{  \rho,\sigma}^3.
	\end{eqnarray*}
	Then it remains to show that, for any $\eps>0,$	
	\begin{equation}\label{esnix}
	\begin{aligned}
		&-2 m^2  \Big(\comi z^{\kappa+\delta} (\partial_y\partial_x^m  \eta)\partial_z\psi, ~\comi z^{\kappa+\delta} g_m\Big)_{L^2}\\
		\leq &\eps m^2\norm{\comi z^{\kappa+\delta} \partial_z g_m}_{L^2}^2+  \frac{C_\eps  [\inner{m-6}!]^{2\sigma}}{\rho^{2(m-5)}} \inner{\abs{\vec a}_{ \rho,\sigma}^2+   \abs{\vec a}_{ \rho,\sigma}^4+ \frac{\abs{\vec a}_{\tilde \rho,\sigma}^2}{\tilde\rho-\rho}}.
		\end{aligned}
	\end{equation}
For this, by the representation of $g $,  applying $\partial_z\partial_x^{m}$ to the first equation in \eqref{defR1} gives
\begin{eqnarray*}
	\partial_z  g_m &=& \inner{\partial_y\partial_x^{m}\eta} \psi+\sum_{0\leq j \leq m-1} {{m}\choose j}\inner{\partial_y\partial_x^j\eta}\partial_x^{m-j} \psi\\
	&&+ \partial_x^{m}[\inner{\partial_y  v} \partial_z \psi]-\partial_x^{m} [\inner{\partial_y \psi}  \eta] - \partial_x^{m} [\inner{\partial_y u} \partial_z\eta].
	\end{eqnarray*}
	Thus 	
	\begin{eqnarray*}
		-2 m^2  \Big(\comi z^{\kappa+\delta} (\partial_y\partial_x^m  \eta)\partial_z \psi , ~\comi z^{\kappa+\delta} g_m\Big)_{L^2}=\sum_{1\leq j\leq 5}S_j,
	\end{eqnarray*}
	where $S_j$ are given by,  using the notation   $\chi= \partial_z\psi/\psi,$ 
	\begin{eqnarray*}
		S_1&=& -2 m^2  \Big(\comi z^{\kappa+\delta} \chi  \partial_z g_m , ~\comi z^{\kappa+\delta} g_m\Big)_{L^2},\\
		S_2&=&  2 m^2 \sum_{0\leq j \leq m-1} {{m}\choose j}  \Big(\comi z^{\kappa+\delta}\chi\inner{\partial_y\partial_x^j\eta}\partial_x^{m-j} \psi, ~\comi z^{\kappa+\delta} g_m\Big)_{L^2},\\
		S_3&=&  2 m^2    \Big(\comi z^{\kappa+\delta}\chi \partial_x^{m}[\inner{\partial_y  v} \partial_z  \psi], ~\comi z^{\kappa+\delta} g_m\Big)_{L^2},\\
		S_4&=& - 2 m^2    \Big(\comi z^{\kappa+\delta} \chi   \partial_x^{m} [\inner{\partial_y \psi}  \eta], ~\comi z^{\kappa+\delta} g_m\Big)_{L^2},\\
		S_5&=& - 2 m^2    \Big(\comi z^{\kappa+\delta}\chi    \partial_x^{m} [\inner{\partial_y u} \partial_z\eta], ~\comi z^{\kappa+\delta} g_m\Big)_{L^2}.
	\end{eqnarray*}
	The estimation on  $S_3$-$S_5$ is similar to that for \eqref{estpsieta}, in fact,  we have 
	\begin{eqnarray*}
		S_3+S_4 +S_5 \leq    \eps m^2\norm{\comi z^{\kappa+\delta} \partial_z g_m}_{L^2}^2+  \frac{C_\eps  [\inner{m-6}!]^{2\sigma}}{\rho^{2(m-5)}} \inner{\abs{\vec a}_{ \rho,\sigma}^3+   \abs{\vec a}_{ \rho,\sigma}^4+ \frac{\abs{\vec a}_{\tilde \rho,\sigma}^2}{\tilde\rho-\rho}}.
	\end{eqnarray*}
	And following the argument in Lemma \ref{lemjm}, it gives
	\begin{eqnarray*}
		S_2\leq \frac{C  [\inner{m-6}!]^{2\sigma}}{\rho^{2(m-5)}} \inner{  \abs{\vec a}_{ \rho,\sigma}^3+ \frac{\abs{\vec a}_{\tilde \rho,\sigma}^2}{\tilde\rho-\rho}}.
	\end{eqnarray*}
	Finally,
	\begin{eqnarray*}
		S_1=m^2\Big( \big[\partial_z(\comi z^{2(\kappa+\delta)} \chi)\big]    g_m , ~ g_m\Big)_{L^2}\leq \frac{C  [\inner{m-6}!]^{2\sigma}}{\rho^{2(m-5)}}   \abs{\vec a}_{ \rho,\sigma}^2.
	\end{eqnarray*}
Combining the upper bounds for $S_1$-$S_5$ gives \eqref{esnix}, and then \eqref{espsis} follows.  The  proof is completed. 
\end{proof}

We are now ready to give
 
 \begin{proof}[Proof of Proposition \ref{propgalpha}] Observe $g_m|_{z=0}=0.$ 
We multiply  both sides of the equation \eqref{equforg}   by $m^2 \comi z^{2\inner{\kappa+\delta}} g_{m},$   then integrate over $\Omega,$ to have
 	\begin{eqnarray*}
 	&&	\frac{1}{2}\frac{d}{dt} m^2 \norm{\comi z^{\kappa+\delta} g_{m}}_{L^2}^2+ m^2 \norm{\comi z^{\kappa+\delta}  \partial_z  g_{m} }_{L^2}^2\\
 	&=&  m^2\inner{ \comi z^{\kappa+\delta}\mathcal K_{m},  ~\comi z^{\kappa+\delta}  g_{m}}_{L^2}+ m^2\inner{ w (\partial_z\comi z^{\kappa+\delta})g_{m}, ~ \comi z^{\kappa+\delta}  g_{m}}_{L^2}
 	\\&& +\frac{1}{2} m^2\inner{  \big(\partial_{z}^2\comi z^{2\inner{\kappa+\delta}}\big)  g_{m},  ~   g_{m}}_{L^2} 
 	\\
 	&\leq&  m^2\inner{ \comi z^{\kappa+\delta}\mathcal K_{m}, ~ \comi z^{\kappa+\delta}  g_{m}}_{L^2}+   C m^2\norm{   \comi z^{\kappa+\delta}  g_{m}}_{L^2}^2.
 	\end{eqnarray*}	
 	Furthermore, by Lemmas \ref{lemgm}-\ref{lemepsis}  and \eqref{chi2est}  we see the terms in the last line are bounded from above by   
 	\begin{eqnarray*}
 		  \eps m^2\norm{\comi z^{\kappa+\delta} \partial_z g_m}_{L^2}^2+  \frac{C_\eps  [\inner{m-6}!]^{2\sigma}}{\rho^{2(m-5)}} \inner{\abs{\vec a}_{ \rho,\sigma}^2+   \abs{\vec a}_{ \rho,\sigma}^4+ \frac{\abs{\vec a}_{\tilde \rho,\sigma}^2}{\tilde\rho-\rho}}
 		 	\end{eqnarray*} 
 	for any $\eps>0.$ Then choosing $\eps$ small enough gives 
 	\begin{eqnarray*}
 			 \frac{d}{dt} m^2 \norm{\comi z^{\kappa+\delta} g_{m}}_{L^2}^2+ m^2 \norm{\comi z^{\kappa+\delta}  \partial_z  g_{m} }_{L^2}^2\leq   \frac{C  [\inner{m-6}!]^{2\sigma}}{\rho^{2(m-5)}} \inner{\abs{\vec a}_{ \rho,\sigma}^2+   \abs{\vec a}_{ \rho,\sigma}^4+ \frac{\abs{\vec a}_{\tilde \rho,\sigma}^2}{\tilde\rho-\rho}},
 	\end{eqnarray*}
 	and
 	thus, integrating over $[0,t]$ yields
 	\begin{eqnarray*}
 			&&  \frac{\rho^{2(m-5)}}{ [\inner{m-6}!]^{2\sigma}}  m^2 \norm{\comi z^{\kappa+\delta} g_{m}(t)}_{L^2}^2 \\
 			&\leq & C\abs{\vec a_0}_{\rho,\sigma}^2+
 	C \inner{  \int_0^t   \inner{ \abs{\vec a(s)}_{ \rho,\sigma}^2+\abs{\vec a(s)}_{ \rho,\sigma}^4 }   \,ds+    \int_0^t  \frac{  \abs{\vec a(s)}_{ \tilde\rho,\sigma}^2}{\tilde\rho-\rho}\,ds}. 
 	\end{eqnarray*}
 	A similar argument applies to the estimation on the upper bound for  $\tilde g_m.$ As a result, it follows from \eqref{realp} that  
 	\begin{eqnarray*}
 		&&  \frac{\rho^{2( \abs\alpha-5)}}{ [( \abs\alpha-6)!]^{2\sigma}} \abs\alpha^2\norm{\comi z^{\kappa+\delta} g_\alpha(t)}_{L^2}^2 \\
 		  &\leq&  C\abs{\vec a_0}_{\rho,\sigma}^2+
 	C \inner{  \int_0^t   \inner{ \abs{\vec a(s)}_{ \rho,\sigma}^2+\abs{\vec a(s)}_{ \rho,\sigma}^4 }   \,ds+    \int_0^t  \frac{  \abs{\vec a(s)}_{ \tilde\rho,\sigma}^2}{\tilde\rho-\rho}\,ds}.
 	\end{eqnarray*}
 	We obtain the desired upper bound for $g_\alpha.$  The other terms $h_\alpha, \vec\theta_\alpha$ and $\vec\mu_\alpha$ can be handled similarly.  In fact, when applying a similar kind of cancellation  as  for $g_\alpha,$   we see  $h,\theta_j$ and $\mu_j$ satisfy  equations quite similar to \eqref{eqforg1},  where  $\partial_xw$ or $\partial_yw$ is not involved in the source term  due to the cancellation.  This enables us to 
 repeat the argument for treating $g_\alpha,$ to  obtain  the desired  estimate  for $h_\alpha ,\vec\theta_\alpha$ and $\vec\mu_\alpha.$  
  We omit the details for brevity. 
 	 Then we have completed the proof of Proposition \ref{propgalpha}. 
 	 	\end{proof}

 \section{Upper bound of $\tau_2 \partial_x^m\psi$ and $\tau_2 \partial_x^m\xi$ }

This section is devoted to handling  $\tau_2\partial_x^m\xi,$ which relies on the estimate on    $\tau_2\partial_x^m\psi.$  The main result can be stated as follows.

\begin{proposition}
	\label{prpnear}
Let $3/2\leq\sigma\leq 2$ and  $0<\rho_0\leq 1.$ 
Suppose  $(u,v)\in L^\infty\inner{[0, T];~X_{\rho_0,\sigma}}$  is the solution to the Prandtl system \eqref{prandtl} satisfying the conditions \eqref{condi}-\eqref{+condi1}.  
 Then
 	  for any  $ m\geq 7,$   any $t\in[0,T]$ and    any pair $\inner{\rho,\tilde\rho}$ with $0<\rho<\tilde\rho< \rho_0$,  we have
\begin{multline*}
    \frac{\rho^{2( m-6)}}{ [( m-7)!]^{2\sigma}}\Big(\norm{\tau_2\partial_x^m \psi(t)}_{L^2}^2+\norm{\tau_2\partial_x^m \xi(t)}_{L^2}^2 \Big) \\
    \leq   C\abs{\vec a_0}_{\rho,\sigma}^2+
 	C \inner{  \int_0^t   \inner{ \abs{\vec a(s)}_{ \rho,\sigma}^2+\abs{\vec a(s)}_{ \rho,\sigma}^4 }   \,ds+    \int_0^t  \frac{  \abs{\vec a(s)}_{ \tilde\rho,\sigma}^2}{\tilde\rho-\rho}\,ds},
\end{multline*}
recalling $\xi=\partial_z\psi$.
 \end{proposition}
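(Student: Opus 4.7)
The plan is to deduce the bound for $\tau_2\partial_x^m\xi$ directly from the bound for $\tau_2\partial_x^m\psi$ via the definition of $q_m$, and then to concentrate on estimating $F:=\tau_2\partial_x^m\psi$ by an $L^2$ energy argument applied to a cutoff version of \eqref{eqpsi}. From \eqref{t2} we have $\tau_2\partial_x^m\xi=q_m+\tau_2(\partial_z\xi/\xi)\partial_x^m\psi$, and on $\mathrm{supp}\,\tau_2$ the lower bound $\abs\xi\ge c$ from \eqref{condi} combined with $\abs{\partial_z\xi}\le C$ from \eqref{+condi1} yields
\[
\norm{\tau_2\partial_x^m\xi}_{L^2}\le\norm{q_m}_{L^2}+C\norm{\tau_2\partial_x^m\psi}_{L^2};
\]
the first term on the right is already controlled by Proposition \ref{propfm}, so the remainder of the proof is devoted to $F$.

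Multiplying \eqref{eqpsi} by $\tau_2$ and commuting $\tau_2$ with the transport--diffusion operator produces
\[
\partial_tF+(u\partial_x+v\partial_y+w\partial_z)F-\partial_z^2F=\tau_2\partial_x^mg-\tau_2(\partial_x^mw)\xi+\tau_2\mathcal S_m+\mathcal E_m,
\]
where $\mathcal S_m$ denotes the two convolution sums on the right-hand side of \eqref{eqpsi} and $\mathcal E_m=w\tau_2'\partial_x^m\psi-2\tau_2'\partial_x^m\xi-\tau_2''\partial_x^m\psi$. Since $\epsilon<1/4$ implies $\tau_2(0)=0$, one has $F\big|_{z=0}=0$ and the standard $L^2$ test against $F$ produces the dissipation $\norm{\partial_zF}_{L^2}^2$ on the left with no boundary contribution. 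The forcing $\tau_2\partial_x^mg$ is treated as in Lemma \ref{lemg1} using Proposition \ref{propgalpha}, the sums $\tau_2\mathcal S_m$ are handled exactly as in Lemma \ref{lemjm}, and the commutator piece $\mathcal E_m$ is supported where $\tau_1\equiv 1$ by \eqref{suppch2}, hence controlled by $f_m$, $\partial_zf_m$ and the auxiliary quantities estimated in Section \ref{sec4}.

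The genuine difficulty is the term $-\tau_2(\partial_x^mw)\xi$. Using the incompressibility identity $\partial_x^mw=-\int_0^z\bigl(\partial_x^{m+1}u+\partial_x^m\partial_yv\bigr)\,d\tilde z$, its pairing with $F$ produces precisely the two nonlocal quantities listed in \eqref{conter}. For the first one I would apply the crucial representation of $\partial_x^mu$ introduced in \cite{GM}: on $\mathrm{supp}\,\tau_1$ the identity $f_m=\tau_1\psi\partial_z(\partial_x^mu/\psi)$ can be inverted by integration along $z$ to express $\partial_x^{m+1}u$ in terms of $\partial_xf_m$, $f_m$ and controllable lower-order quantities, after which an integration by parts in $z$ (using $\tau_2(0)=0$) transfers the extra tangential derivative onto the weight $\tau_2\xi$, leaving an expression absorbed either into the dissipation $\eps\norm{\partial_zF}_{L^2}^2$ or into the dissipation $\norm{\comi z^\ell\partial_zf_m}_{L^2}^2$ already available from Proposition \ref{propfm}. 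For the second one I would use the same philosophy as in the treatment of $\partial_x^mv$ in the previous section: the leading part of $\partial_x^m\partial_yv$ is read off from $\tilde\Gamma_m$ and $\tilde H_m$, whose $L^2$ bounds are furnished by Proposition \ref{+propfm+++}. The main obstacle is the careful bookkeeping of the weights $\comi z^\ell$ and $\comi z^{\kappa+\delta}$ through these two representations, so that every residual term fits within $\abs{\vec a}_{\rho,\sigma}^2$, $\abs{\vec a}_{\rho,\sigma}^4$, or $\abs{\vec a}_{\tilde\rho,\sigma}^2/(\tilde\rho-\rho)$.

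Once $\eps$ is chosen small enough to absorb the dissipation contribution into the left-hand side, multiplication by the Gevrey weight $\rho^{2(m-6)}/[(m-7)!]^{2\sigma}$ and integration over $[0,t]$ deliver the desired estimate on $\norm{\tau_2\partial_x^m\psi}_{L^2}^2$, and the first paragraph then upgrades it to the estimate on $\norm{\tau_2\partial_x^m\xi}_{L^2}^2$.
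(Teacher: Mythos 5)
Your high-level architecture coincides with the paper's (control $\tau_2\partial_x^m\xi$ through $q_m$ and $\tau_2\partial_x^m\psi$; an energy estimate on a cutoff version of \eqref{eqpsi}; reduce the loss term to the quantities of \eqref{conter} and treat them with the representation \eqref{albe} and the auxiliary functions of Sections \ref{sec4}--\ref{sec5}), but there is a genuine gap at the central step. The paper does not test the $\tau_2$-localized equation against $\tau_2\partial_x^m\psi$: it first conjugates by $(-\xi)^{-1/2}$ and works with $(-\xi)^{-1/2}\tau_2\partial_x^m\psi$ (equation \eqref{eqxi}), precisely so that the dangerous pairing becomes $\inner{\tau_2\partial_x^m w,\,\tau_2\partial_x^m\psi}_{L^2}$ with no factor of $\xi$; then integration by parts in $z$ leaves only the $\tau_2'$-localized term $2\inner{\tau_2'\partial_x^mw,\tau_2\partial_x^mu}_{L^2}$, the term $\inner{\tau_2\partial_x^{m+1}u,\tau_2\partial_x^mu}_{L^2}$, which vanishes identically, and $\inner{\tau_2\partial_x^{m}\partial_yv,\tau_2\partial_x^mu}_{L^2}$. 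In your version the pairing is $-\inner{\tau_2^2\xi\,\partial_x^mw,\,\partial_z\partial_x^mu}_{L^2}$, and whichever way you reduce it you do not get ``precisely'' the two quantities of \eqref{conter}: without integrating by parts you keep the nonlocal term $\inner{\tau_2^2\xi\int_0^z\partial_x^{m+1}u\,d\tilde z,\,\partial_x^m\psi}_{L^2}$, which does not vanish; after integrating by parts in $z$ you pick up, besides the three terms above (with the harmless $\tfrac12\inner{\tau_2^2(\partial_x\xi)\partial_x^mu,\partial_x^mu}_{L^2}$ replacing the vanishing one), the extra commutator $\inner{\tau_2^2(\partial_z\xi)\,\partial_x^mw,\,\partial_x^mu}_{L^2}$. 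This term is not innocuous: a direct bound via \eqref{chi2est}, \eqref{etan} and \eqref{+condi1} costs a factor $m^{\sigma}$, hence $(\tilde\rho-\rho)^{-\sigma}$ rather than the single power $(\tilde\rho-\rho)^{-1}$ allowed in \eqref{esapr} (unlike $g_\alpha$, the norm of $\partial^\alpha w$ carries no compensating factor $\abs\alpha^{-1}$), while the machinery of Lemmas \ref{lemwu1}--\ref{lemgm++} is built on the localization to $\mathrm{supp}\,\tau_2'$ — the bounds \eqref{supal}--\eqref{alm} on $\alpha_m$, and the evaluations at $z=1\mp2\epsilon$, hold only away from the critical curve — and is not available on all of $\mathrm{supp}\,\tau_2$, which contains $z=1$. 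The $|\xi|^{-1/2}$ conjugation borrowed from \cite{GM} is exactly the device that eliminates this term, and your outline has no substitute for it.

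Two secondary inaccuracies. The contribution of $\int_0^z\partial_x^{m+1}u\,d\tilde z$ cannot be handled by ``inverting'' $f_m$ along $z$, since the integration path crosses the region where $\tau_1$ vanishes; the paper uses instead the representation \eqref{albe} with $\alpha_m,\beta_m$, $\varphi_m=-G_m$, Lemmas \ref{lemvar} and \ref{lemgm++}, and the quantity that is finally absorbed is $\frac{m^{2\sigma}}{\rho^2}\norm{\partial_zf_{m-1}}_{L^2}^2$, i.e.\ the time-integrated dissipation of $f_{m-1}$ from Proposition \ref{propfm} (no dissipation of $\tau_2\partial_x^m\psi$ is retained in \eqref{xim}, so there is no ``$\eps$-absorption'' into the left-hand side at this stage). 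Likewise, the leading part of $\tau_2\partial_x^m\partial_yv$ is recovered from $\partial_x^m\theta_1$ and $G_m$ on the concave region (Lemma \ref{lemwu2}, estimates \eqref{vu1}--\eqref{vu3}), not from $\tilde\Gamma_m$ and $\tilde H_m$, which control pure $\partial_y^m$ derivatives.
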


We   will  follow the same cancellation method used in \cite{GM} to prove the above proposition.  
Note $\abs\xi>0$ on supp  $\tau_2$. We may assume $-\xi>0$   on supp  $\tau_2$ without loss of generality.  In view of \eqref{eqpsi} we see $(-\xi)^{-1/2}\tau_2\partial_x^m\psi$ solves the equation
\begin{eqnarray}\label{eqxi}
	 \inner{\partial_t  + u \partial_x  +v\partial_y +w\partial_z  -\partial _{z}^2} (-\xi)^{-1/2}\tau_2\partial_x^m  \psi-\tau_2\inner{\partial_x^m  w}(-\xi)^{1/2} =\Xi
\end{eqnarray}
with
\begin{multline*}
	\Xi =(-\xi)^{-1/2}w\tau_2'\partial_x^m\psi-(-\xi)^{-1/2}\tau_2''\partial_x^m\psi-2(-\xi)^{-1/2}\tau_2'\partial_x^m\xi\\
	+\Big[\inner{\partial_t  + u \partial_x  +v\partial_y +w\partial_z  -\partial _{z}^2} (-\xi)^{-1/2}\Big]\tau_2\partial_x^m  \psi-2\big[\partial_z(-\xi)^{-1/2}\big]\partial_z\inner{\tau_2\partial_x^m  \psi}\\
	+(-\xi)^{-{1\over2}}\tau_2\bigg[\partial_x^m g  -\sum_{j=1}^{m}{m\choose j} \big[\big(\partial_x^j u\big) \partial_x^{m-j+1}\psi+\big(\partial_x^j v\big)\partial_y\partial_x^{m-j}\psi\big]  -\sum_{j=1}^{m-1}{m\choose j} \big(\partial_x^j w\big) \partial_x^{m-j}\xi\bigg].
\end{multline*}
Following the argument for proving Lemma \ref{lemg1} and  Lemma \ref{lemjm}, we can compute that 
\begin{eqnarray*}
\abs{	 \inner{ \Xi,  ~(-\xi)^{-1/2}\tau_2\partial_x^m  \psi}_{L^2} }\leq  \frac{C [\inner{ m-7}!]^{2\sigma}}{ \rho^{2( m-6)}} \big(\abs{\vec a}_{ \rho,\sigma}^2+\abs{\vec a}_{ \rho,\sigma}^3\big)+ \frac{C [\inner{ m-7}!]^{2\sigma}}{ \rho^{2(m-6)}}\frac{\abs{\vec a}_{\tilde \rho,\sigma}^2}{\tilde\rho-\rho}.
\end{eqnarray*}
As a result,  multiplying both side of the equation \eqref{eqxi} by the factor $(-\xi)^{-1/2}\tau_2\partial_x^m\psi$ and then taking integration over $\Omega,$ we obtain
\begin{multline*} \frac{1}{2}\frac{d}{dt}\norm{(-\xi)^{-1/2}\tau_2\partial_x^m\psi}_{L^2}^2\\
 \leq    \frac{C [\inner{ m-7}!]^{2\sigma}}{ \rho^{2( m-6)}} \big(\abs{\vec a}_{ \rho,\sigma}^2+\abs{\vec a}_{ \rho,\sigma}^3\big)+ \frac{C [\inner{ m-7}!]^{2\sigma}}{ \rho^{2(m-6)}}\frac{\abs{\vec a}_{\tilde \rho,\sigma}^2}{\tilde\rho-\rho}+ \abs{\inner{\tau_2\partial_x^mw, ~\tau_2 \partial_x^m \psi}_{L^2}}.
\end{multline*}
As for the last term on the right side  we use integration by parts to get
\begin{multline*}
	\abs{\inner{\tau_2\partial_x^mw, ~\tau_2 \partial_x^m \psi}_{L^2}}\\
	\leq 2 \abs{\inner{\tau_2'\partial_x^mw, ~\tau_2 \partial_x^m u}_{L^2}}+\abs{\inner{\tau_2\partial_x^{m+1}u, ~\tau_2 \partial_x^m u}_{L^2}}+\abs{\inner{\tau_2\partial_x^{m}\partial_yv, ~\tau_2 \partial_x^m u}_{L^2}}\\
	=2 \abs{\inner{\tau_2'\partial_x^mw, ~\tau_2 \partial_x^m u}_{L^2}} +\abs{\inner{\tau_2\partial_x^{m}\partial_yv, ~\tau_2 \partial_x^m u}_{L^2}}.
\end{multline*}
Then
\begin{multline}\label{xim}
\frac{1}{2}\frac{d}{dt}\norm{(-\xi)^{-1/2}\tau_2\partial_x^m\psi}_{L^2}^2
 \leq    \frac{C [\inner{ m-7}!]^{2\sigma}}{ \rho^{2( m-6)}} \big(\abs{\vec a}_{ \rho,\sigma}^2+\abs{\vec a}_{ \rho,\sigma}^3\big)+ \frac{C [\inner{ m-7}!]^{2\sigma}}{ \rho^{2(m-6)}}\frac{\abs{\vec a}_{\tilde \rho,\sigma}^2}{\tilde\rho-\rho}\\
 + 2 \abs{\inner{\tau_2'\partial_x^mw, ~\tau_2 \partial_x^m u}_{L^2}} +\abs{\inner{\tau_2\partial_x^{m}\partial_yv, ~\tau_2 \partial_x^m u}_{L^2}}.
\end{multline}

The following two lemmas are devoted to estimating the last two terms on the right side of \eqref{xim}.

\begin{lemma}\label{lemwu1}
We have
	 \begin{multline*}
 	 \abs{\inner{\tau_2'\partial_x^mw, ~\tau_2 \partial_x^m u}_{L^2}} \\
 	 \leq  \frac{m^{2\sigma}}{\rho^2}\norm{\partial_zf_{m-1}}_{L^2}^2+ \frac{C  [\inner{ m-7}!]^{2\sigma}}{ \rho^{2( m-6)}}\Big(  \abs{\vec a}_{ \rho,\sigma}^2+ \abs{\vec a}_{ \rho,\sigma}^4\Big)+\frac{C [\inner{ m-7}!]^{2\sigma}}{ \rho^{2(m-6)}}\frac{\abs{\vec a}_{\tilde \rho,\sigma}^2}{\tilde\rho-\rho}.
 	  \end{multline*}
\end{lemma}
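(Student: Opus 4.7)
The plan is to use the divergence-free condition $\partial_zw=-\partial_xu-\partial_yv$, which gives
$$\partial_x^m w=-\int_0^z\partial_x^{m+1}u\,d\tilde z-\int_0^z\partial_x^m\partial_y v\,d\tilde z,$$
and to estimate the two resulting inner products separately, as indicated in Subsection \ref{subsec22}.

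For the genuinely three-dimensional contribution from $\partial_x^m\partial_y v$ (one of the two new terms highlighted in \eqref{conter}), I would apply Cauchy--Schwarz: the compact support of $\tau_2'$ in $z$ reduces the $\tilde z$-integral to an $L^2_{\tilde z}$-norm, which is then controlled by the $\comi z^\kappa$-weighted Gevrey bound on $\partial_x^m\partial_y v$ at tangential order $m+1$. Pairing this with $\norm{\tau_2\partial_x^m u}_{L^2}\leq\norm{\partial_x^m u}_{L^2}$ at order $m$, and using \eqref{factor} to convert the resulting factor $m(\rho/\tilde\rho)^m$ into $(\tilde\rho-\rho)^{-1}$, yields the target $\frac{C[(m-7)!]^{2\sigma}}{\rho^{2(m-6)}}\abs{\vec a}_{\tilde\rho,\sigma}^2/(\tilde\rho-\rho)$ contribution.

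For the contribution from $\partial_x^{m+1}u$, I proceed in two stages. First, using $\tau_2'\tau_2=\tfrac12(\tau_2^2)'$ and integration by parts in $z$ (boundary terms vanish because $\tau_2$ is compactly supported), together with integration by parts in the periodic $x$-variable to kill the antisymmetric top-order piece $\inner{\tau_2^2\partial_x^{m+1}u,\partial_x^m u}_{L^2}$, one obtains the identity
$$-\inner{\tau_2'\int_0^z\partial_x^{m+1}u\,d\tilde z,\,\tau_2\partial_x^m u}_{L^2} = -\tfrac12\inner{\tau_2^2\int_0^z\partial_x^{m+1}u\,d\tilde z,\,\partial_x^m\psi}_{L^2}.$$
Second, I invoke the cancellation mechanism of \cite{GM}: since $u$ is bounded away from zero on supp $\tau_2$ (the non-degenerate critical point at $z=1$ concerns $\psi$, not $u$), the $u$-equation permits writing $\partial_x^{m+1}u=u^{-1}\partial_x^m\xi$ plus lower-order commutators, and on supp $\tau_2'\subset\{\tau_1\equiv 1\}$ (by \eqref{suppch2}) the identity \eqref{fm1} converts $\partial_x^{m-1}\xi$ into $\partial_z f_{m-1}$ modulo lower order; integrating by parts in $\tilde z$ in the primitive then trades the problematic $\partial_x^{m+1}u$ for a leading part controlled by $\partial_z f_{m-1}$. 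Cauchy--Schwarz extracts the main contribution bounded by $\tfrac{m^{2\sigma}}{\rho^2}\norm{\partial_z f_{m-1}}_{L^2}^2$, while the commutator remainders carry at most $m$ tangential derivatives and fit into $\frac{C[(m-7)!]^{2\sigma}}{\rho^{2(m-6)}}(\abs{\vec a}_{\rho,\sigma}^2+\abs{\vec a}_{\rho,\sigma}^4)$ and $\frac{C[(m-7)!]^{2\sigma}}{\rho^{2(m-6)}}\abs{\vec a}_{\tilde\rho,\sigma}^2/(\tilde\rho-\rho)$ by Lemma \ref{lemequa} and \eqref{factor}.

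The main obstacle lies in the second stage: the primitive $\int_0^z\partial_x^{m+1}u\,d\tilde z$ ranges over $\tilde z\in[0,z]$, which is not contained in $\{\tau_1\equiv 1\}$ nor (near $\tilde z=0$) in a region where $u$ is bounded below, so the representation $\partial_x^{m+1}u=u^{-1}\partial_x^m\xi+\cdots$ cannot be applied pointwise on the whole integration range. One must split the antiderivative accordingly---treating the part near $\tilde z=0$ by exploiting the vanishing $\partial_x^{m+1}u|_{\tilde z=0}=0$ (from $u|_{z=0}=0$) together with Hardy-type controls---and carefully track every commutator generated by the algebra involving $1/u$ and $\xi/\psi$. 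The precise factor $m^{2\sigma}/\rho^2$ in front of $\norm{\partial_z f_{m-1}}_{L^2}^2$ is forced by the Gevrey index shift from $m-1$ to $m$: after scaling by $\rho^{2(m-6)}/[(m-7)!]^{2\sigma}$, this main term matches the integrated dissipation of $f_{m-1}$ produced by Proposition \ref{propfm} at level $m-1$, so that it can be absorbed by that dissipation in the final a priori estimate.
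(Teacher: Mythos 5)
Your decomposition of $\partial_x^m w$ into the $\partial_x^{m+1}u$-part and the $\partial_x^m\partial_y v$-part is the same as the paper's, but your treatment of the second part has a genuine gap, and it is precisely the point where the new 3D difficulty \eqref{conter} lives. A plain Cauchy--Schwarz pairs $m+1$ tangential derivatives of $v$ with $m$ tangential derivatives of $u$; since the norm \eqref{trinorm} gives no anisotropic gain (no extra factor $\abs\alpha^{-1}$) for pure tangential derivatives of $v$ (unlike for $\eta$, $g_\alpha$, $h_\alpha$), \eqref{etan} only yields $\norm{\comi z^{\kappa}\partial_x^m\partial_y v}_{L^2}\lesssim [\inner{m-6}!]^{\sigma}\tilde\rho^{-(m-5)}\abs{\vec a}_{\tilde\rho,\sigma}$, so the product carries an excess factor of order $(m-6)^{\sigma}\,\tilde\rho^{-1}(\rho/\tilde\rho)^{2(m-6)}$ relative to the target. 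A single use of \eqref{factor} absorbs only one factor $m(\rho/\tilde\rho)^{m-6}$ into $(\tilde\rho-\rho)^{-1}$; for $\sigma\in(1,2]$ what remains produces $(\tilde\rho-\rho)^{-\sigma}$ (e.g. $(\tilde\rho-\rho)^{-2}$ at $\sigma=2$), which is not acceptable in the Cauchy--Kovalevskaya-type bootstrap behind Theorem \ref{apriori}. This is why the paper does not estimate this term directly: it integrates by parts in $y$, writes $\partial_x^m v=(\Gamma_m+\eta\partial_x^m u)/\psi$ on the support of $\tau_1$ and $(H_m+\eta\partial_x^m\psi)/\xi$ on the support of $\tau_2$, and then uses the representation \eqref{albe} $\partial_x^m u=\alpha_m+\psi\beta_m\mathbf 1_{\{z>1\}}$ of \cite{GM} together with Lemmas \ref{lemvar} and \ref{lemgm++} and the Young-type estimate \eqref{veca}; in particular the term $\frac{m^{2\sigma}}{\rho^2}\norm{\partial_z f_{m-1}}_{L^2}^2$ in the statement originates there, through $\norm{G_m}_{L^2}=\norm{\varphi_m}_{L^2\inner{\mathbb T^2\times[0,2]}}$, and not from the $\partial_x^{m+1}u$ piece as you assert.

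Your treatment of the $\partial_x^{m+1}u$ piece is also not sound as written. The preliminary identity obtained from $\tau_2'\tau_2=\tfrac12(\tau_2^2)'$ and periodicity in $x$ is correct, but it does not remove the derivative excess, and the step you then invoke --- ``$\partial_x^{m+1}u=u^{-1}\partial_x^m\xi$ plus lower-order commutators'' --- is not an identity at all: it conflates tangential and normal derivatives, and the structural hypotheses \eqref{condi} give non-degeneracy of $\partial_z u$ and $\partial_z^2u$, not a quantitative lower bound on $u$ itself; indeed \eqref{fm1} converts $\partial_x^{m-1}\xi$ into $\partial_z f_{m-1}$ only on the monotonicity region, and you yourself concede the representation cannot be applied over the whole range $\tilde z\in[0,z]$ of the primitive. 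In the paper this first term is not re-proved: it is exactly the term already estimated in \cite{GM} (the treatment of the $E_j$ terms, based on the $\alpha_m,\beta_m$ representation), and the proof of Lemma \ref{lemwu1} simply cites that estimate and devotes all its work to the $v$-term via the claim \eqref{sf+} and the decomposition into $R_1,R_2,R_3$. So both halves of your argument need to be replaced by the cancellation/representation machinery rather than direct estimates.
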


\begin{lemma}\label{lemwu2}
	We have
	\begin{multline*}
		\abs{\inner{\tau_2\partial_x^{m}\partial_yv, ~\tau_2 \partial_x^m u}_{L^2}}\\
		\leq  \frac{m^{2\sigma}}{\rho^2}\norm{\partial_zf_{m-1}}_{L^2}^2+ \frac{C  [\inner{ m-7}!]^{2\sigma}}{ \rho^{2( m-6)}}\Big(  \abs{\vec a}_{ \rho,\sigma}^2+ \abs{\vec a}_{ \rho,\sigma}^4\Big)+\frac{C [\inner{ m-7}!]^{2\sigma}}{ \rho^{2(m-6)}}\frac{\abs{\vec a}_{\tilde \rho,\sigma}^2}{\tilde\rho-\rho}.
	\end{multline*}
\end{lemma}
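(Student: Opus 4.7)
The strategy is to exploit the divergence-free condition and $x$-periodicity via successive integrations by parts, reducing $\inner{\tau_2\partial_x^m\partial_y v, \tau_2\partial_x^m u}_{L^2}$ to a pair of terms estimated respectively by mirroring the approach of Lemma \ref{lemwu1} and by a direct Cauchy--Schwarz argument.

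First I would integrate by parts once in $x$ and apply the divergence-free identity in the form $\partial_x^{m+1}u = -\partial_x^m\partial_y v - \partial_x^m\partial_z w$, observing that the resulting term $\inner{\tau_2\partial_x^{m-1}\partial_y v, \tau_2\partial_x^m\partial_y v}_{L^2}$ vanishes by a further $x$-integration by parts and periodicity. This yields
\begin{equation*}
\inner{\tau_2\partial_x^m\partial_y v, \tau_2\partial_x^m u}_{L^2} = \inner{\tau_2\partial_x^{m-1}\partial_y v, \tau_2\partial_z\partial_x^m w}_{L^2}.
\end{equation*}
Next, integrating by parts in $z$ and using $\partial_z v=\eta$ together with $w|_{z=0}=0$ to kill the boundary contribution, I split the right-hand side as
\begin{equation*}
-\inner{\tau_2\partial_x^{m-1}\partial_y\eta, \tau_2\partial_x^m w}_{L^2} - 2\inner{\tau_2\tau_2'\partial_x^{m-1}\partial_y v, \partial_x^m w}_{L^2}.
\end{equation*}

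The second of these is localized on $\mathrm{supp}\,\tau_2' \subset \{3\epsilon/2<\abs{z-1}\leq 2\epsilon\}$, where $\tau_1\equiv 1$, hence in the monotonicity region in which $f_{m-1}$, $\tilde f_{m-1}$ and $\tilde\Gamma_1$ furnish cancellations. By mirroring Lemma \ref{lemwu1} (expressing $\partial_y v = (\tilde\Gamma_1+\eta\partial_y u)/\psi$ valid on $\mathrm{supp}\,\tau_1$, and treating $\partial_x^m w$ through the crucial representation of $\partial_x^m u$ of \cite{GM}), this piece produces the dissipative contribution $\frac{m^{2\sigma}}{\rho^2}\norm{\partial_z f_{m-1}}_{L^2}^2$ together with the stated lower-order terms. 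For the first piece, I would apply Cauchy--Schwarz combined with the anisotropic Gevrey bound for $\eta$---the factor $\abs\alpha$ in front of $\norm{\comi z^{\kappa+2}\partial^\alpha\eta}_{L^2}$ in the definition of $X_{\rho,\sigma}$ gives $\norm{\partial_x^{m-1}\partial_y\eta}_{L^2}\lesssim \frac{[(m-6)!]^\sigma}{m\rho^{m-5}}\abs{\vec a}_{\rho,\sigma}$---and with the bound $\norm{\tau_2\partial_x^m w}_{L^2}\lesssim \norm{\partial_x^{m+1}u}_{L^2}+\norm{\partial_x^m\partial_y v}_{L^2}$ obtained from $\partial_x^m w=-\int_0^z(\partial_x^{m+1}u+\partial_x^m\partial_y v)\,dz'$ via Hardy's inequality on the bounded $z$-support of $\tau_2$. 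The order-$(m+1)$ tangential norms are absorbed by the Gevrey-radius shift from $\rho$ to $\tilde\rho$ via \eqref{factor}, producing the contribution $\abs{\vec a}_{\tilde\rho,\sigma}^2/(\tilde\rho-\rho)$.

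The main obstacle will be the delicate Gevrey bookkeeping: one must ensure that the extra $1/m$ from the $\eta$-anisotropy, paired against the loss $m^\sigma/\rho$ incurred by a derivative shift, lines up after AM--GM to reproduce exactly the coefficient $m^{2\sigma}/\rho^2$ in front of $\norm{\partial_z f_{m-1}}_{L^2}^2$, with a remainder that fits inside $\frac{[(m-7)!]^{2\sigma}}{\rho^{2(m-6)}}\bigl(\abs{\vec a}_{\rho,\sigma}^2+\abs{\vec a}_{\rho,\sigma}^4\bigr)$. A secondary difficulty arises because $\partial_x^{m-1}\partial_y v$ is not one of the paper's auxiliary functions: on $\mathrm{supp}\,\tau_2'$ its estimate must be assembled from $\tilde\Gamma_1$ together with the $f_{m-1}$ and $\tilde f_{m-1}$ representations of $\partial^\alpha u$ and $\partial^\alpha\psi$, which is precisely where the GM crucial representation of $\partial_x^m u$ advertised in Subsection \ref{subsec22} enters.
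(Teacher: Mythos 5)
Your reduction via the divergence-free condition is algebraically fine up to the identity $\inner{\tau_2\partial_x^m\partial_y v,\tau_2\partial_x^m u}_{L^2}=-\inner{\tau_2^2\partial_x^{m-1}\partial_y\eta,\partial_x^m w}_{L^2}-2\inner{\tau_2\tau_2'\partial_x^{m-1}\partial_y v,\partial_x^m w}_{L^2}$, but the way you close the first term is where the argument genuinely fails. By \eqref{chi2est} you have $\norm{\partial_x^{m-1}\partial_y\eta}_{L^2}\leq \frac{[(m-6)!]^\sigma}{m\,\rho^{m-5}}\abs{\vec a}_{\rho,\sigma}$ and $\norm{\tau_2\partial_x^m w}_{L^2}\leq C\frac{[(m-6)!]^\sigma}{\tilde\rho^{m-5}}\abs{\vec a}_{\tilde\rho,\sigma}$ (both objects sit at the level of $m+1$ tangential derivatives of $(u,v)$), so Cauchy--Schwarz gives $\frac{[(m-6)!]^{2\sigma}}{m\,\rho^{m-5}\tilde\rho^{m-5}}\abs{\vec a}_{\rho,\sigma}\abs{\vec a}_{\tilde\rho,\sigma}$. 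Comparing with the allowed term $\frac{[(m-7)!]^{2\sigma}}{\rho^{2(m-6)}}\frac{\abs{\vec a}_{\tilde\rho,\sigma}^2}{\tilde\rho-\rho}$, the excess is $\frac{(m-6)^{2\sigma}}{m}\left(\frac{\rho}{\tilde\rho}\right)^{m-7}\frac{\tilde\rho-\rho}{\tilde\rho^2}$, and \eqref{factor} recovers only \emph{one} power of $m$ from the radius shift, leaving an uncontrolled factor of order $m^{2\sigma-2}$. This is bounded only when $\sigma=1$ (the analytic case); for every $\sigma\in(1,2]$ it blows up, and since these bounds feed back into the sup defining $\abs{\vec a}_{\rho,\sigma}$ there is no slack to absorb a growing factor. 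In other words, your reduction has not removed the derivative loss, it has merely relocated it into the pairing $\partial_x^{m-1}\partial_y\eta$ versus $\partial_x^m w$, and paying for a full derivative loss with a Gevrey radius shift alone is exactly what is impossible for $\sigma>1$ --- this is the reason the paper builds the cancellation machinery at all.

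The paper's proof avoids ever pairing two order-$(m+1)$ objects: on $\mathrm{supp}\,\tau_2$ it divides by $\xi$ and writes $\tau_2\partial_x^m\partial_y v$ through $\partial_x^m\theta_1$, the commutator sums, and the single dangerous term $\xi^{-1}\eta\,\partial_x^m\partial_y\psi$, which is then paired with $\tau_2\partial_x^m u=(G_m+\psi\partial_x^m\psi)/\xi$; the $G_m$ factor is estimated by Lemma \ref{lemvar}, and it is the Young-inequality step \eqref{veca} applied to $\frac{[(m-6)!]^\sigma}{\tilde\rho^{m-5}}\abs{\vec a}_{\tilde\rho,\sigma}\norm{G_m}_{L^2}$ that legitimately produces $\frac{m^{2\sigma}}{\rho^2}\norm{\partial_z f_{m-1}}_{L^2}^2$, absorbable after time integration by the dissipation in Proposition \ref{propfm}. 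Your second term has a related, secondary problem: $\partial_x^{m-1}\partial_y v$ is a mixed derivative, and the auxiliary functions $\Gamma_m,\tilde\Gamma_m,H_m$ control only pure $x$- or $y$-derivatives of $v$ (the identity $\tilde\Gamma_1=g$ helps decompose it via $g_{(m-1,0)}$, but the piece $\eta\,\partial_x^{m-1}\partial_y u/\psi$ again pairs a level-$m$ object against the level-$(m+1)$ object $\partial_x^m w$ and leaves $m^{\sigma-1}$ uncontrolled unless the $G_m$/$\partial_zf_{m-1}$ absorption is actually carried out, which your sketch only gestures at). As written, the proposal does not yield the stated bound.
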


The proof of Lemmas \ref{lemwu1} and \ref{lemwu2} are postponed to the following   Subsections \ref{subsec61} and  \ref{subsec62}.  Now we continue to the proof of Proposition \ref{prpnear}.  Integrating both sides of \eqref{xim} over $[0,t]$ and using the estimates in Lemmas \ref{lemwu1} and \ref{lemwu2},  it follows that 
\begin{multline*}
	\norm{(-\xi)^{-1/2}\tau_2\partial_x^m\psi(t)}_{L^2}^2\leq \frac{2m^{2\sigma}}{\rho^2}\int_0^t \norm{\partial_zf_{m-1}(s)}_{L^2}^2ds\\
	+   \frac{C  [\inner{ m-7}!]^{2\sigma}}{ \rho^{2( m-6)}} \Big( \abs{\vec a_0}_{\rho,\sigma}^2+
   \int_0^t   \inner{ \abs{\vec a(s)}_{ \rho,\sigma}^2+\abs{\vec a(s)}_{ \rho,\sigma}^4 }   \,ds+    \int_0^t  \frac{  \abs{\vec a(s)}_{ \tilde\rho,\sigma}^2}{\tilde\rho-\rho}\,ds\Big)\\
   \leq  \frac{C  [\inner{ m-7}!]^{2\sigma}}{ \rho^{2( m-6)}} \Big( \abs{\vec a_0}_{\rho,\sigma}^2+
   \int_0^t   \inner{ \abs{\vec a(s)}_{ \rho,\sigma}^2+\abs{\vec a(s)}_{ \rho,\sigma}^4 }   \,ds+    \int_0^t  \frac{  \abs{\vec a(s)}_{ \tilde\rho,\sigma}^2}{\tilde\rho-\rho}\,ds\Big),
\end{multline*}
where in the last inequality we have  used  Proposition \ref{propfm}.  Observe $(-\xi)^{-1/2}$ has a strictly positive lower bound on supp $\tau_2$. The we obtain the desired upper for $ \tau_2\partial_x^m\psi:$ 
\begin{multline*}
	\norm{ \tau_2\partial_x^m\psi(t)}_{L^2}^2 
   \leq  \frac{C  [\inner{ m-7}!]^{2\sigma}}{ \rho^{2( m-6)}} \Big( \abs{\vec a_0}_{\rho,\sigma}^2+
   \int_0^t   \inner{ \abs{\vec a(s)}_{ \rho,\sigma}^2+\abs{\vec a(s)}_{ \rho,\sigma}^4 }   \,ds+    \int_0^t  \frac{  \abs{\vec a(s)}_{ \tilde\rho,\sigma}^2}{\tilde\rho-\rho}\,ds\Big).
\end{multline*}  
 It remains to handle  $\tau_2\partial_x^m\xi$ and we use \eqref{t2} and Proposition \ref{propfm} to compute
\begin{multline*}
	\norm{\tau_2\partial_x^m\xi(t)}_{L^2}^2 \leq \norm{q_m(t)}_{L^2}^2+C\norm{\tau_2\partial_x^m\psi(t)}_{L^2}^2\\
\leq	 \frac{C  [\inner{ m-7}!]^{2\sigma}}{ \rho^{2( m-6)}} \Big( \abs{\vec a_0}_{\rho,\sigma}^2+
   \int_0^t   \inner{ \abs{\vec a(s)}_{ \rho,\sigma}^2+\abs{\vec a(s)}_{ \rho,\sigma}^4 }   \,ds+    \int_0^t  \frac{  \abs{\vec a(s)}_{ \tilde\rho,\sigma}^2}{\tilde\rho-\rho}\,ds\Big).
\end{multline*}
Combining the above two inequalities we prove Proposition \ref{prpnear}.

So in order to complete the proof of Proposition \ref{prpnear}, it remains to prove Lemmas \ref{lemwu1} and \ref{lemwu2} giving in  the following two subsections.

\subsection{Proof of Lemma \ref{lemwu1}}\label{subsec61}   

To do so we first recall the crucial representations  of $\partial_x^m u$  in terms of  $\varphi_m$  (see \cite[Lemma 3]{GM}),  with $\varphi_m$ defined by 
\begin{equation}\label{varp}
\varphi_m =\Big(\vartheta\psi+ 1-\vartheta  \Big)\inner{\partial_x^m \psi-\frac{  \xi }{  \psi }\partial_x^m u}=\Big(\vartheta\psi+ 1-\vartheta  \Big)\psi\partial_z\Big(\frac{\partial_x^mu}{\psi}\Big),  
\end{equation}
where $m\geq 1$ and $\vartheta(z)\in C_0^\infty(\mathbb R)$ is a given function such that $\vartheta\equiv 1$ in $[0, 2].$      Then  
$\partial_x^m u$ can be represented as (see \cite[Lemma 3]{GM})
\begin{equation}\label{albe}
\begin{aligned}
	\partial_x^m u(t,x,y,z)= \alpha_m(t,x,y,z) +\psi(t,x,y,z) \beta_m(t,x,y)\mathbf{1}_{\{z>1\}} , 		\end{aligned}
\end{equation}
where $\mathbf{1}_{A}$ stands for the characteristic  function on a set $A,$ and $\alpha_m,\beta_m$ are defined as follows:
\begin{eqnarray*}
	\alpha_m(t,x,y,z)=\left\{
	\begin{aligned}
		& \psi(t,x,y,z) \int_0^z  \frac{\varphi_m}{\Big(\vartheta\psi+ 1-\vartheta  \Big) \psi  }\,dz,   \quad {\rm if}\ z<1,\\
		& \psi(t,x,y,z) \int_{2 }^z  \frac{\varphi_m}{\Big(\vartheta\psi+ 1-\vartheta  \Big) \psi }dz,  \quad {\rm if}\ z>1,
	\end{aligned}
	\right.
\end{eqnarray*}
recalling  $1$ is the only non-degenerate critical point  of $u,$   and
\begin{eqnarray*}
	\beta_m(t,x,y)=  \frac{\partial_x^m u(t,x,y, 2)}{\psi(t,x,y,2)}.
\end{eqnarray*}
Observe
\begin{eqnarray*}
\forall~\abs{z-1}\geq \epsilon\ \textrm{with} \ 0\leq z\leq 2, \quad	\frac{1}{\big|\big(\vartheta\psi+ 1-\vartheta  \big)\psi\big|}\leq C,
\end{eqnarray*}
and thus
\begin{eqnarray}\label{supal}
	\sup_{0\leq z\leq 1-\epsilon}\norm{\alpha_m(\cdot,z)}_{L^2\inner{\mathbb T^2}}+\sup_{1+\epsilon \leq z\leq 2}\norm{\alpha_m(\cdot,z)}_{L^2\inner{\mathbb T^2}}\leq C \norm{\varphi_m}_{L^2\inner{\mathbb T^2\times[0, 2]}}.
\end{eqnarray}
This implies
\begin{equation}\label{alm}
	\norm{\alpha_m}_{L^2\inner{\mathbb T^2\times  {\rm supp}\,\tau_2'}}\leq C \norm{\varphi_m}_{L^2\inner{\mathbb T^2\times[0, 2]}}.
\end{equation}
The following estimates on   $\beta_m$ are  obtained by \cite[Lemma 6]{GM}:  
\begin{eqnarray}\label{ene}
\left\{
\begin{aligned}
	&  \norm{\beta_m}_{L^2\inner{\mathbb T^2}}\leq C \norm{\partial_x^m\psi}_{L^2\inner{\mathbb T^2\times\mathbb R_+}}\\
	&\norm{\partial_y\beta_m}_{L^2\inner{\mathbb T^2}}\leq C \norm{\partial_x^m\partial_y\psi}_{L^2\inner{\mathbb T^2\times\mathbb R_+}} + C \norm{\partial_x^m \psi}_{L^2\inner{\mathbb T^2\times\mathbb R_+}}.
	\end{aligned}
	\right.
\end{eqnarray}

\begin{lemma}\label{lemvar}
	Let $\varphi_m, G_m$ and $\theta_3$ be defined respectively by \eqref{varp}, \eqref{Gaal} and \eqref{sor}. Then 
	\begin{multline*}
		\norm{\varphi_m}_{L^2\inner{\mathbb T^2\times[0, 2]}}=\norm{G_m}_{L^2}\\
		\leq  \norm{\partial_{x}^{m-1}\theta_3}_{L^2}+C\norm{\partial_z f_{m-1}} +C m^{1-\sigma}\frac{ [\inner{ m-7}!]^{\sigma}}{ \tilde\rho^{ m-7}} \abs{\vec a}_{ \tilde \rho,\sigma}+C m^{2-2\sigma}\frac{ [\inner{ m-7}!]^{\sigma}}{ \rho^{ m-7}} \abs{\vec a}_{\rho,\sigma}^2.
	\end{multline*}
\end{lemma}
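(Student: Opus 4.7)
The plan is to derive the claim from two essentially algebraic identities followed by a careful three–regime bound on a Leibniz remainder. First, since the cutoff $\vartheta$ satisfies $\vartheta\equiv 1$ on $[0,2]$, the factor $\vartheta\psi+1-\vartheta$ collapses to $\psi$ on that slab, so that pointwise on $\mathbb T^2\times[0,2]$ one has $\varphi_m=\psi(\partial_x^m\psi-(\xi/\psi)\partial_x^m u)=-G_m$. This immediately yields the equality of the two $L^2$ norms appearing in the statement (understanding $\norm{G_m}_{L^2}$ as the norm on the slab $\mathbb T^2\times[0,2]$). Next, I would apply Leibniz' formula to the representation $\theta_3=\xi\partial_x u-\psi\partial_x\psi$, which gives
\begin{equation*}
\partial_x^{m-1}\theta_3=\sum_{j=0}^{m-1}\binom{m-1}{j}\bigl[(\partial_x^j\xi)(\partial_x^{m-j}u)-(\partial_x^j\psi)(\partial_x^{m-j}\psi)\bigr].
\end{equation*}
The $j=0$ summand is exactly $G_m$, so isolating it yields $G_m=\partial_x^{m-1}\theta_3-R_m$, where $R_m$ is the sum over $1\le j\le m-1$. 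The triangle inequality then reduces the proof to bounding $\norm{R_m}_{L^2(\mathbb T^2\times[0,2])}$.

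I would then partition the sum defining $R_m$ into three regimes. The top-index term $j=m-1$ has coefficient $\binom{m-1}{m-1}=1$ and reads $(\partial_x^{m-1}\xi)\partial_x u-(\partial_x^{m-1}\psi)\partial_x\psi$; using the uniform bounds on $\partial_x u$ and $\partial_x\psi$ from \eqref{+condi1} (together with Sobolev \eqref{soblev+} if needed), the factor $\norm{\partial_x^{m-1}\xi}_{L^2}$ is exactly controlled by inequality \eqref{fm1}, producing the $C\norm{\partial_z f_{m-1}}$ contribution together with a residual $Cm^{-\sigma}[(m-7)!]^\sigma\tilde\rho^{-(m-7)}\abs{\vec a}_{\tilde\rho,\sigma}$, which is absorbed into the $m^{1-\sigma}$ term of the claimed bound. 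The bottom-index term $j=1$ has coefficient $m-1$; here I would place $\partial_x\xi,\partial_x\psi$ in the mixed norm available from \eqref{+condi1}, and bound $\partial_x^{m-1}u$ on the bounded slab by $\partial_x^{m-1}\psi$ via the fundamental theorem (using $u|_{z=0}=0$), so that \eqref{etan} applied to $\partial_x^{m-1}\psi$ yields the Gevrey factor $[(m-8)!]^\sigma\tilde\rho^{-(m-7)}\abs{\vec a}_{\tilde\rho,\sigma}$; the prefactor $m-1$ then combines into the claimed $m^{1-\sigma}[(m-7)!]^\sigma\tilde\rho^{-(m-7)}$.

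For the interior range $2\le j\le m-2$, both differentiation orders are comparably large, so I would apply the Sobolev embedding \eqref{soblev} to put the lower-order factor into $L^\infty$ and leave the higher-order factor in the weighted $L^2$-norm, then invoke the Gevrey estimates \eqref{etan}--\eqref{emix} for each factor. The resulting bilinear sum is of the form $\sum_{j}\binom{m-1}{j}[(j-c_1)!]^\sigma[(m-j-c_2)!]^\sigma\rho^{-(m-c_3)}\abs{\vec a}_{\rho,\sigma}^2$, and the main technical obstacle is precisely the combinatorial control of this sum. This is handled by the same splitting around $j\approx m/2$ and the same factorial/polynomial book-keeping as in Lemma \ref{lemjm}, which crucially uses $\sigma\ge 3/2$; the outcome is the quadratic Gevrey term $Cm^{2-2\sigma}[(m-7)!]^\sigma\rho^{-(m-7)}\abs{\vec a}_{\rho,\sigma}^2$. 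Collecting the three regimes completes the proof.
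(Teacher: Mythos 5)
Your proposal is correct and takes essentially the same route as the paper's own proof: the identity $\varphi_m=-G_m$ on $\mathbb T^2\times[0,2]$, the Leibniz expansion of $\partial_x^{m-1}\theta_3$ with the $j=0$ term isolated as $G_m$, the use of \eqref{fm1} for $\partial_x^{m-1}\xi$ in the top-index term, and Lemma~\ref{lemjm}-style $L^\infty$/$L^2$ splitting with factorial bookkeeping (using $\sigma\geq 3/2$) for the interior range. The only deviation, the fundamental-theorem step bounding $\partial_x^{m-1}u$ by $\partial_x^{m-1}\psi$ on the slab, is harmless but unnecessary: \eqref{etan} already controls $\norm{\comi z^{\ell-1}\partial_x^{m-1}u}_{L^2}$ on all of $\Omega$, which is preferable since the lemma is subsequently invoked for the full $L^2(\Omega)$ norm of $G_m$ (as in \eqref{veca}).
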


\begin{proof}
[Sketch of the proof]	 Observe $\varphi_m=-G_m$ on $\mathbb T^2\times[0,2]$ and 
\begin{eqnarray*}
	G_m=\partial_x^{m-1}\theta_3-\sum_{j=1}^{m-1}{{m-1}\choose j} \Big[\inner{\partial_x^j\xi}\partial_x^{m-j}u-\inner{\partial_x^j\psi}\partial_x^{m-j}\psi\Big].
\end{eqnarray*}
Furthermore, direct computation shows
\begin{multline*}
 	\sum_{j=1}^{m-1}{{m-1}\choose j} \norm{\inner{\partial_x^j\psi}\partial_x^{m-j}\psi}_{L^2} \leq m \norm{\inner{\partial_x\psi}\partial_x^{m-1}\psi}_{L^2}+ \sum_{j=2}^{m-1}{{m-1}\choose j}  \norm{\inner{\partial_x^j\psi}\partial_x^{m-j}\psi}_{L^2}  	\\
 	\leq C m^{1-\sigma}\frac{ [\inner{ m-7}!]^{\sigma}}{ \tilde\rho^{ m-7}} \abs{\vec a}_{ \tilde \rho,\sigma}+C m^{2-2\sigma}\frac{ [\inner{ m-7}!]^{\sigma}}{ \rho^{ m-7}} \abs{\vec a}_{\rho,\sigma}^2.
\end{multline*}
On the other hand, 
\begin{multline*}
 	\sum_{j=1}^{m-1}{{m-1}\choose j} \norm{\inner{\partial_x^j\xi}\partial_x^{m-j}u}_{L^2} \\
 	\leq \norm{\inner{\partial_x^{m-1}\xi}\partial_x u}_{L^2}+m\norm{\inner{\partial_x^{m-2}\xi}\partial_x^{2}u}_{L^2}+ \sum_{j=1}^{m-3}{{m-1}\choose j}  \norm{\inner{\partial_x^j\xi}\partial_x^{m-j}u}_{L^2} 	\\
 	\leq C \norm{  \partial_z f_{m-1}}_{L^2} +  C m^{1-\sigma}\frac{ [\inner{ m-7}!]^{\sigma}}{ \tilde\rho^{ m-7}} \abs{\vec a}_{ \tilde \rho,\sigma}+C m^{2-2\sigma}\frac{ [\inner{ m-7}!]^{\sigma}}{ \rho^{ m-7}} \abs{\vec a}_{\rho,\sigma}^2,
\end{multline*}
the last inequality following from \eqref{emix} and \eqref{fm1} as well as \eqref{etan}.
 Combining the above inequalities gives the desired estimate and we have completed the proof of Lemma \ref{lemvar}.
\end{proof}

\begin{lemma}
\label{lemgm++} Let $z_0\in[0, 1]$ be a given number and let $p_0\in L^\infty(\Omega).$    Then we have
\begin{eqnarray*}
	\big\|\int_{z_0}^z p_0 \alpha_m dz\big\|_{L^2\inner{\mathbb T^2\times[1+\frac{3}{2}\epsilon, 2]}}\leq C \norm{\varphi_m}_{L^2\inner{\mathbb T^2\times[0, 2]}},
\end{eqnarray*}
 where the constant $C$ depends   on  $\norm{p_0}_{L^\infty}.$
\end{lemma}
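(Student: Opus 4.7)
The plan is to decompose $\int_{z_0}^z p_0\alpha_m\,d\tilde z$ at the critical level $\tilde z=1$ and its $\epsilon$-neighborhood, handling the regions where $|\psi|$ stays bounded away from zero by the a priori estimate \eqref{supal}, and the two pieces adjacent to $\tilde z=1$ by an integration by parts whose primitive is designed to vanish at the critical point. With $z_0\in[0,1]$ and $z\in[1+\tfrac{3}{2}\epsilon,2]$, split
\[
\int_{z_0}^z p_0\alpha_m\,d\tilde z = I_1+I_2+I_3+I_4,
\]
where $I_1=\int_{z_0}^{1-\epsilon}$, $I_2=\int_{1-\epsilon}^{1}$, $I_3=\int_{1}^{1+\epsilon}$, $I_4=\int_{1+\epsilon}^z$, dropping $I_1$ if $z_0>1-\epsilon$. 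By \eqref{condi}, $|\psi|\ge c>0$ on $[0,1-\epsilon]\cup[1+\epsilon,2]$, so \eqref{supal} yields $\norm{\alpha_m(\cdot,\tilde z)}_{L^2(\mathbb T^2)}\le C\norm{\varphi_m}_{L^2(\mathbb T^2\times[0,2])}$ uniformly on those subintervals, and Minkowski in $\tilde z$ then controls $I_1$ and $I_4$ (the latter uniformly in $z$) by $C\norm{\varphi_m}_{L^2}$.

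The heart of the argument is $I_2$ (and, symmetrically, $I_3$). Since $\vartheta\equiv 1$ on $[0,2]$, the definition of $\alpha_m$ becomes $\alpha_m=\psi\,\Phi_-$ on $(0,1)$ with $\Phi_-(\tilde z)=\int_0^{\tilde z}\varphi_m(s)/\psi(s)^2\,ds$. Define $P_-(\tilde z):=-\int_{\tilde z}^1 p_0(s)\psi(s)\,ds$, so that $P_-'=p_0\psi$ and crucially $P_-(1)=0$. Integration by parts gives
\[
I_2 = \bigl[P_-\Phi_-\bigr]_{1-\epsilon}^{\,1^{-}} - \int_{1-\epsilon}^1 \frac{P_-(s)\,\varphi_m(s)}{\psi(s)^2}\,ds.
\]
From \eqref{condi} we have pointwise in $(x,y)$ the bound $c|s-1|\le |\psi(s)|\le C|s-1|$ on $|s-1|\le 2\epsilon$, hence $|P_-(\tilde z)|\le C\norm{p_0}_{L^\infty}(1-\tilde z)^2$ and $|P_-/\psi^2|\le C$. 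Consequently the remaining integral is dominated by $C\int_{1-\epsilon}^1|\varphi_m|\,ds\le C\norm{\varphi_m}_{L^2([1-\epsilon,1])}$, and Minkowski in $(x,y)$ gives the required $L^2(\mathbb T^2)$ bound. For the boundary term, Cauchy-Schwarz yields $|\Phi_-(\tilde z)|\le C(1-\tilde z)^{-3/2}\bigl(\int_0^{\tilde z}|\varphi_m|^2\,ds\bigr)^{1/2}$, so $|P_-\Phi_-|(\tilde z)\le C(1-\tilde z)^{1/2}\bigl(\int_0^{\tilde z}|\varphi_m|^2\,ds\bigr)^{1/2}\to 0$ as $\tilde z\to 1^-$, while at $\tilde z=1-\epsilon$ the bound $|\psi|\ge c$ on $[0,1-\epsilon]$ gives $|P_-(1-\epsilon)\Phi_-(1-\epsilon)|\le C\bigl(\int_0^{1-\epsilon}|\varphi_m|^2\,ds\bigr)^{1/2}$ pointwise, whence $\norm{I_2}_{L^2(\mathbb T^2)}\le C\norm{\varphi_m}_{L^2}$. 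The piece $I_3$ is treated identically with $P_+(\tilde z):=\int_1^{\tilde z}p_0\psi\,ds$ and $\Phi_+(\tilde z)=\int_2^{\tilde z}\varphi_m/\psi^2\,ds$. Since $I_1,I_2,I_3$ do not depend on $z$ while $I_4$ is bounded uniformly in $z$, the resulting bound survives the outer $L^2$ integration over $z\in[1+\tfrac{3}{2}\epsilon,2]$.

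The main obstacle is the degeneracy $1/\psi^2\sim(s-1)^{-2}$ at the critical curve, which prevents any direct $L^2$ bound on $\alpha_m$ in an arbitrary neighborhood of $z=1$; correspondingly $\Phi_\pm$ genuinely blow up there. The decisive cancellation is the choice of primitives $P_\pm$ with $P_\pm(1)=0$: the quadratic vanishing $|P_\pm(s)|\le C|s-1|^2$ exactly compensates the degeneracy $|\psi(s)|^{-2}\le C|s-1|^{-2}$ in the volume term, and dominates the singularity $|\Phi_\pm(\tilde z)|=O(|\tilde z-1|^{-3/2})$ in the boundary term, so that both parts of the integration by parts can be controlled solely by $\norm{\varphi_m}_{L^2}$.
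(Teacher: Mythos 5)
Your proof is correct, and it rests on the same cancellation as the paper's: an integration by parts in $\tilde z$ against a primitive of (a multiple of) $\psi$ anchored at the critical point $z=1$, whose quadratic vanishing there (coming from \eqref{condi}, $c\abs{s-1}\le\abs{\psi(s)}\le C\abs{s-1}$ near $s=1$) exactly offsets the $\psi^{-2}$ degeneracy in $\alpha_m$. The implementations differ, though. The paper first bounds $\abs{p_0}\le\norm{p_0}_{L^\infty}$ and estimates $\int_0^z\abs{\alpha_m}\,d\tilde z$ directly: it integrates by parts (equivalently, uses Tonelli) over the whole intervals $[0,1]$ and $[1,z]$ with the primitive $u-u(\cdot,1)=\int_1^{\cdot}\psi$, and then only needs the single pointwise bound $\abs{u-u(\cdot,1)}/\abs{(\vartheta\psi+1-\vartheta)\psi}\le C$ on $[0,2]$; no splitting at $1\pm\epsilon$, no appeal to \eqref{supal}, and no separate boundary-term analysis at the critical point (the integrand is nonnegative, so the manipulation is a one-line identity). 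Your version keeps $p_0$ inside the primitive $P_\pm=\int_1^{\cdot}p_0\psi$, localizes the integration by parts to $[1-\epsilon,1]\cup[1,1+\epsilon]$, controls the outer pieces $I_1,I_4$ by \eqref{supal} plus Minkowski, and justifies the improper integration by parts through the explicit blow-up rate $\abs{\Phi_\pm(\tilde z)}\lesssim\abs{\tilde z-1}^{-3/2}\norm{\varphi_m}_{L^2_z}$, which makes the vanishing of the boundary term at $z=1$ quantitative. What your route buys is a fully rigorous limiting argument near the critical point and elementary bounds on each piece; what the paper's route buys is brevity, since the choice $u-u(\cdot,1)$ absorbs the whole interval at once and renders the $\epsilon$-splitting and the estimate \eqref{supal} unnecessary. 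Both arguments are sound and yield the stated constant depending on $\norm{p_0}_{L^\infty}$.
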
  

\begin{proof}
	We will follow the proof of  \cite[Lemma 6]{GM} with slight modification.   Since $p_0\in L^\infty(\Omega)$ then for any $z\in[1+\frac{3}{2}\epsilon, 2], $
		\begin{multline*}
		\Big|\int_{z_0}^z p_0  \alpha_m   d\tilde z\Big| \leq C\int_{0}^z  \abs{\alpha_m}   d\tilde z \leq C \int_{0}^1  \abs\psi    \Big(\int_0^{\tilde z} \Big|  \frac{\varphi_m}{(\vartheta\psi +  1-\vartheta ) \psi }\Big|\,d  z_*  \Big)  d\tilde z\\
		+C\int_{1}^z  \abs\psi  \Big(\int_{\tilde z}^{2} \Big|  \frac{\varphi_m}{(\vartheta\psi +  1-\vartheta ) \psi }\Big|\,d  z_*  \Big)  d\tilde z.
	\end{multline*}
	Observe $1$ is the only critical point of $u$ so that $\psi$ doesn't change sign in the interval $[0,1],$ saying $\psi\geq 0.$ Then  
	\begin{multline*}
		\int_{0}^1  \abs\psi    \Big(\int_0^{\tilde z} \Big|  \frac{\varphi_m}{(\vartheta\psi +  1-\vartheta ) \psi }\Big|\,d  z_*  \Big)  d\tilde z=\int_{0}^1   \psi   \Big(\int_0^{\tilde z} \Big|  \frac{\varphi_m}{(\vartheta\psi +  1-\vartheta ) \psi }\Big|\,d  z_*  \Big)  d\tilde z\\
	=-	\int_{0}^1    u    \Big|  \frac{\varphi_m}{(\vartheta\psi +  1-\vartheta ) \psi }\Big|   d\tilde z+u(\cdot,1) \int_0^{1} \Big|  \frac{\varphi_m}{(\vartheta\psi +  1-\vartheta ) \psi }\Big|\,d  z_* \\
	\leq \int_{0}^1      \Big|    \frac{u-u(\cdot, 1)}{\inner{\vartheta \psi + 1-\vartheta  } \psi }\Big|  \cdot\abs{\varphi_m} d\tilde z.
	\end{multline*}
	Similarly 
	\begin{multline*}
		\int_{1}^z  \abs\psi  \Big(\int_{\tilde z}^2 \Big|  \frac{\varphi_m}{(\vartheta\psi +  1-\vartheta ) \psi }\Big|\,d  z_*  \Big)  d\tilde z\\
	\leq  \int_{1}^z      \Big|    \frac{u-u(\cdot, 1)}{\inner{\vartheta \psi + 1-\vartheta  } \psi }\Big|  \cdot\abs{\varphi_m} d\tilde z+\abs{u(\cdot, z)-u(\cdot,1)} \int_z^{2}      \Big|    \frac{1}{\inner{\vartheta \psi + 1-\vartheta  } \psi }\Big|  \cdot\abs{\varphi_m} d\tilde z.
	\end{multline*}
	Thus combining the above inequalities gives,
	 for any $z\in[1+\frac{3}{2}\epsilon, 2], $
		\begin{multline*}
		\Big|\int_{b_0}^z p_0  \alpha_m   d\tilde z\Big|  
		\leq  C  \int_{0}^z      \Big|    \frac{u-u(\cdot, 1)}{\inner{\vartheta \psi + 1-\vartheta  } \psi }\Big|  \cdot\abs{\varphi_m} d\tilde z\\
		+C\abs{u(\cdot, z)-u(\cdot,1)} \int_z^{2}      \Big|    \frac{1}{\inner{\vartheta \psi + 1-\vartheta  } \psi }\Big|  \cdot\abs{\varphi_m} d\tilde z 
		\leq C \norm{\varphi_m}_{L^2\inner{[0, 2]}},
	\end{multline*}
	where the last inequality holds because 
	\begin{eqnarray*}
		   \Big|    \frac{u-u(\cdot, 1)}{\inner{\vartheta \psi + 1-\vartheta  } \psi }\Big| \leq C ~\textrm{for any} \ z\in[0, 2], 
	\end{eqnarray*}
	and
	\begin{eqnarray*}
		   \Big|    \frac{1}{\inner{\vartheta \psi + 1-\vartheta  } \psi }\Big| \leq C ~\textrm{for any} \ z\in[1+\frac{3}{2}\epsilon, 2].
	\end{eqnarray*}
As a result, the assertion follows. We complete the proof of Lemma  \ref{lemgm++}. 
\end{proof}

The rest of this subsection is devoted to 
\begin{proof}[Proof of Lemma \ref{lemwu1}] Note that $ \abs{\inner{\tau_2'\partial_x^mw, ~\tau_2 \partial_x^m u}_{L^2}}$  is bounded from above by the sum of the following two terms:
\begin{eqnarray*}
	 \abs{\inner{\tau_2'\int_{0}^z \partial_x^{m+1}u(\cdot, \tilde z)d\tilde z, ~\tau_2 \partial_x^m u}_{L^2}} ,\quad  \abs{\inner{\tau_2'\int_{0}^z \partial_x^{m}\partial_y  v(\cdot, \tilde z)d\tilde z, ~\tau_2 \partial_x^m u}_{L^2}}. 
\end{eqnarray*}
 We only need to handle the second term,  since the first one has been estimated in \cite{GM} (see the treatment of $E_j$ in \cite{GM}). For this, we claim
  \begin{equation}
 	\label{sf+}
 	\begin{aligned}
 &	 \Big|\Big(\tau_2'\int_{0}^z \partial_x^{m}\partial_y  v(\cdot, \tilde z)d\tilde z, ~\tau_2 \partial_x^m u\Big)_{L^2}\Big| \\
 	&\qquad   \leq   \frac{m^{2\sigma}}{\rho^2}\norm{\partial_zf_{m-1}}_{L^2}^2+ \frac{C  [\inner{ m-7}!]^{2\sigma}}{ \rho^{2( m-6)}}  \Big(\abs{\vec a}_{ \rho,\sigma}^2+\abs{\vec a}_{ \rho,\sigma}^4\Big)+ \frac{C [\inner{ m-7}!]^{2\sigma}}{ \rho^{2(m-6)}}\frac{\abs{\vec a}_{\tilde \rho,\sigma}^2}{\tilde\rho-\rho}.
 	 \end{aligned}
 	 	 \end{equation}
 	Observe supp $\tau_2'\subset \Omega_1\cup \Omega_2$ with  	 
 	   $\Omega_1=\mathbb T^2\times \big[1+\frac{3}{2}\epsilon, 1+2\epsilon\big] $ and $\Omega_2=\mathbb T^2\times \big[1-2\epsilon, 1-\frac{3}{2}\epsilon\big],$ and thus the desired estimate 
 	   \eqref{sf+} follows if we can prove that 
 	 	\begin{equation}
 	\label{+sf++}
 	\begin{aligned}
 &	\sum_{j=1}^2 \Big|\Big(\tau_2'\int_{0}^z \partial_x^{m}\partial_y  v(\cdot, \tilde z)d\tilde z, ~\tau_2 \partial_x^m u\Big)_{L^2(\Omega_j)}\Big| \\
 	&\qquad   \leq   \frac{m^{2\sigma}}{\rho^2}\norm{\partial_zf_{m-1}}_{L^2}^2+ \frac{C  [\inner{ m-7}!]^{2\sigma}}{ \rho^{2( m-6)}}  \Big(\abs{\vec a}_{ \rho,\sigma}^2+\abs{\vec a}_{ \rho,\sigma}^4\Big)+ \frac{C [\inner{ m-7}!]^{2\sigma}}{ \rho^{2(m-6)}}\frac{\abs{\vec a}_{\tilde \rho,\sigma}^2}{\tilde\rho-\rho}.
 	 \end{aligned}
 	 	 \end{equation} 
 	 In the following argument we  will focus on deriving the upper bound for the integration over $\Omega_1$,  and the estimate for $\Omega_2$ can be handled in the same way with simpler argument.      To do so  we write,  for $1+\frac{3}{2}\epsilon\leq z\leq 1+2\epsilon,$  
\begin{eqnarray*}
	\int_{0}^z \partial_x^{m}   v(\cdot, \tilde z)d\tilde z&=&\int_{0}^{1-2\epsilon} \partial_x^{m}   v(\cdot, \tilde z)d\tilde z+\int_{1-2\epsilon}^z \partial_x^{m}   v(\cdot, \tilde z)d\tilde z\\
	&=&\int_{0}^{1-2\epsilon} \Big[\frac{\Gamma_m}{\psi}+\frac{\eta}{\psi}\partial_x^m u\Big] d\tilde z+\int_{1-2\epsilon}^z \Big[\frac{H_m}{\xi}+\frac{\eta}{\xi}\partial_x^m \psi\Big]d\tilde z,
\end{eqnarray*}
where the last line holds because of \eqref{condi} and \eqref{Gaal}.   Moreover, for the last term above, 
\begin{eqnarray*}
	\int_{1-2\epsilon}^z  \frac{\eta}{\xi}\partial_x^m \psi  d\tilde z =-\int_{1-2\epsilon}^z \Big(\partial_z \frac{\eta}{\xi}\Big)\partial_x^m u d\tilde z+\frac{\eta}{\xi}\partial_x^mu-\Big(\frac{\eta}{\xi}\partial_x^mu\Big)\Big|_{z=1-2\epsilon}.
\end{eqnarray*}
Then
\begin{eqnarray*}
	 	\Big|\Big(\tau_2'\int_{0}^z \partial_x^{m}\partial_y  v(\cdot, \tilde z)d\tilde z, ~\tau_2 \partial_x^m u \Big)_{L^2(\Omega_1)}\Big|&= &	 \Big|\Big(\tau_2'\int_{0}^z \partial_x^{m} v(\cdot, \tilde z)d\tilde z, ~\tau_2 \partial_x^m \partial_y  u\Big)_{L^2(\Omega_1)}\Big|\\
	 	 &\leq  & \sum_{j=1}^3 R_j
\end{eqnarray*}
with
\begin{eqnarray*}
	R_1&=&\Big|\Big(\tau_2'\int_{0}^{1-2\epsilon}  \frac{\Gamma_m}{\psi} d\tilde z, ~\tau_2 \partial_x^m \partial_y  u\Big)_{L^2(\Omega_1)}\Big|+\Big|\Big(\tau_2'\int_{1-2\epsilon}^z  \frac{H_m}{\xi} d\tilde z, ~\tau_2 \partial_x^m \partial_y  u\Big)_{L^2(\Omega_1)}\Big|,\\
	R_2&=&\Big|\Big(\tau_2'\int_{0}^{1-2\epsilon}  \frac{\eta}{\psi} \partial_x^m u  d\tilde z, ~\tau_2 \partial_x^m \partial_y  u\Big)_{L^2(\Omega_1)}\Big|\\
	&&+\Big|\Big(\tau_2'\int_{1-2\epsilon}^z \Big(\partial_z \frac{\eta}{\xi}\Big)\partial_x^m u  d\tilde z, ~\tau_2 \partial_x^m \partial_y  u\Big)_{L^2(\Omega_1)}\Big|,\\
	R_3&=&\Big|\Big(\tau_2'\frac{\eta}{\xi}\partial_x^m u , ~\tau_2 \partial_x^m \partial_y  u\Big)_{L^2(\Omega_1)}\Big|+\Big|\Big(\Big(\frac{\eta}{\xi}\partial_x^mu\Big)\Big|_{z=1-2\epsilon}\tau_2'
, ~\tau_2 \partial_x^m \partial_y  u\Big)_{L^2(\Omega_1)}\Big|.
\end{eqnarray*}

\medskip
\noindent\underline{\it Estimate on $R_1$.}  
To estimate $R_1,$ we use the relationship
\begin{eqnarray*}
 		\Gamma_m=-\partial_x^{m-1}h
 		+\sum_{j=0}^{m-2}\frac{(m-1)!}{j!(m-1-j)!}\big[(\partial_{x}^{j+1} u)\partial_x^{m-1-j}\eta-(\partial_{x}^{j+1} v)\partial_x^{m-1-j}\psi\big]
 	\end{eqnarray*}
 due to the definition \eqref{defR1} and \eqref{Gaal} of $h$ and $\Gamma_m.$	 This yields 
 \begin{multline*}
 	\Big|\Big(\tau_2'\int_{0}^{1-2\epsilon}  \frac{\Gamma_m}{\psi} d\tilde z, ~\tau_2 \partial_x^m \partial_y  u\Big)_{L^2(\Omega_1)}\Big| \leq  C \norm{\partial_x^{m-1}h}_{L^2} \norm{\partial_x^m\partial_y u}_{L^2}\\
 	+C\sum_{j=0}^{m-2}{{m-1}\choose j}\Big|\Big(\tau_2'\int_{0}^{1-2\epsilon}  \frac{ (\partial_{x}^{j+1} v)\partial_x^{m-1-j}\psi}{\psi}   d\tilde z, ~\tau_2 \partial_x^m \partial_y  u\Big)_{L^2(\Omega_1)}\Big|\\
 	+C\sum_{j=0}^{m-2}{{m-1}\choose j}\Big|\Big(\tau_2'\int_{0}^{1-2\epsilon}  \frac{(\partial_{x}^{j+1} u)\partial_x^{m-1-j}\eta}{\psi}   d\tilde z, ~\tau_2 \partial_x^m \partial_y  u\Big)_{L^2(\Omega_1)}\Big|.
 	 \end{multline*}
 	 As for the first and second terms  on the right side,  we use  \eqref{etan}-\eqref{chi2est} and follow the argument for proving Lemmas \ref{lemg1} and \eqref{lemjm},  to conclude  with direct computation  that they are bounded from above by 
 	 \begin{eqnarray*}
 	\frac{C  [\inner{ m-7}!]^{2\sigma}}{ \rho^{2( m-6)}} \big(\abs{\vec a}_{ \rho,\sigma}^2+\abs{\vec a}_{ \rho,\sigma}^3\big)+ \frac{C [\inner{ m-7}!]^{2\sigma}}{ \rho^{2(m-6)}}\frac{\abs{\vec a}_{\tilde \rho,\sigma}^2}{\tilde\rho-\rho}.
 	 \end{eqnarray*}
 	By using additionally \eqref{chi2est},  we can see the last term is also controlled  by the above upper bound.  Then
\begin{multline*}
		\Big|\Big(\tau_2'\int_{0}^{1-2\epsilon}  \frac{\Gamma_m}{\psi} d\tilde z, ~\tau_2 \partial_x^m \partial_y  u\Big)_{L^2(\Omega_1)}\Big| \\
		\leq  \frac{C  [\inner{ m-7}!]^{2\sigma}}{ \rho^{2( m-6)}} \big(\abs{\vec a}_{ \rho,\sigma}^2+\abs{\vec a}_{ \rho,\sigma}^3\big)+ \frac{C [\inner{ m-7}!]^{2\sigma}}{ \rho^{2(m-6)}}\frac{\abs{\vec a}_{\tilde \rho,\sigma}^2}{\tilde\rho-\rho}.
		\end{multline*}
Similarly,   we have the upper bound of  
\begin{eqnarray*}
	\Big|\Big(\tau_2'\int_{1-2\epsilon}^z  \frac{H_m}{\xi} d\tilde z, ~\tau_2 \partial_x^m \partial_y  u\Big)_{L^2(\Omega_1)}\Big|.
\end{eqnarray*}
Thus
\begin{equation}
	\label{R1}
	R_1\leq  \frac{C  [\inner{ m-7}!]^{2\sigma}}{ \rho^{2( m-6)}} \big(\abs{\vec a}_{ \rho,\sigma}^2+\abs{\vec a}_{ \rho,\sigma}^3\big)+ \frac{C [\inner{ m-7}!]^{2\sigma}}{ \rho^{2(m-6)}}\frac{\abs{\vec a}_{\tilde \rho,\sigma}^2}{\tilde\rho-\rho}.
\end{equation}

\medskip
\noindent\underline{\it Estimate on $R_2$.} We only need to handle the second term in the representation of $R_2,$ since the upper bound for the first one can be derived in the same way.   Using the representation \eqref{albe},  we have
\begin{eqnarray*}
\begin{aligned}
	& \Big|\Big(\tau_2'\int_{1-2\epsilon}^z \Big(\partial_z \frac{\eta}{\xi}\Big)\partial_x^m u  d\tilde z, ~\tau_2 \partial_x^m \partial_y  u\Big)_{L^2(\Omega_1)}\Big|\\
	\leq &  \Big|\Big(\tau_2'\int_{1-2\epsilon}^z \Big(\partial_z \frac{\eta}{\xi}\Big)\partial_x^m   u  d\tilde z, ~\tau_2 \partial_y  \alpha_m \Big)_{L^2(\Omega_1)}\Big|\\
	&+  \Big|\Big(\tau_2'\int_{1-2\epsilon}^z \Big(\partial_z \frac{\eta}{\xi}\Big)\partial_x^m   u  d\tilde z, ~\tau_2\inner{\beta_m\partial_y\psi+\psi\partial_y\beta_m} \Big)_{L^2(\Omega_1)}\Big|\\
	\leq & C\norm{\beta_m}_{L^2\inner{\mathbb T^2}}\norm{\partial_x^mu}_{L^2}+ C 	\norm{\alpha_m}_{L^2\inner{\mathbb T^2\times  {\rm supp}\,\tau_2'}}\inner{\norm{\partial_x^m u}_{L^2}+\norm{\partial_x^m\partial_y u}_{L^2}} \\
	&+ \Big|\Big(\tau_2'\int_{1-2\epsilon}^z \Big(\partial_z \frac{\eta}{\xi}\Big)\partial_x^m   u  d\tilde z, ~\tau_2 \psi\partial_y\beta_m\Big)_{L^2(\Omega_1)}\Big|\\
	\leq & C\norm{\beta_m}_{L^2\inner{\mathbb T^2}}\norm{\partial_x^mu}_{L^2}+ C 	\norm{\varphi_m}_{L^2\inner{\mathbb T^2\times[0,2]}}\inner{\norm{\partial_x^m u}_{L^2}+\norm{\partial_x^m\partial_y u}_{L^2}} \\
	&+ \Big|\Big(\tau_2'\int_{1-2\epsilon}^z \Big(\partial_z \frac{\eta}{\xi}\Big)\partial_x^m   u  d\tilde z, ~\tau_2 \psi\partial_y\beta_m\Big)_{L^2(\Omega_1)}\Big|,
	\end{aligned}
\end{eqnarray*}
where in the last inequality we have used \eqref{alm}. 
As for the last term in the above inequality, we use \eqref{albe} again to get
\begin{eqnarray*}
	&&\Big|\Big(\tau_2'\int_{1-2\epsilon}^z \Big(\partial_z \frac{\eta}{\xi}\Big)\partial_x^m   u  d\tilde z, ~\tau_2 \psi\partial_y\beta_m\Big)_{L^2(\Omega_1)}\Big|\\
	&\leq &\Big|\Big(\tau_2'\int_{1-2\epsilon}^z \Big(\partial_z \frac{\eta}{\xi}\Big)\alpha_m d\tilde z, ~\tau_2 \psi\partial_y\beta_m\Big)_{L^2(\Omega_1)}\Big|\\
	&&+\Big|\Big(\tau_2'\beta_m \int_{1}^z \Big(\partial_z \frac{\eta}{\xi}\Big)\psi d\tilde z, ~\tau_2 \psi\partial_y\beta_m\Big)_{L^2(\Omega_1)}\Big|\\
	&\leq& C \norm{\partial_y\beta_m}_{L^2\inner{\mathbb T^2}}\big\|\int_{1-2\epsilon}^z  \Big(\partial_z \frac{\eta}{\xi}\Big)\alpha_m dz\big\|_{L^2\inner{\mathbb T^2\times[1+\frac{3}{2}\epsilon, 2]}}+ C \norm{\beta_m}_{L^2\inner{\mathbb T^2}}^2\\
	&\leq& C \norm{\partial_y\beta_m}_{L^2\inner{\mathbb T^2}}\norm{\varphi_m}_{L^2\inner{\mathbb T^2\times[0,2]}}+ C \norm{\beta_m}_{L^2\inner{\mathbb T^2}}^2,
\end{eqnarray*}
where in the second inequality we have used the fact that
\begin{multline*}
	\Big|\Big(\tau_2'\beta_m \int_{1}^z \Big(\partial_z \frac{\eta}{\xi}\Big)\psi d\tilde z, ~\tau_2 \psi\partial_y\beta_m\Big)_{L^2(\Omega_1)}\Big|
	=\Big|{1\over 2} \Big( \partial_y\Big[\tau_2' \tau_2 \psi \int_{1}^z \Big(\partial_z \frac{\eta}{\xi}\Big)\psi d\tilde z\Big], ~\beta_m^2\Big)_{L^2(\Omega_1)}\Big|\end{multline*}
and the last inequality follows from Lemma \ref{lemgm++}.   Thus combining the above
inequalities and observing the estimate \eqref{ene}, we obtain  
\begin{multline}\label{lasvar}
	\Big|\Big(\tau_2'\int_{1-2\epsilon}^z \Big(\partial_z \frac{\eta}{\xi}\Big)\partial_x^m u  d\tilde z, ~\tau_2 \partial_x^m \partial_y  u\Big)_{L^2(\Omega_1)}\Big|
	\leq  C \norm{\beta_m}_{L^2\inner{\mathbb T^2}}^2+C\norm{\beta_m}_{L^2\inner{\mathbb T^2}}\norm{\partial_x^mu}_{L^2}\\
	+C \norm{\varphi_m}_{L^2\inner{\mathbb T^2\times[0,2]}}\Big(\norm{\partial_y\beta_m}_{L^2\inner{\mathbb T^2}}+\norm{\partial_x^m u}_{L^2}+\norm{\partial_x^m\partial_y u}_{L^2}\Big)\\
	\leq \frac{C  [\inner{ m-7}!]^{2\sigma}}{ \rho^{2( m-6)}}  \abs{\vec a}_{ \rho,\sigma}^2 +\frac{C  [\inner{ m-6}!]^{\sigma}}{ \tilde\rho^{( m-5)}} \abs{\vec a}_{\tilde \rho,\sigma}  \norm{G_m}_{L^2},
\end{multline}
where in  the last inequality we have used the fact that   $\varphi_m=-G_m$ on $\mathbb T^2\times[0,2]$ with $G_m$ defined in \eqref{Gaal}.   As for the last term in the above inequality, 
we use Lemma \ref{lemvar} as well as \eqref{chi2est}  to obtain
 \begin{multline*}
 	\norm{G_m}_{L^2}
 	\leq m^{-1}\frac{  [\inner{ m-7}!]^{\sigma}}{ \tilde\rho^{( m-6)}}  \abs{\vec a}_{\tilde \rho,\sigma}+C \norm{\partial_zf_{m-1}}\\
 	+C  m^{1-\sigma}\frac{ [\inner{ m-7}!]^{\sigma}}{ \tilde\rho^{ m-7}} \abs{\vec a}_{ \tilde \rho,\sigma}+C m^{2-2\sigma}\frac{ [\inner{ m-7}!]^{\sigma}}{ \rho^{ m-7}} \abs{\vec a}_{\rho,\sigma}^2, 
 \end{multline*}
 and thus, recalling $3/2\leq\sigma\leq 2$ and $0<\rho<\tilde\rho\leq 1,$
 \begin{multline}\label{veca}
 	\frac{[\inner{ m-6}!]^{\sigma}}{ \tilde\rho^{( m-5)}} \abs{\vec a}_{\tilde \rho,\sigma}  \norm{G_m}_{L^2} 
 	\leq  C\frac{m+m^{\sigma-1}}{\tilde \rho} \frac{   [\inner{ m-7}!]^{2\sigma}}{ \tilde\rho^{2( m-6)}}\abs{\vec a}_{\tilde \rho,\sigma} ^2\\ 
 	+C\frac{m^{2-\sigma}\rho}{\tilde\rho}\frac{  [\inner{ m-7}!]^{2\sigma}}{ \tilde\rho^{ m-6}\rho^{m-6}} \abs{\vec a}_{\tilde \rho,\sigma} \abs{\vec a}_{ \rho,\sigma} ^2+C\frac{\rho^2}{m^{2\sigma}}\frac{   [\inner{ m-6}!]^{2\sigma}}{ \tilde\rho^{2( m-5)}} \abs{\vec a}_{\tilde \rho,\sigma}^2 +\frac{m^{2\sigma}}{\rho^2}\norm{\partial_zf_{m-1}}_{L^2}^2\\
 	\leq   C\frac{m}{\tilde \rho} \frac{   [\inner{ m-7}!]^{2\sigma}}{ \tilde\rho^{2( m-6)}}\abs{\vec a}_{\tilde \rho,\sigma} ^2+ \frac{C  [\inner{ m-7}!]^{2\sigma}}{ \rho^{2( m-6)}}  \abs{\vec a}_{ \rho,\sigma}^4 +\frac{m^{2\sigma}}{\rho^2}\norm{\partial_zf_{m-1}}_{L^2}^2\\
 	\leq \frac{m^{2\sigma}}{\rho^2}\norm{\partial_zf_{m-1}}_{L^2}^2+ \frac{C  [\inner{ m-7}!]^{2\sigma}}{ \rho^{2( m-6)}}  \abs{\vec a}_{ \rho,\sigma}^4+ \frac{C [\inner{ m-7}!]^{2\sigma}}{ \rho^{2(m-6)}}\frac{\abs{\vec a}_{\tilde \rho,\sigma}^2}{\tilde\rho-\rho}.
 \end{multline}
 As a result,  combining the above inequalities \eqref{lasvar} and \eqref{veca} yields 
 \begin{multline*}
 	\Big|\Big(\tau_2'\int_{1-2\epsilon}^z \Big(\partial_z \frac{\eta}{\xi}\Big)\partial_x^m u  d\tilde z, ~\tau_2 \partial_x^m \partial_y  u\Big)_{L^2(\Omega_1)}\Big|\\
\leq  \frac{m^{2\sigma}}{\rho^2}\norm{\partial_zf_{m-1}}_{L^2}^2+ \frac{C  [\inner{ m-7}!]^{2\sigma}}{ \rho^{2( m-6)}}  \Big(\abs{\vec a}_{ \rho,\sigma}^2+\abs{\vec a}_{ \rho,\sigma}^4\Big)+ \frac{C [\inner{ m-7}!]^{2\sigma}}{ \rho^{2(m-6)}}\frac{\abs{\vec a}_{\tilde \rho,\sigma}^2}{\tilde\rho-\rho}.
 \end{multline*}
 Similar estimate holds  for the other term in the representation of $R_2.$ 
Thus  
 \begin{equation}
 	\label{r2}
 	R_2\leq  \frac{m^{2\sigma}}{\rho^2}\norm{\partial_zf_{m-1}}_{L^2}^2+ \frac{C  [\inner{ m-7}!]^{2\sigma}}{ \rho^{2( m-6)}}  \Big(\abs{\vec a}_{ \rho,\sigma}^2+\abs{\vec a}_{ \rho,\sigma}^4\Big)+ \frac{C [\inner{ m-7}!]^{2\sigma}}{ \rho^{2(m-6)}}\frac{\abs{\vec a}_{\tilde \rho,\sigma}^2}{\tilde\rho-\rho}.
 \end{equation}
 
 \medskip
\noindent\underline{\it Estimate on $R_3$.}  It is clear that, using integration by parts, 
\begin{eqnarray*}
	\Big|\Big(\tau_2'\frac{\eta}{\xi}\partial_x^m u , ~\tau_2 \partial_x^m \partial_y  u\Big)_{L^2(\Omega_1)}\Big|\leq C\norm{\partial_x^m    u}_{L^2}^2\leq \frac{C  [\inner{ m-7}!]^{2\sigma}}{ \rho^{2( m-6)}}  \abs{\vec a}_{ \rho,\sigma}^2.
\end{eqnarray*}
Using again the representation \eqref{albe} of $\partial_x^m u,$ we have
\begin{multline*}
 	\Big|\Big(\Big(\frac{\eta}{\xi}\partial_x^mu\Big)\Big|_{z=1-2\epsilon}\tau_2'
, ~\tau_2 \partial_x^m \partial_y  u\Big)_{L^2(\Omega_1)}\Big|
\leq  \Big|\Big(\Big(\frac{\eta}{\xi}\alpha_m\Big)\Big|_{z=1-2\epsilon}\tau_2'
, ~\tau_2 \partial_x^m \partial_y  u\Big)_{L^2(\Omega_1)}\Big| \\
\leq C \norm{\alpha_m(\cdot, 1-2\eps)}_{L^2(\mathbb T^2)}\norm{\partial_x^m \partial_y  u}_{L^2}\leq  C\norm{\varphi_m}_{L^2\inner{\mathbb T^2\times[0, 2]}}\frac{   [\inner{ m-6}!]^{\sigma}}{ \tilde\rho^{( m-5)}} \abs{\vec a}_{\tilde \rho,\sigma}, 
\end{multline*}
where the  last inequality follows from \eqref{supal} as well as \eqref{etan}.   This, along with the fact that  $\norm{\varphi_m}_{L^2\inner{\mathbb T^2\times[0, 2]}}=\norm{G_m}_{L^2}$ and the estimate \eqref{veca},  yields    
\begin{multline*}
	\Big|\Big(\Big(\frac{\eta}{\xi}\partial_x^mu\Big)\Big|_{z=1-2\epsilon}\tau_2'
, ~\tau_2 \partial_x^m \partial_y  u\Big)_{L^2(\Omega_1)}\Big|\\
\leq  \frac{m^{2\sigma}}{\rho^2}\norm{\partial_zf_{m-1}}_{L^2}^2+ \frac{C  [\inner{ m-7}!]^{2\sigma}}{ \rho^{2( m-6)}}  \abs{\vec a}_{ \rho,\sigma}^4+ \frac{C [\inner{ m-7}!]^{2\sigma}}{ \rho^{2(m-6)}}\frac{\abs{\vec a}_{\tilde \rho,\sigma}^2}{\tilde\rho-\rho}.
\end{multline*}
and thus
\begin{eqnarray*}
R_3\leq 	\frac{m^{2\sigma}}{\rho^2}\norm{\partial_zf_{m-1}}_{L^2}^2+ \frac{C  [\inner{ m-7}!]^{2\sigma}}{ \rho^{2( m-6)}}\Big(  \abs{\vec a}_{ \rho,\sigma}^2+\abs{\vec a}_{ \rho,\sigma}^4\Big)+ \frac{C [\inner{ m-7}!]^{2\sigma}}{ \rho^{2(m-6)}}\frac{\abs{\vec a}_{\tilde \rho,\sigma}^2}{\tilde\rho-\rho}.
\end{eqnarray*}
Now we combine the above inequality and  the estimates \eqref{R1} and \eqref{r2} on $R_1$ and $R_2$ to obtain the desired  
 upper bound for the integration over $\Omega_1$ on the left side of \eqref{+sf++}. The estimation
    for the integration over $\Omega_2$ is similar.  Then  
the  estimate \eqref{sf+} follows.  We then complete the proof of Lemma \ref{lemwu1}. 
\end{proof} 

\subsection{Proof of Lemma \ref{lemwu2}}\label{subsec62}
Observe $\abs\xi>0$ on supp $\tau_2$. Thus we use the representation \eqref{sor}  and \eqref{Gaal} of $\theta_1$  and $G_m,$ to write
\begin{eqnarray*}
	\abs{\inner{\tau_2\partial_x^{m}\partial_yv, ~\tau_2 \partial_x^m u}_{L^2}}&\leq&	\frac{1}{2}\abs{\inner{\xi^{-1}\tau_2\partial_x^{m}\theta_1 , ~\tau_2 \partial_x^m u}_{L^2}}+\abs{\inner{\xi^{-1}\tau_2\eta\partial_x^{m} \partial_y\psi , ~\tau_2 \partial_x^m u}_{L^2}}\\
	&&+ \sum_{j=1}^{m} {m\choose j}\abs{\inner{\xi^{-1}\tau_2 (\partial_x^{j}\xi)\partial_x^{m-j}\partial_y v, ~\tau_2 \partial_x^m u}_{L^2}}\\
	&&\quad+ \sum_{j=1}^{m} {m\choose j}\abs{\inner{\xi^{-1}\tau_2 (\partial_x^{j}\eta)\partial_x^{m-j}\partial_y \psi, ~\tau_2 \partial_x^m u}_{L^2}}.
\end{eqnarray*}
Then  Lemma \ref{lemwu2}  holds  if we can show that
\begin{equation}\label{vu1}
	\abs{\inner{\xi^{-1}\tau_2\partial_x^{m}\theta_1 , ~\tau_2 \partial_x^m u}_{L^2}}\leq \frac{C [\inner{ m-7}!]^{2\sigma}}{ \rho^{2( m-6)}}\frac{\abs{\vec a}_{\tilde \rho,\sigma}^2}{\tilde\rho-\rho},
\end{equation}
\begin{equation}
	\label{vu2}
	\begin{aligned}	
&	\abs{\inner{\xi^{-1}\tau_2\eta\partial_x^{m} \partial_y\psi , ~\tau_2 \partial_x^m u}_{L^2}}\\
	&\quad\leq  \frac{m^{2\sigma}}{\rho^2}\norm{\partial_zf_{m-1}}_{L^2}^2+ \frac{C  [\inner{ m-7}!]^{2\sigma}}{ \rho^{2( m-6)}}\Big(  \abs{\vec a}_{ \rho,\sigma}^2+ \abs{\vec a}_{ \rho,\sigma}^4\Big)+\frac{C [\inner{ m-7}!]^{2\sigma}}{ \rho^{2(m-6)}}\frac{\abs{\vec a}_{\tilde \rho,\sigma}^2}{\tilde\rho-\rho}
	\end{aligned}	
	\end{equation}
	and
	\begin{multline}\label{vu3}
		 \sum_{j=1}^{m} {m\choose j}\Big[\abs{\inner{\xi^{-1}\tau_2 (\partial_x^{j}\xi)\partial_x^{m-j}\partial_y v, ~\tau_2 \partial_x^m u}_{L^2}}+ \abs{\inner{\xi^{-1}\tau_2 (\partial_x^{j}\eta)\partial_x^{m-j}\partial_y \psi, ~\tau_2 \partial_x^m u}_{L^2}}\Big]\\
		\leq  \frac{C  [\inner{ m-7}!]^{2\sigma}}{ \rho^{2( m-6)}}   \abs{\vec a}_{ \rho,\sigma}^2 + \frac{C  [\inner{ m-7}!]^{2\sigma}}{  \rho^{2( m-6)}} \abs{\vec a}_{ \rho,\sigma}^2.
	\end{multline}

\begin{proof}
	[Proof of \eqref{vu1}]
	This is just a direct consequence of  \eqref{etan} and \eqref{chi2est}, using the same argument in the proof of Lemma \ref{lemg1}.
\end{proof}

\begin{proof}
	[Proof of \eqref{vu2}] By virtue of the representation  \eqref{Gaal} of  $G_m,$ we have, using \eqref{etan},
	\begin{multline*}
			\abs{\inner{\xi^{-1}\tau_2\eta\partial_x^{m} \partial_y\psi , ~\tau_2 \partial_x^m u}_{L^2}}\\ \leq 	\abs{\inner{\xi^{-1}\tau_2\eta\partial_x^{m} \partial_y\psi , ~ \xi^{-1}\tau_2 G_m}_{L^2}}+	\abs{\inner{\xi^{-1}\tau_2\eta\partial_x^{m} \partial_y\psi , ~ \xi^{-1} \tau_2\psi \partial_x^m \psi}_{L^2}}\\
			\leq C\norm{\partial_x^{m} \partial_y\psi}_{L^2} \norm{G_m}_{L^2}+C\norm{\partial_x^{m}  \psi}_{L^2} ^2\\
			\leq \frac{C  [\inner{ m-6}!]^{\sigma}}{ \tilde\rho^{( m-5)}} \abs{\vec a}_{\tilde \rho,\sigma}  \norm{G_m}_{L^2}+\frac{C  [\inner{ m-7}!]^{2\sigma}}{  \rho^{2( m-6)}} \abs{\vec a}_{ \rho,\sigma}^2,	\end{multline*}
which togethr with  \eqref{veca} give \eqref{vu2}. 
\end{proof}

\begin{proof}
	[Proof of \eqref{vu3}]  Following the argument in the proof of Lemma \ref{lemjm} and using
	 \eqref{etan}, we have 
	\begin{multline*} 
		 \sum_{j=1}^{m} {m\choose j}\abs{\inner{\xi^{-1}\tau_2 (\partial_x^{j}\xi)\partial_x^{m-j}\partial_y v, ~\tau_2 \partial_x^m u}_{L^2}}\leq  \sum_{j=1}^{m} {m\choose j}\norm{ (\tau_2\partial_x^{j}\xi)\partial_x^{m-j}\partial_y v}_{L^2}\norm{\partial_x^m u}_{L^2}\\
		 \leq  \frac{C  [\inner{ m-7}!]^{2\sigma}}{ \rho^{2( m-6)}}   \abs{\vec a}_{ \rho,\sigma}^2 +\frac{C [\inner{ m-7}!]^{2\sigma}}{ \rho^{2(m-6)}}\frac{\abs{\vec a}_{\tilde \rho,\sigma}^2}{\tilde\rho-\rho}.
	\end{multline*}
Similarly, using \eqref{chi2est} and  \eqref{etan} gives
 	\begin{multline*}
		 \sum_{j=1}^{m} {m\choose j}\abs{\inner{\xi^{-1}\tau_2 (\partial_x^{j}\eta)\partial_x^{m-j}\partial_y \psi, ~\tau_2 \partial_x^m u}_{L^2}}\\
		 \leq  \frac{C  [\inner{ m-7}!]^{2\sigma}}{ \rho^{2( m-6)}}   \abs{\vec a}_{ \rho,\sigma}^2 +\frac{C [\inner{ m-7}!]^{2\sigma}}{ \rho^{2(m-6)}}\frac{\abs{\vec a}_{\tilde \rho,\sigma}^2}{\tilde\rho-\rho}.
	\end{multline*}
	Combining these inequalities gives \eqref{vu3}.
	\end{proof}

 \section{Estimate on $\norm{\vec a}_{\rho,\sigma}$}\label{sec6}
 This section is about the estimate on    $\norm{\vec a}_{\rho,\sigma}$ with $\vec a=(u,v).$  We will handle the tangential derivatives and mixed derivatives   in Subsection \ref{subsec1} and Subsection \ref{subsec52}
 respectively.

 \subsection{Upper bound for the tangential derivatives}\label{subsec1} 
 For the derivatives in $x,y$ variables, we have the following
 
 \begin{proposition}\label{propuv}
 Let $3/2\leq\sigma\leq 2$ and $0<\rho_0\leq 1.$
 	Suppose  $(u,v)\in L^\infty\inner{[0, T];~X_{\rho_0,\sigma}}$  is the solution to the Prandtl system \eqref{prandtl} satisfying the conditions \eqref{condi}-\eqref{+condi1}.  
 Then 
 	  for any  $\alpha\in\mathbb Z_+^2$ with $ \abs\alpha\geq 7,$  any $t\in[0,T]$ and  
  any pair $\inner{\rho,\tilde\rho}$ with $0<\rho<\tilde\rho<\rho_0$,  we have
\begin{equation*}\label{taest}
\begin{aligned}
&   \frac{\rho^{2(  \abs\alpha-6)}}{ [( \abs\alpha-7)!]^{2\sigma}}\Big(\norm{\comi z^{\ell-1}\partial^\alpha u(t)}_{L^2}^2+\norm{\comi z^{\ell}\partial^\alpha \psi(t)}_{L^2}^2+\norm{\comi z^{\kappa}\partial^\alpha v(t)}_{L^2}^2 \Big) \\
  &\qquad+\frac{\rho^{2(  \abs\alpha-5)}}{ [( \abs\alpha-6)!]^{2\sigma}} 
  \abs\alpha^2\norm{\comi z^{\kappa+2}\partial^\alpha \eta(t)}_{L^2}^2\\
   \leq & ~C\abs{\vec a_0}_{\rho,\sigma}^2+
 	C \inner{  \int_0^t   \inner{ \abs{\vec a(s)}_{ \rho,\sigma}^2+\abs{\vec a(s)}_{ \rho,\sigma}^4 }   \,ds+    \int_0^t  \frac{  \abs{\vec a(s)}_{ \tilde\rho,\sigma}^2}{\tilde\rho-\rho}\,ds}. 
\end{aligned}
\end{equation*}
 \end{proposition}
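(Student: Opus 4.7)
The plan is to derive Proposition \ref{propuv} from the bounds already established in Sections \ref{sec4}--\ref{sec5} (Propositions \ref{propfm}, \ref{+propfm+++}, \ref{propgalpha} and \ref{prpnear}), together with Hardy and Poincar\'e inequalities and one direct weighted energy estimate on the $\partial^\alpha$-differentiated $\eta$-equation in \eqref{equforpsi}. By \eqref{realp} it suffices to control the pure $x$- and $y$-tangential derivatives, so by symmetry I will describe only the $x$-direction and handle $\partial_x^m u$, $\partial_x^m\psi$, $\partial_x^m v$, and $\partial^\alpha\eta$ in turn.

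For $\partial_x^m u$ and $\partial_x^m\psi$, I split space via the cutoff partition $\tau_1+\tau_2\geq 1$ from \eqref{suppch2}. On $\mathrm{supp}\,\tau_1$, where $|\psi|\geq c\comi{z}^{-\delta}$ by \eqref{condi}, the defining identity $f_m=\tau_1\psi\partial_z(\partial_x^m u/\psi)$, the trace $\partial_x^m u|_{z=0}=0$, and the \cite{GM}-representation \eqref{albe} reduce the control of $\partial_x^m u$ to that of $G_m$ and $\beta_m$: Hardy's inequality applied to $\alpha_m$ combined with \eqref{alm} gives a weighted $L^2$-estimate of $\alpha_m$ in terms of $\norm{G_m}_{L^2}$, while $\beta_m$ is handled by \eqref{ene}; both are supplied by Propositions \ref{+propfm+++} and \ref{prpnear}. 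The bound on $\tau_1\partial_x^m\psi$ then follows from the algebraic relation $\tau_1\partial_x^m\psi=f_m+\tau_1(\xi/\psi)\partial_x^m u$, Proposition \ref{propfm}, and the pointwise estimate $|\xi/\psi|\leq C\comi{z}^{-1}$ on $\mathrm{supp}\,\tau_1$. On $\mathrm{supp}\,\tau_2$, Proposition \ref{prpnear} directly gives $\norm{\tau_2\partial_x^m\psi}_{L^2}$, and $\norm{\tau_2\partial_x^m u}_{L^2}$ is recovered by integrating $\partial_x^m\psi$ in $z$ from $0$ through the monotonic region. For $\partial_x^m v$ I again split: on the monotonic part I use $\partial_x^m v=\Gamma_m/\psi+(\eta/\psi)\partial_x^m u$ with Proposition \ref{+propfm+++} and the $\partial_x^m u$-bound just obtained, and on the concave part $\partial_x^m v=H_m/\xi+(\eta/\xi)\partial_x^m\psi$ with Propositions \ref{+propfm+++} and \ref{prpnear}, using $\ell\leq\kappa+\delta$ from \eqref{ellalpha} to match the weights.

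For $\partial^\alpha\eta$ I multiply the $\partial^\alpha$-differentiated second equation of \eqref{equforpsi} by $|\alpha|^2\comi{z}^{2(\kappa+2)}\partial^\alpha\eta$ and integrate over $\Omega$. Since $\partial_z\eta|_{z=0}=\partial_y p=0$ for the constant outer flow (by compatibility \eqref{comcon} and preservation in time), integration by parts in $z$ produces $\tfrac12\tfrac{d}{dt}|\alpha|^2\norm{\comi{z}^{\kappa+2}\partial^\alpha\eta}_{L^2}^2$ together with the dissipation $|\alpha|^2\norm{\comi{z}^{\kappa+2}\partial_z\partial^\alpha\eta}_{L^2}^2$. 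The source $|\alpha|^2(\comi{z}^{2(\kappa+2)}h_\alpha,\partial^\alpha\eta)_{L^2}$ is absorbed by Proposition \ref{propgalpha}, and the commutator contributions arising from $[\partial^\alpha,u\partial_x+v\partial_y+w\partial_z]\eta$ are estimated by the combinatorial splitting $|\beta|\leq[m/2]$ versus $|\beta|>[m/2]$ and the factorial bookkeeping of Lemmas \ref{lemjm} and \ref{lemgm}, invoking the Sobolev inequalities \eqref{soblev}--\eqref{soblev+} and the norm bounds \eqref{etan}--\eqref{mixoneta}; the sums close because $\sigma\geq 3/2$.

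The hardest part will be the top-order commutator $|\alpha|^2((\partial^\alpha w)\partial_z\eta,\comi{z}^{2(\kappa+2)}\partial^\alpha\eta)_{L^2}$, in which $\partial^\alpha w$ formally involves one more tangential derivative than is directly controlled by $|\vec a|_{\rho,\sigma}$. The rescue is the anisotropic $|\alpha|$-factor built into the $\eta$- and $h_\alpha$-entries of \eqref{trinorm}: \eqref{chi2est} bounds $\norm{\partial^\alpha w}_{L_{x,y}^2(L_z^\infty)}$ by $[(m-6)!]^\sigma\tilde\rho^{-(m-5)}|\vec a|_{\tilde\rho,\sigma}$, and combined with the $L^\infty$-control of $\comi{z}^{\kappa+2}\partial_z\eta$ from \eqref{+condi1} (upgraded via Lemma \ref{lemequa}(ii)) this yields a contribution of the form $|\alpha|[(m-6)!]^{2\sigma}\tilde\rho^{-2(m-5)}|\vec a|_{\tilde\rho,\sigma}^2$; the surplus factor $|\alpha|$ is then converted into $1/(\tilde\rho-\rho)$ via \eqref{factor}. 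Any quadratic remainder $\epsilon|\alpha|^2\norm{\comi{z}^{\kappa+2}\partial_z\partial^\alpha\eta}_{L^2}^2$ that appears after Cauchy--Schwarz is absorbed into the dissipation on the left-hand side, exactly as in Lemmas \ref{lemgm} and \ref{lemepsis}. Once this delicate term is handled, time integration of the resulting differential inequality together with Propositions \ref{propfm}--\ref{prpnear} closes the estimate and yields Proposition \ref{propuv}.
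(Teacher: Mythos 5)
Your plans for $\partial^\alpha v$ (recovery from $\Gamma_m/\psi$ and $H_m/\xi$ on the two regions) and for $\partial^\alpha\eta$ (weighted energy estimate with the anisotropic factor $\abs\alpha^2$, the source $h_\alpha$ controlled through the anisotropic norm, and the top commutator $(\partial^\alpha w)\partial_z\eta$ handled via \eqref{chi2est} plus \eqref{factor}) coincide in substance with the paper's Lemmas \ref{lemmixuvpsi} and \ref{lemetaes}. The gap is in the step recovering $\partial_x^m u$ and $\partial_x^m\psi$ on the monotone region. The paper does \emph{not} use the representation \eqref{albe} there: it invokes the Hardy-type inequality of \cite[Lemma 6.2]{LY}, which bounds $\norm{\tau_1\comi z^{\ell-1}\partial_x^m u}_{L^2}+\norm{\tau_1\comi z^{\ell}\partial_x^m\psi}_{L^2}$ by $\norm{\comi z^{\ell}f_m}_{L^2}$, $\norm{\tau_2\partial_x^m\psi}_{L^2}$ and the $\tau_1'$-localized quantity $\tau_1'\partial_x^m\psi-\tau_1'\frac{\partial_z\xi}{\psi}\partial_x^m u$, and this last quantity requires its own energy estimate (obtained by rerunning the $f_m$-argument of Proposition \ref{propfm}); your outline omits this term entirely.

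Your substitute — $\partial_x^m u=\alpha_m+\psi\,\beta_m\mathbf 1_{\{z>1\}}$ with $\beta_m$ controlled through \eqref{ene} — does not close. First, \eqref{ene} bounds $\beta_m$ by the full, unweighted $\norm{\partial_x^m\psi}_{L^2(\Omega)}$, whose $\tau_1$-part is precisely (part of) what you are trying to estimate; it is \emph{not} supplied by Propositions \ref{propfm}, \ref{+propfm+++} or \ref{prpnear}. Since $\tau_1\partial_x^m\psi=f_m+\tau_1(\xi/\psi)\partial_x^m u$, your scheme yields an inequality of the form $\norm{\comi z^{\ell}\partial_x^m\psi(t)}_{L^2}\leq A(t)+C\norm{\partial_x^m\psi(t)}_{L^2}$ with no smallness in $C$, which is circular; and if instead you bound $\norm{\partial_x^m\psi(t)}_{L^2}$ by $[(m-7)!]^\sigma\rho^{-(m-6)}\abs{\vec a(t)}_{\rho,\sigma}$ via \eqref{etan}, the term $\abs{\vec a(t)}_{\rho,\sigma}^2$ appears at time $t$ outside the time integral, which is useless for \eqref{esapr}. (In the paper, $\beta_m$ and \eqref{ene} enter only in Lemmas \ref{lemwu1}--\ref{lemwu2}, where they sit inside terms that end up quadratic or higher in $\abs{\vec a}$ under $\int_0^t$, so no such circularity occurs.) Second, the claim that $\alpha_m$ is controlled in the weighted norm over all of $\operatorname{supp}\tau_1$ by $\norm{G_m}_{L^2}$ alone overstates \eqref{supal}--\eqref{alm}, which hold only on bounded sets away from the critical curve; for $z$ large one needs the weighted norm of $f_m$ (this part is repairable, but the $\beta_m$ circularity is the decisive obstruction). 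To fix the proof, replace this step by the \cite[Lemma 6.2]{LY} Hardy inequality together with the additional energy estimate for $\tau_1'\partial_x^m\psi-\tau_1'\frac{\partial_z\xi}{\psi}\partial_x^m u$, then recover $\tau_2\partial_x^m u$ by Poincar\'e on $\operatorname{supp}\tau_2$ using $\tau_2'=\tau_2'\tau_1$, as the paper does.
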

 
 The proof of this proposition is based on the following two lemmas.

 \begin{lemma}[Estimates on  $\partial^\alpha u,\partial^\alpha \psi$ and  $\partial^\alpha v$] 
 \label{lemmixuvpsi} Under the same assumption as in Proposition \ref{propuv},   
 	  for any  $\alpha\in\mathbb Z_+^2$ with $ \abs\alpha\geq 7,$   any $t\in[0,T]$ and   any pair $\inner{\rho,\tilde\rho}$ with $0<\rho<\tilde\rho<\rho_0\leq 1,$ we have
   \begin{equation*}
\begin{aligned}
&   \frac{\rho^{2(  \abs\alpha-6)}}{ [( \abs\alpha-7)!]^{2\sigma}}\Big(\norm{\comi z^{\ell-1}\partial^\alpha u(t)}_{L^2}^2+\norm{\comi z^{\ell}\partial^\alpha \psi(t)}_{L^2}^2+\norm{\comi z^{\kappa}\partial^\alpha v(t)}_{L^2}^2 \Big)\\
   \leq & ~C\abs{\vec a_0}_{\rho,\sigma}^2+
 	C \inner{  \int_0^t   \inner{ \abs{\vec a(s)}_{ \rho,\sigma}^2+\abs{\vec a(s)}_{ \rho,\sigma}^4 }   \,ds+    \int_0^t  \frac{  \abs{\vec a(s)}_{ \tilde\rho,\sigma}^2}{\tilde\rho-\rho}\,ds}. 
\end{aligned}
\end{equation*}
 
\end{lemma}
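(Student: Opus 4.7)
The strategy is to deduce the weighted $L^2$ bounds on $\partial^\alpha u$, $\partial^\alpha \psi$, and $\partial^\alpha v$ for $\abs\alpha\geq 7$ from the a priori bounds already established in Propositions \ref{propfm}, \ref{+propfm+++}, and \ref{prpnear}, together with weighted Hardy and Poincar\'e inequalities. First, by \eqref{realp}, it suffices to control the pure derivatives $\partial_x^m$ and $\partial_y^m$ with $m=\abs\alpha$; only the former will be treated, the latter being symmetric via $\tilde f_m$, $\tilde\Gamma_m$, $\tilde H_m$. Using $\tau_1+\tau_2\geq 1$ from \eqref{suppch2}, each weighted $L^2$ norm on $\Omega$ is bounded by the sum of the same norm localized by $\tau_1$ and by $\tau_2$, so the argument splits into a monotonicity piece on supp $\tau_1$ and a concave piece on supp $\tau_2$.

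On the monotonicity region, where $c\comi z^{-\delta}\leq \abs\psi\leq c^{-1}\comi z^{-\delta}$ by \eqref{condi}, invert the definition $f_m=\tau_1\psi\partial_z(\partial_x^m u/\psi)$: since $\partial_x^m u|_{z=0}=0$, dividing by $\tau_1\psi$ and integrating in $z$, combined with a weighted Hardy inequality (the condition $\ell+1/2<\delta\leq\ell+1$ in \eqref{ellalpha} guarantees integrability of the weights), recovers $\tau_1\comi z^{\ell-1}\partial_x^m u$ from $\comi z^\ell f_m$. The bound on $\tau_1\comi z^\ell\partial_x^m\psi$ then follows directly from $\tau_1\partial_x^m\psi=f_m+\tau_1(\xi/\psi)\partial_x^m u$ together with $\abs{\xi/\psi}\lesssim\comi z^{-1}$ on supp $\tau_1$ from \eqref{condi}. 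For the normal component, the identity $\partial_x^m v=\Gamma_m/\psi+(\eta/\psi)\partial_x^m u$ gives $\tau_1\comi z^\kappa\partial_x^m v$: the factor $\comi z^\kappa/\psi$ is compensated by the extra weight $\comi z^\delta$ built into the bound on $\comi z^{\kappa+\delta}\Gamma_m$ from Proposition \ref{+propfm+++}, while $(\eta/\psi)\partial_x^m u$ is handled using the $L^\infty$ bound on $\eta/\psi$ from \eqref{condi}-\eqref{+condi1} and the estimate on $\partial_x^m u$ just obtained.

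On the concave region supp $(1-\tau_1)\subset\{\abs{z-1}\leq 3\epsilon/2\}\subset\text{supp }\tau_2$, which is bounded in $z$ so that all weights $\comi z^{\ell-1},\comi z^\ell,\comi z^\kappa$ are harmless, Proposition \ref{prpnear} directly controls $\tau_2\partial_x^m\psi$; the bound on $\partial_x^m u$ in this region follows by writing $\partial_x^m u(z)=\partial_x^m u(z_0)+\int_{z_0}^z\partial_x^m\psi\,d\tilde z$ for some fixed $z_0\in\text{supp }\tau_1\cap\{z<1-\epsilon\}$, using a Sobolev trace on $\mathbb T^2\times\{z_0\}$ against the monotonicity-region estimate. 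For $\partial_x^m v$ on this region, use $\partial_x^m v=H_m/\xi+(\eta/\xi)\partial_x^m\psi$, where $\abs\xi\geq c$ by \eqref{condi}, and combine the bound on $H_m$ from Proposition \ref{+propfm+++} with $\tau_2\partial_x^m\psi$ from Proposition \ref{prpnear}. Assembling the two regions with the common prefactor $\rho^{2(m-6)}/[(m-7)!]^{2\sigma}$ yields the claimed inequality.

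The main obstacle, in my view, is the coherent bookkeeping of weights and $L^\infty$-coefficients: verifying that the singularity $\psi^{-1}\lesssim\comi z^\delta$ arising on the monotonicity part is \emph{exactly} balanced by the extra weight $\comi z^\delta$ appearing in the norms of $\Gamma_m$ and $G_m$, that the constants in the Hardy/Poincar\'e steps are independent of $m$ so they do not disturb the Gevrey factorial scale, and that the junction between the monotonicity and concave regions (through the trace at $z_0$) produces no terms worse than those already present on the right-hand side of the target estimate. Once these matchings are checked, the assembly is routine.
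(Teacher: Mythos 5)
Your overall strategy coincides with the paper's: reduce to pure $x$- and $y$-derivatives via \eqref{realp}, split with $\tau_1+\tau_2\geq 1$, recover $u,\psi$ on the monotone part from $f_m$ by a weighted Hardy argument, recover them on the concave part from Proposition \ref{prpnear} plus a Poincar\'e/trace step, and recover $v$ from the representations $\partial_x^m v=(\Gamma_m+\eta\partial_x^m u)/\psi$ on $\mathrm{supp}\,\tau_1$ and $(H_m+\eta\partial_x^m\psi)/\xi$ on $\mathrm{supp}\,\tau_2$, with the weight bookkeeping $\psi^{-1}\lesssim\comi z^{\delta}$ absorbed by the $\comi z^{\kappa+\delta}$ weight on $\Gamma_m$; these parts, including $\abs{\xi/\psi}\lesssim\comi z^{-1}$ and $\comi z^{\kappa+\delta}\abs\eta\lesssim\comi z^{\ell-1}$, match the paper's proof.

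The genuine gap is in the Hardy step. You claim that, because $\partial_x^m u|_{z=0}=0$, integrating $\partial_z(\partial_x^m u/\psi)=f_m/(\tau_1\psi)$ from $z=0$ recovers $\tau_1\comi z^{\ell-1}\partial_x^m u$ from $\comi z^{\ell}f_m$ alone. This works only on the component $\{0\leq z\leq 1-\epsilon\}$ of $\mathrm{supp}\,\tau_1$. On the other component $\{z\geq 1+\epsilon\}$ you cannot integrate from $z=0$: across the strip $\{\abs{z-1}<\epsilon\}$ one has $\tau_1=0$ and $\psi$ vanishes at the critical curve, so the identity is unavailable there; integrating from $z=+\infty$ is not an option either, both because no decay of $\partial_x^m u/\psi\sim\comi z^{\delta}\partial_x^m u$ is known (only $\comi z^{\ell-1}\partial_x^m u\in L^2$, with $\delta>\ell+1/2$) and because the weight exponent $\ell-1-\delta<-3/2$ lies on the wrong side of $-1/2$ for the Hardy inequality in that direction. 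One must anchor at the edge of the critical strip, which brings in a trace of $\partial_x^m u$ there together with the near-critical quantity $\tau_2\partial_x^m\psi$; this is exactly why the Hardy-type inequality the paper quotes from the 2D case (\cite[Lemma 6.2]{LY}) carries the extra terms $\big\|\tau_1'\partial_x^m\psi-\tau_1'\frac{\partial_z\xi}{\psi}\partial_x^m u\big\|_{L^2}$ and $\norm{\tau_2\partial_x^m\psi}_{L^2}$ on its right-hand side, and why the first of these requires its own energy estimate (obtained in the paper by repeating the $f_m$-argument of Proposition \ref{propfm}). Your proposal never estimates such junction terms; moreover the ordering is circular as written, since your concave-region bound for $u$ uses a trace taken in the monotone region while the upper monotone component in turn needs an anchor from the concave region. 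This can be repaired (lower component first, then $\tau_2\partial_x^m\psi$ and the concave-region $u$, then the upper component anchored at the strip's edge), but as stated the step ``recovers $\tau_1\comi z^{\ell-1}\partial_x^m u$ from $\comi z^{\ell}f_m$'' is false, and the missing junction estimates are precisely the non-routine part of the paper's argument.
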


\begin{proof}
	The estimates on  $\partial^\alpha u$ and $\partial^\alpha \psi$  can be obtained
	 in the same way as  \cite[Proposition 6.1]{LY}. In fact,  it follows from the Hardy type inequality that  
	\begin{multline*}
		 \norm{\tau_1\comi{z}^{\ell-1} \partial_x^m u}_{L^2}+	 \norm{\tau_1\comi{z}^{\ell} \partial_x^m \psi}_{L^2} \\
		 \leq C \norm{ \comi{z}^{\ell} f_m}_{L^2}+ C\big\|  \tau_1'\partial_x^m\psi-\tau_1'\frac{\partial_z\xi}{\psi}\partial_x^mu\big\|_{L^2}+C\norm{ \tau_2 \partial_x^m\psi}_{L^2}; 
	\end{multline*}
	see \cite[Lemma 6.2]{LY} for the detailed proof  of the above inequality.  
		Moreover,  just repeating the argument for estimating $\norm{ \comi{z}^{\ell} f_m}_{L^2}$ (see Proposition \ref{propfm}), we  have
	\begin{multline*}
    \frac{\rho^{2( m-6)}}{ [( m-7)!]^{2\sigma}}\big\|  \tau_1'\partial_x^m\psi-\tau_1'\frac{\partial_z\xi}{\psi}\partial_x^mu\big\|_{L^2}\\
    \leq   C\abs{\vec a_0}_{\rho,\sigma}^2+
 	C \inner{  \int_0^t   \inner{ \abs{\vec a(s)}_{ \rho,\sigma}^2+\abs{\vec a(s)}_{ \rho,\sigma}^4 }   \,ds+    \int_0^t  \frac{  \abs{\vec a(s)}_{ \tilde\rho,\sigma}^2}{\tilde\rho-\rho}\,ds}. 
\end{multline*}
This together with   Propositions \ref{propfm} and \ref{prpnear}  give
\begin{multline*}
	 \frac{\rho^{2( m-6)}}{ [( m-7)!]^{2\sigma}}\Big( \norm{\tau_1\comi{z}^{\ell-1} \partial_x^m u}_{L^2}+	 \norm{\tau_1\comi{z}^{\ell} \partial_x^m \psi}_{L^2}\Big)\\
	   \leq   C\abs{\vec a_0}_{\rho,\sigma}^2+
 	C \inner{  \int_0^t   \inner{ \abs{\vec a(s)}_{ \rho,\sigma}^2+\abs{\vec a(s)}_{ \rho,\sigma}^4 }   \,ds+    \int_0^t  \frac{  \abs{\vec a(s)}_{ \tilde\rho,\sigma}^2}{\tilde\rho-\rho}\,ds},
\end{multline*}
and thus,  using Proposition \ref{prpnear} and the fact that $1\leq \tau_1+\tau_2,$  we have 
\begin{multline}\label{l1}
	 \frac{\rho^{2( m-6)}}{ [( m-7)!]^{2\sigma}}\Big( \norm{\tau_1\comi{z}^{\ell-1} \partial_x^m u(t)}_{L^2}+	 \norm{\comi{z}^{\ell} \partial_x^m \psi(t)}_{L^2}\Big)\\
	   \leq   C\abs{\vec a_0}_{\rho,\sigma}^2+
 	C \inner{  \int_0^t   \inner{ \abs{\vec a(s)}_{ \rho,\sigma}^2+\abs{\vec a(s)}_{ \rho,\sigma}^4 }   \,ds+    \int_0^t  \frac{  \abs{\vec a(s)}_{ \tilde\rho,\sigma}^2}{\tilde\rho-\rho}\,ds}.
\end{multline}
	On the other hand, it follows from Poincar\'e inequality as well as the relationship in  \eqref{suppch2} that
	\begin{eqnarray*}
	\begin{aligned}
	&	\norm{\tau_2 \comi z^{\ell-1}\partial_x^m u(t)}_{L^2}\leq C\norm{\tau_2  \partial_x^m u(t)}_{L^2}\leq C(\norm{\tau_2'  \partial_x^m u(t)}_{L^2}+\norm{\tau_2 \partial_x^m \psi(t)}_{L^2} )\\
	&\quad	\leq C(\norm{\tau_1 \partial_x^m u(t)}_{L^2}+\norm{ \partial_x^m \psi(t)}_{L^2})\\
		&\qquad \leq    \frac{C [( m-7)!]^{2\sigma}}{\rho^{2( m-6)}} \Big( \abs{\vec a_0}_{\rho,\sigma}^2+
 	  \int_0^t   \inner{ \abs{\vec a(s)}_{ \rho,\sigma}^2+\abs{\vec a(s)}_{ \rho,\sigma}^4 }   \,ds+    \int_0^t  \frac{  \abs{\vec a(s)}_{ \tilde\rho,\sigma}^2}{\tilde\rho-\rho}\,ds\Big),
 	  \end{aligned},
	\end{eqnarray*}
	where in the last inequality we have used \eqref{l1}.
We then  combine the above two inequalities and observe the fact that $1\leq \tau_1+\tau_2,$  to obtain 
 \begin{equation}\label{ualpha}
 \begin{aligned}
 &	 \frac{\rho^{2( m-6)}}{ [( m-7)!]^{2\sigma}}\big(  \norm{ \comi z^{\ell-1} \partial_x^m u(t)}_{L^2}^2  +\norm{\comi z^{\ell}\partial_x^m\psi(t)}_{L^2}^2\big)\\
 \leq& ~ C\abs{\vec a_0}_{\rho,\sigma}^2+
 	C \inner{  \int_0^t   \inner{ \abs{\vec a(s)}_{ \rho,\sigma}^2+\abs{\vec a(s)}_{ \rho,\sigma}^4 }   \,ds+    \int_0^t  \frac{  \abs{\vec a(s)}_{ \tilde\rho,\sigma}^2}{\tilde\rho-\rho}\,ds}.  
 	\end{aligned}
 \end{equation}
 Similar estimates hold for the  weighted $L^2$-norms of  $\partial_y^m u$ and $\partial_y^m \psi.$  Then we  use \eqref{realp} with $\abs\alpha=m$ for the desired estimates on $\partial^\alpha u$ and $\partial^\alpha \psi.$   
 
 It remains to handle $\partial^\alpha v.$  And again it suffices to consider $\partial_x^m v$ and $\partial_y^m v$ with $m=\abs\alpha,$ due to \eqref{realp}.  In view of \eqref{Gaal}, we have 
 \begin{eqnarray*}
 	\partial_x^m v=
 	\left\{
 	\begin{aligned}
 	& \big(\Gamma_m+\eta \partial_x^m u\big)/ \psi,\quad  \textrm{if} \ z\in \textrm{supp}\,\tau_1,\\
 	& \big(H_m+\eta \partial_x^m \psi\big)/ \xi,\quad \textrm{if} \ z\in \textrm{supp}\,\tau_2,  
 	\end{aligned}
 	\right.	
 	 \end{eqnarray*}  
 and thus, recalling $ \psi\sim \comi z^{-\delta}$ on supp $\tau_1$,
 \begin{multline*}
 \norm{\comi z^{\kappa} \partial_x^m v}_{L^2}	\leq   \norm{\tau_1\comi z^{\kappa} \partial_x^m v}_{L^2} + C \norm{\tau_2 \partial_x^m v}_{L^2}\\
 \leq C \norm{\comi z^{\kappa+\delta} \Gamma_m}_{L^2}	+C\norm{\comi z^{\kappa+\delta} \eta\partial_x^m u   }_{L^2}+ C \norm{ H_m}_{L^2}	+C\norm{ \partial_x^m \psi}_{L^2}\\
 	\leq  C\norm{\comi z^{\kappa+\delta} \Gamma_m}_{L^2}	+C\norm{\comi z^{\ell-1} \partial_x^m u   }_{L^2}+C \norm{ H_m}_{L^2}	+C\norm{ \partial_x^m \psi}_{L^2},
 \end{multline*} 
 where in the last inequality we have used  \eqref{ellalpha} and \eqref{+condi1}. 
 This together with Proposition \ref{propfm} and  \eqref{ualpha},  yield the   upper bound for $\partial_x^m v,$ i.e.,
  \begin{eqnarray*}
 && \frac{\rho^{2( m-6)}}{ [( m-7)!]^{2\sigma}}  \norm{ \comi z^{\kappa} \partial_x^m v(t)}_{L^2}^2  \\
 &
  \leq&   C\abs{\vec a_0}_{\rho,\sigma}^2+
 	C \inner{  \int_0^t   \inner{ \abs{\vec a(s)}_{ \rho,\sigma}^2+\abs{\vec a(s)}_{ \rho,\sigma}^4 }   \,ds+    \int_0^t  \frac{  \abs{\vec a(s)}_{ \tilde\rho,\sigma}^2}{\tilde\rho-\rho}\,ds}.  
 \end{eqnarray*}
 The same argument applies to  $\partial_y^m v.$   Thus,  the desired bound for $\partial^\alpha v$  follows and  the proof is completed.  
\end{proof}

 \begin{lemma}
 	[Estimate on $\partial^\alpha \eta$] \label{lemetaes}
 	  Under the same assumption as in Proposition \ref{propuv},
 	  for any  $\alpha\in\mathbb Z_+^2$ with $ \abs\alpha\geq 7,$   any $t\in[0,T]$ and    any pair $\inner{\rho,\tilde\rho}$ with $0<\rho<\tilde\rho<\rho_0\leq 1,$ we have
 	\begin{eqnarray*}
 		&&   \frac{\rho^{2(  \abs\alpha-5)}}{ [( \abs\alpha-6)!]^{2\sigma}}\abs\alpha^2 \norm{\comi z^{\kappa+2}\partial^\alpha \eta(t)}_{L^2}^2\\
   &\leq & ~C\abs{\vec a_0}_{\rho,\sigma}^2+
 	C \inner{  \int_0^t   \inner{ \abs{\vec a(s)}_{ \rho,\sigma}^2+\abs{\vec a(s)}_{ \rho,\sigma}^4 }   \,ds+    \int_0^t  \frac{  \abs{\vec a(s)}_{ \tilde\rho,\sigma}^2}{\tilde\rho-\rho}\,ds}. 
 	\end{eqnarray*}
 \end{lemma}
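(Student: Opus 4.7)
The plan is to perform a direct weighted energy estimate on the $\eta$–equation obtained from \eqref{equforpsi}, exploiting the crucial fact explained in Remark \ref{remaniso} that the Gevrey norm on $\partial^\alpha\eta$ carries the anisotropic factor $|\alpha|$ and the shifted weight $\langle z\rangle^{\kappa+2}$. This built–in ``one–derivative–less'' accounting is precisely what compensates the one‐derivative loss coming from $(\partial^\alpha w)\zeta$, which would otherwise be the obstruction.

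First I would apply $\partial^\alpha$ with $|\alpha|\geq 7$ to the second equation in \eqref{equforpsi} and use Leibniz's rule to write
\begin{equation*}
\bigl(\partial_t+u\partial_x+v\partial_y+w\partial_z-\partial_z^2\bigr)\partial^\alpha\eta \;=\; h_\alpha + \mathcal R_\alpha,
\end{equation*}
where $h_\alpha=\partial^\alpha h$ and $\mathcal R_\alpha$ collects the Leibniz commutators, the most dangerous of which is $-(\partial^\alpha w)\,\zeta$, together with $-(\partial^\alpha u)\partial_x\eta-(\partial^\alpha v)\partial_y\eta$ and the mid–range terms $\sum_{1\le j\le |\alpha|-1}\binom{\alpha}{\beta}(\partial^\beta u)\partial_x\partial^{\alpha-\beta}\eta$ etc. I would then multiply by $|\alpha|^2\langle z\rangle^{2(\kappa+2)}\partial^\alpha\eta$, integrate over $\Omega$, and use the compatibility condition with constant pressure to get $\partial_z\eta|_{z=0}=0$, so that integration by parts on the dissipation produces the nonnegative term $|\alpha|^2\|\langle z\rangle^{\kappa+2}\partial_z\partial^\alpha\eta\|_{L^2}^2$ plus the harmless commutator with the weight.

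For the source $h_\alpha$, I would pair it with $|\alpha|^2\langle z\rangle^{2(\kappa+2)}\partial^\alpha\eta$ and invoke Proposition \ref{propgalpha} directly: since $\kappa+2\le \kappa+\delta$ by \eqref{ellalpha}, Cauchy–Schwarz gives the required bound with the correct Gevrey scaling. For the generic Leibniz terms $(\partial^\beta u)\partial_x\partial^{\alpha-\beta}\eta$, $(\partial^\beta v)\partial_y\partial^{\alpha-\beta}\eta$ and $(\partial^\beta w)\partial_z\partial^{\alpha-\beta}\eta$ with $1\le|\beta|\le|\alpha|-1$, I would split the sum at $|\beta|=[|\alpha|/2]$ and on each half use Sobolev inequality \eqref{soblev}--\eqref{soblev+} on the low-order factor and the Gevrey bounds \eqref{etan}--\eqref{mixoneta} and \eqref{chi2est} on the high-order factor; the combinatorial counting is identical to the one carried out for $T_1,T_2$ in the proof of Lemma \ref{lemjm}, and yields $C|\vec a|_{\rho,\sigma}^3+C|\vec a|_{\tilde\rho,\sigma}^2/(\tilde\rho-\rho)$ after inserting the factors $[(|\alpha|-6)!]^{-2\sigma}\rho^{2(|\alpha|-5)}$ and using $\sigma\geq 3/2$ together with \eqref{factor}.

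The step I expect to be the main obstacle is the top–order term $|\alpha|^2\bigl(\langle z\rangle^{2(\kappa+2)}(\partial^\alpha w)\zeta,\,\partial^\alpha\eta\bigr)_{L^2}$, because $\partial^\alpha w$ is formally one derivative higher than what the $u,v$–part of the norm controls. However, writing $\partial^\alpha w=-\int_0^z\bigl(\partial^\alpha\partial_x u+\partial^\alpha\partial_y v\bigr)d\tilde z$ and applying H\"older in the form $\|\partial^\alpha w\|_{L^2_{x,y}(L^\infty_z)}\cdot\|\zeta\|_{L^\infty_{x,y}(L^2_z)}\cdot\|\langle z\rangle^{2(\kappa+2)}\partial^\alpha\eta\|_{L^2}$, the anisotropic estimate \eqref{chi2est} gives $|\alpha|\,\|\partial^\alpha w\|_{L^2_{x,y}(L^\infty_z)}\le [(|\alpha|-6)!]^\sigma r^{-(|\alpha|-5)}|\vec a|_{r,\sigma}$, which pairs exactly with the matching $|\alpha|\,\|\langle z\rangle^{\kappa+2}\partial^\alpha\eta\|_{L^2}$ factor on the other side via \eqref{+condi1} applied to $\zeta$. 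Thus the apparent derivative loss is absorbed by the choice of anisotropic weight, and the term is bounded by $C\,[(|\alpha|-6)!]^{2\sigma}\rho^{-2(|\alpha|-5)}(|\vec a|_{\rho,\sigma}^2+|\vec a|_{\rho,\sigma}^4)$. Collecting all estimates, using Gronwall–type absorption of $\varepsilon|\alpha|^2\|\langle z\rangle^{\kappa+2}\partial_z\partial^\alpha\eta\|_{L^2}^2$ into the dissipation, and integrating over $[0,t]$ yields the claimed inequality; combination with \eqref{realp} reduces the case of general $\alpha$ to $\alpha=(m,0)$ and $(0,m)$.
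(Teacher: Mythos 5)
Your proposal is correct and follows essentially the same route as the paper: reduce to $\alpha=(m,0),(0,m)$ via \eqref{realp}, do the weighted energy estimate on the $\partial^\alpha\eta$ equation with weight $\comi z^{\kappa+2}$ and prefactor $\abs\alpha^2$, bound $h_\alpha$ through the norm (i.e.\ \eqref{chi2est}), treat the mid-range Leibniz terms by the Lemma \ref{lemjm}-type splitting using $\sigma\geq 3/2$, and let the anisotropy of the $\eta$-part of the norm neutralize the top-order term $(\partial^\alpha w)\zeta$. One small correction: \eqref{chi2est} controls $\norm{\partial^\alpha w}_{L_{x,y}^2(L_z^\infty)}$ \emph{without} an extra factor $\abs\alpha$, so after pairing with $\abs\alpha\norm{\comi z^{\kappa+2}\partial^\alpha\eta}_{L^2}$ and $\norm{\comi z^{\kappa+2}\zeta}_{L_{x,y}^\infty(L_z^2)}\leq C$ from \eqref{+condi1}, a factor $\abs\alpha$ survives and must be absorbed by \eqref{factor}; consequently this contribution lands in the $\abs{\vec a}_{\tilde\rho,\sigma}^2/(\tilde\rho-\rho)$ term (as in the paper's treatment of the $j=m-1,m$ terms of $P_3$) rather than in $\abs{\vec a}_{\rho,\sigma}^2+\abs{\vec a}_{\rho,\sigma}^4$, which is still compatible with the stated estimate.
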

 
 \begin{proof}
 As before,	it suffices to consider only $\partial_x^m \eta$ and $\partial_y^m \eta.$  We apply $\partial_x^m$ to the equation for $\eta$ in \eqref{equforpsi} to obtain
  \begin{eqnarray*}
 	&&\partial_t  \partial_x^m \eta+\big(u \partial_x  +v\partial_y +w\partial_z\big) \partial_x^m \eta-\partial _{z}^2\partial_x^m \eta \\
& =& \partial_x^m h-\sum_{1\leq j\leq m}{m\choose j} [\big(\partial_x^j u\big)\partial_x\partial_x^{m-j}\eta+\big(\partial_x^j v\big)\partial_y\partial_x^{m-j}\eta+\big(\partial_x^j w\big)\partial_z\partial_x^{m-j}\eta].
 \end{eqnarray*}
Multiplying both sides above by $m^2 \comi z^{2(\kappa+2)} \partial_x^m \eta,$  and then integrating over $\Omega,$ we have by noting $\partial_z\eta|_{z=0}=0,$
\begin{eqnarray*}
 	\frac{1}{2} \frac{d}{dt}  m^2\norm{\comi z^{\kappa+2} \partial_x^m \eta}_{L^2}^2+ m^2 \norm{\comi z^{\kappa+2}  \partial_z\partial_x^m \eta }_{L^2}^2=P_1+P_2+P_3
 	\end{eqnarray*}
 	with 
 	\begin{eqnarray*}
	P_1&=&  m^2\Big( \comi z^{\kappa+2} \partial_x^m h,  ~\comi z^{\kappa+2}  \partial_x^m \eta\Big)_{L^2}+ m^2\Big( w (\partial_z\comi z^{\kappa+2})\partial_x^m \eta, ~ \comi z^{\kappa+2} \partial_x^m \eta\Big)_{L^2}\\
	&&+\frac{1}{2} m^2\inner{  \big(\partial_{z}^2\comi z^{2(\kappa+2)}\big)  \partial_x^m \eta,  ~  \partial_x^m \eta}_{L^2}, \\
 P_2	&=& - m^2\sum_{1\leq j\leq m}{m\choose j}  \Big( \comi z^{\kappa+2} [\big(\partial_x^j u\big) \partial_x\partial_x^{m-j}\eta+\big(\partial_x^j v\big)\partial_y\partial_x^{m-j}\eta ],  ~\comi z^{\kappa+2}  \partial_x^m \eta\Big)_{L^2}, \\
 	P_3&=& - m^2\sum_{1\leq j\leq m}{m\choose j}  \Big( \comi z^{\kappa+2}  \big(\partial_x^j w\big)\partial_z\partial_x^{m-j}\eta,  ~\comi z^{\kappa+2}  \partial_x^m \eta\Big)_{L^2}.
 	\end{eqnarray*}
Furthermore, using \eqref{chi2est}-\eqref{mixoneta} we obtain by noting  $\kappa+2\leq \kappa+\delta$ due to \eqref{ellalpha}  and $\partial_x^m h=h_\gamma$ with $\gamma=(m,0),$
  \begin{eqnarray*}
 P_1\leq 	\frac{C[\inner{m-6}!]^{2\sigma}}{\rho^{2(m-5)}}  \abs{\vec a}_{\rho,\sigma}^2.
 \end{eqnarray*} 
We now estimate $P_3.$ By  \eqref{chi2est}, \eqref{mixoneta} and \eqref{+condi1}, we   have
 \begin{eqnarray*}
 &&- m^2\bigg(\sum_{1\leq j\leq 2}+\sum_{m-1\leq j\leq m}\bigg){m\choose j}  \Big( \comi z^{\kappa+2}  \big(\partial_x^j w\big)\partial_z\partial_x^{m-j}\eta,  ~\comi z^{\kappa+2}  \partial_x^m \eta\Big)_{L^2}\\
 &  \leq &  \frac{C[\inner{m-6}!]^{2\sigma}}{\rho^{2(m-5)}} \frac{\abs{\vec a}_{\tilde \rho,\sigma}^2}{\tilde\rho-\rho}.
 \end{eqnarray*}
 Moreover, for the terms in the middle of  the summation, it follows from the argument used  in Lemma \ref{lemjm} that   
 \begin{eqnarray*}
 	 - m^2\sum_{3\leq j\leq m-2}{m\choose j}  \Big( \comi z^{\kappa+2}  \big(\partial_x^j w\big)\partial_z\partial_x^{m-j}\eta,  ~\comi z^{\kappa+2}  \partial_x^m \eta\Big)_{L^2}\leq \frac{C[\inner{m-6}!]^{2\sigma}}{\rho^{2(m-5)}}  \abs{\vec a}_{  \rho,\sigma}^3.
 	  	  \end{eqnarray*}
 	  	  Thus, we obtain
 	  	  \begin{eqnarray*}
 	  	  	P_3\leq \frac{C[\inner{m-6}!]^{2\sigma}}{\rho^{2(m-5)}}\Big(\abs{\vec a}_{  \rho,\sigma}^3+ \frac{\abs{\vec a}_{\tilde \rho,\sigma}^2}{\tilde\rho-\rho}\Big).
 	  	  \end{eqnarray*}
 	  	  Similarly,   we have
 	  	  \begin{eqnarray*}
 	  	  	P_2\leq\frac{C[\inner{m-6}!]^{2\sigma}}{\rho^{2(m-5)}}\Big(\abs{\vec a}_{  \rho,\sigma}^3+ \frac{\abs{\vec a}_{\tilde \rho,\sigma}^2}{\tilde\rho-\rho}\Big).
 	  	  \end{eqnarray*}
 	  	  Combining these estimates gives
 	  	  \begin{eqnarray*}
 	  	  	\frac{1}{2} \frac{d}{dt} m^2 \norm{\comi z^{\kappa+2} \partial_x^m \eta}_{L^2}^2+ m^2 \norm{\comi z^{\kappa+2}  \partial_z\partial_x^m \eta }_{L^2}^2\leq   \frac{C[\inner{m-6}!]^{2\sigma}}{\rho^{2(m-5)}}\Big(\abs{\vec a}_{  \rho,\sigma}^2+\abs{\vec a}_{  \rho,\sigma}^3+ \frac{\abs{\vec a}_{\tilde \rho,\sigma}^2}{\tilde\rho-\rho}\Big).
 	  	  \end{eqnarray*}
 	  	  Integrating over  $[0,t]$ yields
 	  	  \begin{eqnarray*}
 	  && \frac	{\rho^{2(m-5)}}{ [\inner{m-6}!]^{2\sigma}}m^2 \norm{\comi z^{\kappa+2} \partial_x^m \eta(t)}_{L^2}^2\\
 	  &\leq & C\abs{\vec a_0}_{\rho,\sigma}^2+
 	C \inner{  \int_0^t   \inner{ \abs{\vec a(s)}_{ \rho,\sigma}^2+\abs{\vec a(s)}_{ \rho,\sigma}^4 }   \,ds+    \int_0^t  \frac{  \abs{\vec a(s)}_{ \tilde\rho,\sigma}^2}{\tilde\rho-\rho}\,ds}. 
 	  	  \end{eqnarray*}
 	  	   The upper bound of $\norm{\comi z^{\kappa+2} \partial_y^m \eta(t)}_{L^2}$ can be obtained similarly.
 	  	   Thus, the  estimate on $\partial^\alpha \eta$ follows
 	  	   and this completes the proof.
 \end{proof}
 
As an immediate consequence of Lemmas \ref{lemmixuvpsi}-\ref{lemetaes},   we 
have Proposition \ref{propuv}.   

 \subsection{Upper bound for the mixed derivatives }
 \label{subsec52}
 Now we estimate the mixed derivatives that  is stated in
 
  \begin{proposition}
 \label{norma}
 Let $3/2\leq\sigma\leq 2$ and $0<\rho_0\leq 1.$ Suppose   $(u,v)\in L^\infty\inner{[0, T];~X_{\rho_0,\sigma}}$  is the solution to the Prandtl system \eqref{prandtl} satisfying the conditions \eqref{condi}-\eqref{+condi1}.   Then we have, for any pair $(\alpha,j)\in\mathbb Z_+^2\times\mathbb Z_+$ with $1\leq j\leq 4$ and $ \abs\alpha+j\geq 7,$  any $t\in[0,T],$ and    any pair $(\rho,\tilde\rho)$  with  $0<\rho<\tilde\rho<\rho_0,$
\begin{eqnarray*}
 && \frac{\rho^{2(\abs\alpha+j-6)}}{[\inner{\abs\alpha+j-7}!]^{2\sigma}}\norm{\comi z^{\ell+1} \partial^\alpha \partial_z^j\psi(t)}_{L^2} ^2  + \frac{\rho^{2(\abs\alpha+j-5)}}{[\inner{\abs\alpha+j-6}!]^{2\sigma}} \abs\alpha^2 \norm{\comi z^{\kappa+2} \partial^\alpha \partial_z^j\eta(t)}_{L^2} ^2\\
 &\leq&     C \abs{\vec a_0}_{\rho,\sigma}^2+
 	  C \inner{  \int_0^t   \inner{ \abs{\vec a(s)}_{ \rho,\sigma}^2+\abs{\vec a(s)}_{ \rho,\sigma}^4 }   \,ds+    \int_0^t  \frac{  \abs{\vec a(s)}_{ \tilde\rho,\sigma}^2}{\tilde\rho-\rho}\,ds}. 
\end{eqnarray*}
\end{proposition}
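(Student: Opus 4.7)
The plan is to derive both bounds by weighted $L^2$ energy estimates on the evolution equations satisfied by the mixed derivatives, leveraging a crucial structural simplification: once at least one $\partial_z$ is applied, the nonlocal ``loss-of-derivative'' term $(\partial^\alpha w)\psi$ that drives the analysis of purely tangential derivatives no longer arises in its dangerous form. Indeed, the incompressibility relation $\partial_z w=-(\partial_x u+\partial_y v)$ is pointwise, so every commutator in which a $\partial_z$ falls on $w$ reduces to a product of tangential and normal derivatives of $u$ and $v$ already controlled by Proposition \ref{propuv}.

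I would proceed by induction on $j\in\{1,2,3,4\}$. For $j=1$, apply $\partial^\alpha$ to the first equation of \eqref{seco} for $\xi=\partial_z\psi$ and test against $\comi z^{2(\ell+1)}\partial^\alpha\xi$; the source $(\partial^\alpha\theta_1,\partial^\alpha\theta_2,\partial^\alpha\theta_3)=\vec\theta_\alpha$ is already estimated in Proposition \ref{propgalpha}, and the commutators between $\partial^\alpha$ and the transport operator $u\partial_x+v\partial_y+w\partial_z$ are treated exactly as in Lemmas \ref{lemjm} and \ref{lemgm}. An analogous argument for the second equation of \eqref{seco}, with weight $|\alpha|^2\comi z^{2(\kappa+2)}$, bounds $\partial^\alpha\zeta=\partial^\alpha\partial_z\eta$; the extra factor $|\alpha|^2$ is exactly the anisotropic weight of the Gevrey norm \eqref{trinorm} for $\eta$ (cf.~Remark \ref{remaniso}). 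For $j\geq 2$, differentiate the $\xi$ (resp.\ $\zeta$) equation $j-1$ times in $z$ to obtain an evolution equation for $\partial_z^{j-1}\xi=\partial_z^j\psi$, apply $\partial^\alpha$, and run the same weighted energy estimate. Equivalently, one can substitute $\partial_z^2\psi=\partial_t\psi+(u\partial_x+v\partial_y+w\partial_z)\psi-g$ from \eqref{equforpsi} into $\partial^\alpha\partial_z^j\psi=\partial^\alpha\partial_z^{j-2}\partial_z^2\psi$ and close by induction on $j$.

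The commutator terms are expanded via Leibniz in the double index $(\beta,j_1)$ and split into low- and high-frequency pairings, using the Sobolev inequalities \eqref{soblev}--\eqref{soblev+} together with \eqref{etan}--\eqref{mixoneta} from Lemma \ref{lemequa}; the factorial accounting mirrors the proofs of Lemmas \ref{lemjm}, \ref{lemgm} and \ref{lemepsis} and yields exactly the scalings $[(|\alpha|+j-7)!]^{\sigma}$ and $[(|\alpha|+j-6)!]^{\sigma}$ demanded by the Gevrey norm. Sourcelike contributions such as $\partial^\alpha\partial_z^{j-1}\theta_i$ or $\partial^\alpha\partial_z^j g$, after expanding $g,h$ via \eqref{defR1} and $\theta_i,\mu_i$ via \eqref{sor}, are bilinear in $\psi,\eta$ and their derivatives, and are handled in the same spirit as Lemma \ref{lemg1}.

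The main obstacle I anticipate is the combinatorial bookkeeping at the boundary $j=4$: the Leibniz commutator with $w\partial_z$ can produce a factor $\partial_z^{j+1}\psi=\partial_z^5\psi$ falling outside the range of the Gevrey norm $|\cdot|_{\rho,\sigma}$. Such terms must be absorbed either by the parabolic dissipation $\norm{\comi z^{\ell+1}\partial_z\partial^\alpha\partial_z^j\psi}_{L^2}^2$ generated on the left of the energy identity, or by writing $\partial_z^5\psi=\partial_z^3(\partial_z^2\psi)=\partial_z^3\bigl(\partial_t\psi+(u\partial_x+v\partial_y+w\partial_z)\psi-g\bigr)$ via \eqref{equforpsi} to trade the extra normal derivative for a time derivative that can be re-absorbed into the evolution of the lower-order mixed derivative. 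Once all four cases $j\in\{1,\ldots,4\}$ are treated, integrating in time on $[0,t]$ and incorporating the initial contribution $|\vec a_0|_{\rho,\sigma}^2$ yields the stated inequality.
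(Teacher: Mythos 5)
Your overall route---weighted $L^2$ energy estimates for $\partial^\alpha\partial_z^j\psi$ and $\partial^\alpha\partial_z^j\eta$ (equivalently, for the $z$-differentiated $\xi,\zeta$ equations), with the bilinear sources $\partial^\alpha\partial_z^{j}g$, $\partial^\alpha\partial_z^{j}h$ (or $\vec\theta_\alpha,\vec\mu_\alpha$) expanded through \eqref{defR1} and \eqref{sor} and estimated as in Lemma \ref{lemjm}, and the anisotropic factor $\abs\alpha^2\comi z^{2(\kappa+2)}$ for $\eta$---is essentially the paper's. But there is a genuine gap: you never address the boundary terms at $z=0$ produced when the dissipation $-\partial_z^2$ is integrated by parts. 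These take the form $\int_{\mathbb T^2}\big(\partial^\alpha\partial_z^{j+1}\psi\big)\big(\partial^\alpha\partial_z^{j}\psi\big)\big|_{z=0}\,dx\,dy$ (and the analogue for $\eta$). For $j=1$ they vanish because $\partial_z\psi|_{z=0}=\partial_z\eta|_{z=0}=0$, but for $j=2,3,4$ they involve the traces $\partial_z^3\psi|_{z=0}$ and $\partial_z^5\psi|_{z=0}$, which are neither zero nor controlled by $\abs{\cdot}_{\rho,\sigma}$; testing against $\comi z^{2(\ell+1)}\partial^\alpha\partial_z^j\psi$ does not make them disappear. The paper's proof hinges on reducing these traces through the equation \eqref{equforpsi}, e.g.\ $\partial_z^3\psi|_{z=0}=\psi(\partial_x\psi-\partial_y\eta)|_{z=0}+2\eta\,\partial_y\psi|_{z=0}$ and the corresponding identity for $\partial_z^5\psi|_{z=0}$ (with analogues for $\eta$)---this is precisely what the compatibility conditions \eqref{comcon} encode---after which the boundary contributions are quadratic in quantities of lower order and can be absorbed by the dissipation and the tangential estimates. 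Without this reduction your energy identity does not close, so this step must be added; your proposed devices for the interior $\partial_z^5\psi$ commutator (absorption into the dissipation, or substituting $\partial_z^2\psi$ from \eqref{equforpsi}) do not touch this boundary issue.

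A secondary, less serious point: your opening justification for why the derivative loss disappears is imprecise. The delicate commutator is the one in which \emph{no} $\partial_z$ falls on $w$, namely $(\partial^\alpha w)\partial_z^{j+1}\psi$, which is still nonlocal and carries $\abs\alpha+1$ tangential derivatives of $(u,v)$; the identity $\partial_z w=-(\partial_x u+\partial_y v)$ is irrelevant to it. What saves the mixed-derivative estimate is the bookkeeping built into the norm \eqref{trinorm}: the Gevrey weight counts the total order $\abs\alpha+j$, so the tangential order $\abs\alpha+1\leq\abs\alpha+j$ entails no genuine loss, while $\partial_z^{j+1}\psi$ is low order by \eqref{condi}--\eqref{+condi1} (and, when $j=4$, requires one of the devices you mention). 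With the boundary-trace reduction supplied and this accounting made explicit, the rest of your argument matches the paper's.
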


\begin{proof}[Sketch of the proof] 
	Based on the above discussion, we will only sketch the proof for brevity.
	First of all, 
we need to consider  here the boundary conditions since the normal derivatives  are involved  when we use integration by parts. The situation is exactly the same as in 2D, where we need to use the equation \eqref{equforpsi} and the boundary conditions  
\begin{eqnarray*}
	\partial_z\psi|_{z=0}=  \partial_z\eta|_{z=0}=0,
\end{eqnarray*} 
  to  reduce the order of normal derivatives   in the boundary terms. Precisely,    
we use the equation for $\psi$ in \eqref{equforpsi} to obtain 
\begin{eqnarray*}
	\partial_z^3\psi  |_{z=0} =  \partial_z \big(\partial_t    \psi+ \inner{u \partial_x +v\partial_y +w\partial_z} \psi -g  \big)    |_{z=0}   =    \psi  (\partial_x\psi-\partial_y \eta)  |_{z=0}+2\eta \partial_y \psi |_{z=0},
\end{eqnarray*}
and  moreover,  direct computation yields
\begin{eqnarray*}
  &&\partial_z^5\psi  |_{z=0}= -\big(\partial_{z}^2 \psi \big)\inner{\partial_x\psi+\partial_y\eta}|_{z=0}+4 \psi \partial_x\partial_z^2\psi|_{z=0}+4\eta\partial_y\partial_z^2\psi|_{z=0}.\end{eqnarray*}
 Similar equalities hold for $\partial_z^3\eta|_{z=0}$ and $\partial_z^5\eta|_{z=0}.$ Then, we can 
follow the argument in \cite{LY} to deduce the energy estimates on $\partial^\alpha\partial_z^j \psi$ and $\partial^\alpha\partial_z^j\eta,$  with  only difference coming from the appearance of  two additional terms
\begin{equation}\label{twterm}
 \Big( \comi z^{\ell+1} \partial^\alpha\partial_z^j g,  ~\comi z^{\ell+1}  \partial^\alpha\partial_z^j  \psi\Big)_{L^2}~{\rm and}~~\, \abs\alpha^2\Big( \comi z^{\kappa+2} \partial^\alpha\partial_z^j h,  ~\comi z^{\kappa+2}  \partial^\alpha\partial_z^j  \eta\Big)_{L^2}.
\end{equation}
   However, there will be no additional difficulty  to  control the above
   two terms because  we can use \eqref{defR1} to write, by noting $1\leq j\leq 4,$
 $$\partial^\alpha\partial_z^{j}  g=\partial^\alpha\partial_z^{j-1}\Big[ (\partial_y \eta) \psi + (\partial_y v)\partial_z \psi -(\partial_y\psi) \eta-(\partial_xu) \partial_z\eta\Big].$$
Then following the argument in the proof of Lemma \ref{lemjm}  yields that  
\begin{eqnarray*}
	\norm{\comi z^{\ell+1}\partial^\alpha\partial_z^{j-1}  g}_{L^2}\leq  \frac{C[\inner{\abs\alpha+j-7}!]^{\sigma}}{\rho^{ \abs\alpha+j-6}} \abs{\vec a}_{  \rho,\sigma}^2+\frac{C[\inner{\abs\alpha+j-7}!]^{\sigma}}{\rho^{ \abs\alpha+j-6}} \frac{\abs{\vec a}_{  \tilde\rho,\sigma}}{\tilde\rho-\rho}.
\end{eqnarray*}
  Hence, we have  
\begin{eqnarray*}
 \Big( \comi z^{\ell+1} \partial^\alpha\partial_z^j g,  ~\comi z^{\ell+1}  \partial^\alpha\partial_z^j  \psi\Big)_{L^2} 
	 \leq   \frac{C[\inner{\abs\alpha+j-7}!]^{2\sigma}}{\rho^{2( \abs\alpha+j-6)}}\Big( \abs{\vec a}_{  \rho,\sigma}^3+\frac{\abs{\vec a}_{  \tilde\rho,\sigma}^2}{\tilde\rho-\rho}\Big). 
\end{eqnarray*}
Similarly, we can show that  the  second term in \eqref{twterm} is bounded from above by
\begin{eqnarray*}
 \frac{C[\inner{\abs\alpha+j-6}!]^{2\sigma}}{\rho^{ 2(\abs\alpha+j-5)}} \abs{\vec a}_{  \rho,\sigma}^3.
\end{eqnarray*}
And the other terms  can be estimated in the same way as   \cite{LY}, so
that we omit the detail for brevity,  cf. Subsection 6.2 in \cite{LY}.  Then the estimate given in Proposition \ref{norma} follows. 
\end{proof}

\section{Proof of a priori estimate}
\label{sec7}
In this section, we will prove the a priori estimate, in the two cases of  $\sigma\in[3/2,2]$  
 and $1<\sigma<3/2$ separately. For this, we
 will first prove  Theorem \ref{apriori} for the case when $\sigma\in[3/2,2]$.
 
 \bigskip
 \noindent\underline{\it The case when $\sigma\in[3/2,2]$}.  
By Propositions \ref{propfm},   \ref{propgalpha}, \ref{propuv} and \ref{norma},  we have the  upper bound on the  terms in $\abs{\vec a}_{\rho,\sigma}$  when the order of derivatives is greater than or equal to $7,$ that is, these terms are bounded by 
\begin{eqnarray*}
  C\abs{\vec a_0}_{\rho,\sigma}^2+C
 	  \int_0^t   \inner{ \abs{u(s)}_{ \rho,\sigma}^2+\abs{u(s)}_{ \rho,\sigma}^4 }   \,ds+C
 	  \int_0^t   \frac{\abs{u(s)}_{\tilde \rho,\sigma}^2}{\tilde\rho-\rho}   \,ds.
\end{eqnarray*}
For the derivatives with order less than $7$,  it can be checked
 straightforwardly
that the  same  upper bound holds.   Hence, we have the a priori  estimate   \eqref{esapr} and then complete the proof of   Theorem \ref{apriori}.

 \bigskip
 \noindent\underline{\it The case when $1<\sigma<3/2$}.     The proof
 for this case is similar to the one when $\sigma\in[3/2,2],$  just replacing the norm $\norm{\cdot}_{\rho,\sigma}$ by an equivalent norm.     Precisely,  if $1<\sigma<3/2$, then we can find an integer $N\geq 2$ such that 
\begin{equation}\label{nsigma}
	(N+1)/N\leq \sigma. 
\end{equation}
Define another Gevrey norm $\norm{\cdot}_{\rho,\sigma, N}$ by   replacing  respectively 
$
	\sup_{\abs\alpha\geq 7}
$ and  $
	\sup_{\abs\alpha\leq 6}
$  in \eqref{trinorm} 
by 
$
	\sup_{\abs\alpha\geq N+5}
$ and  $\sup_{\abs\alpha\leq N+4}.$  Do the same for  $\sup_{\abs\alpha+j\geq 7}$  and $\sup_{\abs\alpha+j\leq 6}.$ This new norm $\norm{\cdot}_{\rho,\sigma, N}$  is 
equivalent to$\norm{\cdot}_{\rho,\sigma}$ in the sense that 
\begin{eqnarray*}
	\norm{\cdot}_{\rho,\sigma} \leq 	\norm{\cdot}_{\rho,\sigma,N}\leq C_N\rho^{2-N} \norm{\cdot}_{\rho,\sigma} 
\end{eqnarray*}
for $\rho\leq 1,$  where $C_N$ is a constant depending only on $N.$  Moreover, following the     same  argument as above,  we  replace   $\abs{\cdot}_{\rho,\sigma}$ given in Definition \ref{gevspace} by
 $\abs{\cdot}_{\rho,\sigma,N},$ 
and replace \eqref{+condi1}  by 
\begin{equation}\label{++condi1}
\begin{aligned}
&\sum_{\abs\alpha\leq N+1}  \Big(\norm{\comi z^{\ell-1}\partial^\alpha u}_{L^\infty} +  \norm{\comi z^{\kappa}\partial^{\alpha} v}_{L^\infty} +  \norm{ \partial^{\alpha} w}_{L^\infty} \Big)\\
 &\qquad+\sum_{\abs\alpha\leq N+2}  \norm{\comi z^{\ell}\partial^{\alpha}  \psi}_{L_{x,y}^\infty(L_z^2)} +\sum_{\stackrel{ \abs\alpha+j\leq N+2}{j\geq 1}}   \norm{\comi z^{\ell+1}\partial^{\alpha}\partial_z^j \psi}_{L_{x,y}^\infty(L_z^2)}\\
 &\qquad\qquad\quad\qquad\qquad\quad\qquad+\sum_{  \abs\alpha+j\leq N+2} \norm{\comi z^{\kappa+2}\partial^{\alpha}\partial_z^j \eta}_{L_{x,y}^\infty(L_z^2)}\leq \tilde C.
\end{aligned}
\end{equation}

And then we have

\begin{theorem}\label{thmnewapr}
	Let $1<\sigma<3/2$ and  $0<\rho_0\leq 1.$  Suppose  $(u,v)\in L^\infty\inner{[0, T];~X_{\rho_0,\sigma}}$ is the solution to the Prandtl system \eqref{prandtl}    such that  the properties  in  \eqref{condi} and \eqref{++condi1} hold. 
 Then  there exists   a constant   $C_*>1,$   such that     
\begin{equation}\label{+esapr}
	\abs{\vec a (t)}_{\rho,\sigma,N}^2\leq C_* \abs{\vec a_0}_{\rho, \sigma,N}^2+C_* \int_{0}^t \inner{\abs{\vec a(s)}_{\rho,\sigma,N}^2+\abs{\vec a(s)}_{\rho,\sigma,N}^4} \,ds+C_* \int_{0}^t\frac{ \abs{\vec a(s)}_{\tilde\rho,\sigma,N}^2}{\tilde \rho-\rho}\,ds 
\end{equation}	 
 holds for any pair $(\rho,\tilde\rho)$ with   $0<\rho<\tilde \rho<\rho_0$ and any $t\in[0,T].$ 
\end{theorem}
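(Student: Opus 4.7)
The strategy is to repeat, essentially verbatim modulo bookkeeping, the proofs of Propositions \ref{propfm}, \ref{+propfm+++}, \ref{propgalpha}, \ref{prpnear}, \ref{propuv} and \ref{norma} that together yield \eqref{esapr}, replacing throughout the ``Sobolev cutoff'' index $7$ by $N+5$ (and $6$ by $N+4$). The idea is that the passage from $\abs{\cdot}_{\rho,\sigma}$ to $\abs{\cdot}_{\rho,\sigma,N}$ enlarges the Sobolev portion of the norm: all derivatives $\partial^\alpha$ with $\abs\alpha\leq N+4$ are now controlled by \eqref{++condi1} and the Sobolev embeddings \eqref{soblev}-\eqref{soblev+}, not just those with $\abs\alpha\leq 3$. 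This extra flexibility is precisely what is needed to offset a smaller value of $\sigma$.

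First, I would state and prove the analog of Lemma \ref{lemequa} in which \eqref{etan}-\eqref{mixoneta} are upgraded by replacing $7$ by $N+5$ and $6$ by $N+4$ in the indexing; the proofs are read off directly from the definition of $\abs{\cdot}_{\rho,\sigma,N}$. Next, I would revisit each of the nonlinear estimates in Sections \ref{sec4}-\ref{sec6}. Inspection shows that $\sigma\geq 3/2$ was only used in Lemmas \ref{lemjm}, \ref{lemgm}, \ref{lemepsis} and their analogues, where Leibniz sums of the form $\sum_{j}\binom{m}{j}\bigl\|(\partial_x^j w)(\partial_z\partial_x^{m-j}\psi)\bigr\|_{L^2}$ (and similar) were split into small-$j$, middle-$j$ and large-$j$ regimes, the middle regime being bounded by placing the low-order factor in $L^\infty$ via \eqref{soblev}-\eqref{soblev+}. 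In that case the Sobolev factor absorbed $3$ extra derivatives, and convergence of the series reduced to $\sum_{j}j^{-4}m^{1-3(\sigma-1)}<\infty$, i.e.\ $\sigma\geq 3/2$. With the cutoff shifted to $N+5$, the Sobolev factor now absorbs $N+1$ derivatives (thanks to \eqref{++condi1}), so the corresponding sum becomes, schematically,
\begin{eqnarray*}
\sum_{j=c}^{[m/2]} \frac{m\,[(j-N-3)!]^{\sigma-1}[(m-j-N-5)!]^{\sigma-1}}{j^{N+2}(m-j)^{N+4}}\bigl[(m-N-5)!\bigr]^{-\sigma}m!,
\end{eqnarray*}
and, after the standard manipulations, the controlling numerical series is $\sum_{j} j^{-(N+2)}\,m^{1-N(\sigma-1)}$, which is uniformly summable in $m$ precisely when $N(\sigma-1)\geq 1$, i.e.\ when $\sigma\geq (N+1)/N$. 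This is exactly the condition \eqref{nsigma}.

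Every other step of the argument--the cancellations producing the equations for $f_m,q_m,\Gamma_m,H_m,G_m$ in Sections \ref{sec4}-\ref{sec5}, the nonlocal treatment of $\tau_2'$-terms in Subsections \ref{subsec61}-\ref{subsec62} via the representation \eqref{albe} and Lemma \ref{lemgm++}, the Hardy-Poincar\'e reduction from $f_m,q_m,\Gamma_m,H_m$ to $\partial^\alpha u,\partial^\alpha\psi,\partial^\alpha v$ in Lemma \ref{lemmixuvpsi}, and the direct energy estimate on $\partial^\alpha\eta$ and the mixed derivatives in Subsection \ref{subsec52}--goes through line by line after the purely index-theoretic substitution $(7,6)\mapsto (N+5,N+4)$. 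Summing the resulting estimates over all the terms appearing in $\abs{\vec a}_{\rho,\sigma,N}$ yields \eqref{+esapr}.

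The main obstacle is purely the bookkeeping in the combinatorial sums: for each Leibniz decomposition one must allocate which factor is estimated in $L^\infty$ (using the Sobolev embedding \eqref{soblev+} and the enhanced a priori bound \eqref{++condi1}) and which is estimated in weighted $L^2$ (via $\abs{\vec a}_{r,\sigma,N}$), and then check that the resulting series converges uniformly in $m$ under the weaker hypothesis $\sigma\geq (N+1)/N$. No new cancellation mechanism, no new auxiliary function, and no new boundary computation is required; only a uniform redistribution of the $N$ extra derivatives made available by \eqref{++condi1}.
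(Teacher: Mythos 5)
Your proposal is correct and follows essentially the same route as the paper's own sketch: the paper likewise replaces $\abs{\cdot}_{\rho,\sigma}$ by the equivalent norm $\abs{\cdot}_{\rho,\sigma,N}$ and \eqref{+condi1} by \eqref{++condi1}, re-splits the Leibniz sums so that the first and last $N$ terms are handled via \eqref{factor} and \eqref{++condi1} while the middle range produces factors $m^{N+1-N\sigma},\,m^{N+2-(N+1)\sigma},\dots$ that are bounded exactly when $\sigma\geq (N+1)/N$, which is \eqref{nsigma}. No new cancellation or auxiliary function is needed, matching the paper's argument.
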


\begin{proof}
	[Sketch of the proof] For brevity, we only give a sketch of the proof.
	In fact, it is similar to that  for  Theorem \ref{apriori}.  Here we will show how to modify the argument used there.
	
	We first recall how  the assumption $\sigma\in[ 3/2, 2]$ is used when  proving Theorem \ref{apriori}.  For $\sigma\geq 3/2$,  we use  the   following type of splitting in the summation
\begin{equation}\label{sum+}
	\Big[\sum_{0\leq j\leq 1}+\sum_{m-1\leq j\leq m }\Big] {m\choose j} \cdots+ \sum_{2\leq j\leq    m-2 }  {m\choose j} \cdots.
\end{equation} 
By direct computation, we see that the factors
$
	m$ and $ m^{2-\sigma}$
appear in the first two summations in \eqref{sum+} respectively. 
Then we can use \eqref{factor} and \eqref{+condi1} to conclude that the   summation
\begin{eqnarray*}
	\Big[\sum_{0\leq j\leq 1}+\sum_{m-1\leq j\leq m }\Big] {m\choose j} \cdots 
	\end{eqnarray*}
	is basically  bounded from above by 
	\begin{eqnarray*}
		\frac{ C[\inner{ m-7}!]^{\sigma}}{\rho^{ m-6}}\frac{\abs{\vec a}_{\tilde \rho,\sigma}}{\tilde\rho-\rho}\quad \textrm{or}  \quad \frac{ C[\inner{ m-6}!]^{\sigma}}{\rho^{ m-5}}\frac{\abs{\vec a}_{\tilde \rho,\sigma}}{\tilde\rho-\rho} .
	\end{eqnarray*}
	Meanwhile, for   the last summation in \eqref{sum+} we have  factors like
	\begin{eqnarray*}
		m^{3-2\sigma}, ~m^{4-3\sigma}, \cdots,
	\end{eqnarray*}
	which are less than 1 when $\sigma\geq  3/2.$ Thus,  the last summation in \eqref{sum+} is basically bounded from above  by 
 \begin{eqnarray*}
		\frac{ C[\inner{ m-7}!]^{\sigma}}{\rho^{ m-6}} \abs{\vec a}_{  \rho,\sigma}^2 \quad \textrm{or}  \quad \frac{ C[\inner{ m-6}!]^{\sigma}}{\rho^{ m-5}} \abs{\vec a}_{\tilde \rho,\sigma}^2.
	\end{eqnarray*}
	
Now we turn to the case when $1<\sigma<3/2,$ and instead of \eqref{sum+},  we can use a new splitting \begin{equation}\label{sum++}
	\Big[\sum_{0\leq j\leq N-1}+\sum_{m-N+1\leq j\leq m }\Big] {m\choose j} \cdots+ \sum_{N\leq j\leq  m-N }  {m\choose j} \cdots.
\end{equation}
Then we have  factors 
\begin{eqnarray*}
	m, m^{2-\sigma}, m^{3-2\sigma}, ~m^{4-3\sigma}, m^{N-(N-1)\sigma},
\end{eqnarray*}
appearing in the first two summations. Note that these factors are bounded by $m$, and then we can use again \eqref{factor} and \eqref{++condi1} to estimate the first two summations.   Meanwhile,  factors like
\begin{eqnarray*}
	m^{N+1-N\sigma}, ~m^{N+2-(N+1)\sigma}, \cdots,
\end{eqnarray*}
appear in the last summation in \eqref{sum++},  and these factors are less than 1 because of \eqref{nsigma}.   So the situation is  similar to
 the case  of   $\sigma\in[3/2,2].$    Then one can apply the argument used
  in Sections \ref{sec4}-\ref{sec6}  to the case when $1<\sigma<3/2$, with $\abs{\cdot}_{\rho,\sigma}$ and  \eqref{+condi1} therein replaced  by  $\abs{\cdot}_{\rho, \sigma, N}$ and \eqref{++condi1}.   Then the desired a priori estimate \eqref{+esapr} follows, and it completes the proof.  
	\end{proof}

 \section{Proof of the main results }
\label{sec8}

We will prove in this section the main results on the existence and uniqueness for Prandtl system.   We first prove Theorem \ref{maithm1} that corresponds to the constant outer flow
 when $(U,V)=(0,0),$ and then explain how to extend the argument to the  general outer flow.  Since the proof is  similar as in 2D case after we
 have the a priori estimate,  we will only  give  a sketch.

\begin{proof}
	[Sketch of the proof of Theorem \ref{maithm1}] 
	The proof relies on the a priori estimates given in Theorems \ref{apriori} and \ref{thmnewapr}. 
	
	In order to  obtain  the existence  of solutions to   the   Prandtl equations \eqref{prandtl},  there are  two main 
ingredients, one  of which is to investigate the existence of   approximate solutions to the regularized Prandtl system
\begin{equation}
\label{repradtl}
\left\{
\begin{aligned}
	&\partial_t u_\eps+\big(u_\eps \partial_x  +v_\eps\partial_y	 +w_\eps\partial_z\big)  u_\eps-\partial _{z}^2u_\eps-\eps\partial _{x}^2 u_\eps-\eps\partial _{y}^2 u_\eps=0,\\
 &\partial_t v_\eps+\big(u_\eps \partial_x  +v_\eps\partial_y	 +w_\eps\partial_z\big) v_\eps-\partial _{z}^2v_\eps-\eps\partial _{x}^2 v_\eps-\eps\partial _{y}^2 v_\eps=0,\\
 &(u_\eps,v_\eps)|_{z=0}=(0,0),\quad\lim_{z\rightarrow +\infty}   (u_\eps,v_\eps)=(0, 0),\\
 &(u_\eps,v_\eps)|_{t=0}=(u_0,v_0),
 \end{aligned}
 \right.
\end{equation}
with $w_\eps=-\int_0^z(\partial_x u_\eps+\partial_y v_\eps )d\tilde z.$       We remark that  the  regularized equations above share the same compatibility condition   \eqref{comcon} as the  original system  \eqref{prandtl}.  Another ingredient is to  derive a uniform estimate with respect to $\eps$ for the approximate solutions $(u_\eps, v_\eps).$  

  The existence for the parabolic system \eqref{repradtl} is standard.  Indeed, 
suppose that  $(u_0,v_0)\in X_{2\rho_0,\sigma}.$  Then we can  construct,   following  the 
  similar scheme as that in \cite[Section 7]{LY},  a solution  $(u_\eps,v_\eps)\in L^\infty\big([0,T_\eps^*];X_{3\rho_0/2, \sigma}\big)$ to \eqref{repradtl}   such that
\begin{equation}\label{unesfor uv}
	\sup_{t\in[0, T_\eps^*]}\norm{(u_\eps(t),v_\eps(t))}_{3\rho_0/2, \sigma}\leq C\norm{(u_0,v_0)}_{2\rho_0,\sigma},
\end{equation}  
where $T_\eps^*$ may depend on $\eps.$      

It remains to  derive a uniform estimate for the approximate solutions $(u_\eps, v_\eps),$  so that we can remove the $\eps$-dependence of the lifespan $T_\eps^*.$    
For this, we need to verify that
$(u_\eps,v_\eps)$  satisfies the condition \eqref{condi}, and the condition \eqref{+condi1} if $\sigma\in[3/2,2]$ and  \eqref{++condi1} if $1<\sigma<3/2$  respectively.

   In view of  \eqref{unesfor uv}  and by Sobolev inequalities and the definition of $\norm{\cdot}_{\rho,\sigma}$,  we know that  $u_\eps$ and $v_\eps$ satisfy the condition \eqref{+condi1} if $\sigma\in[3/2,2]$, and the condition \eqref{++condi1} in the case of $1<\sigma<3/2.$     In order to verify  \eqref{condi}, we suppose    
 $u_0$ satisfies Assumption \ref{maas} additionally and   show  that these properties therein preserve in  time.  This is clear when we consider a small perturbation around a shear flow.   For the general initial data,  it was shown in \cite{MW} by  using the classical  maximum principle  for  the parabolic equation,  that  such properties listed in Assumption \ref{maas}   also  preserve in time. Here, we can adopt the argument in \cite[Subsection 5.2]{MW}, and apply the maximum principle to the first equation in \eqref{repradtl}. 
 Precisely, applying $\partial_z$ to the first equation in \eqref{repradtl} and using the notations $\psi_\eps=\partial_zu_\eps$ and $\eta_\eps=\partial_zv_\eps,$, we have
 \begin{eqnarray*}
 	\partial_t \psi_\eps+\big(u_\eps \partial_x  +v_\eps\partial_y	 +w_\eps\partial_z\big)  \psi_\eps-\partial _{z}^2\psi_\eps-\eps\partial _{x}^2 \psi_\eps-\eps\partial _{y}^2 \psi_\eps=(\partial_y v_\eps)\psi_\eps- ( \partial_yu_\eps)\eta_\eps, 
 \end{eqnarray*}
that is, 
\begin{eqnarray*}
 	&&\Big[\partial_t  +\Big(u_\eps \partial_x  +v_\eps\partial_y	 +(w_\eps+2\delta(1+z)^{-1})\partial_z\Big)   -\partial _{z}^2 -\eps\partial _{x}^2 -\eps\partial _{y}^2  \Big]\big((1+ z)^{\delta}\psi_\eps\big)\\
 	&=&\Big(\partial_y v_\eps- ( \partial_yu_\eps)\eta_\eps/\psi_\eps + \delta w_\eps/(1+z)+\delta(\delta+1)/(1+z)^2\Big) (1+ z)^{\delta} \psi_\eps.
 \end{eqnarray*}
By using  \eqref{unesfor uv} and the maximal principle for parabolic equations (see \cite[Lemma E.2]{MW}), one can show that there exists
$c_*>0$ independent of $\eps$ such that  for any $(t,x,y,z)\in [0,T_\eps^*]\times\Omega,$
 \begin{eqnarray*}
     \psi_\eps(t,x,y,z) \geq  c_*  (1+ z)^{-\delta}.
 \end{eqnarray*}
This  gives   the  lower bound on $\partial_z u_\eps$  given in \eqref{condi}. Similarly, we can  derive also  the upper bounds on $\partial_z u_\eps$ and its normal derivatives given
  in \eqref{condi}.
  
 Consequently, following the argument in Sections \ref{sec4}-\ref{sec7},  the estimates \eqref{esapr} and  \eqref{+esapr} also hold,  with $\vec a$ there replaced by $(u_\eps, v_\eps).$   Finally,  by the uniform estimate
on $(u_\eps,v_\eps)$, we can repeat the argument in \cite[Section 8]{LY} to conclude the existence and uniqueness of solutions to the Prandtl system \eqref{prandtl}.  For brevity, we omit the detail here  and refer it to \cite[Section 5.2]{MW} and \cite[Section 8]{LY}  for  the comprehensive discussion. Thus, the proof of Theorem \ref{maithm1} is completed.    
\end{proof}

\begin{proof}
	[Sketch of the proof of Theorem \ref{maintheorem}]
Now we consider the general outer flow $U,V,p\in Y_{2\rho_0,\sigma}.$  The argument is similar to the case with the constant outer flow
discussed  above. The main difference comes from the appearance of extra source term and boundary term.  Since the extra source terms  only involve $U,V$ and $p$,  they are bounded by the Gevrey norms of $U,V$ and $p$. 

In addition, the boundary terms do not create additional difficulty in the
estimation. To see this, we consider for instance $\hat f_m$ which is defined in the same way as $f_m,$  that is,
\begin{eqnarray*}
\hat f_m=	\partial_x^{m} \psi-\frac{\partial_z \psi }{ \psi}\partial_x^{m}  (u-U)=\psi\partial_z\Big(\frac{\partial_x^m (u-U)}{\psi}\Big).
\end{eqnarray*}
Different from the case with  constant data $U$,   since  $\hat f_m\partial_z\hat f_m|_{z=0}\neq 0,$ then we have boundary terms like
\begin{eqnarray*}
	\int_{\mathbb T^2} \hat f_m(x,y,0) \partial_z\hat f_m(x,y,0)dxdy,
\end{eqnarray*}
when applying integration by parts.  Furthermore,  note that, denoting $\chi=\partial_z\psi/\psi,$  
\begin{eqnarray*}
	 \hat f_m(x,y,0) =\partial_x^{m} \psi(x,y,0)+\chi(x,y,0)\partial_x^mU(x,y),
\end{eqnarray*}
by using the fact that $\partial_z\psi|_{z=0}=\partial_xp,$ we have
\begin{eqnarray*}
	\partial_z \hat f_m(x,y,0) = \partial_x^{m+1} p(x,y)+(\partial_z\chi)(x,y,0)\partial_x^mU(x,y)-\chi(x,y,0)\partial_x^m\psi(x,y,0).
\end{eqnarray*}
Then
\begin{eqnarray*}
	\Big|\int_{\mathbb T^2} \hat f_m(x,y,0) \partial_z\hat f_m(x,y,0)dxdy\Big| \leq C\norm{\partial_x^m\psi (\cdot, 0)}_{L^2(\mathbb T^2)}^2+\textrm{ terms involving} ~U, V,p.
\end{eqnarray*}
As for the first term on the right side, we have
\begin{eqnarray*}
	 \norm{\partial_x^m\psi (\cdot, 0)}_{L^2(\mathbb T^2)}^2 &\leq&  \eps  \norm{ \partial_z\partial_x^m  \psi}_{L^2(\Omega)}^2+C_\eps  \norm{\partial_x^m\psi}_{L^2(\Omega)}^2\\
	& \leq&  \eps  \norm{ \partial_z \hat f_m}_{L^2(\Omega)}^2+C_\eps  \inner{\norm{\partial_x^m\psi}_{L^2(\Omega)}^2+\norm{\partial_x^m(u-U)}_{L^2(\Omega)}^2},
\end{eqnarray*}
where in the last inequality we have used the representation of $\hat f_m.$  We can choose $\eps$   small enough to   absorb  the  first  
term
$
	 \norm{ \partial_z f_m}_{L^2(\Omega)}^2.
$
Then the  quantity 
\begin{eqnarray*}
		\Big|\int_{\mathbb T^2} \hat f_m(x,y,0) \partial_z\hat f_m(x,y,0)dxdy\Big| 
\end{eqnarray*}
has a suitable upper bound,  just as $\norm{\partial_x^m\psi}_{L^2}^2$ and $\norm{\partial_x^m(u-U)}_{L^2}^2.$ We can apply a  similar argument as above to the other boundary terms when estimating  $g_\alpha, h_\alpha, \Gamma_m,$ etc.

     In summary, we can  extend the  result to  general
     outer flow $(U, V),$ with $(U,V)$ in  a  Gevrey space with the same index $\sigma$  as the initial data.    For  this,  we replace $(u,v)$   by $(u-U, v-V)$  in the estimation,  and perform  the  estimates on $(u-U, v-V)$  following the discussions  in Sections \ref{sec4}-\ref{sec7}.  Since it does not involve 
     any extra essential difficulty, we omit the detail for brevity.
  \end{proof}

 \bigskip
\noindent {\bf Acknowledgements.}
We would like to thank the reviewer's valuable suggestions for the revision of the paper.
   The research of the first author was supported by NSFC (11871054, 11771342) and Fok Ying Tung Education Foundation (151001). And the research of the second
author was supported by the General Research Fund of Hong Kong, CityU
 No.11320016 and the NSFC project 11731008.

\end{document}